\numberwithin{equation}{section}
\numberwithin{equation}{section} 
\theoremstyle{remark}
\newtheorem{remark}{Remark}
\theoremstyle{theorem}
\newtheorem{theorem}{Theorem}
\theoremstyle{proposition}
\newtheorem{prop}{Proposition}
\theoremstyle{lemma}
\newtheorem{lemma}{Lemma}
\theoremstyle{defin}
\newtheorem{defin}{Definition}
\theoremstyle{hyp}
\newtheorem{hyp}{Assumption}
\theoremstyle{coro}
\theoremstyle{example}
\newtheorem{example}{Example}
\title{Entropic bounds for conditionally Gaussian vectors \\ and applications to neural networks}
\newcommand{\R}{\mathbb{R}}
\author[1]{Lucia Celli}
\author[1]{Giovanni Peccati}
\affil[1]{Department of Mathematics, Luxembourg University}
\date{}
\begin{document}
\maketitle
\begin{abstract}
{ Using entropic inequalities from information theory, we provide new bounds on the total variation and 2-Wasserstein distances between a conditionally Gaussian law and a Gaussian law with invertible covariance matrix. We apply our results  to quantify the speed of convergence to Gaussian of a randomly initialized fully connected neural network and its derivatives --- evaluated in a finite number of inputs --- when the initialization is Gaussian and the sizes of the inner layers diverge to infinity. Our results require mild assumptions on the activation function, and allow one to recover optimal rates of convergence in a variety of distances, thus improving and extending the findings of Basteri and Trevisan (2023), Favaro {\it et al.} (2023), Trevisan (2024) and Apollonio {\it et al.} (2024). One of our main tools are the quantitative cumulant estimates established in Hanin (2024). {As an illustration, we apply our results to bound the total variation distance between the Bayesian posterior law of the neural network and its derivatives, and the posterior law of the corresponding Gaussian limit: this yields quantitative versions of a posterior CLT by Hron {\it et al.} (2022), and extends several estimates by Trevisan (2024) to the total variation metric}.\\
\noindent{\bf Keywords:} Conditionally Gaussian Random variables; Gaussian Initialization; Limit Theorems; Neural Networks; Relative Entropy; Total Variation Distance; Wasserstein Distance\\
\noindent{\bf AMS classification:} 60F05; 60F07; 60G60; 68T07.}
\end{abstract}

\tableofcontents
\section{Introduction and statement of the main results}
\subsection{Overview}\label{ss:overview}
{

{ The aim of this paper is to develop a general methodology to assess the discrepancy between the distribution of a Gaussian vector and that of a {\it conditionally Gaussian} random vector with the same dimension, using tools and concepts from information theory, see e.g. \cite{john, NPS}. Our main abstract estimates, proved by means of an interpolation technique inspired by the work of Trevisan \cite{Trev}, are stated in Theorems \ref{final_noC} and \ref{final} below.

\smallskip

As demonstrated in the sections to follow, our principal goal is to use our abstract bounds to quantitatively assess the fluctuations of \emph{randomly initialized fully connected neural networks} (see, e.g. \cite{Agg, RYH22, surveyNN, Ye}, as well as Definition \ref{def_NN}) by establishing quantitative versions of a seminal central limit theorem (CLT) by R. Neal \cite{Neal96, HanGa, HannonG,MHRTG,LBNSPS}, recalled in Theorem \ref{Hanin} below. As discussed in Sections \ref{ss:mainintro} and \ref{ss:litintro}, our findings allow one to deduce \emph{optimal Berry-Esseen bounds} for Neal's CLT, valid in any dimension and holding for total variation and Wasserstein-type distances \cite[Chapter 6]{villani}. Our bounds ({presented in Theorem \ref{fin_th_n}}) scale as the inverse of the network width, matching known lower bounds from \cite{FHMNP} in many cases --- see Remark \ref{optim_sp}. More broadly, our findings unify, improve, and generalize the collection of quantitative CLTs for fully connected neural networks recently established in \cite{Torr23, Btesi, BT24, FHMNP, Trev}. 

{Following \cite{Trev}, we also apply Theorem \ref{fin_th_n} to Bayesian inference, establishing a quantitative version of a key result in \cite{HBNPS}. Specifically, we bound the total variation distance between the exact posterior distribution of a neural network and that associated with its Gaussian limit, extending existing results to include network gradients. Theorem \ref{bay_TV} below provides an explicit bound on this distance.}

The content of Theorem \ref{fin_th_n} and Theorem \ref{bay_TV} is informally captured by the next statement, that we present for the reader's benefit. Precise definitions are given in Sections \ref{ss:introabstract} and \ref{def_NN_sec}.

\begin{theorem}[\bf Informal version of Theorems \ref{fin_th_n} and \ref{bay_TV}]\label{t:informal} 
Let $z^{(L+1)}$ be a fully connected feed-forward neural network with width $n$, fixed depth $L$, and Gaussian initialization. Then, as $n\to\infty$ and under an appropriate non-degeneracy assumption, the finite-dimensional marginal distributions of $z^{(L+1)}$ and of its gradients converge to a Gaussian limit both in the total variation and 2-Wasserstein distances, with a convergence rate of order $O\left(\frac1n\right)$. In the case of the network's marginals, the rate $\frac1n$ is optimal. 
An analogous quantitative CLT continues to hold for the posterior finite-dimensional distributions of $z^{(L+1)}$ and its gradients, provided the likelihood is bounded and continuous.
\end{theorem}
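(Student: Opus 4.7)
The plan is to combine the abstract entropic bounds stated in Theorems \ref{final_noC} and \ref{final} with the conditionally Gaussian structure of randomly initialized fully connected neural networks. First I would observe that, since the weights and biases of the output layer are i.i.d.\ Gaussian and independent of the parameters of the previous layers, the vector $z^{(L+1)}$ evaluated at finitely many inputs is, \emph{conditionally} on the depth-$L$ activations, a centered Gaussian with a random covariance matrix $\Sigma_n$ that is a simple quadratic functional of those activations. This places the problem exactly in the setting of the abstract theorems, with limiting target given by the Gaussian of covariance $\Sigma_\infty$ coming from Neal's CLT (Theorem \ref{Hanin}), whose invertibility is the non-degeneracy hypothesis in the statement.

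Invoking Theorems \ref{final_noC} and \ref{final} then reduces the total variation and $2$-Wasserstein bounds to controlling suitable moments of the fluctuation $\Sigma_n - \Sigma_\infty$. The key input at this stage is the quantitative cumulant estimate of Hanin cited in the introduction, which yields $\mathbb{E}\|\Sigma_n - \Sigma_\infty\|^2 = O(1/n^2)$ together with sharp control on the higher-order cumulants that enter the entropic inequalities. Plugging these estimates into the abstract bounds gives a convergence rate of order $1/n$ in both distances, and the lower bound from \cite{FHMNP} confirms that this rate is optimal for the network's marginals.

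For the network gradients I would exploit the fact that $\nabla z^{(L+1)}$ is again linear in the output-layer Gaussian weights, with coefficients depending only on the quantities backpropagated from the hidden layers. This yields a joint conditionally Gaussian structure for $(z^{(L+1)}, \nabla z^{(L+1)})$ evaluated at finitely many inputs, to which Theorems \ref{final_noC} and \ref{final} can be applied anew. I expect the main technical obstacle to lie precisely here: one must verify non-degeneracy of the joint covariance and extend the cumulant estimates to the derivative entries, which requires careful bookkeeping of the chain rule across layers and mild smoothness assumptions on the activation function.

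Finally, for the posterior statement in Theorem \ref{bay_TV}, I would rely on the elementary fact that total variation is stable under absolutely continuous reweighting by a common bounded density. Denoting by $\mu_n$ and $\mu_\infty$ the prior finite-dimensional laws and by $\ell$ the likelihood, the posteriors are proportional to $\ell\, d\mu_n$ and $\ell\, d\mu_\infty$; a direct $L^1$ estimate then bounds the posterior total variation distance by $\|\ell\|_\infty/\min(Z_n, Z_\infty)$ times the prior total variation. Uniformly lower bounding the normalising constants $Z_n$ via continuity of $\ell$ together with tightness of $\mu_n$ transfers the $O(1/n)$ rate from the prior to the Bayesian posterior, completing the plan.
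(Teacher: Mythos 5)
Your overall architecture (conditional Gaussianity of the network and its gradients, abstract entropic bounds, Hanin-type estimates on the empirical covariance, lower bound from \cite{FHMNP} for optimality, and the reweighting argument for the posterior) is the right skeleton, and your treatment of the Bayesian part coincides with the paper's proof of Theorem \ref{bay_TV}. However, there is a genuine gap in the central step: you propose to invoke Theorems \ref{final_noC} and \ref{final} directly, but these require $\mathbb{P}(\det A>0)=1$ and $\mathbb{E}[\|A^{-1}\|_{HS}^2]<\infty$ for the \emph{conditional} covariance matrix $A=A^{(L+1)}$ (and its derivative analogue), and these hypotheses are precisely the ones that fail, or cannot be verified, for a randomly initialized network at finite width. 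The paper explicitly flags this obstruction and resolves it with Theorem \ref{th_gen_tv_2w}, whose proof does not pass through the relative entropy of the full law: it localizes to the event $E=\{\|A-K\|_{op}\le\lambda(K)/2\}$ of \eqref{def_ev_E}, on which $A$ is automatically strictly positive definite, applies the Pinsker--Csiszar--Kullback and Talagrand inequalities under the conditioned measure via Lemma \ref{inE}, and disposes of $E^{C}$ by Markov's inequality. Without this (or an equivalent truncation), your reduction to the abstract entropy bound does not go through; it is the actual proof of Theorem \ref{fin_th_n} in the Appendix that specializes \ref{th_gen_tv_2w} (not \ref{final_noC}/\ref{final}) to the matrices of directional derivatives, combined with Proposition \ref{EAmK} and Remarks \ref{gaus_cond_der_z}, \ref{norm_8}, \ref{trA_fin}.

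A second, quantitative problem is your claim that Hanin's estimates give $\mathbb{E}\|\Sigma_n-\Sigma_\infty\|^2=O(1/n^2)$. This is false: the conditional covariance is an empirical average over the width, so its fluctuation around its mean has variance of order $1/n$, not $1/n^2$ (consistently with Theorem \ref{coll_obs} and Proposition \ref{AmE}, which give $\mathbb{E}[\|A-\mathbb{E}[A]\|_{HS}^{8}]=O(n^{-4})$). The rate $1/n$ is not obtained by plugging a second moment of $A-K$ into the bound --- that route yields only $1/\sqrt{n}$ --- but from the specific structure of \eqref{e:zz}: a deterministic bias term $\|\mathbb{E}[A]-K\|_{HS}=O(1/n)$ (Proposition \ref{EAmK}) plus the high-moment term $\mathbb{E}[\|A-K\|_{HS}^{8}]^{1/4}=O(1/n)$, which is exactly why the paper engineers its bounds to feature $\|\mathbb{E}[A]-K\|_{HS}$ rather than $\mathbb{E}[\|A-K\|_{HS}]$. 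As written, your mechanism for extracting the $O(1/n)$ rate therefore does not work, even though the correct rate is recoverable once the bound is organized as in Theorem \ref{th_gen_tv_2w}.
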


\smallskip

The rate $O(\frac{1}{n})$ in the $2$-Wasserstein distance for the network's marginals was already deduced in \cite{Trev}, whereas one-dimensional total variation bounds of the same order have been obtained in \cite{FHMNP}. We emphasize that---unlike in the case of bounds involving the convex distance---{\it multi-dimensional} estimates in total variation are typically \textit{not directly accessible} via coupling techniques (as those exploited in ~\cite{BT24, Btesi, Trev}) or via Stein's method, which remains the method of choice in~\cite{Torr23, FHMNP}. This limitation motivates the conceptually distinct, information-theoretic approach developed in the present work. See also the discussion contained in~\cite{Herry, NPS}.

From now on, we assume that every random element is defined on a common probability space $(\Omega, \mathcal{A}, \mathbb{P})$, with $\mathbb{E}$ denoting expectation with respect to $\mathbb{P}$. The following definition is standard and used throughout the paper.

\begin{defin}[\bf Conditionally Gaussian Vectors]\label{def_cond_ga}
Let $X$ be an integrable random vector with values in $\mathbb{R}^d$, $d\geq 1$, and assume that $\mathbb{E}[X]=0$. The vector $X$ is said to be \emph{conditionally Gaussian} with respect to a $\sigma$-field $\mathcal{F} \subseteq \mathcal{A}$ if there exists a positive semi-definite random matrix $A\in\mathbb{R}^{d\times d}$ (called \emph{conditional covariance matrix}) which is $\mathcal{F}$-measurable and such that, a.s.-$\mathbb{P}$,
\begin{equation}\label{cond_Ga_def}
\mathbb{E}\Big[e^{i\langle y,X\rangle}\big|\mathcal{F}\Big]=e^{-\frac{1}{2}\langle y,Ay\rangle},\quad \text{for every $y\in\mathbb{R}^d$}.
\end{equation}
\end{defin}

We will now state the main abstract results of our paper.}

}

\subsection{Main abstract bounds}\label{ss:introabstract}

{ To state our general results we need to introduce some standard probabilistic distances and discrepancies. A detailed discussion of their properties is provided in Section \ref{ss:results on distances}.
\begin{defin}[{Total variation distance}, see e.g. Appendix C in \cite{NP12}]
Given random vectors $X,Y$  with values in $\mathbb{R}^d$, the total variation distance (TV distance) between the laws of $X$ and $Y$ is defined as
\begin{multline}\label{dTV_def}
d_{TV}(X,Y):=\sup_{B\in\mathcal{B}(\mathbb{R}^d)}\Big|\mathbb{P}(X\in B)-\mathbb{P}(Y\in B)\Big|
=\frac{1}{2}\sup_{h\in \mathcal{M}_1}\Big|\mathbb{E}[h(X)]-\mathbb{E}[h(Y)]\Big|,
\end{multline}
where $\mathcal{B}(\mathbb{R}^d)$ is the Borel $\sigma$-field of $\mathbb{R}^d$ and {
\[
\mathcal{M}_1:=\{h:\mathbb{R}^d\to\R\quad\text{Borel measurable with}\quad \|h\|_{\infty}\le 1\}.
\]
}
\end{defin}

\begin{defin}[Convex distance,see \cite{Torr23, FHMNP, KG22}]
Given random vectors $X,Y$ with values in $\mathbb{R}^d$, the convex distance between the distributions of $X$ and $Y$ is defined as
\begin{equation}\label{dC_def}
d_{C}(X,Y):=\sup_{B\in\mathcal{C}(\mathbb{R}^d)}\Big|\mathbb{P}(X\in B)-\mathbb{P}(Y\in B)\Big|,
\end{equation}
where $\mathcal{C}(\mathbb{R}^d)$ is the class of all convex subsets of $\mathbb{R}^d$ (observe in particular that $d_C(X,Y)\le d_{TV}(X,Y)$).
\end{defin}

\begin{defin}[$p$-Wasserstein distance \cite{villani}]
Given $p\ge1$ and $X,Y$ random vectors with values in $\mathbb{R}^d$ and such that $\mathbb{E}[\|X\|^p],\mathbb{E}[\|Y\|^p]<\infty$, the $p$-Wasserstein distance between the laws of $X$ and $Y$ is defined as 
\begin{equation}\label{dW_def}
W_p(X,Y):=\inf \mathbb{E}[\|Z-W\|^p]^{1/p},
\end{equation}
 where the infimum is over all pairs $(Z,W)$ such that $Z\sim X$ and $W\sim Y$.
\end{defin}

For any random vectors $X,Y$ taking values in $\mathbb{R}^d$, $d\ge 1$, one can define the \emph{relative entropy} (or \emph{Kullback-Leibler divergence}) of $Y$ with respect to $X$, whenever the law of $Y$ is absolutely continuous with respect to the law of $X$. 
\begin{defin}[Relative entropy \cite{john}]\label{def_RE}
For $X,Y$ as above let $\nu_X$, $\nu_Y$ denote, respectively, the laws of $X$ and $Y$. Writing $\frac{d\nu_Y}{d\nu_X}$ to indicate the density of the law of $Y$ with respect to the law of $X$, we define the \emph{relative entropy} of the law of $Y$ with respect to the law of $X$ to be the quantity
\[{
D(Y||X):=\int_{\mathbb{R}^d}\log\Bigg(\frac{d\nu_Y}{d\nu_X}(z)\Bigg)\nu_Y(dz)=\mathbb{E}\Bigg[\frac{d\nu_Y}{d\nu_X}(X)\log\Bigg(\frac{d\nu_Y}{d\nu_X}(X)\Bigg)\Bigg],}
\]
with the convention that $0\log0=0$.
\end{defin}

We will see below that the relative entropy allows one to control the TV and 2-Wasserstein distances between two vectors, through the well-known {\it Pinsker-Csiszar-Kullback} and {\it Talagrand's} inequalities (see, respectively, Theorem \ref{PCK} and Theorem \ref{tala}). 

\smallskip 

Our first result is a general bound (Theorem \ref{final_noC}) on the relative entropy between a conditionally Gaussian law and a Gaussian law, under some conditions that ensure absolute continuity. 
\begin{hyp}\label{hip_g_cond}
\emph{ Fix $d\in\mathbb{N}$, and consider the following situation:
\begin{itemize}
\item $G$ is a random variable such that $G\sim\mathcal{N}_d(0,K)$ with values in $\mathbb{R}^d$ and with $K\in\mathbb{R}^{d\times d}$ invertible, 
\item $F$ is a random variable with values in $\mathbb{R}^d$ and $\mathcal{F}$ is a $\sigma$-field such that $F$ is conditionally Gaussian with respect to $\mathcal{F}$, with conditional covariance matrix $A\in\mathbb{R}^{d\times d}$ (see Definition \ref{def_cond_ga}).
\end{itemize}}
\end{hyp}
In what follows, we will denote by $\|A\|_{HS}$ the Hilbert-Schmidt norm of a matrix $A$. See Section \ref{ss:notation} for a detailed presentation of our notational conventions.

\begin{theorem}\label{final_noC}
 Fix $d\in\mathbb{N}$ and let Assumption \ref{hip_g_cond} prevail.
If moreover $\mathbb{E}[\|A\|_{HS}^8]<\infty$,  $\mathbb{P}(\det A>0)=1$ and $\mathbb{E}[\|A^{-1}\|_{HS}^2]<\infty$, then
\[
D(F||G)\le
C_1\|\mathbb{E}[A]-K\|_{HS}^2
+C_2\mathbb{E}\Big[\|A-K\|_{HS}^8\Big]^{1/2},
\]
where $C_1$ and $C_2$ are two explicit constants that depend on $d,K$ and $A$ (see Theorem \ref{final} for analytic expressions).
\end{theorem}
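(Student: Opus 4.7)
The plan is a two-step strategy: a Pythagorean-type decomposition of $D(F\|G)$ in the centered Gaussian exponential family, followed by an interpolation argument for the resulting ``fluctuation'' term. Let $\mu := \mathcal{N}_d(0, \mathbb{E}[A])$ be the Gaussian matching the covariance of $F$ (by the conditional variance formula, $\mathbb{E}[F F^{T}] = \mathbb{E}[A]$). A direct computation of $\int p_F\,\log(d\mu/dG)\,dx$, exploiting the quadratic form of $\log(d\mu/dG)$, yields the identity
\[
D(F\|G) = D(F\|\mu) + D(\mu\|G).
\]
My plan is to bound $D(\mu\|G)$ by $C_1\|\mathbb{E}[A] - K\|_{HS}^2$ and $D(F\|\mu)$ by $C_2\,\mathbb{E}[\|A-K\|_{HS}^8]^{1/2}$ separately.

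The ``bias'' term $D(\mu\|G)$ is the relative entropy between two centered Gaussians, admitting the closed form $\tfrac{1}{2}[\mathrm{tr}(K^{-1}\mathbb{E}[A]) - d - \log\det(K^{-1}\mathbb{E}[A])] = \tfrac{1}{2}\sum_i (m_i - 1 - \log m_i)$, with $m_i > 0$ the eigenvalues of $K^{-1}\mathbb{E}[A]$. Using the elementary inequality $m - 1 - \log m \leq c(m-1)^2$, valid on any compact subinterval of $(0,\infty)$, and controlling the spectrum of $\mathbb{E}[A]$ via the moment hypotheses on $A$ and $A^{-1}$, one obtains the desired bound, with $C_1$ explicit in $d$, $\|K^{-1}\|_{\mathrm{op}}$ and norms of $\mathbb{E}[A]$.

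For the fluctuation term $D(F\|\mu)$ I use an interpolation in the spirit of Trevisan. Write $V := A - \mathbb{E}[A]$ (so $\mathbb{E}[V] = 0$), set $A_\lambda := \mathbb{E}[A] + \lambda V$ for $\lambda \in [0,1]$, let $\phi_A$ denote the density of $\mathcal{N}_d(0,A)$, and define the mixture density $p_\lambda(x) := \mathbb{E}[\phi_{A_\lambda}(x)]$; then $p_0$ is the density of $\mu$ and $p_1$ the density of $F$, so $\Phi(\lambda) := D(p_\lambda\|p_0)$ satisfies $\Phi(0) = 0$ and $\Phi(1) = D(F\|\mu)$. The Gaussian ``heat'' identity $\partial_\lambda\phi_{A_\lambda}(x) = \tfrac{1}{2}\mathrm{tr}\bigl(V\,\partial^2_{xx}\phi_{A_\lambda}(x)\bigr)$, combined with the fact that $A_0 = \mathbb{E}[A]$ is deterministic, yields $\partial_\lambda p_\lambda(x)|_{\lambda=0} = \tfrac{1}{2}\mathrm{tr}(\mathbb{E}[V]\,\partial^2_{xx}\phi_{\mathbb{E}[A]}(x)) = 0$, and hence $\Phi'(0)=0$. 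Taylor's formula with integral remainder then gives
\[
\Phi(1) = \int_0^1 (1-\lambda)\,\Phi''(\lambda)\,d\lambda, \qquad \Phi''(\lambda) = \int \partial^2_\lambda p_\lambda\,\log\tfrac{p_\lambda}{p_0}\,dx + \int \frac{(\partial_\lambda p_\lambda)^2}{p_\lambda}\,dx.
\]

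The hard part is the uniform estimate $|\Phi''(\lambda)| \leq C\,\mathbb{E}[\|A-K\|_{HS}^8]^{1/2}$ on $[0,1]$. To establish it I would rewrite $\partial_\lambda p_\lambda(x) = \mathbb{E}\bigl[(\phi_{A_\lambda}(x)-\phi_{\mathbb{E}[A]}(x))\,g(\lambda,x,A)\bigr]$ with $g$ linear in $V$: the subtraction of the deterministic $\phi_{\mathbb{E}[A]}(x)$, combined with $\mathbb{E}[V]=0$, removes the zeroth-order piece, and expanding $\phi_{A_\lambda}-\phi_{\mathbb{E}[A]}$ via the heat identity brings out a further factor of $V$, so that $\partial_\lambda p_\lambda$ is pointwise controlled (after a Cauchy-Schwarz in the $A$-average) by $\lambda\,\mathbb{E}[\|V\|_{HS}^2]$ times a Gaussian-type weight. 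Squaring, dividing by $p_\lambda$, and integrating in $x$ then produces an explicit Gaussian integral that is controlled by the moment conditions $\mathbb{E}[\|A\|_{HS}^8] < \infty$ and $\mathbb{E}[\|A^{-1}\|_{HS}^2] < \infty$, giving a bound of order $\mathbb{E}[\|V\|_{HS}^4]$; an analogous treatment handles $\int \partial^2_\lambda p_\lambda\,\log(p_\lambda/p_0)\,dx$, using that $\log(p_\lambda/p_0)$ also vanishes at $\lambda = 0$. Finally, the triangle inequality and Cauchy-Schwarz yield $\mathbb{E}[\|V\|_{HS}^4] \leq c\,\mathbb{E}[\|A - K\|_{HS}^8]^{1/2}$, which accounts for the unusual eighth moment in the statement. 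The most delicate technical point is maintaining integrability of $1/p_\lambda$ and of $\log(p_\lambda/p_0)$ uniformly in $\lambda \in [0,1]$ and $x \in \mathbb{R}^d$: this is precisely where the hypotheses $\mathbb{P}(\det A > 0) = 1$ and $\mathbb{E}[\|A^{-1}\|_{HS}^2] < \infty$ become indispensable.
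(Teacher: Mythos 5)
Your Pythagorean split $D(F\|G)=D(F\|\mu)+D(\mu\|G)$ with $\mu=\mathcal{N}_d(0,\mathbb{E}[A])$ is legitimate (the cross term is a quadratic polynomial in $x$ and $F$, $\mu$ share second moments, with $\mathbb{E}[A]$ invertible because $A$ is a.s.\ positive definite and $\mathbb{E}[\|A^{-1}\|_{HS}^2]<\infty$), and the bias estimate $D(\mu\|G)\le C_1\|\mathbb{E}[A]-K\|_{HS}^2$ goes through, since the eigenvalues of $K^{-1}\mathbb{E}[A]$ lie in a compact subinterval of $(0,\infty)$ determined by $K$ and the law of $A$. This is a genuinely different skeleton from the paper, which instead interpolates $\Gamma_t=tA+(1-t)K$ and, crucially, splits on the event $E=\{\|A-K\|_{op}\le\lambda(K)/2\}$, running the Taylor/interpolation argument only on $E$ and treating $E^c$ by the exact Gaussian--Gaussian entropy formula plus H\"older and Markov (the only place where $\mathbb{P}(\det A>0)=1$ and $\mathbb{E}[\|A^{-1}\|_{HS}^2]<\infty$ are used).

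The gap is in your fluctuation term: everything hinges on the asserted uniform bound $|\Phi''(\lambda)|\le C\,\mathbb{E}[\|A-K\|_{HS}^8]^{1/2}$ for the \emph{un-localized} mixture $p_\lambda(x)=\mathbb{E}[\phi_{A_\lambda}(x)]$, and the sketch does not show this is attainable under the stated hypotheses. Concretely: (i) there is no deterministic ``Gaussian-type weight'' dominating $\phi_{A_\lambda}(x)$ or controlling $\log(p_\lambda/p_0)$ pointwise, because the spectrum of $A$ is random with only $\mathbb{E}[\|A\|_{HS}^8]$ and $\mathbb{E}[\|A^{-1}\|_{HS}^2]$ finite, so the ratio $p_\lambda/p_0$ is unbounded in $x$ and its ``smallness in $\lambda$'' at $\lambda=0$ gives no uniform control for $\lambda\in(0,1]$; (ii) to make $\partial_\lambda p_\lambda$ quadratic in $V$ you must write it as $\mathbb{E}\big[\int_0^\lambda\partial_s^2\phi_{A_s}\,ds\big]$, and the Cauchy--Schwarz you invoke to divide by $p_\lambda$ then produces ratios of the type $\phi_{A_s}^2/\phi_{A_\lambda}$ with $s\le\lambda$, whose integrability requires $2A_s^{-1}-A_\lambda^{-1}$ to be positive definite --- this fails in general since $A_\lambda-A_s=(\lambda-s)V$ has no sign; (iii) the resulting expectations couple powers of $\|A_\lambda^{-1}\|$ (controlled only through $\|A^{-1}\|$ near $\lambda=1$) with powers of $\|V\|$, e.g.\ quantities like $\mathbb{E}[\|A^{-1}\|_{HS}^2\|A-K\|_{HS}^4]$ or worse, which cannot be split by H\"older using only $\mathbb{E}[\|A^{-1}\|_{HS}^2]<\infty$ and $\mathbb{E}[\|A-K\|_{HS}^8]<\infty$. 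These are precisely the obstructions the paper removes by localizing to $E$, where the smallest eigenvalue and determinant of the interpolated covariance are deterministically sandwiched ($\lambda(\Gamma_t)\ge\lambda(K)/2$), and by paying for $E^c$ with the crude Gaussian entropy formula and $1_{E^c}\le(2\|A-K\|_{op}/\lambda(K))^4$. Without such a localization (or substantially stronger moment assumptions on $A^{-1}$), your uniform bound on $\Phi''$, and hence the estimate $D(F\|\mu)\le C_2\,\mathbb{E}[\|A-K\|_{HS}^8]^{1/2}$, does not follow from the computations you outline.
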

The requirements in Theorem \ref{final_noC} may be too restrictive for applications. As a consequence,  we will also derive bounds (stated in Theorem \ref{th_gen_tv_2w}) on the total variation and the 2-Wasserstein distances that hold under less stringent assumptions. The proof is based on the already recalled {Pinsker-Csiszar-Kullback} and {Talagrand's} inequalities. 
\begin{theorem}\label{th_gen_tv_2w}
Fix $d\in\mathbb{N}$, and let Assumption \ref{hip_g_cond} prevail, with
$\mathbb{E}[\|A\|_{HS}^8] <\infty$.
 Then,
\begin{equation}\label{e:zz}
\max\Big\{d_{TV}(F,G),W_2(F,G)\Big\}\le C_3\|\mathbb{E}[A]-K\|_{HS}+C_4\mathbb{E}[\|A-K\|_{HS}^8]^{1/4},
\end{equation}
where $C_3>0$ and $C_4>0$ are two explicit constants that depend on $d$ and $K$ (see Theorem \ref{th_dis_nuo} for precise expressions).
\end{theorem}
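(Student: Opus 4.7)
The strategy is to combine Theorem \ref{final_noC} with the Pinsker-Csiszar-Kullback and Talagrand transportation inequalities, which convert any relative entropy bound into bounds on $d_{TV}$ and $W_2$. The obstacle is that Theorem \ref{final_noC} requires $A$ to be a.s.\ invertible with $\mathbb{E}[\|A^{-1}\|_{HS}^2]<\infty$, whereas here we only have $\mathbb{E}[\|A\|_{HS}^8]<\infty$. I would address this by a Gaussian smoothing step followed by a passage to the limit.

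Let $Z,Z'\sim\mathcal{N}_d(0,I_d)$ be independent standard Gaussians, jointly independent of $(F,G,\mathcal{F})$, and for $\epsilon>0$ define $F_\epsilon:=F+\epsilon Z$ and $G_\epsilon:=G+\epsilon Z'$. Then $F_\epsilon$ is still conditionally Gaussian with respect to $\mathcal{F}$, with conditional covariance $A_\epsilon:=A+\epsilon^2 I_d$ which is a.s.\ positive definite and satisfies $\|A_\epsilon^{-1}\|_{HS}\le\sqrt{d}\,\epsilon^{-2}$ a.s., while $G_\epsilon\sim\mathcal{N}_d(0,K_\epsilon)$ with $K_\epsilon:=K+\epsilon^2 I_d$ still invertible. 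Since $\mathbb{E}[A_\epsilon]-K_\epsilon=\mathbb{E}[A]-K$ and $A_\epsilon-K_\epsilon=A-K$, Theorem \ref{final_noC} applied to $(F_\epsilon,G_\epsilon)$ yields
\[
D(F_\epsilon\,||\,G_\epsilon)\le C_1^{\epsilon}\,\|\mathbb{E}[A]-K\|_{HS}^2+C_2^{\epsilon}\,\mathbb{E}[\|A-K\|_{HS}^8]^{1/2},
\]
so that the deviations on the right are precisely those of \eqref{e:zz}. The Pinsker-Csiszar-Kullback inequality (Theorem \ref{PCK}) then gives $d_{TV}(F_\epsilon,G_\epsilon)\le\sqrt{D(F_\epsilon\,||\,G_\epsilon)/2}$, while Talagrand's $T_2$ inequality (Theorem \ref{tala}) for the Gaussian target $G_\epsilon$ gives $W_2(F_\epsilon,G_\epsilon)\le\sqrt{2\lambda_{\max}(K_\epsilon)\,D(F_\epsilon\,||\,G_\epsilon)}$. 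Combining with $\sqrt{a+b}\le\sqrt{a}+\sqrt{b}$, both bounds already take the form $\alpha^\epsilon\|\mathbb{E}[A]-K\|_{HS}+\beta^\epsilon\mathbb{E}[\|A-K\|_{HS}^8]^{1/4}$ required by \eqref{e:zz}.

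To conclude I would let $\epsilon\to 0$. For $W_2$, the coupling $(F,F+\epsilon Z)$ gives $W_2(F,F_\epsilon)\le\epsilon\sqrt{d}$, and the same for $G$; hence the triangle inequality yields $W_2(F,G)\le 2\epsilon\sqrt{d}+W_2(F_\epsilon,G_\epsilon)$, and the extra term vanishes in the limit. For $d_{TV}$, since $(F_\epsilon,G_\epsilon)\to(F,G)$ almost surely (and thus weakly) as $\epsilon\to 0$, the lower semi-continuity of total variation under weak convergence (readily obtained by testing against continuous bounded functions) gives $d_{TV}(F,G)\le\liminf_{\epsilon\to 0} d_{TV}(F_\epsilon,G_\epsilon)$.

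The hard part will be to verify that the constants $C_1^{\epsilon},C_2^{\epsilon}$ furnished by Theorem \ref{final_noC} (whose explicit form appears in Theorem \ref{final}) remain uniformly bounded as $\epsilon\to 0$ and collapse in the limit to constants $C_3,C_4$ depending only on $d$ and $K$. Since $\|A_\epsilon^{-1}\|_{HS}$ blows up as $\epsilon\to 0$, this requires inspecting the interpolation proof of Theorem \ref{final_noC} to confirm that the inverse-moment contributions either cancel in the limit thanks to the interpolation structure, or can be absorbed into the remaining terms through triangle-type estimates such as $\mathbb{E}[\|A\|_{HS}^p]\le C_p(\|K\|_{HS}^p+\mathbb{E}[\|A-K\|_{HS}^p])$. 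This bookkeeping is the only delicate part of the proof.
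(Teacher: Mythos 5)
Your plan reduces everything to Theorem \ref{final_noC} applied to the regularized pair $(F_\epsilon,G_\epsilon)$, and you correctly flag the behaviour of the constants as $\epsilon\to 0$ as "the only delicate part". Unfortunately that is precisely where the argument breaks, and it cannot be repaired in the form you propose. The constant $C_2$ in Theorem \ref{final} (hence your $C_2^{\epsilon}$) contains the additive term $\frac{8}{\lambda(K_\epsilon)^4}\|K_\epsilon\|_{HS}\,\mathbb{E}[\|A_\epsilon^{-1}\|_{HS}^2]^{1/2}$, which multiplies the $\epsilon$-independent quantity $\mathbb{E}[\|A-K\|_{HS}^8]^{1/2}$. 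Under the standing assumption $\mathbb{E}[\|A\|_{HS}^8]<\infty$ alone, $A$ may be singular with positive probability (as it is in the neural-network application when the width is small relative to $d$), so the only available bound is $\mathbb{E}[\|A_\epsilon^{-1}\|_{HS}^2]^{1/2}\le \sqrt{d}\,\epsilon^{-2}$, and the entropy bound for $D(F_\epsilon||G_\epsilon)$ diverges as $\epsilon\to 0$. No cancellation "in the limit" can occur, because the bound is a sum of non-negative terms, and the problematic term is a \emph{negative} moment of $A_\epsilon$, so it cannot be absorbed via estimates of the type $\mathbb{E}[\|A\|_{HS}^p]\le C_p(\|K\|_{HS}^p+\mathbb{E}[\|A-K\|_{HS}^p])$. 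Keeping $\epsilon$ fixed and optimizing does not save the statement either: for $W_2$ the trade-off between the smoothing error $2\epsilon\sqrt d$ and the $\epsilon^{-1}\mathbb{E}[\|A-K\|_{HS}^8]^{1/4}$ contribution yields at best a rate $\mathbb{E}[\|A-K\|_{HS}^8]^{1/8}$, strictly weaker than \eqref{e:zz} (and fatal for the $1/n$ rates of Theorem \ref{fin_th_n}), while for $d_{TV}$ there is no usable trade-off at all, since $d_{TV}(F,F_\epsilon)$ need not be small for fixed $\epsilon$ and your lower semicontinuity argument only helps in the limit $\epsilon\to 0$, where the bound has already blown up.

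The missing idea is to avoid bounding the full relative entropy $D(F||G)$ altogether. The paper's proof of Theorem \ref{th_dis_nuo} works on the event $E=\{\|A-K\|_{op}\le \lambda(K)/2\}$, on which $A$ is automatically invertible (see \eqref{aut_A}), applies Pinsker's inequality (for $d_{TV}$) and Talagrand's inequality after the change of variables $(\sqrt K)^{-1}$ (for $W_2$) under the conditioned measure $\frac{1_E}{\mathbb{P}(E)}d\mathbb{P}$, and controls the resulting restricted entropy through Lemma \ref{inE}, whose interpolation argument requires only $\mathbb{E}[\|A\|_{HS}^8]<\infty$ and no inverse moments (those enter only through Lemma \ref{inEC}, i.e.\ through the complement event, which your black-box use of Theorem \ref{final_noC} inevitably drags in). The contribution of $E^C$ is then estimated directly: by Markov's inequality for the total variation part, and by the crude coupling bound of Remark \ref{boundhhstar} together with the trace inequality $\|\sqrt A-\sqrt K\|_{HS}\le \lambda(K)^{-1}\|A-K\|_{HS}$ for the Wasserstein part. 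If you were to "inspect the interpolation proof" as you suggest, this event-splitting — rather than smoothing plus a limiting argument — is what you would have to implement.
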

{
\begin{remark}\label{remark_cov_no_inv}
    Inspecting the proof of Proposition 5.9 in \cite{FHMNP} and noting that --- as the convex distance --- the total variation distance is invariant under orthogonal transformations, one can see that the assumption of $K$ being invertible can be removed when $K=\mathbb{E}[A]$. In this case, one can deduce a bound analogous to the right-hand side of \eqref{e:zz}, with $\|\mathbb{E}[A]-K\|_{HS}=0$, and a constant $C_4$ continuously depending on the rank and on the minimum nonzero and maximum eigenvalues of $K = \mathbb{E}[A]$. 
\end{remark}
}
\begin{remark}
    The proofs of Theorem \ref{final_noC} and Theorem \ref{th_gen_tv_2w} do not rely on the well-known {\it De Bruijn's identity} \cite[Theorem C.1]{john}, in contrast to reference \cite{NPS}, where the authors study the relative entropy between a Gaussian law and the law of a random vector with components in a Wiener chaos. We will see in the forthcoming sections that --- differently from \cite{NPS} --- our approach leads to bounds without logarithmic corrections, and that these bounds will be shown to be optimal in many instances.
    Another crucial methodological aspect is that the proof of Theorem \ref{th_gen_tv_2w} allows one to apply entropic bounds (via Theorem \ref{PCK} and Theorem \ref{tala}) without using conditioning techniques. In this way, one is able to deduce estimates featuring the term $\|\mathbb{E}[A]-K\|_{HS}$ instead of $\mathbb{E}[\|A-K\|_{HS}]$, which would have yielded less efficient bounds in our applications to neural networks --- see e.g. Theorem \ref{fin_th_n}.
\end{remark}

\begin{remark}
An alternative to using the Pinsker–Csiszár–Kullback inequality (Theorem \ref{PCK}) is the bound given in inequality (1.3) of \cite{bob25}, which involves the Rényi $\alpha$-divergence $D_\alpha$ for $0<\alpha<1$:
\[
\frac{\alpha}{2}d_{TV}(X,Y)^2\le D_\alpha(X||Y),
\]
where $X$ and $Y$ are random variables with distributions absolutely continuous with respect to a $\sigma$-finite measure $\mu$, and respective densities $f_X$ and $f_Y$. The $\alpha$-divergence is then defined as
\[
D_\alpha(X,Y):=\frac{1}{\alpha-1}\log E\Big[\Big(\frac{f_X(Y)}{f_Y(Y)}\Big)^\alpha\Big].
\]
Using the concavity of the function $x \mapsto x^\alpha$ and Jensen's inequality, one should be able to obtain bounds in the spirit of those established in the proofs of Theorems  \ref{final_noC} and \ref{th_gen_tv_2w}. A full treatment of this approach is beyond the scope of the present paper and will be pursued in future work.
\end{remark}

We will now introduce the collection of conditionally Gaussian objects that constitute the main motivation of our work, and to which Theorem \ref{th_gen_tv_2w} will be applied. }
\subsection{Neural networks as conditionally Gaussian objects}\label{def_NN_sec}

{ \emph{Deep neural networks} \cite{Agg, RYH22, Ye} are parametrized families of functions, at the heart of several recent advances in areas as diverse as structural biology \cite{Jumper-alphafold}, computer vision \cite{macchinechesiguidano} or language processing \cite{Brown-languagemodels}.  One of their typical uses is that of approximating an unknown function $f : \R^n \to \R^m$ (with $n$ and $m$ equal, respectively, to the input and output dimensions) starting from a so-called \emph{training data set} 
\begin{equation}\label{e:data}
\{ ( x^{(i)} , f(x^{(i)})) : i=1,...,p\},
\end{equation}
consisting of the values of $f$ at $p$ distinct points. Given the set \eqref{e:data}, one first selects a neural network architecture, which induces a parametric collection of mappings, and then searches within this collection for an approximation to $f$. In this article, we focus on the simple architecture of (feed-forward) \emph{fully connected networks}, whose formal definition is given below. See e.g. \cite[Chapter 6]{Ye} and \cite[Chapter 2]{RYH22} for a general introduction to these objects.
}
\smallskip

\smallskip 

{
\begin{defin}[\bf Fully Connected Neural Networks]\label{def_NN}
Fix integers $ L,n_0,n_{L+1}\ge 1$. A \emph{fully connected neural network (FCNN)} with \emph{depth} $L$, \emph{input dimension} $n_0$, \emph{output dimension} $n_{L+1}$, \emph{hidden layers widths} $n_1,...n_L\geq 1$ and \emph{non-linearity} (or \emph{activation function}) $\sigma:\mathbb{R}\to\mathbb{R}$ is a mapping of the form
$$
z^{(L+1)} : x = (x_1,...,x_{n_0})\mapsto z^{(L+1)}(x) = (z_1^{(L+1)}(x),...., z_{n_{L+1}}^{(L+1)}(x))  : \mathbb{R}^{n_0}\to \mathbb{R}^{n_{L+1}},
$$
defined recursively as follows
\begin{equation}\label{NN_expr}
\begin{cases}
z_{j}^{(1)}(x)=b_j^{(1)}+\sum_{k=1}^{n_{0}}\tilde{W}_{j,k}^{(1)}x_{k} &\text{for $j=1,\dots,n_1$, if $\ell=1$,}\\
z_{j}^{(\ell)}(x)=b_j^{(\ell)}+\sum_{k=1}^{n_{\ell-1}}\tilde{W}_{j,k}^{(\ell)}\sigma\big(z_{k}^{(\ell-1)}(x)\big) & \text{ for $j=1,\dots,n_\ell$, if $\ell=2,\dots,L+1$,}
\end{cases}
\end{equation}
where the \emph{trainable parameters} $b:=\{b_j^{(\ell)}\}_{j=1,\dots,n_\ell}^{\ell=1,\dots,L+1} $ and $\tilde{W}:=\{\tilde{W}_{j,k}^{(\ell)}\}_{j=1,\dots,n_\ell;k=1,\dots,n_{\ell-1}}^{\ell=1,\dots,L+1} $ are called, respectively, the \emph{biases} and the \emph{weights} of the neural network. For $\ell = 1,...,n_{L+1}$, we also use the following notation:
\begin{equation}\label{e:bnot}
b^{(\ell)} := (b_1^{(\ell)},..., b_{n_\ell}^{(\ell)} )\in \mathbb{R}^{n_\ell}, 
\end{equation}
and 
\begin{equation}\label{e:matrixnot}
\tilde{W}^{(\ell)}:=\left\{\tilde{W}_{j,k}^{(\ell)} : j=1,\dots,n_\ell, \,\, k=1,\dots,n_{\ell-1}\right\}\in \mathbb{R}^{n_\ell\times n_{\ell-1}} .
\end{equation}
\end{defin}
}
\smallskip

{When it is well-defined, the \emph{neural tangent kernel} (NTK) \cite{arora_2020, CC23, Jacot, RYH22} of $z^{(L+1)}$ is given by the mapping \begin{equation}\label{e:ntk}(x,y)\mapsto T_{L+1}(x,y) := \nabla z^{(L+1)}(x)\cdot \nabla z^{(L+1)}(y), \quad x,y\in \R^{n_0},\end{equation} where the gradient is considered with respect to the parameters $\Theta:= \{b,  \tilde{W}\}$, and the `dot' indicates an inner product in $\R^{|\Theta|}$. As explained e.g. in \cite{Agg, RYH22, Ye}, feed-forward FCNNs are among the basic building blocks of many network architectures used in practice --- their explaining power being a consequence of \emph{universal approximation theorems} \cite{Cybenko}. In general, given the training dataset \eqref{e:data} and an architecture such as \eqref{NN_expr}, the goal is to determine a configuration of the parameters $\Theta$ such that not only one has $ z^{(L+1)}(x) \approx f(x)$
for $x$ in the training set \eqref{e:data}, but also for inputs that do not belong to the training data. This optimization usually consists of two steps: (i) \emph{randomly initialize} the network trainable parameters (that is, sample $\Theta$ according to some multivariate probability distribution), and (ii) optimize the parameters by using some adequate variant of \emph{gradient descent} on an empirical loss such as the squared error \begin{equation}\label{e:sqloss} \sum_{i=1}^p\|z^{(L+1)}(x^{(i)})-f(x^{(i)})\|_{\R^{n_{L+1}}}^2 = \sum_{i=1}^p\|z^{(L+1)}(x^{(i)};\Theta)-f(x^{(i)})\|_{\R^{n_{L+1}}}^2,
\end{equation}
where on the right-hand side we have emphasized the dependency of the network on the trainable parameters $\Theta$ (with respect to which the optimization is realized). This yields optimization dynamics that can be directly expressed in terms of the {NTK} \eqref{e:ntk}, see \cite[Ch. 11]{Ye}. One should observe that the optimization problem described in \eqref{e:sqloss} is, in general, {\it highly non-convex}: as discussed e.g. in \cite{belkinhessian, belkinloss}, the fact that a global minimum is attained with overwhelming probability (when the parameter space dimension is sufficiently large), is explained by the specific geometry of the associated {\it loss landscapes}, which in turn emerges from the subsistence of some variation of the so-called {\it Polyak-\L{}ojasiewicz condition} \cite[Section 11.3]{Ye}. }

{In this work, we adopt one of the most popular forms of random initialization (sometimes called \emph{Le Cun initialization} and formally described in Assumption \ref{bW} below) consisting in sampling the trainable parameters $\Theta$ according to a multivariate centered Gaussian distribution with weight variances that are inversely proportional to the width of the network. In general, the rationale for randomly initializing neural biases and weights is to break the initial symmetry within the network: this ensures that each layer can learn unique features during training, as the optimization process will update the layers in distinct ways --- see e.g. \cite{weight}. 

From now on, for every $d\geq 1$ we will write $\mathcal{N}_d(m,\Sigma)$ to indicate a Gaussian law with expectation $m \in \mathbb{R}^d$ and covariance matrix $\Sigma\in\mathbb{R}^{d\times d}$ (note that, when $d=1$, one has that $m$ and $\Sigma$ are scalar). We also use the notation $X\sim Y$ to indicate that two random elements $X,Y$ have the same distribution; similarly, $X\sim \mu$ indicates that $X$ has law $\mu$. 
\begin{hyp}[\bf Random Gaussian Initialization]\label{bW}
\emph{Consider the FCNN defined in \eqref{NN_expr}. The parameters $\left\{b_j^{(\ell)}, \tilde{W}_{j,k}^{(\ell)}\right\} $ are mutually stochastically independent random variables such that, for every $\ell=1,\dots,L+1$, every $i=1,\dots,n_\ell$ and every $j=1,\dots,n_{\ell-1}$, one has that
\[
b_i^{(\ell)}\sim\mathcal{N}_1(0,C_b),
\]
\[
\tilde{W}_{i,j}^{(\ell)}\sim\mathcal{N}_1\Big(0,\frac{C_W}{n_{\ell-1}}\Big),
\]
with $C_b\ge0$ and $C_W> 0$. This implies in particular that $W_{i,j}^{(\ell)} :=\tilde{W}_{i,j}^{(\ell)}\times \frac{\sqrt{n_{\ell-1}}}{\sqrt{C_W}}\sim\mathcal{N}_1(0,1)$.
}
\end{hyp}
}
{We will also require some regularity properties on the non-linearity function $\sigma$ appearing in \eqref{NN_expr}. The following assumption, already used in \cite{FHMNP, HanGa}, is satisfied by most activations used in the literature, such as e.g., the Logistic Sigmoid, Tanh, ReLU, Swish and Mish (see e.g. \cite{activations}):

\begin{hyp}\label{hip_s}
\emph{There exists an integer $r\ge 1$ such that $\sigma$ is {either} $r$ times continuously differentiable, or it is $r-1$ times continuously differentiable and the $(r-1)$-derivative is a piece-wise linear function with a finite number of points of discontinuity for its derivative. Moreover there exists $k\ge 1$ s.t. 
\[
\sup_{x\in\mathbb{R}}\Big|(1+|x|)^{-k}\frac{d^r}{dx^r}\sigma(x)\Big|<\infty.
\]}
\end{hyp}

The following elementary statement shows that the neural network introduced in \eqref{NN_expr} defines a conditional Gaussian object, in the sense of Definition \ref{def_cond_ga}.

\begin{lemma}[\cite{HanGa}, Lemma 7.1]\label{gaus_stut_NN}
{Adopt the notation introduced in Definition \ref{def_NN}}, and let Assumption \ref{bW} prevail. Fix an integer $d\ge 1$, as well as inputs $\mathcal{X}:=\{x^{(1)},\dots,x^{(d)}\}\subseteq\mathbb{R}^{n_0}$, and define $\mathcal{F}_L$ to be the $\sigma$-field generated by $\{b^{(\ell)}, \tilde{W}^{(\ell)} : \ell=1,...,L\}$. For $i=1,...,n_{L+1}$, set 
\begin{equation}\label{vec_z}
z_i^{(L+1)}(\mathcal{X}):=(z_i^{(L+1)}(x^{(1)}),\dots, z_i^{(L+1)}(x^{(d)})).
\end{equation}
Then, one has that: {\rm (i)} conditionally on $\mathcal{F}_L$, the random vectors $z_i^{(L+1)}(\mathcal{X})$, $i=1,..., n_{L+1}$, are stochastically independent, and {\rm (ii)} each $z_i^{(L+1)}(\mathcal{X})$ is Gaussian conditionally on $\mathcal{F}_L$, in the sense of Definition \ref{def_cond_ga} and with a conditional covariance matrix  $A=A^{(L+1)}$ defined as follows: for $i,j=1,\dots, d$,
\begin{equation}\label{cov_z}
A^{(L+1)}_{i,j}:=A^{(L+1)}(x^{(i)},x^{(j)}):=
\begin{cases}
C_b+\frac{C_W}{n_{L}}\sum_{k=1}^{n_{L}}\sigma(z_k^{(L)}(x^{(i)}))\sigma(z_k^{(L)}(x^{(j)})), & \text{if $L\ge 1$}\\
C_b+\frac{C_W}{n_0}\sum_{k=1}^{n_0}x^{(i)}_{k}x^{(j)}_{k}, & \text{if $L=0$}.
\end{cases}
\end{equation}
\end{lemma}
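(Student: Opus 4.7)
The plan is to verify the characterisation in Definition \ref{def_cond_ga} directly, by computing the joint conditional characteristic function of the vector $(z_1^{(L+1)}(\mathcal{X}),\ldots,z_{n_{L+1}}^{(L+1)}(\mathcal{X}))$ given $\mathcal{F}_L$, and then reading off both conditional independence and the conditional covariance formula from its factorisation.

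First I would record the two structural facts that drive everything. By Assumption \ref{bW}, the parameters $\{b_i^{(L+1)},\tilde W_{i,k}^{(L+1)}\}_{i,k}$ of the $(L+1)$-th layer are mutually independent and, jointly, independent of $\mathcal{F}_L$. Moreover, for $L\ge 1$ each quantity $\sigma(z_k^{(L)}(x^{(a)}))$ (and for $L=0$ each coordinate $x^{(a)}_k$) is $\mathcal{F}_L$-measurable, hence deterministic once we condition. Consequently, for each fixed neuron index $i$ and each input $x^{(a)}$, the recursion \eqref{NN_expr} exhibits $z_i^{(L+1)}(x^{(a)})$ as an affine combination of independent centred Gaussians with $\mathcal{F}_L$-measurable coefficients, and is therefore centred Gaussian conditionally on $\mathcal{F}_L$.

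Next, I would fix test vectors $y^{(1)},\ldots,y^{(n_{L+1})}\in\mathbb{R}^d$ and consider
\[
\mathbb{E}\!\left[\exp\!\left(i\sum_{j=1}^{n_{L+1}}\langle y^{(j)},z_j^{(L+1)}(\mathcal{X})\rangle\right)\Big|\mathcal{F}_L\right].
\]
After substituting \eqref{NN_expr}, each inner product $\langle y^{(j)},z_j^{(L+1)}(\mathcal{X})\rangle$ becomes a linear combination of $b_j^{(L+1)}$ and $\{\tilde W_{j,k}^{(L+1)}\}_k$ with $\mathcal{F}_L$-measurable coefficients. Since distinct neuron indices $j$ involve disjoint families of parameters and all these parameters are independent of $\mathcal{F}_L$, the conditional expectation factorises as the product over $j$ of the individual conditional characteristic functions. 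Each factor is the Fourier transform of a centred Gaussian and hence equals $\exp\bigl(-\tfrac12\langle y^{(j)},A^{(L+1)}y^{(j)}\rangle\bigr)$ for the matrix $A^{(L+1)}$ defined in \eqref{cov_z}. Setting all but one of the $y^{(j)}$ to zero recovers claim (ii) (conditional Gaussianity with the prescribed covariance); keeping them arbitrary simultaneously recovers claim (i) (conditional independence across neurons).

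The explicit form of the entries $A^{(L+1)}_{a,b}$ then follows from a direct computation of $\mathbb{E}[z_i^{(L+1)}(x^{(a)})z_i^{(L+1)}(x^{(b)})\mid\mathcal{F}_L]$, using $\mathbb{E}[b_i^{(L+1)}]=0$, $\mathrm{Var}(b_i^{(L+1)})=C_b$ and $\mathbb{E}[\tilde W_{i,k}^{(L+1)}\tilde W_{i,k'}^{(L+1)}]=\delta_{k,k'}C_W/n_L$, so that all cross-terms vanish by independence from $\mathcal{F}_L$; the two cases $L\ge 1$ and $L=0$ then appear immediately from \eqref{NN_expr}. I do not expect any conceptual obstacle here: the only point requiring mild care is the $L=0$ case, where $\mathcal{F}_0$ is to be interpreted as the trivial $\sigma$-field and the statement reduces to ordinary (unconditional) Gaussianity of a deterministic linear combination of independent Gaussians.
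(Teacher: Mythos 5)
Your argument is correct and is essentially the standard one underlying this statement, which the paper itself does not reprove but simply imports from \cite{HanGa}: conditionally on $\mathcal{F}_L$ the outputs $z_i^{(L+1)}(\mathcal{X})$ are affine in the layer-$(L+1)$ Gaussian parameters with $\mathcal{F}_L$-measurable coefficients, the factorisation of the conditional characteristic function over the disjoint parameter families $\{b_i^{(L+1)},\tilde W_{i,\cdot}^{(L+1)}\}$ gives conditional independence across neurons, and the second-moment computation yields exactly \eqref{cov_z} (with the freezing lemma for conditional expectations justifying the substitution of the $\mathcal{F}_L$-measurable coefficients). No gap; the only point worth making explicit is the independence/measurability justification for that substitution, which you invoke implicitly and which holds under Assumption \ref{bW}.
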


{We observe that the case $L=0$ in \eqref{cov_z} corresponds to the covariance of the (Gaussian) field $z^{(1)}$ defined in \eqref{NN_expr}.} As argued in Remark \ref{gaus_cond_der_z}, the content of Lemma \ref{gaus_stut_NN} can be suitably extended to include the derivatives of $z^{(L+1)}$ with respect to the inputs. We will now explain how the content of Theorem \ref{th_gen_tv_2w} can be used to assess the fluctuations of large neural networks initialized as in Assumption \ref{bW}.

}
\subsection{Main results: tight bounds in large-width CLTs}\label{ss:mainintro}

{
In what follows, we will focus on the so-called {\it large-width analysis} of the network $z^{(L+1)}$ defined in \eqref{NN_expr}, obtained by fixing $L, n_0, n_{L+1}$ (depth and input/output dimensions) and letting $n_1,...,n_L\to \infty$. By doing so, the following two fundamental (and strictly related) phenomena emerge whenever the trainable parameters are initialized as in Assumption \ref{bW}: 
\begin{enumerate}
\item[\bf (A1)] The random field $z^{(L+1)}$ converges weakly to a $n_{L+1}$-dimensional Gaussian field with independent coordinates and a layer-wise recursively defined covariance structure \cite{Neal96, LBNSPS, HanGa, HannonG};
\item[\bf (A2)] The {neural tangent kernel} $T_{L+1}$ defined in \eqref{e:ntk}  converges (say, in probability) towards a deterministic mapping \cite{arora_2020, Jacot}. 
\end{enumerate}
\label{K}As discussed e.g. in \cite{arora_2020, Jacot, Ye}, the phenomenon described at Point {\bf (A2)} yields that, as the width diverges to infinity, with overwhelming probability the training of $z^{(L+1)}$ becomes indistinguishable from the optimization of a {\it linear model} (a situation sometimes referred to as ``lazy regime''). As a consequence, the central limit theorem (CLT) at Point {\bf (A1)} allows one to explicitly approximate the neural network after training as a deterministic affine transformation of the network at initialization, by applying classical formulae of {kernel regression} \cite[Chapter 2]{GPML}. 

\smallskip

\,

The CLT at Point {\bf (A1)} above --- first established in Neal's seminal paper \cite{Neal96} and then refined over more than two decades by several authors --- is the content of the next statement.
\begin{theorem}[\bf Large width CLT \cite{HanGa, HannonG, Neal96, MHRTG, LBNSPS}]\label{Hanin}
Fix $n_0,n_{L+1}$ and a smooth compact set $T\subseteq \mathbb{R}^{n_0}$. Let Assumption \ref{bW} and \ref{hip_s} prevail. As $n_1,\dots,n_{L} \to \infty $, the stochastic processes
\[
 T \ni x:=(x_{1},\dots,x_{n_0})\mapsto z^{(L+1)}(x)\in\mathbb{R}^{n_{L+1}}
\]
converge weakly in $C^{r-1}(T,\mathbb{R}^{n_{L+1}})$ to a centered Gaussian process $G^{(L+1)}$ taking values in $\mathbb{R}^{n_{L+1}}$ with independent and identically distributed coordinates. The coordinate-wise covariance function of $G^{(L+1)}$, defined for every $x^{(1)},x^{(2)}\in T$ as
\begin{equation}\label{lim_cov}
K_{1,2}^{(L+1)}:=K^{(L+1)}(x^{(1)},x^{(2)}):=\lim_{n_1,\dots,n_{L}\to\infty}Cov(z_{i}^{(L+1)}(x^{(1)}),z_{i}^{(L+1)}(x^{(2)}))
\end{equation}
satisfies the layer-wise recursion
\[
K_{1,2}^{(\ell)} :=K^{(\ell)}(x^{(1)},x^{(2)}) =C_b+C_W\mathbb{E}\Big[\sigma(G_1^{(\ell-1)}(x^{(1)}))\sigma(G_1^{(\ell-1)}(x^{(2)}))\Big],
\]
where (with obvious notation)
\[
\Big(G_1^{(\ell-1)}(x^{(1)}), G_1^{(\ell-1)}(x^{(2)})\Big)\sim\mathcal{N}_2\Bigg(0,
\begin{pmatrix}
K_{1,1}^{(\ell-1)} & K_{1,2}^{(\ell-1)}\\
K_{1,2}^{(\ell-1)} & K_{2,2}^{(\ell-1)}
\end{pmatrix}
\Bigg),
\]
for $\ell\ge 2$, with initial condition
\[
K_{1,2}^{(1)}:= K^{(1)}(x^{(1)},x^{(2)})=C_b+\frac{C_W}{n_0}\sum_{j=1}^{n_0}x^{(1)}_{j}x^{(2)}_{j}.
\]
\end{theorem}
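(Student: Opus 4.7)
The plan is to argue by induction on the depth $L$, exploiting Lemma \ref{gaus_stut_NN} as the main structural input: conditionally on $\mathcal{F}_L$, each vector $z_i^{(L+1)}(\mathcal{X})$ is centered Gaussian in $\mathbb{R}^d$ with covariance $A^{(L+1)}$, and the $n_{L+1}$ such vectors are conditionally independent. The base case $L=0$ is immediate, since $z^{(1)}$ is affine in independent Gaussians, hence exactly Gaussian with covariance $K^{(1)}$. For the inductive step from $L$ to $L+1$ and any $(y_1,\dots,y_{n_{L+1}})\in(\mathbb{R}^d)^{n_{L+1}}$, conditioning on $\mathcal{F}_L$ yields
\[
\mathbb{E}\bigl[e^{i\sum_i\langle y_i,z_i^{(L+1)}(\mathcal{X})\rangle}\bigr]=\mathbb{E}\Bigl[\exp\Bigl(-\tfrac{1}{2}\sum_{i=1}^{n_{L+1}}\langle y_i,A^{(L+1)}y_i\rangle\Bigr)\Bigr],
\]
so it suffices to establish the entry-wise convergence $A^{(L+1)}\to K^{(L+1)}$ in probability and invoke dominated convergence.

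The heart of the argument is precisely this convergence. Each entry of $A^{(L+1)}$ is a sample mean of $n_L$ terms that, conditionally on $\mathcal{F}_{L-1}$, are i.i.d.\ copies of $\sigma(z_1^{(L)}(x^{(i)}))\sigma(z_1^{(L)}(x^{(j)}))$. A conditional $L^2$ law of large numbers --- whose variance bound is furnished by the polynomial-growth condition on $\sigma$ (Assumption \ref{hip_s}) --- gives that $A^{(L+1)}_{i,j}$ differs by a vanishing term (as $n_L\to\infty$) from its conditional expectation $C_b+C_W\,\mathbb{E}[\sigma(z_1^{(L)}(x^{(i)}))\sigma(z_1^{(L)}(x^{(j)}))\mid\mathcal{F}_{L-1}]$. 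The induction hypothesis applied to the two-input vector $(z_1^{(L)}(x^{(i)}),z_1^{(L)}(x^{(j)}))$, combined with uniform integrability coming from Assumption \ref{hip_s}, lets me identify the limit of this conditional expectation with the deterministic quantity $K^{(L+1)}_{i,j}$ as $n_1,\dots,n_{L-1}\to\infty$. This closes the induction for finite-dimensional distributions of the network; the same scheme, invoking Remark \ref{gaus_cond_der_z}, extends to joint finite-dimensional distributions of the network together with its derivatives up to order $r-1$, evaluated at any finite collection of inputs.

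To lift finite-dimensional convergence to weak convergence in $C^{r-1}(T,\mathbb{R}^{n_{L+1}})$, I would establish tightness via a Kolmogorov-type criterion: Assumption \ref{hip_s} propagates layer-wise to moment bounds of the form $\mathbb{E}\|\partial^\alpha z^{(L+1)}(x)-\partial^\alpha z^{(L+1)}(y)\|^q\le C_{\alpha,q}\|x-y\|^{q}$, uniformly in the widths, for $|\alpha|\le r-1$ and $q$ sufficiently large. The main obstacle is organizing the simultaneous limit $n_1,\dots,n_L\to\infty$ cleanly: the conditional LLN at layer $L+1$ must be matched with the multi-parameter inductive limit for layer $L$, which itself is not a single-parameter object. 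The cleanest route I see is a triangle decomposition of $A^{(L+1)}-K^{(L+1)}$ into a fluctuation piece (controlled by the conditional LLN, with explicit variance scaling in $n_L$) and a conditional-bias piece (controlled by induction plus the polynomial-growth moment bounds), each of which vanishes in probability under the joint limit; once this is in hand, the dominated-convergence step and the tightness argument are technical but routine.
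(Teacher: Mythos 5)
Theorem \ref{Hanin} is not proved in this paper: it is recalled as a known result and attributed to \cite{Neal96, LBNSPS, MHRTG, HannonG, HanGa}, so there is no internal proof to compare yours against. Your sketch follows what is essentially the standard route of those references (in particular the conditional-Gaussianity viewpoint of \cite{HannonG, HanGa}, which is also the structural fact recorded here as Lemma \ref{gaus_stut_NN}): induct on depth, reduce finite-dimensional convergence to convergence of the conditional covariance $A^{(L+1)}$ to $K^{(L+1)}$, prove the latter by a conditional law of large numbers plus control of a bias term, and obtain functional convergence in $C^{r-1}$ by a Kolmogorov-type tightness argument with moment bounds uniform in the widths. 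As a blueprint this is the right architecture, and it is consistent with how the cited literature proceeds.

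There is, however, one genuine gap in the way you close the induction. The bias term you must control is the \emph{conditional} expectation $C_b+C_W\,\mathbb{E}[\sigma(z_1^{(L)}(x^{(i)}))\sigma(z_1^{(L)}(x^{(j)}))\mid\mathcal{F}_{L-1}]$, which is a random variable --- namely a continuous function of the entries of $A^{(L)}$ evaluated at the bivariate Gaussian conditional law. Your stated induction hypothesis (weak convergence of the layer-$L$ process) together with uniform integrability only identifies the limit of the \emph{unconditional} expectation $\mathbb{E}[\sigma(z_1^{(L)}(x^{(i)}))\sigma(z_1^{(L)}(x^{(j)}))]$, which is not what appears in the decomposition. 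To make the step work you must carry a strengthened induction hypothesis: the conditional covariances $A^{(\ell)}$ converge entrywise in probability (indeed in $L^p$, with moment bounds uniform in $n_1,\dots,n_{\ell-1}$) to $K^{(\ell)}$; then the bias term converges by continuity of the Gaussian expectation in the covariance parameter, and the fluctuation term is $O(n_L^{-1/2})$ in $L^2$ uniformly in the other widths, which is exactly what makes the joint limit $n_1,\dots,n_L\to\infty$ (rather than a sequential one) legitimate. Your ``triangle decomposition'' paragraph shows you see the right splitting, but as written the inductive step invokes a hypothesis weaker than the one it needs, so the argument is not yet closed. Two smaller points: the uniform moment bounds on $A^{(\ell)}$ needed for the conditional LLN must themselves be part of the induction (they follow from the polynomial growth in Assumption \ref{hip_s} plus conditional Gaussianity, but should be stated); and in the tightness step the increment bound $\mathbb{E}\|\partial^{\alpha}z^{(L+1)}(x)-\partial^{\alpha}z^{(L+1)}(y)\|^{q}\le C\|x-y\|^{q}$ is only useful for Kolmogorov--Chentsov in $\mathbb{R}^{n_0}$ when $q>n_0$, and for $|\alpha|=r-1$ its proof must use the almost-everywhere polynomially bounded $r$-th derivative allowed by Assumption \ref{hip_s}, which deserves an explicit argument rather than the phrase ``propagates layer-wise''.
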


In recent years, several authors have established quantitative versions of the CLT stated in Theorem \ref{Hanin}, both at the finite-dimensional and functional level --- see e.g. \cite{Torr23, Btesi, BT24, FHMNP, Trev}, as well as the forthcoming discussion. In what follows, we use Theorem \ref{th_gen_tv_2w} to deduce {\it tight bounds} in the TV and 2-Wasserstein distances for the finite-dimensional CLTs implied by Theorem \ref{Hanin}. As explained in Remark \ref{optim_sp}, our bounds are tight because they provide rates of convergence that scale as the inverse of the width of the network, yielding optimal rates of convergence in many situations. Our main findings, informally stated in Theorem \ref{t:informal} and collected in the forthcoming Theorem \ref{fin_th_n}, are preceded by a sequence of preliminary remarks and definitions of a (necessarily) technical nature.

\begin{remark}[See Remarks 2.3 and 2.6 in \cite{FHMNP}]\label{oss_FHMNP}
Under Assumptions \ref{bW} and \ref{hip_s}, for all $\ell=1,\dots,L+1$ the following properties hold true for the neural network $z^{(\ell)}$ defined in \eqref{NN_expr} and for its Gaussian limit $G^{(\ell)}$ introduced in Theorem \ref{Hanin}:
\begin{enumerate}[(i)]
\item $G^{(\ell)},z^{(\ell)}\in C^{r-1}(\mathbb{R}^{n_0};\mathbb{R}^{n_\ell})$ and $A^{(\ell)}\in C^{r-1,r-1}(\mathbb{R}^{n_0}\times \mathbb{R}^{n_0};\mathbb{R})$ with probability one, where $A^{(\ell)}$ is defined in \eqref{cov_z};
\item $z^{(\ell)}$, $G^{(\ell)}$ and $A^{(\ell)}$ are $r$-times differentiable almost everywhere with probability one. Moreover, for every multi-index $I:=(i_1,\dots,i_{n_0})\in\mathbb{N}_0^{n_0}$ with

 $|I|:=i_1+\dots+i_{n_0}=r$, the mixed derivatives 
\[
D^{I}_xz^{(\ell)}(x)\quad\text{and}\quad D^{I}_xG^{(\ell)}(x)
\]
are well defined and finite with probability one for every $x\ne 0$, where
\begin{equation}\label{der_sem}
D^{I}_x:=\frac{\partial^{i_1}}{\partial x_1^{i_1}}\dots\frac{\partial^{i_{n_0}}}{\partial x_{n_0}^{i_{n_0}}};
\end{equation}
\item For all $x^{(i)},x^{(j)}\in\mathbb{R}^{n_0}$ and for all $I,J\in\mathbb{N}_0^{n_0}$ such that $|I|,|J|\le r-1$ one has that
\begin{equation}\label{prod_der_G}
\mathbb{E}[D_{x^{(i)}}^IG^{(\ell)}(x^{(i)}) \cdot D_{x^{(j)}}^JG^{(\ell)}(x^{(j)})]=D_{x^{(i)}}^ID_{x^{(j)}}^JK^{(\ell)}_{i,j},
\end{equation}
with 
\begin{equation}\label{e:matrix}
K_{i,j}^{(\ell)}:= K^{(\ell)}(x^{(i)}, x^{(j)})
\end{equation}
(similarly to \eqref{lim_cov}), and where we have used the convention that when $x^{(i)}=x^{(j)}$, for every enough regular function $f:\mathbb{R}^{n_0}\times \mathbb{R}^{n_0}\to\mathbb{R}$, we have 
\begin{equation}\label{conv_der_ugu}
D_{x^{(i)}}^ID_{x^{(j)}}^Jf(x^{(i)},x^{(j)})=D_{x}^ID_{y}^Jf(x,y)_{|_{x=y=x^{(i)}}}.
\end{equation}
Identity \eqref{prod_der_G} holds also when $|I|=r$ or $|J|=r$ under the hypothesis that $x^{(i)},x^{(j)}\in\mathbb{R}^{n_0}\setminus\{0\}$;
\item For all $x^{(i)},x^{(j)}\in\mathbb{R}^{n_0}$ and for all $I,J\in\mathbb{N}_0^{n_0}$ such that $|I|,|J|\le r$ one has that
\[
\mathbb{E}[D_{x^{(i)}}^ID_{x^{(j)}}^JA_{i,j}^{(\ell)}]=D_{x^{(i)}}^ID_{x^{(j)}}^J\mathbb{E}[A^{(\ell)}_{i,j}]
\]
where we have adopted a notational convention similar to \eqref{e:matrix}, provided we assume that the mixed derivatives $D_{x^{(i)}}^ID_{x^{(j)}}^JA_{i,j}^{(\ell)}$ are well defined and finite with probability one when $|I|=|J|=r$.
\end{enumerate}
\end{remark}

\begin{remark}{\rm As in \cite{FHMNP}, for an integer $p\ge1$, we will denote a generic set of $p$ directional derivative operators in $\mathbb{R}^{n_0}$ as
\begin{equation}\label{def_V_set}
V=\{V_1,\dots,V_p\}
\end{equation}
where, for every $j=1,\dots,p$, we implicitly assume that there exists a vector $v_j=(v_{j,1},\dots,v_{j,n_0})\in\mathbb{R}^{n_0}$ such that
\begin{equation}\label{def_v_sum}
V_j=\sum_{i=1}^{n_0}v_{j,i}\frac{\partial}{\partial x_i}.
\end{equation}
Given $x\in\mathbb{R}^{n_0}$ and a multi-index $J:=(j_1,\dots,j_p)\in\mathbb{N}_0^p$ we define
\begin{equation}\label{power_V}
V^J_y:={V_1^{j_1}\dots V_p^{j_p}}_{|_{x= y}},
\end{equation}
meaning that the derivatives are computed at $x$, ({with $V^0_i =$ identity, by convention}). Finally, for integers $q\ge 0$ and $p\geq 1$, define
\begin{equation}\label{set_der_q}
{\mathcal{M}^{(p)}_q:=\{J:=(j_1,\dots,j_p)\in\mathbb{N}_0^p  : |J|\le q\},}
\end{equation}
where
\begin{equation}\label{mod_ind}
|J|:=j_1+\dots+j_p
\end{equation}
is the size of the multi-index $J$. {Note that 
$\mathcal{M}^{(p)}_0 = \{ {\bf 0}\}$, where ${\bf 0}$ indicates the element of $\mathbb{N}_0^p$ with identical zero entries.}
}
\end{remark}

\begin{defin}[Definition 2.4 in \cite{FHMNP}]\label{non_deg}
Fix $\mathcal{X}:=\{x^{(1)},\dots,x^{(d)}\}\subseteq\mathbb{R}^{n_0}\setminus\{0\}$ and consider the infinite-width $d\times d$ covariance matrices $\{K^{(\ell)}\}_{\ell=1,\dots,L+1}$ defined in Theorem \ref{Hanin} through the convention \eqref{e:matrix} (considering Assumption \ref{hip_s}), as well as a finite set of $p$ directional derivative operators $V$ as in \eqref{def_V_set}. Then, $\{K^{(\ell)}\}_{\ell=1,\dots,L+1}$ is said to be \emph{non-degenerate} on $\mathcal{X}$ to the order $q\le r$ with respect to $V$ if for every $\ell=1,\dots,L+1$ the matrix
\[
\Big(V_{x^{(i)}}^{J^{(i)}}V_{x^{(j)}}^{J^{(j)}}K_{i,j}^{(\ell)}\Big)_{(x^{(i)},J^{(i)}),(x^{(j)},J^{(j)})\in \mathcal{X}\times\mathcal{M}^{(p)}_q}
\]
is invertible, where we have used \eqref{power_V} and \eqref{set_der_q} together with a convention analogous to \eqref{conv_der_ugu}.
\end{defin}

\begin{remark}\label{q_zero}
If $q=0$ then $\{K^{(\ell)}\}_{\ell=1,\dots,L+1}$ is non-degenerate to the order $0$ if $K^{(\ell)}$ is invertible for every $\ell=1,\dots,L+1$.
\end{remark}

\begin{remark}
    In \cite[Remark (a), Subsection 3.2]{FHMNP}, it is proved that, when the non-linearity is $\sigma(x):=ReLU(x):=\max\{0,x\}, C_b=0, C_W=2$ and $x\ne 0$ then the limiting covariance matrix $\{K^{(\ell)}\}_{\ell=1,\dots,L+1}$ is non-degenerate on $x$ both to order $0$
 and to order $1$ with respect to $V=\Big\{\frac{\partial}{\partial x_i}\Big\}$ for $i\in\{1,\dots,n_0\}$.
 If moreover $\|x\|=1$, in \cite[Remark (d), Subsection 3.3]{FHMNP}, the authors also prove that one can find a set of directional derivatives $V$ (non necessarily canonical) such that $\{K^{(\ell)}\}_{\ell=1,\dots,L+1}$ is again non-degenerate on $x$ to the order $1$ with respect to $V$.
 \end{remark}

\begin{hyp}\label{deriv_mix_A}
For every $x^{(i)},x^{(j)}\in\mathbb{R}^{n_0}$ and for every $I,J\in \mathbb{N}_0^{n_0}$ with $|I|=r$ or $|J|=r$ the mixed derivatives $D_{x^{(i)}}^ID_{x^{(j)}}^JA_{i,j}^{(\ell)}$ are well defined and finite with probability one for all $\ell=1,\dots, L+1$, where we have adopted notations \eqref{cov_z}, \eqref{der_sem} and \eqref{mod_ind} respectively for the definitions of $A^{(\ell)}$, $D^J_{x}$ and $|I|$.
\end{hyp}
 
The following statement is one of the main achievements of the present work. We will see that the proof combines Theorem \ref{th_gen_tv_2w} with the content of Remark \ref{oss_FHMNP} as well as the forthcoming Remarks \ref{gaus_cond_der_z}, \ref{norm_8}, \ref{trA_fin}, and Proposition \ref{EAmK}. 
}
{
\begin{theorem}\label{fin_th_n}
Let Assumptions \ref{bW}, \ref{hip_s} and \ref{deriv_mix_A} prevail, and {fix $q\in\{0,1,..., r\}$}. Fix $\mathcal{X}:=\{x^{(1)},\dots,x^{(d)}\}\subseteq \mathbb{R}^{n_0}\setminus \{0\}$, a set of $p$ directional derivative operators $V:=\{V_1,\dots,V_p\}$ as in notation \eqref{def_V_set}, and a set of multi-indices $\{J^{(j)}\}_{j=1,\dots,d}$ {with $J^{(j)}\in\mathcal{M}_q^{(p)}$,}
for every $j=1,\dots,d$.
Assume that the matrix defined in \eqref{lim_cov}, $\{K^{(\ell)}\}_{\ell=1,\dots,L+1}$, is non-degenerate on $\mathcal{X}$ to the order $q$ with respect to $V$ as in Definition \ref{non_deg}. Then, if there exists $n\in\mathbb{N}$ such that 
{
\begin{equation}\label{bounds_n}
cn\le n_1, \dots n_{L}\le Cn
\end{equation}}
for some $c,C>0$ constants, and recalling the definitions in \eqref{dTV_def} and in \eqref{dW_def}, one has that
\begin{equation}\label{e:tvmain}
d_{TV}\Big(\big(V_{x^{(j)}}^{J^{(j)}}z_1^{(L+1)}(x^{(j)})\big)_{j=1,\dots,d},\big(V_{x^{(j)}}^{J^{(j)}}G_1^{(L+1)}(x^{(j)})\big)_{j=1,\dots,d}\Big)\le \frac{D_1}{n}  
\end{equation}
and
\begin{equation}\label{e:w2main}
{W}_2\Big(\big(V_{x^{(j)}}^{J^{(j)}}z_1^{(L+1)}(x^{(j)})\big)_{j=1,\dots,d},\big(V_{x^{(j)}}^{J^{(j)}}G_1^{(L+1)}(x^{(j)})\big)_{j=1,\dots,d}\Big)\le \frac{D_2}{n}, 
\end{equation}
where $D_1$ and $D_2$ are positive constants that do not depend on $n, n_1,\dots, n_{L}$.
\end{theorem}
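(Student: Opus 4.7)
The plan is to apply the abstract bound of Theorem \ref{th_gen_tv_2w} to the random vector
\[
F := \Big( V_{x^{(j)}}^{J^{(j)}} z_1^{(L+1)}(x^{(j)}) \Big)_{j=1,\dots,d}
\]
and the Gaussian vector $G := (V_{x^{(j)}}^{J^{(j)}} G_1^{(L+1)}(x^{(j)}))_{j=1,\dots,d}$. The first step is to verify that $F$ is conditionally Gaussian in the sense of Definition \ref{def_cond_ga}. Because $x\mapsto z_1^{(L+1)}(x)$ depends linearly on the last-layer parameters $(b^{(L+1)}, \tilde W^{(L+1)})$ and the operators $V_{x^{(j)}}^{J^{(j)}}$ act only on the input variable $x$, differentiation commutes with conditioning on $\mathcal{F}_L$; hence Lemma \ref{gaus_stut_NN}, extended as in the forthcoming Remark \ref{gaus_cond_der_z}, shows that $F$ is conditionally Gaussian given $\mathcal{F}_L$, with conditional covariance matrix $A$ whose entries are obtained by applying $V_{x^{(i)}}^{J^{(i)}} V_{x^{(j)}}^{J^{(j)}}$ to the kernel $A^{(L+1)}_{i,j}$ in \eqref{cov_z}. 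The target covariance $K$ has entries $V_{x^{(i)}}^{J^{(i)}} V_{x^{(j)}}^{J^{(j)}} K^{(L+1)}_{i,j}$ and is invertible by the non-degeneracy assumption on $\{K^{(\ell)}\}_{\ell=1,\dots,L+1}$.

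Next, one checks the integrability requirement $\mathbb{E}[\|A\|_{HS}^8] < \infty$: this follows from the polynomial-growth condition in Assumption \ref{hip_s} together with the Gaussianity of the inner pre-activations (the content of the announced Remark \ref{norm_8}, complemented by Remark \ref{trA_fin}). Applying Theorem \ref{th_gen_tv_2w} then gives
\[
\max\{d_{TV}(F,G), W_2(F,G)\} \le C_3 \|\mathbb{E}[A] - K\|_{HS} + C_4 \mathbb{E}[\|A-K\|_{HS}^8]^{1/4},
\]
where $C_3, C_4$ depend only on $d$ and $K$, and in particular are independent of $n, n_1,\dots, n_L$.

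It remains to prove that each of the two terms on the right-hand side is of order $1/n$. For the first term, I would invoke the forthcoming Proposition \ref{EAmK}, whose proof proceeds by a layer-wise recursion: denoting by $\Delta^{(\ell)} := \mathbb{E}[A^{(\ell)}] - K^{(\ell)}$ the discrepancy between the finite-width mean kernel and its infinite-width limit, one propagates $\Delta^{(\ell)}$ through the non-linearity using a Gaussian interpolation between the finite-$n$ marginal and its limit, exploiting \eqref{bounds_n} to show that $\|\Delta^{(\ell)}\|_{HS} = O(1/n)$ uniformly in $\ell$; the same recursion extends to the derivative entries, since the operators $V_{x^{(i)}}^{J^{(i)}}$ commute with expectation (cf.\ Remark \ref{oss_FHMNP}(iv)). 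For the second, concentration-type, term, one needs $\mathbb{E}[\|A-K\|_{HS}^8] = O(n^{-4})$, which is the place where the quantitative cumulant estimates of Hanin \cite{HanGa} play the central role: the $r$-th joint cumulant of the post-activation functionals entering \eqref{cov_z} scales like $n^{1-r}$, which via the cumulant-to-moment expansion yields $\mathbb{E}[\|A-K\|_{HS}^{2k}] = O(n^{-k})$ for every integer $k\ge 1$, and in particular the sharp $n^{-4}$ bound for $k=4$.

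The main obstacle is the 8th-moment control $\mathbb{E}[\|A-K\|_{HS}^8]^{1/4} = O(1/n)$: the mixed derivatives $V_{x^{(i)}}^{J^{(i)}}V_{x^{(j)}}^{J^{(j)}} A^{(L+1)}_{i,j}$ are intricate polynomial expressions in $\sigma^{(k)}(z^{(L)})$ evaluated at correlated Gaussians, so the moment bounds cannot be obtained by elementary exchangeability; one must verify that the polynomial growth in Assumption \ref{hip_s}, the regularity in Assumption \ref{deriv_mix_A} and the non-degeneracy of $\{K^{(\ell)}\}$ are enough to make Hanin's cumulant estimates applicable uniformly in $n$, and that all auxiliary moments propagated through the layer-wise recursion remain uniformly bounded. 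Once both $\|\mathbb{E}[A]-K\|_{HS} = O(1/n)$ and $\mathbb{E}[\|A-K\|_{HS}^8]^{1/4} = O(1/n)$ are established, \eqref{e:tvmain} and \eqref{e:w2main} follow at once.
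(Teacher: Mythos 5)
Your proposal is correct and follows essentially the same route as the paper: specialize Theorem \ref{th_gen_tv_2w} to $A=\big(V_{x^{(i)}}^{J^{(i)}}V_{x^{(j)}}^{J^{(j)}}A^{(L+1)}_{i,j}\big)$ and $K=\big(V_{x^{(i)}}^{J^{(i)}}V_{x^{(j)}}^{J^{(j)}}K^{(L+1)}_{i,j}\big)$ (using Remark \ref{gaus_cond_der_z} for conditional Gaussianity and non-degeneracy for the invertibility of $K$), then control $\|\mathbb{E}[A]-K\|_{HS}=O(1/n)$ via Proposition \ref{EAmK} and $\mathbb{E}[\|A-K\|_{HS}^8]^{1/4}=O(1/n)$ via the Hanin-based estimates in Proposition \ref{AmE} and Remarks \ref{norm_8}, \ref{trA_fin}. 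Your sketch of how Proposition \ref{EAmK} is proved (layer-wise recursion) is heuristic, but the paper itself simply imports it from \cite{HanGa}, so this is immaterial.
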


}

We will now discuss the content of Theorem \ref{fin_th_n}.

\subsection{Remarks on Theorem \ref{fin_th_n}}

\begin{remark}\label{one_dim_eq}
    To keep the notational complexity within bounds, Theorem \ref{fin_th_n} only considers the distances between the first component of the neural network and its Gaussian limit. However, our results can be easily generalized to the case in which one considers the whole output. To see this, observe that the proof of Theorem \ref{fin_th_n} is based on the conditional Gaussianity of the neural network and on Theorem \ref{e:zz}, yielding bounds on the distances depending on the dimension of the random vectors, the minimum eigenvalue of the limiting covariance matrix and the norm of the difference between the covariance matrices and their expectation. Since the components of the output of the neural network (resp. of its limit) are conditionally independent and identically distributed (resp. independent and identically distributed), it follows that its conditional covariance matrix (resp. covariance matrix) has a block diagonal structure where every block on the main diagonal is given by $n_{L+1}$ copies of the conditional covariance matrix of $\big(V_{x^{(j)}}^{J^{(j)}}z_1^{(L+1)}(x^{(j)})\big)_{j=1,\dots,d}$ (resp. copies of $\big(V_{x^{(j)}}^{J^{(j)}}G_1^{(L+1)}(x^{(j)})\big)_{j=1,\dots,d}$). Starting from this observation, it is easy to suitably modify the proof of Theorem \ref{fin_th_n} and derive bounds analogous to \eqref{e:tvmain}--\eqref{e:w2main}, where the constants $D_1$ and $D_2$ now depend on $n_{L+1}$.
\end{remark}

{

\begin{remark}\label{r:quick comp} As discussed in the forthcoming Section \ref{ss:litintro}, the content of Theorem \ref{fin_th_n} substantially complements and extends the existing literature in the following sense: 
\begin{itemize}
\item[--] Bound \eqref{e:tvmain} generalizes and improves all available finite-dimensional bounds in the convex distance (see, e.g., \cite[Section 3.3]{FHMNP}, \cite{Torr23} { and Subsection \ref{subsec_dc} of the present paper for more details}) both by lifting them to the total variation setting, and by yielding convergence rates proportional to $\frac 1 n$, rather than to $\frac{1}{\sqrt{ n}}$.

\item[--] Bound \eqref{e:w2main} allows one to recover the optimal rates of convergence in the 2-Wasserstein distance established in \cite{Trev}  {(see subsection \ref{subsec_trev}) for a wider class of activation functions, including Lipschitz continuous functions}, and yields commensurate rates also for the (iterated) gradients of the network. 
\end{itemize}
    
\end{remark}

\begin{remark}

Thanks to Remark \ref{q_zero}, in order to apply Theorem \ref{fin_th_n} in the case $q=0$ (without derivatives) it is sufficient to assume that the limiting covariance matrices $K^{(\ell)}$ are invertible for every $\ell = 1, \dots, L+1$. This is not a restrictive assumption. In particular, Theorems 6 and 7 in \cite{CC24} provide conditions on the inputs of the neural network that ensure $K^{(\ell)}$ is strictly positive definite for all $\ell = 1, \dots, L+1$, under the assumption that the activation function $\sigma$ is continuous and non-polynomial:
\begin{itemize}
    \item[--] When $C_b \neq 0$, it is sufficient to assume that the inputs are all distinct.
    \item[--] When $C_b = 0$, it is sufficient to assume that the inputs are pairwise non-proportional.
\end{itemize}
\end{remark}

{
\begin{remark}
    Under the assumptions and notations of the previous theorem, we consider the case where the non-degeneracy condition on the sequence $\{K^{(\ell)}\}_{\ell=1,\dots,L+1}$ is not imposed. Instead, we assume that $\sigma$ is a smooth mapping, and that the matrix 
\begin{equation}\label{e:nonnull}
    B := \Big({V_{x^{(i)}}^{J^{(i)}}V_{x^{(j)}}^{J^{(j)}}K_{i,j}^{(L+1)}}\Big)_{i,j=1,\dots,d}
\end{equation}
is not the null matrix. Using Remark \ref{remark_cov_no_inv} and defining
    \[
    \tilde{G} \sim \mathcal{N}_d \left(0, \Big(\mathbb{E}\big[{V_{x^{(i)}}^{J^{(i)}}V_{x^{(j)}}^{J^{(j)}}A_{i,j}^{(L+1)}}\big]\Big)_{i,j=1,\dots,d} \right),
    \]
    we obtain the following bound:
    \begin{multline*}
        d_{TV} \Big(\big(V_{x^{(j)}}^{J^{(j)}}z_1^{(L+1)}(x^{(j)})\big)_{j=1,\dots,d},\tilde{G}\Big) \\
        \leq \tilde{C}_4 
        \mathbb{E}\Bigg[\Big\|\Big({V_{x^{(i)}}^{J^{(i)}}V_{x^{(j)}}^{J^{(j)}}A_{i,j}^{(L+1)}}\Big)_{i,j=1,\dots,d} - \Big(\mathbb{E}[{V_{x^{(i)}}^{J^{(i)}}V_{x^{(j)}}^{J^{(j)}}A_{i,j}^{(L+1)}}]\Big)_{i,j=1,\dots,d} \Big\|^8\Bigg]^{1/4},
    \end{multline*}
    where $\tilde{C}_4 > 0$ is a constant that continuously depends on the rank and the maximum and minimum positive eigenvalues of the matrix
    \[
    \tilde{A} := \Big(\mathbb{E}[{V_{x^{(i)}}^{J^{(i)}}V_{x^{(j)}}^{J^{(j)}}A_{i,j}^{(L+1)}}]\Big)_{i,j=1,\dots,d}.
    \]
Now denote by $\lambda_{+}(\tilde{A})$ and $\lambda_{+}(B)$ the minimum positive eigenvalues of $\tilde{A}$ and $B$, respectively. Using Theorem 4.5.3 (Weyl’s inequality) from \cite{HDP}, we obtain
    \[
    |\lambda_{+}(\tilde{A}) - \lambda_{+}(B)| \leq \|\tilde{A} - B\|_{op} \leq \|\tilde{A} - B\|_{HS} \leq \frac{D_1}{n},
    \]
    where the last inequality follows from Theorem \ref{coll_obs}, and $D_1$ is a constant independent of $n,n_1,\dots,n_L$. As a consequence, for every \( n \geq \frac{2D_1}{\lambda_{+}(B)} \), we have
    \[
    \lambda_{+}(\tilde{A}) \geq \lambda_{+}(B) - \frac{D_1}{n} \geq \frac{\lambda_{+}(B)}{2}.
    \]
Since we are assuming that \( \sigma \in C^{\infty}(\mathbb{R}) \), Theorem \ref{coll_obs} below along with the previous estimates yields that
    \[
    d_{TV} \Big(\big(V_{x^{(j)}}^{J^{(j)}}z_1^{(L+1)}(x^{(j)})\big)_{j=1,\dots,d},\tilde{G}\Big) \leq \frac{D_2}{n},
    \]
    where \( D_2 > 0 \) is a constant independent of \( n,n_1,\dots,n_L \).
\end{remark}
}

\begin{remark}[See Lemma 7.1 in \cite{HanGa}]\label{gaus_cond_der_z}
Exactly as in Lemma \ref{gaus_stut_NN}, under the assumptions of Theorem \ref{fin_th_n}, one can easily prove that conditionally on the $\sigma$-field $\mathcal{F}_{L}$ the vector of gradients $\left(V^{J^{(j)}}_{x^{(j)}} z_1^{(L+1)}(x^{(j)})\right)_{j=1,\dots,d}$ has a Gaussian law with covariance
\[
\mathbb{E}\Big[V^{J^{(i)}}_{x^{(i)}} z_1^{(L+1)}(x^{(i)})\cdot V^{J^{(j)}}_{x^{(j)}} z_1^{(L+1)}(x^{(j)})\big|\mathcal{F}_{L}\Big]=V_{x^{(i)}}^{J^{(i)}}V_{x^{(j)}}^{J^{(j)}}A_{i,j}^{(L+1)},
\]
where we have adopted a convention analogous to \eqref{conv_der_ugu}.
\end{remark}

\begin{remark}\label{optim_sp}
In \cite[Theorem 3.3]{FHMNP}, it is proved that, for $x\in\mathbb{R}^{n_0}$, under Assumptions \ref{bW} and \ref{hip_s}, supposing $K^{(\ell)}(x,x)\ne 0$ for every $\ell=1,\dots, L+1$ and $$\tilde{z}^{(L+1)}(x)\sim\mathcal{N}_1(0,\mathbb{E}[A^{(L+1)}(x,x)]),$$ then one has that
\begin{equation}\label{th_1_PM}
\min\Big\{W_1\big(z_1^{(L+1)}(x), \tilde{z}^{(L+1)}(x)\big),d_{TV}\big(z_1^{(L+1)}(x),\tilde{z}^{(L+1)}(x)\big)\Big\}\ge \frac{C_0}{n}
\end{equation}
where $C_0>0$ is a constant that does not depend on $n,n_1,\dots, n_{L}$. Now consider $\mathcal{X}:=\{x^{(1)},\dots, x^{(d)}\}\subseteq\mathbb{R}^{n_0}$, let the notations and assumptions of Theorem \ref{fin_th_n} prevail in the case $q=0$, and define
\[
\tilde{z}^{(L+1)}(\mathcal{X})\sim\mathcal{N}_d(0,\mathbb{E}[A^{(L+1)}]).
\]
Then, our findings imply that there exists a constant $C_5>0$ independent of $n,n_1,\dots,n_L$ such that
\begin{multline}\label{optim_bound}
\frac{C_5}{n}\ge \max\Big\{d_{TV}(z_1^{(L+1)}(\mathcal{X}),\tilde{z}^{(L+1)}(\mathcal{X})),d_{W}(z_1^{(L+1)}(\mathcal{X}),\tilde{z}^{(L+1)}(\mathcal{X}))\Big\}\\
\ge \min\Big\{d_{TV}(z_1^{(L+1)}(x^{(1)}),\tilde{z}^{(L+1)}(x^{(1)})),W_1 (z_1^{(L+1)}(x^{(1)}),\tilde{z}^{(L+1)}(x^{(1)}))\Big\}\ge \frac{C_0}{n},
\end{multline}
where: (i) the first bound follows from Theorem \ref{th_gen_tv_2w}, Lemma \ref{gaus_stut_NN} and  the forthcoming {Proposition \ref{EAmK}}, (ii) the second inequality is an elementary consequence of the definitions of $d_{TV}$ and $W_1$, and (iii) the third estimate follows from \eqref{th_1_PM}. The relation \eqref{optim_bound} shows in particular that, in the case $q=0$, the dependence on $n$ on the upper bounds established in Theorem \ref{fin_th_n} is optimal.
\end{remark}

\medskip

In the next section we show how to apply Theorem \ref{fin_th_n} to typical problems in Bayesian inference.

}

\subsection{Application to Bayesian deep neural networks}\label{sec_Bay}

{Consider a training dataset 
\[
\mathcal{D} = \{(x^{(i)}, y^{(i)})\}_{i=1,\dots,d} \subseteq \mathbb{R}^{n_0} \times \mathbb{R}^{n_{L+1}}
\]
where the labels satisfy
\begin{equation} \label{y_come_der}
    y^{(i)} = V_{x^{(i)}}^{J^{(i)}} f(x^{(i)}), \quad i=1,\dots,d,
\end{equation}
for some suitably regular function \( f: \mathbb{R}^{n_0} \to \mathbb{R}^{n_{L+1}} \). Here, $r\geq 1$,  $ 0 \leq  q \le r-1$, and  $ p \geq 1 $ are integers, the multi-indices \( \{J^{(i)}\}_{i=1,\dots,d} \) are elements of \( \mathcal{M}^{(p)}_q \) (as defined in \eqref{set_der_q}), and the operators $ \{V_{x^{(i)}}^{J^{(i)}}\}_{i=1,\dots,d} $ are defined as in \eqref{power_V}. The indices $r,p,q$ are fixed for the rest of the section.

We consider a family of neural networks as in Definition \ref{def_NN}, parameterized by the hyperparameters \( \Theta := \{b, \tilde{W}\} \) and with a non-linearity $\sigma$ obeying Assumption \ref{hip_s} for some $r\geq 1$. An alternative strategy to best approximate the labels \( \{y^{(i)}\}_{i=1,\dots,d} \) consists of adopting a Bayesian perspective, rather than the approach described in Section \ref{def_NN_sec}. This methodology, outlined e.g. in \cite{GPML,Trev,Fort22,HBNPS}, involves selecting a likelihood function \( \mathcal{L} \), which depends on \( \Theta \) and the training dataset \( \mathcal{D} \), and imposing a prior distribution on \( \Theta \), which in turn induces a prior distribution $\mu$ (that is, a prior law for \( z^{(L+1)}(\cdot;\Theta) \) and its derivatives) on the functional space associated with the network. Given the regularity assumptions on $\sigma$, without loss of generality we may regard the prior $\mu$ as a probability measure on the space $C^{r-1}(\mathbb{R}^{n_0}, \mathbb{R}^{n_{L+1}})$ (endowed with its Borel $\sigma$-field).

Once the prior distribution is fixed, the needed likelihood function can be introduced under the following assumption.

{
\begin{hyp} \label{hyp_L}
    We assume the following: {\rm (a)} conditionally on the network $z^{(L+1)}$ and its derivatives, the law of the vector
    \begin{equation}\label{zel}
{\bf u} := (y^{(1)}, \dots, y^{(d)})
\end{equation}
is absolutely continuous with respect to a fixed positive measure \( \nu_d \) on \( \mathbb{R}^{d \times n_{L+1}} \), and {\rm (b)} the distribution of the vector ${\bf u}$ conditionally on $\Theta$ coincides with the distribution of ${\bf u}$ conditionally on $z^{(L+1)}(\cdot; \Theta)$. 
\end{hyp}
}
\medskip 

Under Assumption \ref{hyp_L}, the {\it likelihood function associated with} $$ (x^{(1)},...,x^{(d)}, V^{J^{(1)}},...,V^{J^{(d)}})$$  is simply the density of the vector $(y^{(1)},...,y^{(d)})$ (with respect to \( \nu_d \) and evaluated in $(y^{(1)},...,y^{(d)})$), conditionally on $z^{(L+1)} =z\in C^{r-1}(\mathbb{R}^{n_0}, \mathbb{R}^{n_{L+1}})$. From now on, such a likelihood is written
\begin{equation} \label{e:likely}
\mathcal{L}(z; \mathcal{D}) = \mathcal{L}(z; \{(x^{(i)}, y^{(i)})\}_{i=1,\dots,d}), \quad z\in C^{r-1}(\mathbb{R}^{n_0}, \mathbb{R}^{n_{L+1}}).
\end{equation}

\begin{example}
    Consider the case $r=1$ (so that $q=0$), and assume that, conditionally on $\Theta$, the labels follow the noisy model
    \[
    y^{(i)} = z^{(L+1)}(x^{(i)}) + \varepsilon_i, \quad i = 1, \dots, d,
    \]
    where \( \{\varepsilon_i\}_{i=1,\dots,d} \) are i.i.d. standard Gaussian vectors in \( \mathbb{R}^{n_{L+1}} \). Let \( \nu_d \) be the Lebesgue measure on \( \mathbb{R}^{d \times n_{L+1}} \). In this case, the likelihood function is the density (with respect to \( \nu_d \)) of \( (y^{(1)}, \dots, y^{(d)}) \) conditionally on \( (z^{(L+1)}(x^{(1)}), \dots, z^{(L+1)}(x^{(d)})) \), which corresponds to a product of Gaussian densities with means \( z^{(L+1)}(x^{(i)}) \) and unit variances.
\end{example}

In accordance with Bayes’ theorem, the {\it posterior distribution} on the functional space $C^{r-1}(\mathbb{R}^{n_0}, \mathbb{R}^{n_{L+1}})$, written \( \mu_{|\mathcal{D}} \), is given by
\begin{eqnarray}\notag
d\mu_{|\mathcal{D}}(z) = \frac{\mathcal{L}(z, \{(x^{(i)}, y^{(i)})\}_{i=1,\dots,d})}{\int_{C^{r-1}(\mathbb{R}^{n_0}, \mathbb{R}^{n_{L+1}})} \mathcal{L}(z, \{(x^{(i)}, y^{(i)})\}_{i=1,\dots,d}) d\mu(z)} d\mu(z) \\ := \frac{\mathcal{L}(z, \{(x^{(i)}, y^{(i)})\}_{i=1,\dots,d})}{T} d\mu(z),\label{e:Z}
\end{eqnarray}
where the factor $\frac 1T$ (assuming $T>0$) ensures that \( \mu_{|\mathcal{D}} \) is a probability measure. This posterior distribution is then used to make predictions about the values of the gradients of the unknown function \( f \) at a new set of inputs 
\[
\mathcal{X}_{*} := (x^{(1)}_{*}, \dots, x^{(s)}_{*}) \in \mathbb{R}^{s \times n_0}, \quad s\geq 1.
\]
In this respect, an important role is played by the measure describing the law of the network at unseen inputs and unseen directional derivatives under the posterior distribution $\mu_{|\mathcal{D}}$, given by the mapping
\begin{multline} \label{pred}
    B\mapsto 
     \frac1T \mathbb{E}\Bigg[ 1_B \Big( V^{J_*^{(1)}}_{x^{(1)}_{*}} z^{(L+1)}(x^{(1)}_{*}), \dots, V^{J_*^{(s)}}_{x^{(s)}_{*}} z^{(L+1)}(x^{(s)}_{*}) \Big) 
    {\mathcal{L} \Big(  z^{(L+1)}; \mathcal{D}  \Big)}\Bigg]\\
    := \mathbb{P} \Big( (V^{J_*^{(1)}}_{x^{(1)}_{*}} z^{(L+1)}(x^{(1)}_{*}), \dots, V^{J_*^{(s)}}_{x^{(s)}_{*}} z^{(L+1)}(x^{(s)}_{*})) \in B \Big| \mathcal{X}_{*}, \mathcal{D} \Big),
\end{multline}
where $T$ is implicitly defined in \eqref{e:Z}, and $B$ is a Borel subset of $\mathbb{R}^{s\times n_{L+1}}$. See e.g. \cite{GPML, Trev}.
}

\medskip 

{ The convergence in law of a fully connected neural network to a Gaussian process $G^{(L+1)}$ as the inner widths tend to infinity (Theorem \ref{Hanin}) naturally raises the question of the limiting behavior of the posterior distribution. In \cite{HBNPS}, the authors addressed this problem under the assumption that the labels \( y^{(1)}, \dots, y^{(d)} \) depend on the parameters \( \Theta \) of the neural network and the inputs \( \mathcal{X} = \{x^{(1)}, \dots, x^{(d)}\} \) only through \( z^{(L+1)}(\mathcal{X}) \). Moreover, they assumed that Assumption \ref{hyp_L} holds with \( r=1 \), and that the likelihood function is given by a non-negative, bounded, and continuous mapping $\mathcal{L}(\bullet)$ computed on the vector \( z^{(L+1)}(\mathcal{X})\), where the definition of $\mathcal{L}$ does not depend on the inner widths. Under these conditions, it was shown in \cite{HBNPS} that if \( \mathbb{E}[\mathcal{L}(G^{(L+1)}(\mathcal{X}))] > 0 \), then the posterior distribution of the neural network induced by the dataset \( \mathcal{D}_0 := \{(x^{(i)}, f(x^{(i)}))\}_{i=1,\dots,d} \), denoted by \( {z^{(L+1)}}_{|\mathcal{D}_0} \), converges in law to the posterior distribution of its Gaussian process limit, denoted by \( {G^{(L+1)}}_{|\mathcal{D}_0} \), as the inner widths tend to infinity.

The first quantitative result on the convergence in law of posteriors was established in \cite{Trev}. There, the author assumed that the activation function \( \sigma \) is Lipschitz, that the limiting covariance matrix is non-degenerate of order \( q=0 \) on \( \mathcal{X} \) (as in Definition \ref{non_deg}), and that the likelihood function is Lipschitz continuous and satisfies the same assumptions as in \cite{HBNPS}. Under these conditions, the results proved in \cite[Section 5]{Trev} yield that, if \( n := \min\{n_1, \dots, n_L\} \) is sufficiently large, then
\[
W_1({z^{(L+1)}}_{|\mathcal{D}_0},{G^{(L+1)}}_{|\mathcal{D}_0})\le \frac{C}{n},
\]
where \( C>0 \) is a constant independent of the inner widths. If the non-degeneracy condition does not hold, the bound is of order \( \frac{1}{\sqrt{n}} \).
}

{
The following result, proved in the Appendix, does not require the likelihood function to be Lipschitz and extends the results presented in \cite{HBNPS,Trev} by including the derivatives of the neural network.} Note that, as in Theorem \ref{fin_th_n} and in order not to overcharge the notation, we only present bounds that involve the first coordinate of the vector $z^{(L+1)} = (z_1^{(L+1)},...,z_{n_{L+1}}^{(L+1)})$; reasoning as in Remark \ref{one_dim_eq} (now applied to the content of the forthcoming Theorem \ref{bay_TV}) one can see that our bounds can be immediately generalized to include the full network's output. {
\begin{theorem}\label{bay_TV}
Let Assumption \ref{hyp_L} prevail, and assume that the likelihood \eqref{e:likely} admits a version such that
$$
\mathcal{L}(z_1^{(L+1)} ; \mathcal{D}) = \mathcal{L}\Big(\Big(V^{J^{(j)}}_{x^{(j)}}z_1^{(L+1)}(x^{(j)})\Big)_{j=1,\dots,d}\Big),\quad z_1^{(L+1)}\in C^{r-1}(\R^{n_0}, \R),
$$
where $\mathcal{L} : \mathbb{R}^{d} \to \R$ is non-negative, bounded and continuous.
We assume that under the prior measure the assumptions of Theorem \ref{fin_th_n} are satisfied and define \[ Z:=\Big(V^{J^{(j)}}_{x^{(j)}}z_1^{(L+1)}(x^{(j)})\Big)_{j=1,\dots,d}\quad\text{and}\quad G:=\Big(V^{J^{(j)}}_{x^{(j)}}G_1^{(L)}(x^{(j)})\Big)_{j=1,\dots,d}. \] 
If $T= \mathbb{E}[\mathcal{L}(Z)]>0$ and $\mathbb{E}[\mathcal{L}(G)]>0$, then there exists a constant $D$ independent of $n,n_1,\dots,n_L$ such that
    \[
    d_{TV}({Z}_{|\mathcal{D}},{G}_{|\mathcal{D}})\le \frac{D\|\mathcal{L}\|_{\infty}}{\mathbb{E}[\mathcal{L}(G)]}\Bigg(1+\frac{\|\mathcal{L}\|_{\infty}}{\mathbb{E}[\mathcal{L}(Z)]}\Bigg)\frac{1}{n},
    \]
    where: {\rm (i)} ${Z}_{|\mathcal{D}}$ indicates a vector distributed according to the posterior law of $Z$, that is, the law of $Z$ under the measure $\mu_{|\mathcal{D}}$ defined in \eqref{e:Z}, and {\rm (ii)} ${G}_{|\mathcal{D}}$ is a vector distributed according to the posterior law of $G$, that is, according to the probability measure given by
$$
B\mapsto \frac{1}{\mathbb{E}[\mathcal{L}(G)]}\mathbb{E}[1_{B}(G)\mathcal{L}(G)], \quad B\in \mathcal{B}(\R^d).
$$
    
\end{theorem}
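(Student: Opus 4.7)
The plan is to reduce the posterior total variation bound to the prior bound supplied by Theorem \ref{fin_th_n} by means of a direct Bayes-type computation. For any Borel measurable test function $h:\R^d\to\R$ with $\|h\|_\infty\leq 1$, the very definition of posterior law combined with the factorization of the likelihood assumed in the statement gives
\[
\mathbb{E}[h(Z_{|\mathcal{D}})]=\frac{\mathbb{E}[h(Z)\mathcal{L}(Z)]}{T_Z},\qquad \mathbb{E}[h(G_{|\mathcal{D}})]=\frac{\mathbb{E}[h(G)\mathcal{L}(G)]}{T_G},
\]
where $T_Z:=\mathbb{E}[\mathcal{L}(Z)]$ and $T_G:=\mathbb{E}[\mathcal{L}(G)]$ are strictly positive by hypothesis. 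The key algebraic step is to split the difference of the two ratios as
\[
\mathbb{E}[h(Z_{|\mathcal{D}})]-\mathbb{E}[h(G_{|\mathcal{D}})]=\mathbb{E}[h(Z)\mathcal{L}(Z)]\,\frac{T_G-T_Z}{T_Z T_G}+\frac{\mathbb{E}[h(Z)\mathcal{L}(Z)]-\mathbb{E}[h(G)\mathcal{L}(G)]}{T_G},
\]
so that each summand can be controlled separately by the TV distance between the priors $Z$ and $G$.

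Next, I would exploit the fact that both $h\mathcal{L}$ and $\mathcal{L}$ are bounded by $\|\mathcal{L}\|_\infty$, so that after dividing by $\|\mathcal{L}\|_\infty$ they belong to the class $\mathcal{M}_1$ from \eqref{dTV_def}. The variational representation of the total variation distance then supplies the two a priori estimates
\[
|\mathbb{E}[h(Z)\mathcal{L}(Z)]-\mathbb{E}[h(G)\mathcal{L}(G)]|\leq 2\|\mathcal{L}\|_\infty\,d_{TV}(Z,G),\qquad |T_Z-T_G|\leq 2\|\mathcal{L}\|_\infty\,d_{TV}(Z,G),
\]
together with the trivial estimate $|\mathbb{E}[h(Z)\mathcal{L}(Z)]|\leq \|\mathcal{L}\|_\infty$. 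Plugging these three inequalities into the decomposition, taking the supremum over $h$, dividing by $2$, and finally invoking the prior bound $d_{TV}(Z,G)\leq D_1/n$ provided by Theorem \ref{fin_th_n} yields
\[
d_{TV}(Z_{|\mathcal{D}},G_{|\mathcal{D}})\leq \frac{\|\mathcal{L}\|_\infty\,d_{TV}(Z,G)}{T_G}\left(1+\frac{\|\mathcal{L}\|_\infty}{T_Z}\right)\leq \frac{D_1\|\mathcal{L}\|_\infty}{\mathbb{E}[\mathcal{L}(G)]}\left(1+\frac{\|\mathcal{L}\|_\infty}{\mathbb{E}[\mathcal{L}(Z)]}\right)\frac{1}{n},
\]
which is precisely the claim with $D=D_1$.

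The argument itself is short; the only somewhat delicate point is to verify that the finite-dimensional posterior of the vector $Z=(V^{J^{(j)}}_{x^{(j)}}z_1^{(L+1)}(x^{(j)}))_{j=1,\dots,d}$ induced by the functional posterior $\mu_{|\mathcal{D}}$ defined in \eqref{e:Z} actually coincides with the re-weighted law having density $\mathcal{L}(Z)/T_Z$ with respect to the distribution of $Z$. This is exactly where one uses Assumption \ref{hyp_L} together with the selected version of $\mathcal{L}$, which factorizes through the finite-dimensional evaluation map $z\mapsto (V^{J^{(j)}}_{x^{(j)}}z_1(x^{(j)}))_{j=1,\dots,d}$. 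No finer control beyond the TV estimate of Theorem \ref{fin_th_n} is needed, and in particular the continuity of $\mathcal{L}$ together with its potential non-Lipschitz character enters only through the boundedness $\|\mathcal{L}\|_\infty<\infty$, which is precisely why this approach avoids the $W_1$-based strategy employed in \cite{Trev}.
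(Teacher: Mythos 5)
Your proposal is correct and follows essentially the same route as the paper: the same splitting of the ratio difference into a normalizing-constant term and a likelihood-weighted expectation term, each bounded using only $\|\mathcal{L}\|_\infty$ and the prior total variation estimate $d_{TV}(Z,G)\le D_1/n$ from Theorem \ref{fin_th_n} (the paper works with indicators $1_B$ rather than general $h\in\mathcal{M}_1$, an immaterial difference given \eqref{dTV_def}). The identification of the finite-dimensional posterior of $Z$ with the law reweighted by $\mathcal{L}(Z)/\mathbb{E}[\mathcal{L}(Z)]$, which you flag as the delicate point, is exactly how the paper starts its proof, so no gap remains.
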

}
\smallskip

{
\begin{remark}
    Since $\mathcal{L}$ is continuous and bounded, Theorem \ref{Hanin} yields that
    \[
    \mathbb{E}[\mathcal{L}(Z)]\to \mathbb{E}[\mathcal{L}(G)]\quad\text{as $n\to\infty$}.
    \]
    It follows that, if $\mathbb{E}[\mathcal{L}(G)]>0$, there exists an $N\in\mathbb{N}$ such that $\mathbb{E}[\mathcal{L}(Z)]>0$ for every $n\ge N$. As a consequence, one can remove the assumption on the positivity of $\mathbb{E}[\mathcal{L}(Z)]$ from the previous statement, provided $n$ is sufficiently large.
\end{remark}}

{
\begin{remark} 
Fix new inputs $\mathcal{X}_{*}:=\{x^{(1)}_{*},\dots,x^{(s)}_{*}\}\subset \mathbb{R}^{n_0}$, with $s\ge 1$. For every Borel set $B\in\mathcal{B}(\mathbb{R}^{s})$, under the assumptions and notation of Theorem \ref{bay_TV} and of \eqref{pred}, one has that
    \begin{multline}\label{diff_prop_post}
   \Bigg| \mathbb{E}\Bigg[1_{B}\Big((V^{J_*^{(i)}}_{x^{(i)}_{*}}{z}_1^{(L+1)}(x^{(i)}_{*}))_{i=1,\dots,s}\Big)
\mathcal{L}\Big((V^{J^{(i)}}_{x^{(i)}}{z_1^{(L+1)}}(x^{(i)}))_{i=1,\dots,d}\Big)\Bigg]\frac{1}{\mathbb{E}[\mathcal{L}(Z)]} \\
- \mathbb{E}\Bigg[1_{B}\Big((V^{J_*^{(i)}}_{x^{(i)}_{*}}{G}_1^{(L+1)}(x^{(i)}_{*}))_{i=1,\dots,s}\Big)
\mathcal{L}\Big((V^{J^{(i)}}_{x^{(i)}}{G_1^{(L+1)}}(x^{(i)}))_{i=1,\dots,d}\Big)\Bigg]\frac{1}{\mathbb{E}[\mathcal{L}(G)]}\Bigg|\\
\le d_{TV}(\tilde{Z}_{|\mathcal{D}},\tilde{G}_{|\mathcal{D}}),
    \end{multline}
    where $\tilde{Z}_{|\mathcal{D}}$ is a $(d+s)$-dimensional vector with law proportional to
    \[
    \mathcal{L}\Big(z_1,\dots,z_d\Big)d\mu_1(z_1,\dots,z_d,z_{d+1},\dots,z_{d+s})
    \]
    and $\tilde{G}_{|\mathcal{D}}$ is a $(d+s)$-dimensional vector with law proportional to
    \[
    \mathcal{L}\Big(g_1,\dots,g_d\Big)d\mu_2(g_1,\dots,g_d,g_{d+1},\dots,g_{d+s}),
    \]
    where $\mu_1$ and $\mu_2$ are, respectively, the laws of
    \[
    \tilde{Z}:=\Big((V^{J^{(i)}}_{x^{(i)}}{z}_1^{(L+1)}(x^{(i)}))_{i=1,\dots,d},(V^{J_*^{(i)}}_{x_{*}^{(i)}}{z_1^{(L+1)}}(x_{*}^{(i)}))_{i=1,\dots,d+s})\Big)
    \]
    and of
    \[
    \tilde{G}:=\Big((V^{J^{(i)}}_{x^{(i)}}{G}_1^{(L+1)}(x^{(i)}))_{i=1,\dots,d},(V^{J_*^{(i)}}_{x_{*}^{(i)}}{G_1^{(L+1)}}(x_{*}^{(i)}))_{i=1,\dots,d+s})\Big).
    \]
    Assuming, as in the setting of Theorem \ref{fin_th_n}, that the limiting covariance matrices $$\{K^{(\ell)}\}_{\ell=1,\dots,L+1}$$ are non-degenerate on $\mathcal{X}_{*}\cup\mathcal{X}$ (with $\mathcal{X}:=\{x^{(1)},\dots,x^{(d)}\}$), to the order $q\in\{0,\dots,r\}$ with respect to $V$, one can rehearse the proof of Theorem \ref{bay_TV} to infer that 
    \[
    d_{TV}(\tilde{Z}_{|\mathcal{D}},\tilde{G}_{|\mathcal{D}})\le \frac{D\|\mathcal{L}\|^2_{\infty}}{\mathbb{E}[\mathcal{L}(G)]}\Bigg(1+\frac{\|\mathcal{L}\|_{\infty}}{\mathbb{E}[\mathcal{L}(Z)]}\Bigg)\frac{1}{n}.
    \]
    It follows from identity \eqref{pred} and inequality \eqref{diff_prop_post} that computing 
    \[
    \mathbb{P}\Big((V^{J_*^{(1)}}_{x^{(1)}_{*}}z^{(L+1)}(x^{(1)}_{*}),\dots,V^{J_*^{(s)}}_{x^{(s)}_{*}}z^{(L+1)}(x^{(s)}_{*}))\in B\Big|\mathcal{X}_{*},\mathcal{D}\Big)
    \]
    using the posterior of the Gaussian limit instead of that of the neural network yields an error scaling as $O(1/n)$ as $n$ diverges to infinity.
\end{remark}
}

{
\subsection{Literature review}\label{ss:litintro}

As already discussed, several recent papers have addressed the problem of bounding the distance between the law of a Gaussian FCNN (under various assumptions on the network architecture) and its Gaussian limit, as the inner widths tend to infinity. The techniques used are mainly probabilistic (Stein’s method and/or coupling) or based on optimal transport. In contrast, this paper addresses the problem using information-theoretic methods. {

We now compare existing results with the new contributions of this paper, focusing on the type of distance considered.

\subsubsection{Known bounds on the TV, Convex and in 1-Wasserstein distances}\label{subsec_dc}

In \cite{BFF24} the authors work under Assumption \ref{bW} (invertibility of the limiting covariance matrix $K^{(L+1)}$), assuming $L=1$ (shallow network) and $\sigma$ polynomially bounded along with its first two derivatives. As an application of the second-order Poincaré inequalities from \cite{Vid}, it is shown that the TV distance (for one input) and the Wasserstein distance (for multiple inputs) between the law of the network and its Gaussian limit are bounded by $\frac{1}{\sqrt{n_1}}$, where $n_1$ is the length of the inner layer of the network. For $L=2$, they obtained a slower convergence rate.

{ In \cite{Torr23}, the authors assumed Gaussian weights and biases as in Assumption \ref{bW} and general conditions on the activation function $\sigma$ that holds for example when $\sigma$ is a Lipschitz continuous function (see Proposition 5.2 in \cite{Torr23}):
\begin{itemize}
    \item $\forall a_1,a_2\ge 0$, and $C_b,C_W>0$, there exists a polynomial $P$, with non-negative coefficients depending only on $\sigma, C_b, C_W$ and with degree independent of $\sigma, a_1, a_2, C_b, C_W$, such that
    \[
    |\sigma(x\sqrt{C_b+C_Wa_1})^2-\sigma(x\sqrt{C_b+C_Wa_2})^2|\le P(|x|)|a_2-a_1|, \quad\text{for all $x\in\mathbb{R}$};
    \]
    \item For every $k\in\mathbb{R}$, $\mathbb{E}[\sigma^4(kZ)]<\infty$, where $Z\sim\mathcal{N}_1(0,1)$.
\end{itemize}
In \cite[Theorem 6.1 and Theorem 6.2]{Torr23} it is proved that for $L \ge 1$ and $x\in\mathbb{R}^{n_0}$ 
\begin{equation}\label{e:apo}
\max\Big\{W_1(z^{(L+1)}(x),G^{(L+1)}(x)), d_C(z^{(L+1)}(x),G^{(L+1)}(x))\Big\} \le \sum_{\ell=1}^{L} C_\ell \frac{1}{\sqrt{n_\ell}},
\end{equation}
where $n_1,...,n_L$ are the inner widths of the network and $C_\ell > 0$ is an explicit constant. The approach of \cite{Torr23} relies on the conditional Gaussianity of the network and on the use of Stein's method. Commensurate rates are established for the TV, Kolmogorov, and 1-Wasserstein distances, when $L=1$ and one considers a single input.}

In \cite{FHMNP}, as an application of Stein's method and of the estimates from \cite{HanGa}, the authors improved these bounds in several ways. Under Assumptions \ref{bW} and \ref{hip_s}, they showed that for a single input, both the 1-Wasserstein and TV distances are bounded above and below (in the case where no derivatives are involved --- see Remark \ref{optim_sp}) by quantities scaling $\frac{1}{n}$, yielding optimal convergence rates. The results proved in \cite{FHMNP} are valid in the general framework of Theorem \ref{Hanin}) and consider in particular the derivatives of the neural network and the corresponding Gaussian limit with respect to the input.

Extending the Stein's method approach from \cite{FHMNP} to multiple inputs, while maintaining the optimality of the rates, is challenging. The authors of \cite{FHMNP} analyzed the convex distance between the derivatives of the network and the Gaussian limit under a non-degeneracy condition on the limiting covariance (analogous to Definition \ref{non_deg}), achieving a sub-optimal $\frac{1}{\sqrt{n}}$ bound, when compared to the estimates in Theorem \ref{fin_th_n} above. When no derivatives are involved, the multi-dimensional bounds proved in \cite{FHMNP} roughly match the bound \eqref{e:apo} from \cite{Torr23}.

\subsubsection{The case of $W_p$ distances, for $p\ge 2$}\label{subsec_trev}

In \cite{BT24}, the authors studied a FCNN with a finite number of inputs, Gaussian weights (Assumption \ref{bW}), general depth $L \ge 1$, and a Lipschitz activation function. Using an inductive argument and properties of the 2-Wasserstein distance, they proved an explicit bound of order $\sum_{\ell=1}^{L}\frac{1}{\sqrt{n_\ell}}$ for the distance between the network and its Gaussian limit, without assuming the invertibility of the limiting covariance matrix.

In \cite{Trev}, the author considered a more general network architecture and improved the results of \cite{BT24}, obtaining an upper bound of optimal order $\sum_{\ell=1}^{L}\frac{1}{n_\ell}$ for the $p$-Wasserstein distance between the law of the network and of its Gaussian limit (for all $p \ge 1$), with a finite number of inputs. This result uses Assumption \ref{bW}, a Lipschitz non-linearity, and the invertibility of $K^{(\ell)}$ for all $\ell = 1, \dots, L+1$. 

{ The last two hypothesis on the non-linearity and on the limiting covariance matrix are respectively particular situations of Assumption \ref{hip_s} with $r=1$ and of the non-degeneracy condition in the case of no derivatives as in Definition \ref{non_deg}. This means that the speed of convergence of the order $\frac{1}{n}$ is recovered by Theorem \ref{fin_th_n} in the 2-Wasserstein distance, if $cn\le n_1,\dots, n_{L}\le Cn$ with $c,C>0$ constants. }

As already pointed out, the paper \cite{Trev} served as a key reference for establishing Theorems \ref{final_noC} and \ref{th_gen_tv_2w} in our work. In particular, we will see that our strategy of proof exploits the idea of partitioning the probability space into regions where the network has a density (allowing for a Taylor expansion of such a density) and regions where it does not, but that are easier to handle.

One crucial difference between \cite{Trev} and our work is that in \cite{Trev} such an approach is adopted to control Gaussian fluctuations of an empirical kernel around its expectation, rather than entropy bounds. The author of \cite{Trev} also applied recent results on the $p$-Wasserstein distance, including \cite{Bon20} and \cite{Led19}. We also point out that our use of results from \cite{HanGa} allows us to directly deal with derivatives of the network with respect to the input.

\subsubsection{Functional results}

Papers such as \cite{FHMNP, CMSV24, BGRS23, EMS21, Klu22} study the infinite-dimensional problem, where the neural network is treated as a random continuous function. In \cite{BGRS23, EMS21, Klu22}, the case $L=1$ is considered, focusing on the 2-Wasserstein distance, the $\infty$-Wasserstein distance (defined via the sup-norm), and standard distances for random variables in a Hilbert space. While bound of order $\frac{1}{n_1}$ is not achieved, the order $\frac{1}{\sqrt{n_1}}$ is obtained for polynomial activation functions.

The case $L \ge 1$ is studied in \cite{FHMNP, BGRS23}. In \cite{BGRS23}, extending techniques from \cite{Bar21}, the authors established a smoothing result for the 1-Wasserstein distance between the laws of random fields taking values in the space of continuous functions on the sphere. Using this and Stein's method, they derived a bound of order
\[
\sum_{\ell=1}^{L} \sqrt{n_{\ell+1}} \left(\frac{n_{\ell+1}^4}{n_\ell}\right)^c \log\left(\frac{n_\ell}{n_{\ell+1}^4}\right)
\]
for some constant $c > 0$. The assumptions in \cite{BGRS23} include Lipschitz non-linearities, spherical inputs, Gaussian biases, and i.i.d. weights (not necessarily Gaussian) satisfying adequate moment conditions. The convergence rate improves when considering Gaussian weights and smoother non-linearities but still does not capture convergence in law when all inner widths diverge at the same speed.

In \cite{FHMNP}, the authors worked under Assumptions \ref{bW} and \ref{hip_s}, and assumed further the non-degeneracy of $\{K^{(\ell)}\}_{\ell=1,\dots,L+1}$ up to order $q \le r-1$ with respect to $\{\partial/\partial x_1, \dots, \partial/\partial x_{n_0}\}$ (or $C^\infty$ non-linearity), and further technical conditions on the input space $\mathbb{U}$ and the eigenvalues of the trace-class operator associated with $K^{(L+1)}$:
\begin{multline*}
h \mapsto Kh := \Big\{(Kh)_j(x^{(i)}) 
= \sum_{J \in \mathcal{M}_q^{(n_0)}} \int_{\mathbb{U}} D_{x^{(k)}}^J h_j(x^{(k)}) D^J_{x^{(k)}} K_{k,i}^{(L+1)} dx^{(k)}, \\
j=1,\dots,n_{L+1}, \, x^{(i)} \in \mathbb{U} \Big\},
\end{multline*}
where $\mathcal{M}_q^{(n_0)}$ and $D_x^J$ are defined in \eqref{set_der_q} and \eqref{der_sem}, respectively, and $h \in \mathbb{W}^{q,2}(\mathbb{U}; \mathbb{R}^{n_{L+1}})$, defined as the Sobolev space of functions with square-integrable weak derivatives up to order $q$. Under these conditions, the authors of \cite{FHMNP} proved that
\[
W_{2;q}\big(z^{(L+1)}(\mathbb{U}), G^{(L+1)}(\mathbb{U})\big) \le C n^{-\frac{1}{8}},
\]
with a constant $C > 0$ independent of $n,n_1,\dots,n_L$ (which is characterized as in \eqref{bounds_n}), and $W_{2;q}$ denoting the 2-Wasserstein distance associated with the $\mathbb{W}^{2;q}$ norm. Assuming further that Assumption \ref{hip_s} holds for all $r \ge 1$, and using the Sobolev embedding theorem (see e.g. \cite{DG12}), in \cite{FHMNP} it is proved further that, for fixed $k \ge 1$:
\[
W_{\infty;k}\big(z^{(L+1)}(\mathbb{U}), G^{(L+1)}(\mathbb{U})\big) \le C n^{-\frac{1}{8}},
\]
where $C > 0$ is constant and $W_{\infty;k}$ is the $\infty$-Wasserstein distance defined with the $C^k(\bar{\mathbb{U}})$ norm. Bounds of the order $n^{-1/2}$ are also established for $C^2$-type distances associated with Hilbert-type Sobolev spaces.

\subsection{Structure of the paper}

Section \ref{pre} introduces definitions and known results used throughout the paper, including properties of Gaussian FCNNs and Hermite polynomials. Section \ref{gen_res} outlines the proof scheme for Theorem \ref{final_noC}, while Section \ref{NN_app} contains the proof of Theorem \ref{th_gen_tv_2w}. The Appendix provides the proof of Theorem \ref{bay_TV}, the proofs of all lemmas used in Section \ref{gen_res}, and the proofs of ancillary or technical results.

}

\section{Preliminaries}\label{pre}

\subsection{Results on entropy, distances and conditionally Gaussian variables.}\label{ss:results on distances}

\subsubsection{Dual representation of Wasserstein-type distances}
{
The following result is a particular case of \cite[Theorem 5.10]{villani}. It provides an alternate representation of the $p$-Wasserstein distance with $p\ge 1$ integer (as defined in \eqref{dW_def}).
\begin{theorem}\label{rappW2}
If $X,Y$ are square integrable random variables in $\mathbb{R}^d$ and $p\ge 1$ is an integer, then
\begin{equation}\label{W_2form_altra}
W_p(X,Y)^p = \sup_{h\in L^1(\mu_Y)} \Big(\mathbb{E}[h(Y)]-\mathbb{E}[h^*(X)]\Big)
\end{equation}
where $\mu_Y$ is the law of $Y$ and $h^*$ is defined as
\begin{equation}\label{hstar}
h^*(x):=\sup_{y\in\mathbb{R}^d}\big(h(y)-\|x-y\|^p\big).
\end{equation}

\end{theorem}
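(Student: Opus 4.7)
The plan is to derive the stated identity as a direct specialization of the Kantorovich--Rubinstein duality theorem (Villani, Theorem 5.10) applied to the cost function $c(x,y) = \|x-y\|^p$. That general theorem asserts
\[
W_p(X,Y)^p \;=\; \sup_{(\varphi,\psi)} \Big(\mathbb{E}[\psi(Y)] - \mathbb{E}[\varphi(X)]\Big),
\]
where the supremum ranges over pairs $(\varphi,\psi) \in L^1(\mu_X) \times L^1(\mu_Y)$ satisfying the pointwise constraint $\psi(y) - \varphi(x) \le \|x-y\|^p$ for all $x,y \in \mathbb{R}^d$. The task then reduces to showing that optimizing over such admissible pairs is equivalent to optimizing only over those pairs of the form $(h^*, h)$ with $h \in L^1(\mu_Y)$, where $h^*$ is defined in \eqref{hstar}.

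For one direction, I would fix an arbitrary $h \in L^1(\mu_Y)$ and verify that the pair $(h^*, h)$ is admissible. By the very definition of $h^*(x) = \sup_{y}(h(y) - \|x-y\|^p)$, one has $h(y) - h^*(x) \le \|x-y\|^p$ for every $x,y$, so the Kantorovich constraint holds. Consequently, provided $h^* \in L^1(\mu_X)$, such a pair contributes a valid lower bound to the dual problem, giving $W_p(X,Y)^p \ge \mathbb{E}[h(Y)] - \mathbb{E}[h^*(X)]$, and taking the supremum over $h$ yields one of the two inequalities.

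For the reverse direction, I would start from an arbitrary admissible pair $(\varphi,\psi)$ and set $h := \psi$. The constraint $\psi(y) - \varphi(x) \le \|x-y\|^p$ rearranges to $\psi(y) - \|x-y\|^p \le \varphi(x)$ for every $y$, so by the definition of $h^*$ we obtain $h^*(x) \le \varphi(x)$ pointwise. Integrating against $\mu_X$ and $\mu_Y$ respectively gives
\[
\mathbb{E}[h(Y)] - \mathbb{E}[h^*(X)] \;\ge\; \mathbb{E}[\psi(Y)] - \mathbb{E}[\varphi(X)].
\]
Passing to the supremum over admissible pairs on the right-hand side then matches $W_p(X,Y)^p$.

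The main technical obstacle is the integrability of $h^*$: in general $h^*$ is only upper semicontinuous with values in $[-\infty, +\infty]$, and one must ensure that $\mathbb{E}[h^*(X)]$ is well-defined. This is handled by restricting, without loss of generality, to the class of $c$-concave functions $h$ (equivalently, those of the form $h = g^{**}$ for some $g$), for which $h^*$ is automatically finite and satisfies the growth bound $|h^*(x)| \le C(1 + \|x\|^p)$ whenever $h$ obeys a symmetric bound; the square-integrability assumption on $X$ (and more generally a $p$-th moment assumption, which follows from the hypothesis that $W_p(X,Y)$ itself is finite) then guarantees $h^* \in L^1(\mu_X)$. These measurability and integrability subtleties are precisely what Villani's Theorem 5.10 takes care of in full generality, so the proof reduces to invoking that result and identifying the dual pair $(h^*, h)$ as above.
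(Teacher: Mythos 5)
Your proposal is correct and follows essentially the same route as the paper, which states this result without proof as a particular case of Kantorovich duality (Villani, Theorem 5.10); you simply spell out the routine specialization, checking that pairs of the form $(h^*,h)$ are admissible and that any admissible pair $(\varphi,\psi)$ is dominated by $(\psi^*,\psi)$. Your handling of the measurability and integrability of $h^*$ (lower bound $h^*(x)\ge h(y_0)-\|x-y_0\|^p$ plus the moment assumption) is the right way to make the reduction rigorous, so no gap remains.
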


\begin{remark}

{ By definition, the supremum on the right-hand side of equation \eqref{W_2form_altra} is taken over all $h\in L^1(\mu_Y)$ and over all versions of $h$ (that is, over all functions belonging to the equivalence class of $h$). We notice that, if a given version of $h\in L^1(\mu_Y)$ takes the value $+\infty$ on a set of $\mu_Y$-measure zero, then $h^*(x)=+\infty$ for all $x\in\mathbb{R}^d$ and therefore
   \[
   \mathbb{E}[h(Y)]-\mathbb{E}[h^*(X)]=-\infty.
   \]
   As a consequence, without loss of generality one can remove versions with this property from the supremum on the right-hand side of \eqref{W_2form_altra}. A similar argument allows one to remove from the supremum any version of $h\in L^1(\mu_Y)$ such that $h(y) = -\infty$ for some $y$ belonging to a set of $\mu_Y$-measure zero.
   }

\end{remark}

\begin{remark}
    If $h(y)=0$ for every $y\in\mathbb{R}^d$ then also $h^*(x)=0$ for every $x\in\mathbb{R}^d$. Hence
    \[
    {\sup_{g\in L^1(\mu_Y)} \Big(\mathbb{E}[g(Y)]-\mathbb{E}[g^*(X)]\Big)\ge \mathbb{E}[h(Y)]-\mathbb{E}[h^*(X)]=0}
    \]
    and therefore the expression on the right-hand side of \eqref{W_2form_altra} {is non-negative and one can avoid the use of absolute values.}
\end{remark}
\begin{remark}\label{boundhhstar}
Observe that, thanks to the definition \eqref{hstar} of $h^*$, 
\[
h(y)-h^*(x)=h(y)-\sup_{z\in\mathbb{R}^d}\big(h(z)-\|x-z\|^p\big)\le \|x-y\|^p.
\]
\end{remark}
\begin{remark}
In the case $p=1$, Theorem \ref{rappW2} implies the following dual representation of the 1-Wasserstein distance (see \cite[Remark 6.5]{villani}):
\begin{equation}\label{lip_dist}
W_1(X,Y):=\sup_{f\in \mathcal{L}}\Big|\mathbb{E}[f(X)]-\mathbb{E}[f(Y)]\Big|,
\end{equation}
where $\mathcal{L}:=\Big\{f:\mathbb{R}^d\to\mathbb{R}$ s.t. $\sup_{z,w\in\mathbb{R}^n,z\ne w}\frac{|f(z)-f(w)|}{\|z-w\|}\le 1\Big\}$. 
\end{remark}
\subsubsection{Bounds using relative entropy}
As mentioned in Section \ref{ss:introabstract} and demonstrated by the following two statements, the relative entropy introduced in Definition \ref{def_RE} can be used to bound from above the Total Variation and 2-Wasserstein distances. 

\begin{theorem}[Pinsker-Csizsar-Kullback inequality \cite{BGL14}]\label{PCK}
If $X$, $Y$ are two random vectors in $\mathbb{R}^d$, such that the law of $X$ has a density with respect to the law of $Y$, then
\[
d_{TV}(X,Y)\le \sqrt{\frac{1}{2} D(X||Y)},
\]
where $d_{TV}(X,Y)$ is defined in \eqref{dTV_def}.
\end{theorem}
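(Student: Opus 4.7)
The plan is to carry out the classical Cauchy-Schwarz proof of Pinsker's inequality. Let $f := d\nu_X/d\nu_Y$ denote the Radon-Nikodym derivative (which exists by assumption) and set $\phi(t) := t\log t - t + 1$, with the convention $\phi(0) = 1$. Since $\int (f-1)\, d\nu_Y = 0$, one has the compact representations
\[
2\,d_{TV}(X,Y) = \int_{\R^d} |f-1|\, d\nu_Y, \qquad D(X||Y) = \int_{\R^d} \phi(f)\, d\nu_Y,
\]
where the first equality follows from the variational representation in \eqref{dTV_def} by choosing $h = \mathrm{sgn}(f-1)$.

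The analytical heart of the argument is the pointwise inequality
\[
3(t-1)^2 \le (2t+4)\,\phi(t), \qquad t \ge 0.
\]
To establish it, I would define $g(t) := (2t+4)\phi(t) - 3(t-1)^2$ and, using $\phi'(t)=\log t$ and $\phi''(t)=1/t$, compute $g(1) = g'(1) = 0$ and $g''(t) = 4\log t - 4 + 4/t$. Since $g'''(t) = 4(t-1)/t^2$ has a unique zero at $t=1$, the function $g''$ attains its minimum at $t=1$ with value $g''(1)=0$, so $g''\ge 0$ on $(0,\infty)$. Hence $g$ is convex with a global minimum equal to $0$ at $t=1$, and the boundary value $g(0) = 4\phi(0) - 3 = 1 > 0$ handles the endpoint.

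With the pointwise inequality in hand, I would apply the Cauchy-Schwarz inequality to the splitting $|f-1| = \frac{|f-1|}{\sqrt{2f+4}}\cdot\sqrt{2f+4}$, yielding
\[
\left(\int |f-1|\, d\nu_Y\right)^2 \le \left(\int \frac{(f-1)^2}{2f+4}\, d\nu_Y\right)\left(\int (2f+4)\, d\nu_Y\right).
\]
The second factor equals $6$ because $\int f\,d\nu_Y = 1$, while the first is bounded by $\tfrac{1}{3}D(X||Y)$ by the pointwise inequality. Combining gives $(2\,d_{TV}(X,Y))^2 \le 2\, D(X||Y)$, which is the claim.

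The only genuine obstacle is the elementary but not-quite-obvious verification of $3(t-1)^2 \le (2t+4)\phi(t)$; once that is established, the rest is a one-line application of Cauchy-Schwarz. An alternative route, which avoids this inequality, would be to reduce to the two-point case via the data-processing property of relative entropy applied to $B$-indicator partitions realizing the supremum in \eqref{dTV_def}, and then verify the scalar estimate $2(p-q)^2 \le D(\mathrm{Ber}(p)\,\|\,\mathrm{Ber}(q))$ by checking that $f(p) := D(p\|q) - 2(p-q)^2$ satisfies $f(q) = f'(q) = 0$ and $f''(p) = 1/(p(1-p)) - 4 \ge 0$.
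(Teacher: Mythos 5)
Your proof is correct. Note that the paper does not actually prove Theorem \ref{PCK}: it is imported as a known result with a citation to \cite{BGL14}, so there is no in-paper argument to compare against. What you supply is the standard self-contained Cauchy--Schwarz proof, and every step checks out: with $\phi(t)=t\log t-t+1$ the identities $2\,d_{TV}(X,Y)=\int|f-1|\,d\nu_Y$ and $D(X||Y)=\int\phi(f)\,d\nu_Y$ are exactly right (the latter because $\int f\,d\nu_Y=1$); the pointwise bound $3(t-1)^2\le(2t+4)\phi(t)$ is verified correctly via $g(1)=g'(1)=g''(1)=0$, $g'''(t)=4(t-1)/t^2$, hence $g''\ge0$ and $g$ convex with global minimum $0$ at $t=1$, plus the endpoint value $g(0)=1$; and the splitting $|f-1|=\frac{|f-1|}{\sqrt{2f+4}}\cdot\sqrt{2f+4}$ together with $\int(2f+4)\,d\nu_Y=6$ gives $(2\,d_{TV})^2\le 6\cdot\frac13 D(X||Y)$, i.e. the claimed constant $\frac12$ (and the inequality is trivial when $D(X||Y)=\infty$). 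Your sketched alternative via data processing and the Bernoulli case $2(p-q)^2\le D(\mathrm{Ber}(p)\|\mathrm{Ber}(q))$ is also sound, since $1/(p(1-p))\ge 4$. Either route would serve as a legitimate proof of the cited theorem.
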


\begin{theorem}[Talagrand's inequality \cite{Tala}]\label{tala}
Let $Y\sim\mathcal{N}_d(0,I_d)$ r.v. in $\mathbb{R}^d$, where $I_d$ is the identity matrix of dimension $d\times d$, and let $X$ be a random vector with values in $\mathbb{R}^d$ such that $\mathbb{E}[\|X\|^2]<\infty$. Then
\[
W_2(X,Y)\le\sqrt{2D(X||Y)},
\]
where  $W_2(X,Y)$ is defined in \eqref{dW_def}.
\end{theorem}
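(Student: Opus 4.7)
The plan is to prove Talagrand's inequality $W_2(X,Y)^2\le 2D(X\|Y)$, with $Y\sim \mathcal{N}_d(0,I_d)$, via Brenier's theorem from optimal transport. Writing $\mu$ for the law of $X$ and $\gamma$ for the standard Gaussian on $\R^d$, we may assume $D(X\|Y)<\infty$ (otherwise the inequality is trivial), which forces $\mu$ to be absolutely continuous with respect to Lebesgue measure, with some density $\rho$. Brenier's theorem then furnishes a convex potential $\varphi:\R^d\to\R$ such that $T:=\nabla\varphi$ pushes $\gamma$ onto $\mu$ and is optimal for the quadratic cost, whence
\[
W_2(X,Y)^2 \;=\; \int_{\R^d}\|T(y)-y\|^2\,\gamma(dy).
\]

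The second step is to re-express $D(\mu\|\gamma)$ in terms of $T$. Performing the change of variables $x=T(y)$ together with the Monge--Amp\`ere identity $\rho(T(y))\det(DT(y)) = (2\pi)^{-d/2}e^{-\|y\|^2/2}$, one obtains, after cancellation,
\[
D(\mu\|\gamma) \;=\; \int_{\R^d}\Big[\tfrac{1}{2}\|T(y)\|^2-\tfrac{1}{2}\|y\|^2-\log\det DT(y)\Big]\gamma(dy).
\]
Expanding $\|T(y)\|^2 = \|T(y)-y\|^2+2\langle y,T(y)-y\rangle+\|y\|^2$ and combining with the previous display,
\[
2D(X\|Y)-W_2(X,Y)^2 \;=\; 2\int_{\R^d}\Big[\langle y,T(y)-y\rangle-\log\det DT(y)\Big]\gamma(dy),
\]
so the problem reduces to showing that this last integral is nonnegative.

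The third and concluding step is a Gaussian integration by parts: $\int\langle T(y),y\rangle\,\gamma(dy) = \int\operatorname{tr}(DT(y))\,\gamma(dy)$ and $\int\|y\|^2\gamma(dy)=d$, so the integrand becomes $\operatorname{tr}(DT(y)) - d - \log\det DT(y)$. Since $T=\nabla\varphi$ with $\varphi$ convex, $DT(y)$ is symmetric and positive semidefinite; letting $\lambda_1(y),\dots,\lambda_d(y)$ denote its eigenvalues, this integrand equals $\sum_{i=1}^d(\lambda_i(y)-1-\log\lambda_i(y))$, which is pointwise nonnegative by the elementary inequality $t-1-\log t\ge 0$ for $t>0$. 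This yields Talagrand's bound.

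The main obstacle will be the regularity of the Brenier map: $\varphi$ being merely convex, $T$ is a priori only defined $\gamma$-almost everywhere and lies in $BV_{\mathrm{loc}}$ rather than $C^1$, so neither the Monge--Amp\`ere change of variables nor the Gaussian integration by parts is literally justified. The standard remedy is to regularize $\mu$ (e.g.\ by convolution with a Gaussian of small variance $\varepsilon$), apply the inequality to the regularized measure where Caffarelli's regularity theory makes the associated Brenier map smooth, and then let $\varepsilon\downarrow 0$ using weak lower semicontinuity of $W_2^2$ and of $D(\cdot\|\gamma)$. Alternatively, one can circumvent regularity entirely by following Talagrand's original argument: first establish the one-dimensional case by explicit CDF-based transport, and then tensorize using the subadditivity of the relative entropy and a matching subadditivity of $W_2^2$ for product measures.
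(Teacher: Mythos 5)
Your proposal is correct in substance, but note that the paper does not prove this statement at all: Theorem \ref{tala} is imported as a classical result with a citation to Talagrand's original 1996 paper, whose argument is the one you mention only as a fallback (explicit CDF-based transport in dimension one, then tensorization via subadditivity of relative entropy and of $W_2^2$ over product measures). Your primary route is the by-now standard optimal-transport proof in the style of McCann and Cordero--Erausquin: Brenier map $T=\nabla\varphi$ from $\gamma$ to $\mu$, the Monge--Amp\`ere change of variables to rewrite $D(\mu\|\gamma)$, the expansion $\|T(y)\|^2=\|T(y)-y\|^2+2\langle y,T(y)-y\rangle+\|y\|^2$, and Gaussian integration by parts plus $t-1-\log t\ge 0$ applied to the eigenvalues of $DT$. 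The computation is right, and you correctly identify the only genuine issue, namely the regularity of $T$. Two refinements: (i) you do not actually need Caffarelli regularity or a regularization step, because for a convex potential the distributional Hessian dominates its absolutely continuous (Alexandrov) part, so the Gaussian integration by parts becomes an inequality in the favorable direction, while the Monge--Amp\`ere equation holds $\gamma$-a.e.\ with the Alexandrov Hessian whenever $\mu$ is absolutely continuous; this closes the argument directly. (ii) If you do regularize, the cleanest choice is the Ornstein--Uhlenbeck interpolation $X_\varepsilon=\sqrt{1-\varepsilon}\,X+\sqrt{\varepsilon}\,Z$, which does not increase $D(\cdot\|\gamma)$, combined with lower semicontinuity of $W_2$. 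Compared with Talagrand's original tensorization proof (the one behind the paper's citation), your transport argument is less elementary but more conceptual and extends to other log-concave reference measures; either route fully establishes the statement as used in the paper.
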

{
\begin{remark}
From the previous two statements, one infers that --- if $\{X_n,Y\}$ meet appropriate conditions and if $D(X_n||Y):= \varphi(n)\to 0$ --- then $d_{TV}(X_n, Y)$ and $W_2(X_n,Y)$ also converge to zero {at a rate of the order} $O\left(\sqrt{\varphi(n)}\right)$. This implies, in particular, that $X_n$ converges to $Y$ in distribution (Proposition C.3.1 in \cite{NP12}) .
\end{remark}
}
\begin{remark}
Theorem \ref{PCK} implies a bound also on the convex distance defined in \eqref{dC_def} and Theorem \ref{tala} gives a bound also on the $1$-Wasserstein distance defined in \eqref{lip_dist}).

\end{remark}
\subsubsection{Conditionally Gaussian random variables}
We will now present (as remarks) some elementary properties of conditionally Gaussian random variables --- see Definition \ref{def_cond_ga}.  

\begin{remark}\label{rad_AN}
If a random vector $X$ in $\mathbb{R}^d$ with $\mathbb{E}[X]=0$ is conditionally Gaussian with respect to a $\sigma$-field $\mathcal{F}$ and with conditional covariance $M$, then, if $N\sim\mathcal{N}_d(0,I_d)$ is a standard Gaussian vector in $\mathbb{R}^d$ independent of the matrix $M$, one has that 
\[
X\sim \sqrt{M}N.
\]
Indeed, for all $y\in \R^d$ one has that
\[
\mathbb{E}\Big[e^{i\langle y,\sqrt{M}Ny\rangle}\Big]=\mathbb{E}\Big[\mathbb{E}\Big[e^{i\langle y,\sqrt{M}Ny\rangle}\big|\mathcal{F}\Big]\Big]=\mathbb{E}\Big[e^{-\frac{1}{2}\langle y,My\rangle}\Big]=\mathbb{E}\Big[e^{i\langle y,X\rangle}\Big],
\]
where we have used elementary properties of the conditional expectation (see e.g. \cite{PMW}) together with the identity \eqref{cond_Ga_def}.
\end{remark}

\begin{remark}\label{dens_F}
Let $X$ be a random vector in $\mathbb{R}^d$, such that $\mathbb{E}[X]=0$ and $X$ is conditionally Gaussian with respect to a $\sigma$-field $\mathcal{F}$, and with conditional covariance matrix $M$. Remark \ref{rad_AN} implies that, if $\mathbb{P}({\rm det}\, M>0)=1$, then for every $f:\mathbb{R}^d\to \mathbb{R}$ measurable and bounded, one has the identity
\begin{equation}\label{e:ef}
\mathbb{E}[f(X)]=\mathbb{E}\Big[\mathbb{E}[f(X)|\mathcal{F}]\Big]=\mathbb{E}\Big[\int_{\mathbb{R}^d}f(x)\phi_M(x)dx\Big],
\end{equation}
where $\phi_M$ is the density of the Gaussian law $\mathcal{N}_d(0,M)$. As a consequence, in this case the density of $X$ is given by $x\mapsto \mathbb{E}[\phi_M(x)]$, which is finite for almost every $x\in\mathbb{R}^d$ {(as one can see by choosing $f\equiv 1$ in \eqref{e:ef})}. We recall that, for a positive definite matrix $M$, the Gaussian density in $\mathbb{R}^d$ with zero mean and covariance $M$ is given by
\begin{equation}\label{phi_M}
\phi_M(x) := \frac{1}{(2\pi)^{d/2} \sqrt{\det M}} \, e^{-\frac{1}{2} \langle x, M^{-1} x \rangle}, \quad x\in \R^d.
\end{equation}
\end{remark}

\subsection{Further notation}\label{ss:notation}
Throughout the paper, we write $\mathbb{N}_0 := \mathbb{N} \cup \{0\}$. For any matrix $M \in \mathbb{R}^{d \times d}$, the operator norm is defined as
\[
\|M\|_{op} := \sup_{\substack{x \in \mathbb{R}^d \\ \|x\| = 1}} \|Mx\|,
\]
and the Hilbert-Schmidt norm is given by
\[
\|M\|_{HS} := \sqrt{\sum_{i,j=1}^d M_{i,j}^2} = \sqrt{\sum_{i=1}^d \lambda_i(M)^2},
\]
where $\{\lambda_i(M)\}_{i=1,\dots,d}$ are the eigenvalues of $M$. We use $\lambda(M)$ to denote the smallest eigenvalue of $M$. If $M$ is positive semi-definite, we define $\sqrt{M} \in \mathbb{R}^{d \times d}$ as the unique positive semi-definite matrix satisfying $\sqrt{M} \, \sqrt{M} = M$. The identity matrix in $\mathbb{R}^{d \times d}$ is denoted by $I_d$, and for any vector $x \in \mathbb{R}^d$, $x^T$ denotes its transpose. For any set $B$, the indicator function $1_B$ is defined as
\[
1_B(x) := 
\begin{cases}
1 & \text{if } x \in B, \\
0 & \text{if } x \notin B.
\end{cases}
\]
Given a $\sigma$-field $\mathcal{F}$, the conditional expectation with respect to $\mathcal{F}$ is denoted by $\mathbb{E}[\cdot \mid \mathcal{F}]$. A proposition $P$ is said to hold almost surely (a.s.) on an event $E$ if there exists a measurable subset $E_0 \subseteq E$ such that $P$ holds on $E_0$ and $\mathbb{P}(E \setminus E_0) = 0$.

\subsection{Bounds on observables}

The application of Theorem \ref{th_gen_tv_2w} to the analysis of randomly initialized neural networks motivates the study of the discrepancy between the derivatives of its conditional covariance matrix and those of the limiting covariance matrix. The following theorem provides an upper bound for the $L_p$-norm of this difference, directly derived from Theorem 7.3, Proposition 7.4, and Lemma 7.5 in \cite{HanGa}.

\begin{theorem}\label{coll_obs}
Under the assumptions and notations of Theorem \ref{fin_th_n}, consider any $m$-tuple $F=(f_1,\dots,f_m)$ consisting of measurable functions
\[
f_i:\mathbb{R}^d\to\mathbb{R},\quad i=1,\dots,m.
\]
{ For $\ell = 1,..., L$}, define the collective observable as the following variable
\[
\mathcal{O}_{f_i}^{(\ell)}:=\frac{1}{n_\ell}\sum_{j=1}^{n_\ell}f_i\Big(V_{x}^{J}z_j^{(\ell)}(x), x\in\mathcal{X}, { J\in\mathcal{M}_q^{(p)}}\Big).
\]
Suppose that for every $i=1,\dots,m$, $f_i$ is polynomially bounded and such that
\[
\mathbb{E}\Big[\mathcal{O}_{f_i}^{(\ell)}\Big]=0.
\]
Then, denoting by $\lceil s\rceil$ the integer part of $s+1$, one has that
\begin{equation}\label{bound_obs}
\sup_{n\ge 1}\sup_{1\le i\le m}\Big|n^{\lceil{\frac{s}{2}\rceil}}\mathbb{E}\Big[\big(\mathcal{O}_{f_i}^{(\ell)}\big)^s\Big]\Big|<\infty \quad\text{for all $s\in\mathbb{N}$.}
\end{equation}
{
The bound \eqref{bound_obs} continues to hold if  $\sigma$ and $f_i$ are of class $ C^{\infty}$ for every $i=1,\dots,m$, without assuming that the matrix defined in \eqref{lim_cov}, $\{K^{(\ell)}\}_{\ell=1,\dots,L+1}$, is non-degenerate on $\mathcal{X}$ to the order $q\le r$ with respect to $V$ (see Definition \ref{non_deg}).
}
\end{theorem}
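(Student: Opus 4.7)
The plan is to show that Theorem \ref{coll_obs} is essentially a translation, via the moment-cumulant relation, of the quantitative cumulant estimates developed in \cite{HanGa}. The statement explicitly announces that it follows from Theorem 7.3, Proposition 7.4 and Lemma 7.5 there, so the task is mostly to verify that the framework of \cite{HanGa} applies verbatim to the collective observables $\mathcal{O}_{f_i}^{(\ell)}$ considered here, and then to convert a cumulant bound into a moment bound.

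First, I would check that $\mathcal{O}_{f_i}^{(\ell)}$ falls within the class of \emph{collective observables} studied in \cite{HanGa}: it is the empirical average, over the $n_\ell$ neurons of layer $\ell$, of a polynomially bounded function of the derivatives $V_x^J z_j^{(\ell)}(x)$, evaluated at the finite set of inputs $\mathcal{X}$ and at the finite set of multi-indices $\mathcal{M}_q^{(p)}$. Under Assumption \ref{hip_s}, these derivatives are almost surely well defined (cf.\ Remark \ref{oss_FHMNP}), and the non-degeneracy of $\{K^{(\ell)}\}_{\ell=1,\dots,L+1}$ on $\mathcal{X}$ to order $q$ with respect to $V$ (inherited from Theorem \ref{fin_th_n}) is exactly the hypothesis required by Hanin's cumulant-recursion machinery. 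The polynomial growth of the $f_i$ and the centering $\mathbb{E}[\mathcal{O}_{f_i}^{(\ell)}]=0$ are assumed, and the control \eqref{bounds_n} on the inner widths allows one to replace each $n_\ell$ by $n$ up to a multiplicative constant.

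Second, I would invoke \cite[Theorem 7.3]{HanGa} together with \cite[Proposition 7.4]{HanGa} to extract a cumulant bound of the form
\[
\bigl|\kappa_{s}\bigl(\mathcal{O}_{f_i}^{(\ell)}\bigr)\bigr|\le \frac{C_s}{n^{\,s-1}}, \qquad s\ge 2,
\]
uniformly in $n$ and in $1\le i\le m$, with constants $C_s$ depending on $s$ and on the $f_i$ but not on $n$. Since $\kappa_1(\mathcal{O}_{f_i}^{(\ell)})=\mathbb{E}[\mathcal{O}_{f_i}^{(\ell)}]=0$, the classical moment-cumulant formula
\[
\mathbb{E}\bigl[(\mathcal{O}_{f_i}^{(\ell)})^{s}\bigr]=\sum_{\pi\in\mathcal{P}(s)}\prod_{B\in\pi}\kappa_{|B|}\bigl(\mathcal{O}_{f_i}^{(\ell)}\bigr)
\]
reduces to a sum over set-partitions of $\{1,\dots,s\}$ whose blocks all have size at least $2$. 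A block of size $k\ge 2$ contributes a factor $O(n^{-(k-1)})$, and an elementary optimisation shows that the partition maximising the resulting negative power of $n$ consists of blocks of size $2$ (with a single additional block of size $3$ when $s$ is odd). Summing the exponents gives exactly the announced bound $n^{\lceil s/2\rceil}\mathbb{E}[(\mathcal{O}_{f_i}^{(\ell)})^{s}]=O(1)$, uniformly in $n$ and $i$.

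Finally, for the extended statement in which the non-degeneracy of $\{K^{(\ell)}\}$ is dropped at the cost of assuming that $\sigma$ and the $f_i$ are of class $C^\infty$, I would replace the use of \cite[Theorem 7.3]{HanGa} by \cite[Lemma 7.5]{HanGa}, which provides cumulant estimates under purely smoothness assumptions; the moment-cumulant passage and the uniformity in $i$ go through unchanged. The main (and essentially only) obstacle in this plan is bookkeeping: one must carefully match the collective observables defined here with those appearing in \cite{HanGa} (including the input derivatives $V_x^J z_j^{(\ell)}$) and identify the dominant partitions in the moment-cumulant expansion. Both steps are routine, which is why the statement is presented as a direct corollary.
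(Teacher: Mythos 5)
Your proposal is correct and matches the paper's treatment: the paper offers no independent argument for Theorem \ref{coll_obs} beyond invoking Theorem 7.3, Proposition 7.4 and Lemma 7.5 of \cite{HanGa}, and your reading of that citation --- check that $\mathcal{O}_{f_i}^{(\ell)}$ is a centered, polynomially bounded collective observable in Hanin's sense, import the cumulant decay $\kappa_k = O(n^{1-k})$ for $k\ge 2$, and convert it to the moment bound via the moment--cumulant formula with the pairing (plus one triple for odd $s$) partitions dominating --- is exactly the standard derivation, yielding the exponent $\lceil s/2\rceil$ in its usual ceiling meaning, which is what the statement intends. The only caveat is bookkeeping you already acknowledge: the precise split of which of Hanin's three cited results covers the non-degenerate versus the $C^{\infty}$ case is asserted rather than verified, but this does not affect the soundness of the argument.
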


{ An immediate consequence of the previous statement is the following Proposition.}

\begin{prop}\label{AmE}
Under the assumptions of Theorem \ref{fin_th_n}, one has that
\[
\mathbb{E}\Bigg[\Bigg(\sum_{i,j=1}^d\Big(V_{x^{(i)}}^{J^{(i)}}V_{x^{(j)}}^{J^{(j)}}A_{i,j}^{(L+1)}-\mathbb{E}[V_{x^{(i)}}^{J^{(i)}}V_{x^{(j)}}^{J^{(j)}}A_{i,j}^{(L+1)}]\Big)^2\Bigg)^4\Bigg]\le \frac{C_6}{n^4},
\]
where $C_6>0$ is a constant that does not depend on $n,n_1,\dots,n_{L}$.
\end{prop}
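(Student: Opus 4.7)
\medskip

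\noindent\textbf{Proof proposal.} The plan is to express each centered entry
$$
X_{i,j} := V_{x^{(i)}}^{J^{(i)}}V_{x^{(j)}}^{J^{(j)}}A_{i,j}^{(L+1)}-\mathbb{E}\bigl[V_{x^{(i)}}^{J^{(i)}}V_{x^{(j)}}^{J^{(j)}}A_{i,j}^{(L+1)}\bigr]
$$
as a centered \emph{collective observable at layer $L$} in the sense of Theorem \ref{coll_obs}, and then combine the resulting eighth-moment estimates through a power-mean inequality.

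\medskip

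First, I would start from the explicit expression \eqref{cov_z},
$$
A^{(L+1)}_{i,j} = C_b + \frac{C_W}{n_L}\sum_{k=1}^{n_L}\sigma\bigl(z_k^{(L)}(x^{(i)})\bigr)\sigma\bigl(z_k^{(L)}(x^{(j)})\bigr),
$$
and observe that, since the first factor in the product depends on $x^{(i)}$ only and the second on $x^{(j)}$ only, the operators $V_{x^{(i)}}^{J^{(i)}}$ and $V_{x^{(j)}}^{J^{(j)}}$ act cleanly via the Leibniz rule. This yields
$$
V_{x^{(i)}}^{J^{(i)}}V_{x^{(j)}}^{J^{(j)}}A^{(L+1)}_{i,j} = c_{i,j} + \frac{C_W}{n_L}\sum_{k=1}^{n_L} g^{(k)}_{i,j},
$$
where $c_{i,j}\in\{0,C_b\}$ (depending on whether $J^{(i)}$ and $J^{(j)}$ both vanish) and
$$
g^{(k)}_{i,j} := V_{x^{(i)}}^{J^{(i)}}\sigma\bigl(z_k^{(L)}(x^{(i)})\bigr)\cdot V_{x^{(j)}}^{J^{(j)}}\sigma\bigl(z_k^{(L)}(x^{(j)})\bigr).
$$
By the symmetry of the Gaussian initialization across neurons and by Assumptions \ref{hip_s} and \ref{deriv_mix_A} (which ensure the derivatives are almost surely well defined), the summands $g^{(k)}_{i,j}$ are identically distributed in $k$. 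Consequently, $X_{i,j} = C_W\,\mathcal{O}_{\tilde f_{i,j}}^{(L)}$, where $\tilde f_{i,j}$ is the centered version of the underlying measurable functional that expresses $g^{(k)}_{i,j}$ in terms of $\bigl(V_{x}^{J}z_k^{(L)}(x),\, x\in\mathcal{X},\, J\in\mathcal{M}_q^{(p)}\bigr)$.

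\medskip

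The second step would be to invoke Theorem \ref{coll_obs} with $s=8$: once one verifies that $\tilde f_{i,j}$ is polynomially bounded (and this requires using the chain rule together with the polynomial-growth hypothesis on the derivatives of $\sigma$ built into Assumption \ref{hip_s}), Theorem \ref{coll_obs} yields $\mathbb{E}[X_{i,j}^{8}]\le C/n_L^{4}$, and the width constraint \eqref{bounds_n} upgrades this to $\mathbb{E}[X_{i,j}^{8}]\le C'/n^{4}$ uniformly in $(i,j)$. To conclude, I would apply the elementary power-mean inequality
$$
\Bigl(\sum_{i,j=1}^{d} X_{i,j}^{2}\Bigr)^{4}\le d^{6}\sum_{i,j=1}^{d}X_{i,j}^{8},
$$
which, after taking expectations and summing the $d^{2}$ uniform bounds on $\mathbb{E}[X_{i,j}^{8}]$, gives the desired estimate with $C_6=d^{8}\,\sup_{i,j}\bigl(n^{4}\,\mathbb{E}[X_{i,j}^{8}]\bigr)$.

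\medskip

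The only mildly delicate point—rather than a genuine obstacle—is the verification that, after applying the mixed directional derivatives $V_{x^{(i)}}^{J^{(i)}}V_{x^{(j)}}^{J^{(j)}}$, the resulting functional $\tilde f_{i,j}$ still satisfies the polynomial-growth requirement of Theorem \ref{coll_obs}. By Faà di Bruno, the derivatives of $\sigma\bigl(z_k^{(L)}(x)\bigr)$ are polynomials (of degree controlled by $|J^{(i)}|+|J^{(j)}|\le 2q$) in the partial derivatives of $z_k^{(L)}$ with coefficients of the form $\sigma^{(s)}\bigl(z_k^{(L)}(x)\bigr)$, each of which is polynomially bounded by Assumption \ref{hip_s}. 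Once this check is in place, the remainder of the argument is a direct assembly of Theorem \ref{coll_obs} and the power-mean inequality above.
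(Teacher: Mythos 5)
Your proposal is correct and follows exactly the route the paper intends: the paper gives no separate proof of Proposition \ref{AmE}, stating it as an ``immediate consequence'' of Theorem \ref{coll_obs}, and your argument simply fills in the standard details --- writing each centered entry as a centered collective observable at layer $L$ via \eqref{cov_z} and the product structure under $V_{x^{(i)}}^{J^{(i)}}V_{x^{(j)}}^{J^{(j)}}$, invoking the $s=8$ moment bound, and assembling with the power-mean inequality $\bigl(\sum_{i,j}X_{i,j}^2\bigr)^4\le d^6\sum_{i,j}X_{i,j}^8$. The only point worth a half-sentence in a final write-up is that the centering constant (hence $\tilde f_{i,j}$) depends on the widths, so one should note that the polynomial bound required by Theorem \ref{coll_obs} can be taken uniformly in $n$; this is the same implicit step the paper takes and is not a gap.
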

Using Proposition 10.3 in  \cite{HanGa}, the following result also easily follows.
\begin{prop}\label{EAmK}
Under the assumptions of Theorem \ref{fin_th_n} one has that for every $p\ge 1$ integer 
\[
\Bigg(\sum_{i,j=1}^d\Big(\mathbb{E}[V_{x^{(i)}}^{J^{(i)}}V_{x^{(j)}}^{J^{(j)}}A_{i,j}^{(L+1)}]-V_{x^{(i)}}^{J^{(i)}}V_{x^{(j)}}^{J^{(j)}}K_{i,j}^{(L+1)}\Big)^2\Bigg)^{1/2}\le \frac{C_7}{n},
\]
where $C_7>0$ is a constant that does not depend on $n,n_1,\dots,n_{L}$.
\end{prop}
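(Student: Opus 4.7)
The plan is to reduce the claimed estimate to the pointwise bound provided by Proposition 10.3 of \cite{HanGa}, combined with the commutation of expectation and derivatives encoded in Remark \ref{oss_FHMNP}(iv). First, since each $V_k$ is a fixed linear combination of partial derivatives with constant coefficients (see \eqref{def_v_sum}), and since Assumption \ref{deriv_mix_A} together with Remark \ref{oss_FHMNP}(iv) guarantees that
\[
\mathbb{E}\bigl[D^{I}_{x^{(i)}}D^{J}_{x^{(j)}}A^{(L+1)}_{i,j}\bigr] = D^{I}_{x^{(i)}}D^{J}_{x^{(j)}}\mathbb{E}\bigl[A^{(L+1)}_{i,j}\bigr]
\]
for all admissible multi-indices $I,J$, linearity yields
\[
\mathbb{E}\bigl[V_{x^{(i)}}^{J^{(i)}}V_{x^{(j)}}^{J^{(j)}}A_{i,j}^{(L+1)}\bigr] - V_{x^{(i)}}^{J^{(i)}}V_{x^{(j)}}^{J^{(j)}}K_{i,j}^{(L+1)} = V_{x^{(i)}}^{J^{(i)}}V_{x^{(j)}}^{J^{(j)}}\bigl(\mathbb{E}[A_{i,j}^{(L+1)}] - K_{i,j}^{(L+1)}\bigr).
\]
Thus the claim is reduced to controlling the Hilbert-Schmidt norm of the $d\times d$ matrix obtained by applying the mixed directional operators to the scalar discrepancy $\mathbb{E}[A^{(L+1)}(x,y)] - K^{(L+1)}(x,y)$.

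Second, I would invoke Proposition 10.3 of \cite{HanGa}, which, under Assumptions \ref{bW} and \ref{hip_s}, provides a perturbative expansion showing that $\mathbb{E}[A^{(L+1)}(x,y)] - K^{(L+1)}(x,y) = O(1/n)$ uniformly in $(x,y)$ on compact sets, and that the same $O(1/n)$ estimate is preserved after applying any fixed finite number of partial derivatives with respect to $x$ and $y$, up to an order determined by $r$ in Assumption \ref{hip_s}. Because the coefficients of the operators $V^{J^{(i)}}_{x^{(i)}}$ depend only on the vectors $\{v_j\}_{j=1}^p$ in \eqref{def_v_sum} and not on $n,n_1,\dots,n_L$, Hanin's bound transfers to our directional setting. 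Consequently, for every $(i,j)\in\{1,\dots,d\}^2$ there exists a constant $C_{i,j}>0$, independent of $n,n_1,\dots,n_L$, such that
\[
\bigl|V_{x^{(i)}}^{J^{(i)}}V_{x^{(j)}}^{J^{(j)}}\bigl(\mathbb{E}[A_{i,j}^{(L+1)}] - K_{i,j}^{(L+1)}\bigr)\bigr| \le \frac{C_{i,j}}{n}.
\]

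Third, since the sum defining the left-hand side of the claim runs over the finite set $\{1,\dots,d\}^2$, squaring, summing, and taking the square root immediately gives the desired bound with
\[
C_7 := \Bigl(\sum_{i,j=1}^d C_{i,j}^2\Bigr)^{1/2},
\]
which is independent of $n,n_1,\dots,n_L$, as required. The main obstacle I anticipate is verifying that Hanin's Proposition 10.3 truly produces a $1/n$ remainder when the derivatives of orders $|J^{(i)}|,|J^{(j)}|\le q$ are applied to the expected kernel: this requires matching the regularity in Assumption \ref{hip_s} (with parameter $r\ge q$) to the order of the perturbative expansion and controlling the growth of the relevant moments of $\sigma$ and its derivatives. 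All other ingredients reduce to the linearity of the operators $V$ and a finite triangle inequality.
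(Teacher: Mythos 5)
Your proposal is correct and follows essentially the same route as the paper, which simply observes that the bound ``easily follows'' from Proposition 10.3 in \cite{HanGa}: you reduce each matrix entry to the pointwise $O(1/n)$ estimate on the (directional derivatives of the) discrepancy $\mathbb{E}[A^{(L+1)}]-K^{(L+1)}$ and then sum over the finitely many indices $(i,j)$. The additional step of commuting the operators $V^{J^{(i)}}_{x^{(i)}}V^{J^{(j)}}_{x^{(j)}}$ with the expectation via Remark \ref{oss_FHMNP}(iv) and Assumption \ref{deriv_mix_A} is a sensible elaboration of the same argument, not a different approach.
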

\begin{remark}\label{norm_8}
Under the assumptions of Theorem \ref{fin_th_n}, and using the triangle inequality, one has that
\begin{multline*}
\mathbb{E}\Bigg[\Bigg(\sum_{i,j=1}^d\Big(V_{x^{(i)}}^{J^{(i)}}V_{x^{(j)}}^{J^{(j)}}A_{i,j}^{(L+1)}-V_{x^{(i)}}^{J^{(i)}}V_{x^{(j)}}^{J^{(j)}}K_{i,j}^{(L+1)}\Big)^2\Bigg)^4\Bigg]\\
\le\Bigg\{\mathbb{E}\Bigg[\Bigg(\sum_{i,j=1}^d\Big(V_{x^{(i)}}^{J^{(i)}}V_{x^{(j)}}^{J^{(j)}}A_{i,j}^{(L+1)}-\mathbb{E}[V_{x^{(i)}}^{J^{(i)}}V_{x^{(j)}}^{J^{(j)}}A_{i,j}^{(L+1)}]\Big)^2\Bigg)^4\Bigg]^{1/8}\\
+\Bigg(\sum_{i,j=1}^d\Big(\mathbb{E}[V_{x^{(i)}}^{J^{(i)}}V_{x^{(j)}}^{J^{(j)}}A_{i,j}^{(L+1)}]-V_{x^{(i)}}^{J^{(i)}}V_{x^{(j)}}^{J^{(j)}}K_{i,j}^{(L+1)}\Big)^2\Bigg)^{1/2}\Bigg\}^8\leq \frac{C_8}{n^4},
\end{multline*}
thanks to Proposition \ref{AmE} and Proposition \ref{EAmK}, with $C_8>0$ a constant independent of $n, n_1,\dots,n_{L}$.
\end{remark}
\begin{remark}\label{trA_fin}
Under the assumptions of Theorem \ref{fin_th_n}, for every integer $p\ge 1$, 
\begin{multline*}
\sup_n\mathbb{E}\Bigg[\Bigg(\sum_{i,j=1}^d(V_{x^{(i)}}^{J^{(i)}}V_{x^{(j)}}^{J^{(j)}}A_{i,j}^{(L+1)})^2\Bigg)^{p/2}\Bigg]\\
\le 2^{p-1}\sup_n\mathbb{E}\Bigg[\Bigg(\sum_{i,j=1}^d\Big(V_{x^{(i)}}^{J^{(i)}}V_{x^{(j)}}^{J^{(j)}}A_{i,j}^{(L+1)}-V_{x^{(i)}}^{J^{(i)}}V_{x^{(j)}}^{J^{(j)}}K_{i,j}^{(L+1)}\Big)^2\Bigg)^{p/2}\Bigg]\\
+2^{p-1}\Big(\sum_{i,j}(V_{x^{(i)}}^{J^{(i)}}V_{x^{(j)}}^{J^{(j)}}K_{i,j}^{(L+1)})^2\Big)^{p/2}\le C_9,
\end{multline*}
where the dependence on $n$ in the definition of $A^{(L+1)}$ is implicit, and $C_9>0$ is a constant independent of $n,n_1,\dots,n_{L}$. Such an estimate follows by an argument similar to the one used in Remark \ref{norm_8}.
\end{remark}

\subsection{Hermite polynomials}

We refer to \cite[Chapter 1]{NP12} for a basic introduction to real Hermite polynomials. The following definition is standard.

\begin{defin}\label{def_HP}
For every integer $k \ge 0$, the $k$-th Hermite polynomial $H_k: \mathbb{R} \to \mathbb{R}$ is defined as
\[
H_k(x) := (-1)^k \frac{1}{\phi_1(x)} \frac{d^k}{dx^k} \phi_1(x), \quad x \in \mathbb{R},
\]
where $\phi_1(x) := \frac{1}{\sqrt{2\pi}} e^{-\frac{x^2}{2}}$.
\end{defin}

For example, for $x \in \mathbb{R}$:
\[
H_0(x) = 1, \quad H_1(x) = x, \quad H_2(x) = x^2 - 1, \quad H_3(x) = x^3 - 3x, \quad \text{and so on.}
\]

For $k_1\neq k_2 \ge 0$ the polynomials $H_{k_1}, H_{k_2}$ are orthogonal under the standard Gaussian measure, and their second moments have a simple form.

\begin{prop}[Proposition 1.4.2 in \cite{NP12}]\label{var_her}
For any integers $k, j \ge 0$,
\[
\int_{\mathbb{R}} H_k(x) H_j(x) \phi_1(x) \, dx =
\begin{cases}
k! & \text{if } k = j, \\
0  & \text{otherwise.}
\end{cases}
\]
\end{prop}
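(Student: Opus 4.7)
The plan is to exploit the Rodrigues-type identity implicit in Definition \ref{def_HP}, namely $H_k(x)\phi_1(x) = (-1)^k \phi_1^{(k)}(x)$, and then integrate by parts $k$ times. After the first substitution the integral becomes
\[
\int_{\mathbb{R}} H_k(x)H_j(x)\phi_1(x)\,dx \;=\; (-1)^k\int_{\mathbb{R}} H_j(x)\phi_1^{(k)}(x)\,dx.
\]
Assuming without loss of generality that $j\le k$, I would integrate by parts $k$ times, each step moving one derivative from $\phi_1^{(k)}$ onto $H_j$ and introducing a factor $-1$. After all $k$ iterations the sign factors cancel and the right-hand side reduces to $\int_{\mathbb{R}} H_j^{(k)}(x)\phi_1(x)\,dx$.

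From here the dichotomy is immediate. If $j<k$, then $H_j$ is a polynomial of degree $j<k$ and so $H_j^{(k)}\equiv 0$, yielding the orthogonality statement. If $j=k$, I would invoke the fact that $H_k$ is a \emph{monic} polynomial of degree $k$; this follows by a straightforward induction on $k$ from Definition \ref{def_HP}, since applying $-\tfrac{d}{dx}$ to $P(x)\phi_1(x)$ with $P$ monic of degree $k-1$ produces $(xP(x) - P'(x))\phi_1(x)$, whose leading term is $x^k$. Consequently $H_k^{(k)}(x)=k!$ and the integral collapses to $k!\int_{\mathbb{R}}\phi_1(x)\,dx = k!$.

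The only step that requires real justification, rather than bookkeeping, is the vanishing of the boundary terms at $\pm\infty$ produced by the repeated integration by parts. Concretely, one must check that $H_j^{(i)}(x)\,\phi_1^{(k-1-i)}(x)\to 0$ as $|x|\to\infty$ for each $0\le i\le k-1$. This will be the main (and essentially the only) analytic point to address: it follows because every derivative of $\phi_1$ has the form $Q(x)\phi_1(x)$ for a polynomial $Q$, and the Gaussian factor $e^{-x^2/2}$ dominates any polynomial growth, so each boundary contribution decays to zero. Once this is in place the remainder of the argument is purely algebraic.
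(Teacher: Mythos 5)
Your argument is correct and complete: the Rodrigues-type identity $H_k(x)\phi_1(x)=(-1)^k\phi_1^{(k)}(x)$ follows immediately from Definition \ref{def_HP}, the $k$-fold integration by parts is legitimate because every boundary term has the form $H_j^{(i)}(x)\,Q(x)\,\phi_1(x)$ with $Q$ a polynomial (so it vanishes at $\pm\infty$, exactly as you note), the symmetry of the integrand justifies the reduction to $j\le k$, and your induction showing that $H_k$ is monic of degree $k$ (via $H_k=xH_{k-1}-H_{k-1}'$) gives $H_k^{(k)}=k!$, hence the value $k!\int_{\mathbb{R}}\phi_1=k!$ in the diagonal case. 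Note that the paper does not prove this statement at all: it is quoted verbatim from Proposition 1.4.2 of \cite{NP12}, where the orthogonality relations are established as part of the general theory of Hermite polynomials (typically via their exponential generating function and their role as an orthogonal basis of $L^2(\phi_1)$). Your self-contained integration-by-parts proof is therefore a perfectly good, more elementary substitute for the citation, and it has the small advantage of using only the Rodrigues definition adopted in the paper, with no auxiliary machinery.
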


Definition \ref{def_HP} can be extended to the multivariate case as in \cite{RHerPol}. In order to do that, for fixed $k\geq 0$ and $d\geq 1$, we will use the generic notation $J^{(k)}$ to indicate a multi-index of the type $J^{(k)}:=(j^{(k)}_1,\dots, j^{(k)}_d)\in\mathbb{N}_0^d$, satisfying moreover the condition $j^{(k)}_1+\dots+j^{(k)}_d=k$.

\begin{defin}[Multivariate Hermite Polynomials]
Fix $d\geq 1$ and $k\ge 0$. For every multi-index $J^{(k)}$ as above, the Hermite polynomial of multi-index $J^{(k)}$ and degree $k= |J^{(k)}|$ is the mapping $H_{J^{(k)}}:\mathbb{R}^d\to \mathbb{R}$ defined as
\[
H_{J^{(k)}}(x):=(-1)^{|J^{(k)}|}\frac{1}{\phi_{I_d}(x)}\frac{\partial^{|J^{(k)}|}}{\partial x_1^{j^{(k)}_1}\dots\partial x_d^{j^{(k)}_d}}\phi_{I_d}(x), \quad x:=(x_1,\dots,x_d)\in\mathbb{R}^d,
\]
where $\phi_{I_d}(x):=\frac{1}{(2\pi)^{d/2}}e^{-\frac{1}{2}(x_1^2+\dots+x_d^2)}$.
\end{defin}
\begin{remark}
Since 
\[
\phi_{I_d}(x) = \phi_1(x_1) \cdots \phi_1(x_d),
\]
it follows that, for any $k \ge 0$ and multi-index $J^{(k)}$,
\begin{multline}\label{her_mult_uni}
H_{J^{(k)}}(x) = (-1)^{j^{(k)}_1} \cdots (-1)^{j^{(k)}_d} \frac{1}{\phi_1(x_1)} \cdots \frac{1}{\phi_1(x_d)} \\
\times \frac{\partial^{j^{(k)}_1} \phi_1}{\partial x_1^{j^{(k)}_1}}(x_1) \cdots \frac{\partial^{j^{(k)}_d} \phi_1}{\partial x_d^{j^{(k)}_d}}(x_d) \\
= H_{j^{(k)}_1}(x_1) \cdots H_{j^{(k)}_d}(x_d).
\end{multline}
See e.g. \cite[Chapter 1]{NualartBook} for more details.
\end{remark}

}

\section{Entropy between a Gaussian law and a conditionally Gaussian law }\label{gen_res}
We will now provide a bound on the relative entropy between a Gaussian law and a conditionally Gaussian law as in Definition \ref{def_cond_ga}. 

\begin{theorem}\label{final}
Let Assumption \ref{hip_g_cond} prevail, and assume in addition that  
 $\mathbb{E}[\|A\|_{HS}^8]<\infty$,  $\mathbb{P}(\det A >0)=1$ and $\mathbb{E}[\|A^{-1}\|_{HS}^2]<\infty$. Then,
\begin{multline*}
D(F||G)\le
\frac{\sqrt{3}}{12\sqrt{2}}\big(2\sqrt{6}+3\sqrt{2}+2+\sqrt{d}\big)\|K^{-1}\|_{HS}^2\|\mathbb{E}[A]-K\|_{HS}^2\\
+\frac{1}{\lambda(K)^4}\Bigg\{8\|K\|_{HS}\mathbb{E}[\|A^{-1}\|_{HS}^2]^{1/2}
+8\|K^{-1}\|_{HS}\mathbb{E}[\|A\|_{HS}^2]^{1/2}\\
+{\frac{\sqrt{2}}{\sqrt{3}}\big(2\sqrt{6}+3\sqrt{2}+2+\sqrt{d}\big)\|K^{-1}\|_{HS}^2(\lambda(K)^2+4\|K\|_{HS}^2)}\\
+ {\sqrt{70}d^2}\Bigg(\frac{1}{2}\max{\Big\{\Big|\log\frac{2^d \det K}{(2\|K\|_{op}+\lambda(K))^d}\Big|,\log\frac{2^d \det K}{\lambda(K)^d}\Big\}}
+\frac{(\sqrt{2}+1){d}}{4}
\Bigg)\\
+\Big(3+\frac{\sqrt{3}}{8}+ 5\sqrt{10}+\frac{4\sqrt{30}}{3}+\frac{\sqrt{15}}{3}+\frac{10\sqrt{5}}{3}\Big)d^2\Bigg\}\mathbb{E}\Big[\|A-K\|_{HS}^8\Big]^{1/2}.
\end{multline*}
where $\lambda(K)$ is the minimum eigenvalue of the matrix $K$.
\end{theorem}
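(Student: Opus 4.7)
The plan is to bypass conditioning and work directly with the marginal density of $F$: by Remark \ref{dens_F}, the assumptions $\mathbb{P}(\det A>0)=1$ and $\mathbb{E}[\|A^{-1}\|_{HS}^2]<\infty$ guarantee that $F$ admits the density $f(x) = \mathbb{E}[\phi_A(x)]$. A first reduction turns the relative entropy into an $L^2$-type quantity via the elementary pointwise inequality $(1+u)\log(1+u)\le u+u^2$, valid for all $u\ge -1$ (easily checked by calculus), applied with $u = f/\phi_K-1$; since $\int f\,dx=\int\phi_K\,dx=1$ the linear term cancels and one is left with the chi-squared-type estimate
\[
D(F\|G) \;\le\; \int_{\R^d} \frac{(f(x)-\phi_K(x))^2}{\phi_K(x)}\, dx.
\]

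Next I would interpolate (in the spirit of Trevisan) along the path $A_t := K + t(A-K)$, $t\in[0,1]$, which is uniformly positive definite almost surely because $K$ is positive definite and $A$ is positive definite with probability one. A second-order Taylor expansion of $M\mapsto\phi_M(x)$ at $M=K$ yields the decomposition
\[
\phi_A(x)-\phi_K(x) \;=\; L_K(A-K;x) + R(A;x),
\]
where $L_K(H;x) := \phi_K(x)\bigl[-\tfrac{1}{2}\operatorname{tr}(K^{-1}H) + \tfrac{1}{2}\, x^\top K^{-1} H K^{-1} x\bigr]$ is linear in $H$ and polynomial of degree at most two in $x$, while the integral remainder $R(A;x) := \int_0^1(1-t)\partial_t^2\phi_{A_t}(x)\,dt$ collects all second-order contributions and involves $A_t^{-1}$ and polynomials in $x$ of degree at most four. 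Taking expectations and using the linearity of $L_K$ in $H$, one obtains $f-\phi_K = L_K(\mathbb{E}[A]-K;\cdot) + \mathbb{E}[R(A;\cdot)]$, and squaring with $(a+b)^2\le 2a^2+2b^2$ splits the chi-squared bound into a bias piece driven by $\mathbb{E}[A]-K$ and a variance piece driven by the fluctuations of $A$ around $K$.

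For the bias piece, $\int L_K(\mathbb{E}[A]-K;x)^2/\phi_K(x)\,dx$ is a Gaussian moment of a polynomial of degree at most four. After the change of variables $y = K^{-1/2}x$ and expansion on the basis of multivariate Hermite polynomials \eqref{her_mult_uni}, Proposition \ref{var_her} evaluates this integral in closed form, producing a bound of the shape $c(d)\,\|K^{-1}\|_{HS}^2\,\|\mathbb{E}[A]-K\|_{HS}^2$ that accounts for the first summand in the stated inequality. For the variance piece, I would apply Jensen's inequality $(\mathbb{E}[R])^2\le\mathbb{E}[R^2]$ followed by Tonelli, reducing matters to $\mathbb{E}\bigl[\int R(A;x)^2/\phi_K(x)\,dx\bigr]$. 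On the good event $\{A\prec 2K\}$, where $2A_t^{-1}-K^{-1}$ stays positive definite along the entire path, the inner integral is a convergent Gaussian integral evaluable in closed form in terms of $\det A_t$ and $\det(2K-A_t)$; unfolding the definition of $R(A;x)$ and differentiating in $t$ generates the constants $\|K\|_{HS}\mathbb{E}[\|A^{-1}\|_{HS}^2]^{1/2}$ and $\|K^{-1}\|_{HS}\mathbb{E}[\|A\|_{HS}^2]^{1/2}$, while integration of $\log\det A_t$ along the path produces the $\log\det$-type terms. On the complementary event one falls back on Markov's inequality applied to the moment assumptions $\mathbb{E}[\|A\|_{HS}^8]<\infty$ and $\mathbb{E}[\|A^{-1}\|_{HS}^2]<\infty$; a final Cauchy--Schwarz merges both contributions into the single expression $\mathbb{E}[\|A-K\|_{HS}^8]^{1/2}$.

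The main difficulty is the variance step: uniformly controlling the remainder $R(A;x)$ along the interpolating path when $A$ is far from $K$, since along $A_t$ the smallest eigenvalue can approach zero and the Gaussian integral $\int R^2/\phi_K\,dx$ loses its automatic integrability. The careful partitioning of the sample space and the delicate matching of the Taylor/Gaussian-integral bounds with the moment hypotheses on $A$ and $A^{-1}$ are what dictate the intricate form of the constant $C_2$ in the statement, including the $\lambda(K)^{-4}$ prefactor and the logarithmic contributions.
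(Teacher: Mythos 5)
There is a genuine gap at the very first step. The reduction $D(F\|G)\le \int_{\R^d}\frac{(f(x)-\phi_K(x))^2}{\phi_K(x)}\,dx$ (via $(1+u)\log(1+u)\le u+u^2$) is a correct inequality, but under the hypotheses of the theorem its right-hand side can be $+\infty$, so nothing you do afterwards can recover the stated finite bound. The assumptions only impose $\mathbb{E}[\|A\|_{HS}^8]<\infty$, $\mathbb{P}(\det A>0)=1$ and $\mathbb{E}[\|A^{-1}\|_{HS}^2]<\infty$; they do not prevent $A$ from being ``too large'' relative to $K$ with positive probability. Take, e.g., $A=4K$ with probability $1/2$ and $A=K$ otherwise: then $f\ge \tfrac12\phi_{4K}$ and $\int \phi_{4K}(x)^2/\phi_K(x)\,dx=+\infty$ (the exponent $2(4K)^{-1}-K^{-1}=-\tfrac12 K^{-1}$ is negative definite), so the chi-squared divergence is infinite while $D(F\|G)$ is finite and the theorem's bound is nontrivial. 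This also explains why your later rescue attempt fails: after Jensen/Tonelli you must control $\mathbb{E}\bigl[\int R(A;x)^2/\phi_K(x)\,dx\bigr]$, and on the event where $A\not\prec 2K$ the inner integral is literally $+\infty$, not merely large, so Markov's inequality applied to the probability of that event cannot compensate ($\infty$ times a small probability is still $\infty$). In fact even before Jensen, $\int\mathbb{E}[R(A;x)]^2/\phi_K(x)\,dx$ already diverges in the example above, because the heavy Gaussian component dominates the tail of $f$.

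The structural reason the paper's proof works and yours does not is the \emph{order} of the splitting: the paper partitions the probability space on $E=\{\|A-K\|_{op}\le\lambda(K)/2\}$ \emph{inside the entropy}, using the convexity of $x\mapsto x\log(x/c)$ (inequality \eqref{spiezzo in due}), before any quadratic-type majorization. On $E^C$ the functional $x\log x$ grows slowly enough that Jensen plus the exact Gaussian-vs-Gaussian relative entropy formula reduces everything to $\operatorname{tr}(\sqrt{A}K^{-1}\sqrt{A})$ and $\operatorname{tr}(\sqrt{K}A^{-1}\sqrt{K})$, which is precisely where the hypotheses $\mathbb{P}(\det A>0)=1$ and $\mathbb{E}[\|A^{-1}\|_{HS}^2]<\infty$ are used, followed by a H\"older/Markov step (Lemma \ref{inEC}); on $E$, where $\Gamma_t=tA+(1-t)K$ is uniformly elliptic, the interpolation and Taylor expansion (to fourth order in $t$ of the entropy integrand, with the first-order term vanishing) produce the $\|\mathbb{E}[A]-K\|_{HS}^2$ term and the $\mathbb{E}[\|A-K\|_{HS}^8]^{1/2}$ remainder (Lemma \ref{inE}). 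A chi-squared (quadratic) surrogate cannot be localized after the fact in this way; if you want to keep your Taylor-expansion-of-the-density idea, you would have to first restrict to an event of the type $E$ within the entropy itself and treat the complement by an entropy-specific argument, which essentially reproduces the paper's proof rather than simplifying it.
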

The proof of Theorem \ref{inE} (given at the end of the present section) hinges on the forthcoming technical Lemmas \ref{inEC} and \ref{inE}, whose proofs are detailed in Section \ref{Sec_pr1}. Our overall strategy, inspired by the ideas developed by D. Trevisan in \cite[Sections 3.1 and 3.2]{Trev}, consists in partitioning the probability space in the event  
\begin{equation}\label{def_ev_E}
E=\Big\{\|A-K\|_{op}\le\frac{\lambda(K)}{2}\Big\}
\end{equation}
 and its complement. Then, using the notation \eqref{phi_M}, Definition \ref{def_RE} and Remark \ref{dens_F} one has that
\[
D(F||G)=\int_{\mathbb{R}^d}{\mathbb{E}[\phi_A(x)]}\log \Bigg(\frac{\mathbb{E}[\phi_A(x)]}{\phi_K(x)}\Bigg)dx.
\]

Assuming that $\mathbb{P}(E)\ne 0,1$, one obtains that
\begin{multline*}
D(F||G)=\int_{\mathbb{R}^d}\Bigg(\frac{\mathbb{P}(E)\mathbb{E}[\phi_A(x)1_E]}{\mathbb{P}(E)}+\frac{\mathbb{P}(E^C)\mathbb{E}[\phi_A(x)1_{E^C}]}{\mathbb{P}(E^C)}\Bigg)\cdot\\
\cdot\log \Bigg(\frac{\mathbb{P}(E)\mathbb{E}[\phi_A(x)1_E]}{\mathbb{P}(E)\phi_K(x)}+\frac{\mathbb{P}(E^C)\mathbb{E}[\phi_A(x)1_{E^C}]}{\mathbb{P}(E^C)\phi_K(x)}\Bigg)dx
\end{multline*}
\begin{multline}\label{spiezzo in due}
\le \int_{\mathbb{R}^d}\mathbb{E}[\phi_A(x)1_E]\log \Bigg(\frac{\mathbb{E}[\phi_A(x)1_E]}{\mathbb{P}(E)\phi_K(x)}\Bigg)dx
+\int_{\mathbb{R}^d}\mathbb{E}[\phi_A(x)1_{E^C}]\log \Bigg(\frac{\mathbb{E}[\phi_A(x)1_{E^C}]}{\mathbb{P}(E^C)\phi_K(x)}\Bigg)dx
\end{multline}
where we have used the fact { that the mapping $x\mapsto x\log (x/c)$ is convex for every $c>0$}, and that $\mathbb{P}(E)+\mathbb{P}(E^C)=1$. As a consequence, to prove Theorem \ref{final}, it suffices to establish bounds on the two terms appearing in \eqref{spiezzo in due}.

 The second term in \eqref{spiezzo in due} is controlled by the forthcoming Lemma \ref{inEC}, whose proof (presented in Section \ref{ss:plinec}) uses the convexity of the function $x\to x\log x$ and Jensen inequality, as applied to the probability measure with density $\frac{1_{E^C}}{\mathbb{P}(E^C)}$ with respect to $\mathbb{P}$.
\begin{lemma}\label{inEC}

Let Assumption \ref{hip_g_cond} prevail, and assume that $\mathbb{P}(\det A>0)=1$ and that $\mathbb{E}[\|A^{-1}\|_{HS}^2]<\infty$. Then if $\mathbb{P}(E)\ne 1$, where $E$ is defined in \eqref{def_ev_E}, one has that
\begin{multline*}
\int_{\mathbb{R}^d}{\mathbb{E}[\phi_A(x)1_{E^C}]}\log \Bigg(\frac{\mathbb{E}[\phi_A(x)1_{E^C}]}{\mathbb{P}(E^C)\phi_K(x)}\Bigg)dx\\
\le
\frac{8}{\lambda(K)^4}\Big(\|K\|_{HS}\mathbb{E}[\|A^{-1}\|_{HS}^2]^{1/2}+\|K^{-1}\|_{HS}\mathbb{E}[\|A\|_{HS}^2]^{1/2}\Big)\mathbb{E}[\|A-K\|_{HS}^8]^{1/2}.
\end{multline*}
\end{lemma}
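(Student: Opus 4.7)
The plan is to follow exactly the hint given in the statement: apply Jensen's inequality to pull the outer expectation through the convex function appearing inside the logarithm, recognise the resulting quantity as the relative entropy between two centered Gaussians on $\mathbb{R}^d$, bound this relative entropy by an elementary algebraic expression, and finally control the event $E^C$ via Markov's inequality.

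More precisely, first I would introduce the probability measure $\mathbb{Q}$ on $(\Omega,\mathcal{A})$ with Radon--Nikodym density $1_{E^C}/\mathbb{P}(E^C)$ with respect to $\mathbb{P}$, so that $\mathbb{E}[\phi_A(x)1_{E^C}]=\mathbb{P}(E^C)\,\mathbb{E}_{\mathbb{Q}}[\phi_A(x)]$ for every $x\in\mathbb{R}^d$. Since for any fixed $c>0$ the map $y\mapsto y\log(y/c)$ is convex on $(0,\infty)$, Jensen's inequality applied under $\mathbb{Q}$ with $c=\phi_K(x)$ yields
\[
\mathbb{E}_{\mathbb{Q}}[\phi_A(x)]\log\frac{\mathbb{E}_{\mathbb{Q}}[\phi_A(x)]}{\phi_K(x)}\le \mathbb{E}_{\mathbb{Q}}\!\left[\phi_A(x)\log\frac{\phi_A(x)}{\phi_K(x)}\right].
\]
Multiplying by $\mathbb{P}(E^C)$, integrating over $\mathbb{R}^d$ and exchanging integral and expectation by Fubini--Tonelli (legitimate because the inner $x$-integral is the relative entropy $D(\mathcal{N}_d(0,A)\|\mathcal{N}_d(0,K))\ge 0$ almost surely), the left-hand side of the lemma is dominated by $\mathbb{E}\big[1_{E^C}\,D(\mathcal{N}_d(0,A)\,\|\,\mathcal{N}_d(0,K))\big]$.

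The next step is to upper-bound this inner relative entropy in a form matching the right-hand side of the statement. Starting from the classical identity
\[
2D\big(\mathcal{N}_d(0,A)\,\|\,\mathcal{N}_d(0,K)\big) = \mathrm{tr}(K^{-1}A)-d-\log\det(K^{-1}A),
\]
valid almost surely since $\mathbb{P}(\det A>0)=1$, I would apply the elementary scalar inequality $-\log t\le t^{-1}-1$ ($t>0$) to the positive eigenvalues of $K^{-1}A$; this yields $-\log\det(K^{-1}A)\le \mathrm{tr}(A^{-1}K)-d$, and hence, after discarding the non-positive term $-2d$ and applying Cauchy--Schwarz for the Hilbert--Schmidt inner product to each trace,
\[
D\big(\mathcal{N}_d(0,A)\,\|\,\mathcal{N}_d(0,K)\big)\le \tfrac{1}{2}\big(\|K^{-1}\|_{HS}\|A\|_{HS}+\|K\|_{HS}\|A^{-1}\|_{HS}\big).
\]

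To conclude, I would apply Cauchy--Schwarz on the probability space to obtain $\mathbb{E}[1_{E^C}\|A\|_{HS}]\le \mathbb{P}(E^C)^{1/2}\mathbb{E}[\|A\|_{HS}^2]^{1/2}$ and analogously for $\|A^{-1}\|_{HS}$, and then bound
\[
\mathbb{P}(E^C) = \mathbb{P}\!\left(\|A-K\|_{op}^8 > \big(\lambda(K)/2\big)^8\right)\le \frac{2^8}{\lambda(K)^8}\,\mathbb{E}\big[\|A-K\|_{HS}^8\big]
\]
via Markov's inequality together with $\|\cdot\|_{op}\le\|\cdot\|_{HS}$. This yields $\mathbb{P}(E^C)^{1/2}\le (16/\lambda(K)^4)\mathbb{E}[\|A-K\|_{HS}^8]^{1/2}$, and combining this $16$ with the factor $\tfrac{1}{2}$ inherited from the entropy identity produces the prefactor $8$ announced in the statement. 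The only mildly subtle point is the choice of scalar inequality used to linearise $-\log\det(K^{-1}A)$: the bound $-\log t\le t^{-1}-1$ is precisely the one that converts the log-determinant into a trace of the right shape for a Hilbert--Schmidt Cauchy--Schwarz estimate, and it is what makes the integrability assumption $\mathbb{E}[\|A^{-1}\|_{HS}^2]<\infty$ appear naturally on the right-hand side.
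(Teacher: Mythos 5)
Your proof is correct and follows essentially the same route as the paper: Jensen's inequality under the conditional measure $\tfrac{1_{E^C}}{\mathbb{P}(E^C)}\,d\mathbb{P}$, the bound $D(\mathcal{N}_d(0,A)\,\|\,\mathcal{N}_d(0,K))\le \tfrac12\big(\|K^{-1}\|_{HS}\|A\|_{HS}+\|K\|_{HS}\|A^{-1}\|_{HS}\big)$ on $E^C$, and then Cauchy--Schwarz combined with the definition of $E^C$ to produce the factor $\tfrac{8}{\lambda(K)^4}\,\mathbb{E}[\|A-K\|_{HS}^8]^{1/2}$, with identical constants. The only immaterial differences are that the paper derives the entropy bound by adding the reverse relative entropy and using the closed form of the symmetrized Gaussian divergence, whereas you apply $-\log t\le t^{-1}-1$ to the eigenvalues of $K^{-1}A$ (the same inequality in disguise, though the paper's symmetrized integrand is pointwise nonnegative, which makes the exchange of $dx$ and $\mathbb{E}$ a direct Tonelli application rather than requiring your brief Fubini justification), and that the paper absorbs $1_{E^C}\le (2\|A-K\|_{HS}/\lambda(K))^4$ pointwise instead of invoking Markov's inequality on $\mathbb{P}(E^C)$.
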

The first term in \eqref{spiezzo in due} is bounded by using an interpolation scheme --- studied in detail in Section \ref{ss:ppline} --- yielding a collection of random variables $\{F_t : t\in [0,1]\}$, smoothly depending on the parameter $t$ and such that $F_0$ and $F_1$  have, respectively the same law as $G$ and $F$. As demonstrated e.g. in Proposition \ref{var_hk}, our techniques involve a fine control of the Taylor expansion of the relative entropy between $F_t$ and $G$, as a function of $t\in (0,1)$. The resulting global bound is given in the next statement, whose proof is detailed in Section \ref{ss:pline}.

\begin{lemma}\label{inE}
Assume Assumption \ref{hip_g_cond} and that $\mathbb{E}[\|A\|_{HS}^8]<\infty$. Then if $\mathbb{P}(E)\ne0$, where $E$ is defined in \eqref{def_ev_E}, it holds that
\begin{multline*}
\int_{\mathbb{R}^d}{\mathbb{E}[\phi_A(x)1_E]}\log \Bigg(\frac{\mathbb{E}[\phi_A(x)1_E]}{\mathbb{P}(E)\phi_K(x)}\Bigg)dx\\
\le
\frac{\sqrt{3}}{12\sqrt{2}}\big(2\sqrt{6}+3\sqrt{2}+2+\sqrt{d}\big)\|K^{-1}\|_{HS}^2\|\mathbb{E}[A]-K\|_{HS}^2\\
+\Bigg\{\frac{\sqrt{2}}{\lambda(K)^4\sqrt{3}}\big(2\sqrt{6}+3\sqrt{2}+2+\sqrt{d}\big)\|K^{-1}\|_{HS}^2(\lambda(K)^2+4\|K\|_{HS}^2)\\
+ \frac{\sqrt{70}d^2}{\lambda(K)^4}\Bigg(\frac{1}{2}\max{\Big\{\Big|\log\frac{2^d \det K}{(2\|K\|_{op}+\lambda(K))^d}\Big|,\log\frac{2^d \det K}{\lambda(K)^d}\Big\}}
+\frac{(\sqrt{2}+1){d}}{4}
\Bigg)\\
+\Big(3+\frac{\sqrt{3}}{8}+ 5\sqrt{10}+\frac{4\sqrt{30}}{3}+\frac{\sqrt{15}}{3}+\frac{10\sqrt{5}}{3}\Big)\frac{d^2}{\lambda(K)^4}\Bigg\}\mathbb{E}\Big[\|A-K\|_{HS}^8\Big]^{1/2}.
\end{multline*}
\end{lemma}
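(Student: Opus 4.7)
The plan is to use a matrix-valued interpolation $K_t := (1-t)K + tA$ for $t \in [0,1]$. On the event $E = \{\|A-K\|_{op} \le \lambda(K)/2\}$, Weyl's inequality gives $\lambda(K_t) \ge \lambda(K)/2 > 0$, so the Gaussian density $\phi_{K_t}$ defined in \eqref{phi_M} is well defined along the whole path. Setting $q(t, x) := \mathbb{E}[\phi_{K_t}(x)\, 1_E]/\mathbb{P}(E)$, one obtains a smooth path of probability densities on $\mathbb{R}^d$ with $q(0,x) = \phi_K(x)$ and $q(1,x)$ equal to the density appearing inside the logarithm on the left-hand side. At the level of random variables, this is simply interpolating between $\sqrt{K}N$ and $\sqrt{A}N$ through $\sqrt{K_t}N$ for $N \sim \mathcal{N}_d(0, I_d)$ independent of $A$ (cf. Remark \ref{rad_AN}). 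The target is therefore $H(1)$, where $H(t) := \int_{\mathbb{R}^d} q(t,x) \log\big(q(t,x)/\phi_K(x)\big)\,dx$ and $H(0) = 0$.

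I would first verify that $H'(0) = 0$ by direct differentiation (since $\log(q/\phi_K)|_{t=0} = 0$ and $\int \partial_t q\, dx = 0$), so that Taylor's theorem yields $H(1) = \tfrac{1}{2}H''(0) + R$ with $R = \int_0^1 (1-t)[H''(t)-H''(0)]\,dt$. By Jacobi's formula,
\[
\partial_t \phi_{K_t}(x) = \tfrac{1}{2}\,\phi_{K_t}(x)\Big[x^\top K_t^{-1}(A-K)K_t^{-1} x - \mathrm{tr}\big(K_t^{-1}(A-K)\big)\Big],
\]
so at $t=0$ the bracket is a centered quadratic polynomial in $x$. After the change of variables $y = K^{-1/2}x$, this polynomial becomes a linear combination of degree-$2$ multivariate Hermite polynomials with respect to $\phi_{I_d}$, and Parseval (via Proposition \ref{var_her} and the product structure \eqref{her_mult_uni}) evaluates $H''(0)$ as a multiple of $\|\mathbb{E}[K^{-1/2}(A-K)K^{-1/2}\,1_E]\|_{HS}^2$. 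Adding and subtracting $\mathbb{E}[A-K]$, and bounding the $1_{E^c}$-contribution by Cauchy--Schwarz together with Markov's inequality $\mathbb{P}(E^c) \le 4\,\mathbb{E}[\|A-K\|_{op}^2]/\lambda(K)^2$, produces both the leading $\|K^{-1}\|_{HS}^2\|\mathbb{E}[A]-K\|_{HS}^2$ term and an additional contribution of size $\mathbb{E}[\|A-K\|_{HS}^8]^{1/2}$.

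The remainder $R$ is then controlled by expanding
\[
H''(t) = \int (\partial_t^2 q)\log(q/\phi_K)\,dx + \int \frac{(\partial_t q)^2}{q}\,dx,
\]
and bounding each summand on $E$. The derivatives $\partial_t^k \phi_{K_t}$ for $k\le 2$ are products of $\phi_{K_t}$ with polynomials in $x$ of degree up to $4$ whose coefficients involve $K_t^{-1}$ and entries of $A-K$. Since on $E$ one has the uniform bounds $\|K_t^{-1}\|_{op} \le 2/\lambda(K)$ and $\|K_t\|_{op} \le \|K\|_{op} + \lambda(K)/2$, Gaussian moment estimates under $\phi_{K_t}$ together with repeated Cauchy--Schwarz (once in $x$ against $\phi_K$, once over $\Omega$) give a bound of the form $C\,\mathbb{E}[\|A-K\|_{HS}^8]^{1/2}$. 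The logarithmic factor $\log(q/\phi_K)$ is handled via $|\log\det K_t - \log\det K| = |\int_0^t \mathrm{tr}(K_s^{-1}(A-K))\,ds|$ combined with the spectral bounds on $E$; this is precisely what produces the $\big|\log\big(2^d\det K/(2\|K\|_{op}+\lambda(K))^d\big)\big|$ and $\log\big(2^d\det K/\lambda(K)^d\big)$ terms in the final constant.

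The main obstacle will be the bookkeeping of $R$: one must balance H\"older exponents so that only $\mathbb{E}[\|A-K\|_{HS}^8]^{1/2}$ (and not a higher moment) is invoked, while simultaneously integrating enough powers of $x$ under $\phi_K$ and controlling $\log(q/\phi_K)$ uniformly on $E$. Extracting the sharp $d^2/\lambda(K)^4$ dependence, together with the explicit numerical constants in the statement, requires combining the pointwise control of $\log(q/\phi_K)$ with Gaussian moment computations for quartic polynomials in $x$, rather than bounding $\log(q/\phi_K)$ crudely in $L^\infty$; this mixed estimate is the delicate step.
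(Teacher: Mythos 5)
Your interpolation $K_t=(1-t)K+tA$, the density $q(t,x)=\mathbb{E}[\phi_{K_t}(x)1_E]/\mathbb{P}(E)$, the observation that $H'(0)=0$, and your treatment of the second-order term at $t=0$ (Hermite/Parseval evaluation of $H''(0)$ in terms of $\mathbb{E}[(A-K)1_E]/\mathbb{P}(E)$, then adding and subtracting $\mathbb{E}[A]-K$ and using Markov on $E^{c}$) all coincide with the paper's scheme and with the final computation in Lemma \ref{term_con_zero}. The gap is the order at which you truncate the Taylor expansion and the way you propose to control the remainder. You stop at order two and plan to bound $R=\int_0^1(1-t)\big[H''(t)-H''(0)\big]dt$ by estimating the two summands of $H''(t)$ separately on $E$ via Cauchy--Schwarz and Gaussian moment bounds. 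But for $t>0$ the term $\int(\partial_t q)^2/q\,dx$ is a genuinely second-moment quantity: by exactly the estimates you invoke (cf.\ Lemma \ref{mom_sec_h}) it is of size $\mathbb{E}[\|A-K\|_{HS}^2 1_E]$; and even if you exploit the cancellation with $H''(0)$ by writing $H''(t)-H''(0)=\int_0^t H'''(s)ds$ and bounding $H'''(s)$ crudely for $s>0$, you only reach third-moment size, e.g. $\mathbb{E}[\|A-K\|_{HS}^2]^{1/2}\mathbb{E}[\|A-K\|_{HS}^4]^{1/2}$. Neither quantity is dominated by the right-hand side of the lemma: if $\mathbb{E}[A]=K$ and $\|A-K\|$ has typical size $\epsilon$, the stated bound is of order $\epsilon^4$ (since $\mathbb{E}[\|A-K\|_{HS}^8]^{1/2}\asymp\epsilon^4$), whereas your remainder estimates are of order $\epsilon^2$ or $\epsilon^3$ (in the network application, this is the difference between rates $n^{-1/2}$ or $n^{-3/4}$ and the optimal $n^{-1}$). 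So the step ``repeated Cauchy--Schwarz gives a bound of the form $C\,\mathbb{E}[\|A-K\|_{HS}^8]^{1/2}$'' would fail; the obstruction is not that a moment higher than the eighth is needed, but that the low-order moments your decomposition produces are too large.

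What makes the paper's argument work is the expansion to third order at $t=0$ with a fourth-order Lagrange remainder. At $t=0$ every $t$-derivative of $q$ depends on $A$ only through the conditional mean $\mathbb{E}[A\,1_E]/\mathbb{P}(E)$ (the average over $A$ is taken before any nonlinearity is applied), so the third-order terms are powers of that deterministic mean and are absorbed into the $\|\mathbb{E}[A]-K\|_{HS}^2$ contribution using $\big\|K^{-1}\big(\mathbb{E}\big[A\tfrac{1_E}{\mathbb{P}(E)}\big]-K\big)\big\|_{op}\le 1/2$ on $E$ (Lemma \ref{term_con_zero}); a genuine moment of $A-K$ first appears only in the fourth-order remainder, which is of fourth-moment size and is controlled through the Hermite/Isserlis identities of Proposition \ref{var_hk} and Lemmas \ref{term_log} and \ref{stime_per_th_fin}. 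Without this extra order your decomposition cannot yield the stated estimate, nor the stated constants (the coefficient of $\|\mathbb{E}[A]-K\|_{HS}^2$ itself collects contributions from the third-order terms). A smaller point: you discard the factor $\mathbb{P}(E)$ by bounding the left-hand side by $H(1)$; the paper keeps this prefactor, and it is precisely what cancels the $1/\mathbb{P}(E)$ arising from $\mathbb{E}[A\,1_E]/\mathbb{P}(E)$ when passing to $\mathbb{E}[A]-K$ in Lemma \ref{term_con_zero}.
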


\begin{proof}[Proof of Theorem \ref{final}]
If $\mathbb{P}(E)\ne 0\ne \mathbb{P}(E^C)$, applying Lemma \ref{inEC} and Lemma \ref{inE} to inequality \eqref{spiezzo in due} one infers that
\begin{multline*}
D(F||G)\le 
\frac{8}{\lambda(K)^4}\Big(\|K\|_{HS}\mathbb{E}[\|A^{-1}\|_{HS}^2]^{1/2}+\|K^{-1}\|_{HS}\mathbb{E}[\|A\|_{HS}^2]^{1/2}\Big)\mathbb{E}[\|A-K\|_{HS}^8]^{1/2}\\
+\frac{\sqrt{3}}{12\sqrt{2}}\big(2\sqrt{6}+3\sqrt{2}+2+\sqrt{d}\big)\|K^{-1}\|_{HS}^2\|\mathbb{E}[A]-K\|_{HS}^2\\
+\Bigg\{\frac{\sqrt{2}}{\lambda(K)^4\sqrt{3}}\big(2\sqrt{6}+3\sqrt{2}+2+\sqrt{d}\big)\|K^{-1}\|_{HS}^2(\lambda(K)^2+4\|K\|_{HS}^2)\\
+ \frac{\sqrt{70}d^2}{\lambda(K)^4}\Bigg(\frac{1}{2}\max{\Big\{\Big|\log\frac{2^d \det K}{(2\|K\|_{op}+\lambda(K))^d}\Big|,\log\frac{2^d \det K}{\lambda(K)^d}\Big\}}
+\frac{(\sqrt{2}+1){d}}{4}
\Bigg)\\
+\Big(3+\frac{\sqrt{3}}{8}+ 5\sqrt{10}+\frac{4\sqrt{30}}{3}+\frac{\sqrt{15}}{3}+\frac{10\sqrt{5}}{3}\Big)\frac{d^2}{\lambda(K)^4}\Bigg\}\mathbb{E}\Big[\|A-K\|_{HS}^8\Big]^{1/2}
\end{multline*}
\begin{multline*}
= 
\frac{\sqrt{3}}{12\sqrt{2}}\big(2\sqrt{6}+3\sqrt{2}+2+\sqrt{d}\big)\|K^{-1}\|_{HS}^2\|\mathbb{E}[A]-K\|_{HS}^2\\
+\frac{1}{\lambda(K)^4}\Bigg\{8\|K\|_{HS}\mathbb{E}[\|A^{-1}\|_{HS}^2]^{1/2}
+8\|K^{-1}\|_{HS}\mathbb{E}[\|A\|_{HS}^2]^{1/2}\\
+{\frac{\sqrt{2}}{\sqrt{3}}\big(2\sqrt{6}+3\sqrt{2}+2+\sqrt{d}\big)\|K^{-1}\|_{HS}^2(\lambda(K)^2+4\|K\|_{HS}^2)}\\
+ {\sqrt{70}d^2}\Bigg(\frac{1}{2}\max{\Big\{\Big|\log\frac{2^d \det K}{(2\|K\|_{op}+\lambda(K))^d}\Big|,\log\frac{2^d \det K}{\lambda(K)^d}\Big\}}
+\frac{(\sqrt{2}+1){d}}{4}
\Bigg)\\
+\Big(3+\frac{\sqrt{3}}{8}+ 5\sqrt{10}+\frac{4\sqrt{30}}{3}+\frac{\sqrt{15}}{3}+\frac{10\sqrt{5}}{3}\Big){d^2}\Bigg\}\mathbb{E}\Big[\|A-K\|_{HS}^8\Big]^{1/2}.
\end{multline*}

The bound is of course true also if $\mathbb{P}(E)=0$ or $\mathbb{P}(E^C)=0$. In fact if for example $\mathbb{P}(E)=0$, then 
\[
D(F||G)=\int_{\mathbb{R}^d}{\mathbb{E}[\phi_A(x)1_{E^C}]}\log \Bigg(\frac{\mathbb{E}[\phi_A(x)1_{E^C}]}{\mathbb{P}(E^C)\phi_K(x)}\Bigg)\Bigg]
\]
and a direct application of Lemma \ref{inEC} yields the desired estimate. If $\mathbb{P}(E^C)=0$ the procedure is analogous. 
\end{proof}
\section{Bounds on the distance between a Gaussian  and a conditionally Gaussian law }\label{NN_app}

{ In this section, we derive bounds on the total variation and 2-Wasserstein distances, defined respectively in \eqref{dTV_def} and \eqref{dW_def}, between the law of a conditionally Gaussian random variable and a Gaussian distribution with invertible covariance matrix. Indeed, a limitation of Theorem \ref{final} is that its assumptions on the finite moments of the inverse conditional covariance matrix $A^{-1}$ are rarely met in applications, preventing its direct use—along with Theorems \ref{PCK} and \ref{tala}—to obtain distance bounds. This issue is addressed via Lemma \ref{inE}. Specifically, using the notation of the lemma, we observe that on the event $E$ defined in \eqref{def_ev_E}, the inequality $\|A-K\|{op} \le \lambda(K)/2$ holds. Consequently, for any $x \in \mathbb{R}^d$ with $\|x\| = 1$, we have \begin{equation}\label{aut_A} x^T A x = x^T (A - K) x + x^T K x \ge \lambda(K) - \|A - K\|{op} \ge \frac{\lambda(K)}{2} > 0, \end{equation} implying that $A$ is invertible on $E$. It follows that the conditionally Gaussian random vector admits a density on this event without requiring additional assumptions.}

\begin{theorem}\label{th_dis_nuo}
Fix $d\ge 1$, let Assumption \ref{hip_g_cond} prevail, and assume that $\mathbb{E}[\|A\|_{HS}^8]<\infty$.
Then,
\begin{multline}\label{dtv}
d_{TV}(F,G)\le
\Big(\frac{\sqrt{3}}{24\sqrt{2}}\big(2\sqrt{6}+3\sqrt{2}+2+\sqrt{d}\big)\Big)^{1/2}\|K^{-1}\|_{HS} \|\mathbb{E}[A]-K\|_{HS}\\
+\Bigg\{\Bigg[\frac{\sqrt{2}}{2\lambda(K)^4\sqrt{3}}\big(2\sqrt{6}+3\sqrt{2}+2+\sqrt{d}\big)\|K^{-1}\|_{HS}^2(\lambda(K)^2+4\|K\|_{HS}^2)\\
+ \frac{\sqrt{70}d^2}{2\lambda(K)^4}\Bigg(\frac{1}{2}\max{\Big\{\Big|\log\frac{2^d \det K}{(2\|K\|_{op}+\lambda(K))^d}\Big|,\log\frac{2^d \det K}{\lambda(K)^d}\Big\}}
+\frac{(\sqrt{2}+1){d}}{4}
\Bigg)\\
+\Big(3+\frac{\sqrt{3}}{8}+ 5\sqrt{10}+\frac{4\sqrt{30}}{3}+\frac{\sqrt{15}}{3}+\frac{10\sqrt{5}}{3}\Big)\frac{d^2}{2\lambda(K)^4}\Bigg]^{1/2}+\frac{8}{\lambda(K)^2}\Bigg\}\mathbb{E}\Big[\|A-K\|_{HS}^8\Big]^{1/4},
\end{multline}
and
\begin{multline}\label{W2nn}
W_2(F,G)\le
\|K\|_{op}^{1/2}
\Big(\frac{\sqrt{3}}{6\sqrt{2}}(2\sqrt{6}+3\sqrt{2}+2+\sqrt{d}\big)\Big)^{1/2}\|K^{-1}\|_{HS} \|\mathbb{E}[A]-K\|_{HS}\\
+\Bigg\{\|K\|_{op}^{1/2}\Bigg[\frac{2\sqrt{2}}{\lambda(K)^4\sqrt{3}}(2\sqrt{6}+3\sqrt{2}+2+\sqrt{d}\big)\|K^{-1}\|_{HS}^2(\lambda(K)^2+4\|K\|_{HS}^2)\\
+ \frac{2\sqrt{70}d^2}{\lambda(K)^4}\Bigg(\frac{1}{2}\max{\Big\{\Big|\log\frac{2^d \det K}{(2\|K\|_{op}+\lambda(K))^d}\Big|,\log\frac{2^d \det K}{\lambda(K)^d}\Big\}}
+\frac{(\sqrt{2}+1){d}}{4}
\Bigg)\\
+\Big(3+\frac{\sqrt{3}}{8}+ 5\sqrt{10}+\frac{4\sqrt{30}}{3}+\frac{\sqrt{15}}{3}+\frac{10\sqrt{5}}{3}\Big)\frac{2d^2}{\lambda(K)^4}\Bigg]^{1/2}+\frac{2}{\lambda(K)^2}\Bigg\} \mathbb{E}\Big[\|A-K\|_{HS}^8\Big]^{1/4}.
\end{multline}
\end{theorem}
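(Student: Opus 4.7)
The natural strategy is to apply Theorem \ref{final} and then invoke the Pinsker-Csiszar-Kullback and Talagrand inequalities (Theorems \ref{PCK} and \ref{tala}), but this is blocked by the missing hypotheses $\mathbb{P}(\det A>0)=1$ and $\mathbb{E}[\|A^{-1}\|_{HS}^2]<\infty$. Following the discussion around \eqref{aut_A}, I would localize on the event $E := \{\|A-K\|_{op}\leq \lambda(K)/2\}$, on which $A$ is automatically bounded below by $(\lambda(K)/2)I_d$. The key observation is that Lemma \ref{inE}, which controls the truncated entropy $\int_{\mathbb{R}^d} \mathbb{E}[\phi_A(x)1_E]\log(\mathbb{E}[\phi_A(x)1_E]/(\mathbb{P}(E)\phi_K(x)))dx$, requires only $\mathbb{E}[\|A\|_{HS}^8]<\infty$ and is therefore available under the present assumptions.

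For \eqref{dtv}, starting from $d_{TV}(F,G) = \tfrac{1}{2}\int |\mathbb{E}[\phi_A(x)]-\phi_K(x)|dx$ and inserting $1 = 1_E + 1_{E^C}$ inside the expectation, the triangle inequality yields
\[
2 d_{TV}(F,G) \le \int \bigl|\mathbb{E}[\phi_A(x)1_E]-\mathbb{P}(E)\phi_K(x)\bigr|dx + \int \bigl|\mathbb{E}[\phi_A(x)1_{E^C}]-\mathbb{P}(E^C)\phi_K(x)\bigr|dx.
\]
The second summand is crudely bounded by $2\mathbb{P}(E^C)$. For the first, $x\mapsto \mathbb{E}[\phi_A(x)1_E]/\mathbb{P}(E)$ and $\phi_K$ are honest probability densities, so Pinsker's inequality gives a square-root control by their relative entropy; after multiplying by $\mathbb{P}(E)$ the integral is precisely the quantity bounded in Lemma \ref{inE}. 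Finally Markov followed by Jensen yields $\mathbb{P}(E^C) \le 4 \mathbb{E}[\|A-K\|_{HS}^2]/\lambda(K)^2 \le 4 \mathbb{E}[\|A-K\|_{HS}^8]^{1/4}/\lambda(K)^2$, and separating the two summands of Lemma \ref{inE} via $\sqrt{a+b}\le \sqrt{a}+\sqrt{b}$ reproduces the two terms displayed in \eqref{dtv}.

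For \eqref{W2nn}, I would introduce the regularized conditionally Gaussian vector $\tilde F$ with $\mathcal F$-measurable covariance $\tilde A := A\cdot 1_E+K\cdot 1_{E^C}$, which satisfies $\tilde A \geq (\lambda(K)/2)I_d$ everywhere and $\mathbb{E}[\|\tilde A\|_{HS}^8]<\infty$. Since the $E$-event associated to $\tilde A$ has full probability, Lemma \ref{inE} applied to $\tilde F$ delivers directly a bound on $D(\tilde F\|G)$ (the corrections arising from replacing $\mathbb{E}[A]-K$ by $\mathbb{E}[\tilde A]-K = \mathbb{E}[(A-K)1_E]$, and $\mathbb{E}[\|A-K\|_{HS}^8]$ by $\mathbb{E}[\|A-K\|_{HS}^8 1_E]$, are controlled by Cauchy-Schwarz and absorbed into the higher-order term). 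Reducing to a standard Gaussian through $G=\sqrt K Y$ with $Y\sim\mathcal N_d(0,I_d)$ and using invariance of KL under invertible linear maps, Talagrand's inequality then gives $W_2(\tilde F,G) \le \|K\|_{op}^{1/2}\sqrt{2 D(\tilde F\|G)}$. Using Remark \ref{rad_AN} I would realize $F = \sqrt{A}N$ and $\tilde F = \sqrt{\tilde A}N$ with the same standard Gaussian $N$ independent of $\mathcal F$ on an enlarged probability space, obtaining the coupling
\[
W_2(F,\tilde F)^2 \le \mathbb{E}\bigl[\|\sqrt{A} - \sqrt{K}\|_{HS}^2\, 1_{E^C}\bigr].
\]
Cauchy-Schwarz, the elementary bound $\|\sqrt{A} - \sqrt{K}\|_{HS}^4 \le 8d(\|A\|_{HS}^2+\|K\|_{HS}^2)$ (which follows from $\|\sqrt M\|_{HS}^2 = \mathrm{tr}(M)\le \sqrt{d}\,\|M\|_{HS}$), and the high-moment Markov bound $\mathbb{P}(E^C)\le 2^8 \mathbb{E}[\|A-K\|_{HS}^8]/\lambda(K)^8$ together absorb this contribution into a term of order $\mathbb{E}[\|A-K\|_{HS}^8]^{1/4}/\lambda(K)^2$; the triangle inequality $W_2(F,G) \le W_2(F,\tilde F) + W_2(\tilde F,G)$ then closes the argument.

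The principal obstacle is producing the correct $\mathbb{E}[\|A-K\|_{HS}^8]^{1/4}$ scaling for the coupling error $W_2(F,\tilde F)$: on $E^C$ the conditional covariance $A$ is uncontrolled, so a naive bound on $\|\sqrt A - \sqrt K\|_{HS}$ cannot produce the right rate. The resolution is a Hölder-Markov trade-off in which the polynomial growth of $\|A\|_{HS}$ is balanced against the smallness of $\mathbb{P}(E^C)$ via the eighth-moment hypothesis, exploiting the fact that $\mathbb{E}[\|A\|_{HS}^8]<\infty$ makes all relevant fluctuations on $E^C$ integrable. Tracking the explicit constants to reproduce the exact coefficients appearing in \eqref{dtv}-\eqref{W2nn} is tedious but entirely routine once the three-way split (Pinsker/Talagrand, Lemma \ref{inE}, Markov on $E^C$) is in place.
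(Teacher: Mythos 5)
Your total–variation half follows the paper's own argument almost verbatim: split over $E=\{\|A-K\|_{op}\le \lambda(K)/2\}$, apply Pinsker (Theorem \ref{PCK}) to the laws conditioned on $E$ so that the resulting entropy is exactly the quantity controlled by Lemma \ref{inE}, and dispose of the complement by Markov plus Lyapunov. One caveat: since $\mathbb{P}(\det A>0)=1$ is \emph{not} assumed here, $F$ need not have a Lebesgue density, so your starting identity $d_{TV}(F,G)=\tfrac12\int|\mathbb{E}[\phi_A(x)]-\phi_K(x)|\,dx$ and the integral $\int|\mathbb{E}[\phi_A(x)1_{E^C}]-\mathbb{P}(E^C)\phi_K(x)|\,dx$ are not well defined on $E^C$ (where $\phi_A$ may not exist). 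The paper avoids this by estimating $|\mathbb{E}[(1_S(F)-1_S(G))1_{E^C}]|\le 2\mathbb{P}(E^C)$ directly with test sets; your argument goes through once rephrased this way, and the constants you obtain are no worse than those in \eqref{dtv}.

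Your $W_2$ half takes a genuinely different route (regularize $\tilde A:=A1_E+K1_{E^C}$, apply Talagrand to $\tilde F$ after rescaling by $\sqrt K$, then triangle inequality), whereas the paper uses the dual representation of Theorem \ref{rappW2}, splits the supremum over $E$ and $E^C$, bounds the $E^C$ part by $\mathbb{E}[\|\sqrt K N_1-\sqrt A N_1\|^2 1_{E^C}]$ via Remark \ref{boundhhstar}, and the $E$ part by Talagrand plus Lemma \ref{inE}. Note that your coupling error $W_2(F,\tilde F)^2\le \mathbb{E}[\|\sqrt A-\sqrt K\|_{HS}^2 1_{E^C}]$ is exactly the paper's $E^C$ term, so the routes are closer than they appear; nevertheless, as executed your proof does not deliver the stated inequality, for two concrete reasons. (i) Your handling of this term (Cauchy--Schwarz, the crude bound $\|\sqrt A-\sqrt K\|_{HS}^4\le 8d(\|A\|_{HS}^2+\|K\|_{HS}^2)$, high-moment Markov) produces a coefficient proportional to $\mathbb{E}[\|A\|_{HS}^2+\|K\|_{HS}^2]^{1/4}$, i.e.\ a ``constant'' depending on the law of $A$, while the constants in \eqref{W2nn} depend only on $d$ and $K$. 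Your diagnosis that a direct bound on $\|\sqrt A-\sqrt K\|_{HS}$ cannot give the right rate on $E^C$ is mistaken: the inequality $\|\sqrt A-\sqrt K\|_{HS}\le \frac{1}{\lambda(K)}\|A-K\|_{HS}$ from \cite{trace} requires no upper control on $A$, and together with $1_{E^C}\le 4\|A-K\|_{op}^2/\lambda(K)^2$ it gives $\mathbb{E}[\|\sqrt A-\sqrt K\|_{HS}^21_{E^C}]\le \frac{4}{\lambda(K)^4}\mathbb{E}[\|A-K\|_{HS}^4]$, hence precisely the $\frac{2}{\lambda(K)^2}\mathbb{E}[\|A-K\|_{HS}^8]^{1/4}$ term of the theorem --- this is what the paper does. (ii) Applying Lemma \ref{inE} to $\tilde A$ replaces $\|\mathbb{E}[A]-K\|_{HS}$ by $\|\mathbb{E}[(A-K)1_E]\|_{HS}$; the discrepancy $\mathbb{E}[\|A-K\|_{HS}1_{E^C}]$ is indeed of order $\mathbb{E}[\|A-K\|_{HS}^8]^{1/4}$, but it sits inside the leading squared term, so it cannot be ``absorbed'' without either inflating the coefficient of $\|\mathbb{E}[A]-K\|_{HS}$ or adding an extra $\mathbb{E}[\|A-K\|_{HS}^8]^{1/4}$ contribution with a new constant; either way you prove a bound of the same shape but with constants different from \eqref{W2nn}. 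The paper sidesteps both issues by never modifying $A$: conditioning on $E$ inside the dual representation lets Lemma \ref{inE} be invoked with the original $A$, and the stated constants come out exactly.
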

\begin{proof}

{ Recall \eqref{def_ev_E} for the definition of the event $E$ and suppose for now  that $\mathbb{P}(E)\ne 0$.} 
{ Without loss of generality, we can assume that $F$, $A$ and $G$ are defined on the same probability space and that $F$ and $A$ are independent of $G$}. Then for every $S\in\mathcal{B}(\mathbb{R}^d)$,
\begin{multline}\label{d_TV_S_E}
\Big|\mathbb{E}[1_S(F)]-\mathbb{E}[1_S(G)]\Big|
\le \Big|\mathbb{E}\Big[\Big(1_S(F)-1_S(G)\Big)1_{\Big\{\|A-K\|_{op}\le \frac{\lambda(K)}{2}\Big\}}\Big]\Big|\\
+\Big|\mathbb{E}\Big[\Big(1_S(F)-1_S(G)\Big)1_{\Big\{\|A-K\|_{op}> \frac{\lambda(K)}{2}\Big\}}\Big]\Big|
\end{multline}
\begin{equation}\label{d_TV_Mark_Entr}
\le  \sqrt{\frac{1}{2}\int_{\mathbb{R}^d}{\mathbb{E}[\phi_{A}(x)1_{E}]}\log \Bigg(\frac{\mathbb{E}[\phi_{A}(x)\frac{1_{E}}{\mathbb{P}(E)}]}{\phi_{K}(x)}\Bigg)dx}
+2\mathbb{P}\Big(\|A-K\|_{op}> \frac{\lambda(K)}{2}\Big),
\end{equation}
 where we have used Theorem \ref{PCK} under the probability $d\mathbb{Q}:=\frac{1_{E}}{\mathbb{P}(E)}d\mathbb{P}$, as well as the facts that: { (i) the density of the law of $F$ under $\mathbb{Q}$ is $x\mapsto \mathbb{E}\Big[\phi_{A}(x)\frac{1_{E}}{\mathbb{P}(E)}\Big]$, and (ii) since $G$ and $E$ are independent by assumption under $\mathbb{P}$, the density of $G$ under $\mathbb{Q}$ is given by $\phi_K$.} Hence, using Lemma \ref{inE} (for the first term) and the Markov inequality (for the second term), one deduces that
\begin{multline*}
\Big|\mathbb{E}[1_S(F)]-\mathbb{E}[1_S(G)]\Big|\le \frac{8}{\lambda(K)^2}\mathbb{E}\Big[\|A-K\|_{HS}^2\Big]\\
+ \Big(\frac{\sqrt{3}}{24\sqrt{2}}\big(2\sqrt{6}+3\sqrt{2}+2+\sqrt{d}\big)\Big)^{1/2}\|K^{-1}\|_{HS} \|\mathbb{E}[A]-K\|_{HS}\\
+\Bigg\{\frac{\sqrt{2}}{2\lambda(K)^4\sqrt{3}}\big(2\sqrt{6}+3\sqrt{2}+2+\sqrt{d}\big)\|K^{-1}\|_{HS}^2(\lambda(K)^2+4\|K\|_{HS}^2)\\
+ \frac{\sqrt{70}d^2}{2\lambda(K)^4}\Bigg(\frac{1}{2}\max{\Big\{\Big|\log\frac{2^d det K}{(2\|K\|_{op}+\lambda(K))^d}\Big|,\log\frac{2^d detK}{\lambda(K)^d}\Big\}}
+\frac{(\sqrt{2}+1){d}}{4}
\Bigg)\\
+\Big(3+\frac{\sqrt{3}}{8}+ 5\sqrt{10}+\frac{4\sqrt{30}}{3}+\frac{\sqrt{15}}{3}+\frac{10\sqrt{5}}{3}\Big)\frac{d^2}{2\lambda(K)^4}\Bigg\}^{1/2}\mathbb{E}\Big[\|A-K\|_{HS}^8\Big]^{1/4}
\end{multline*}
and taking the sup over $S$ one obtains the desired estimate \eqref{dtv}. {If $\mathbb{P}(E)=0$ then inequality \eqref{d_TV_S_E} reads as
\[
\Big|\mathbb{E}[1_S(F)]-\mathbb{E}[1_S(G)]\Big|
\le \Big|\mathbb{E}\Big[\Big(1_S(F)-1_S(G)\Big)1_{\Big\{\|A-K\|_{op}> \frac{\lambda(K)}{2}\Big\}}\Big]\Big|
\]
and the conclusion directly follows from Markov inequality.
}

We now proceed to the proof of the bound \eqref{W2nn}, assuming that $\mathbb{P}(E)\ne 0$ {(the case $\mathbb{P}(E)=0$ can be studied exactly as in the Total Variation distance)}.
Using Theorem \ref{rappW2} it follows that
\[
W_2(F,G)=\sqrt{\sup_{h\in L^1(\mu_{G})} \Big(\mathbb{E}\big[h(G)\big]-\mathbb{E}\big[h^*(F)\big]\Big)},
\]
where $h^{*}$ is defined in \eqref{hstar} and $\mu_{G}$ denotes the law of $G$. { Since the joint distribution of the pair $(F,G)$ is immaterial for bounding $W_2(F,G)$,} without loss of generality we can suppose $G\sim\sqrt{K}N_1$ and $F\sim\sqrt{A}N_1$, with $N_1\sim\mathcal{N}_d(0,I_d)$ independent of $A$. Considering the event $E$ defined in \eqref{def_ev_E} one has that
\begin{multline}\label{div_E_W2}
W_2(F,G)
\le \sqrt{\mathbb{P}(E)}\sqrt{\sup_{h\in L^1(\mu_{G})} \Big(\mathbb{E}\Big[h(\sqrt{K}N_1)\frac{1_{E}}{\mathbb{P}(E)}\Big]-\mathbb{E}\Big[h^*(\sqrt{A}N_1)\frac{1_{E}}{\mathbb{P}(E)}\Big]\Big)}\\
+\sqrt{\sup_{h\in L^1(\mu_{G})} \Big(\mathbb{E}\big[h(\sqrt{K}N_1)1_{E^C}\big]-\mathbb{E}\big[h^*(\sqrt{A}N_1)1_{E^C}\big]\Big)}
\end{multline}
{ (note that the two suprema are nonnegative, as one can see by considering the case $h=0$)}. The second summand can be easily bounded by conditioning on $A$ and applying the content of Remarks \ref{boundhhstar} and \ref{rad_AN}, yielding the estimate
\begin{multline}\label{term1W2}
\sup_{h\in L^{1}\big(\mu_{G}\big)}\Big(\mathbb{E}\big[h(\sqrt{K}N_1)1_{E^C}\big]-\mathbb{E}\big[h^*(\sqrt{A}N_1)1_{E^C}\big]\Big)\\
\le \mathbb{E}\Big[\|\sqrt{K}N_1-\sqrt{A}N_1\|^21_{E^C}\Big]=\mathbb{E}\Big[\|\sqrt{K}-\sqrt{A}\|_{HS}^21_{E^C}\Big]\\
\le \frac{4}{\lambda(K)^4} \mathbb{E}\Big[\|A-K\|_{HS}^4\Big],
\end{multline}
where we have used the definition of the event $E$, and the fact that $\|\sqrt{A}-\sqrt{K}\|_{HS}\le \frac{1}{\lambda(K)}\|A-K\|_{HS}$ thanks to Proposition 3.2 from \cite{trace}. We now study the first summand on the right-hand side of \eqref{div_E_W2}. Applying Theorem \ref{rappW2} one has that 
\[
{\mathbb{P}(E)}{\sup_{h\in L^1(\mu_{G})} \Big(\mathbb{E}\Big[h(\sqrt{K}N_1)\Big]-\mathbb{E}\Big[h^*(\sqrt{A}N_1)\frac{1_{E}}{\mathbb{P}(E)}\Big]\Big)}
={\mathbb{P}(E)}W_2(Z,G)^2,
\]
where $Z$ is defined as a r.v. in $\mathbb{R}^d$ with density with respect to the Lebesgue measure given by
\[
x\mapsto \mathbb{E}\Big[\phi_{A}(x)\frac{1_{E}}{\mathbb{P}(E)}\Big], \quad\text{ $x\in\mathbb{R}^d$}.
\]
Moreover, writing $N_2\sim\mathcal{N}_d(0,I_d)$,
\begin{multline*}
{\mathbb{P}(E)}W_2(Z,G)^2={\mathbb{P}(E)}\inf_{ (Y,W), Y\sim Z, W\sim G}\mathbb{E}[\|Y-W\|^2] \\
={\mathbb{P}(E)}\inf_{ (Y,X), Y\sim Z, X\sim N_2}\mathbb{E}[\|\sqrt{K}X-Y\|^2]\\
\le{\mathbb{P}(E)}\|K\|_{op}\inf_{ (Y,X), Y\sim Z, X\sim N_2}\mathbb{E}[\|X-(\sqrt{K})^{-1}Y\|^2]
={\mathbb{P}(E)} \|K\|_{op}W_2\Big(\big(\sqrt{K}\big)^{-1}Z,N_2\Big)^2\\
\le2 \|K\|_{op}
{\int_{\mathbb{R}^d}{\mathbb{E}\big[\phi_{(\sqrt{K})^{-1}A(\sqrt{K})^{-1}}(x)1_{E}\big]}\log \Bigg(\frac{\mathbb{E}\big[\phi_{(\sqrt{K})^{-1}A(\sqrt{K})^{-1}}(x)\frac{1_{E}}{\mathbb{P}(E)}\big]}{\phi_{I_d}(x)}\Bigg)dx}\\
= 2\|K\|_{op}
\int_{\mathbb{R}^d}{\mathbb{E}[\phi_{A}(x)1_{E}]}\log \Bigg(\frac{\mathbb{E}[\phi_{A}(x)\frac{1_{E}}{\mathbb{P}(E)}]}{\phi_{K}(x)}\Bigg)dx,
\end{multline*}
{ where the last equality follows from a standard change of variables}, and we have used the fact that the density of $(\sqrt{K})^{-1}Z$ is given by
\[
x\mapsto \mathbb{E}\Big[\phi_{(\sqrt{K})^{-1}A(\sqrt{K})^{-1}}(x)\frac{1_{E}}{\mathbb{P}(E)}\Big],\quad x\in\mathbb{R}^d,
\]
in conjunction with Theorem \ref{tala}.
Finally, thanks to Lemma \ref{inE} one infers that
\begin{multline}\label{term2W2}
{\mathbb{P}(E)}{\sup_{h\in L^1(\mu_{G})} \Big(\mathbb{E}\Big[h(G)\frac{1_{E}}{\mathbb{P}(E)}\Big]-\mathbb{E}\Big[h^*(F)\frac{1_{E}}{\mathbb{P}(E)}\Big]\Big)}\\
\le \|K\|_{op}
\Big(\frac{\sqrt{3}}{6\sqrt{2}}(2\sqrt{6}+3\sqrt{2}+2+\sqrt{d}\big)\Big)\|K^{-1}\|^2_{HS} \|\mathbb{E}[A]-K\|_{HS}^2\\
+\|K\|_{op}\Bigg\{\frac{2\sqrt{2}}{\lambda(K)^4\sqrt{3}}(2\sqrt{6}+3\sqrt{2}+2+\sqrt{d}\big)\|K^{-1}\|_{HS}^2(\lambda(K)^2+4\|K\|_{HS}^2)\\
+ \frac{2\sqrt{70}d^2}{\lambda(K)^4}\Bigg(\frac{1}{2}\max{\Big\{\Big|\log\frac{2^d \det K}{(2\|K\|_{op}+\lambda(K))^d}\Big|,\log\frac{2^d \det K}{\lambda(K)^d}\Big\}}
+\frac{(\sqrt{2}+1){d}}{4}
\Bigg)\\
+\Big(3+\frac{\sqrt{3}}{8}+ 5\sqrt{10}+\frac{4\sqrt{30}}{3}+\frac{\sqrt{15}}{3}+\frac{10\sqrt{5}}{3}\Big)\frac{2d^2}{\lambda(K)^4}\Bigg\}\mathbb{E}\Big[\|A-K\|_{HS}^8\Big]^{1/2}
\end{multline}
Inequality \eqref{W2nn} immediately follows from the bounds \eqref{div_E_W2}, \eqref{term1W2} and \eqref{term2W2}.

\end{proof}

\section*{Acknowledgments}
The first author was supported by the Luxembourg National Research Fund via the grant
PRIDE/21/16747448/MATHCODA. Both authors are grateful to Boris Hanin, Domenico Marinucci, Ivan Nourdin and Dario Trevisan for several useful discussions.

\section{Appendix}\label{pr_inv}
\subsection{Proofs of Theorem \ref{fin_th_n} and Theorem \ref{bay_TV}}

{ \noindent\underline{\it Proof of Theorem \ref{fin_th_n}}. As already observed, the proof of this result  follows from Theorem \ref{th_gen_tv_2w}, that one has to specialize to the case 
$$
A = \left\{ V^{J^{(i)}}_{x^{(i)}}V^{J^{(i)}}_{x^{(i)}}A_{i,j}^{(L+1)} : i,j= 1,...,d\right\}, \quad K = \left\{ V^{J^{(i)}}_{x^{(i)}}V^{J^{(i)}}_{x^{(i)}}K_{i,j}^{(L+1)} : i,j= 1,...,d\right\},
$$
and combine with Proposition \ref{EAmK}, as well as with the content of Remarks \ref{oss_FHMNP}, \ref{gaus_cond_der_z}, \ref{norm_8}, and \ref{trA_fin}. 
}

\bigskip 

\noindent\underline{\it Proof of Theorem \ref{bay_TV}}. Denote by $\mu_{Z|\mathcal{D}}$, $\mu_{G|\mathcal{D}}$, $\mu_Z$ and $\mu_G$ the laws of $Z_{|\mathcal{D}}$, $G_{|\mathcal{D}}$, $Z$ and $G$ respectively. One has that
    \[
    d{\mu_{Z|\mathcal{D}}}(x)=\frac{\mathcal{L}(x)}{\mathbb{E}[\mathcal{L}(Z)]}d\mu_Z(x)\quad\text{and}\quad d\mu_{G|\mathcal{D}}(x)=\frac{\mathcal{L}(x)}{\mathbb{E}[\mathcal{L}(G)]}d\mu_G(x),
    \]
    and hence
    \[
    d_{TV}({Z}_{|\mathcal{D}},{G}_{|\mathcal{D}})
    =\sup_{B\in\mathcal{B}(\mathbb{R}^{d\times n_{L+1}})}\Bigg|\int_B\frac{\mathcal{L}(x)}{\mathbb{E}[\mathcal{L}(Z)]}d\mu_Z(x)-\int_B\frac{\mathcal{L}(x)}{\mathbb{E}[\mathcal{L}(G)]}d\mu_G(x)\Bigg|
    \]
    \begin{multline*}
    \le \sup_{B\in\mathcal{B}(\mathbb{R}^{d\times n_{L+1}})}\Bigg|\int_B\frac{\mathcal{L}(x)}{\mathbb{E}[\mathcal{L}(Z)]}d\mu_Z(x)-\int_B\frac{\mathcal{L}(x)}{\mathbb{E}[\mathcal{L}(G)]}d\mu_Z(x)\Bigg|\\
    +\sup_{B\in\mathcal{B}(\mathbb{R}^{d\times n_{L+1}})}\Bigg|\int_B\frac{\mathcal{L}(x)}{\mathbb{E}[\mathcal{L}(G)]}d\mu_Z(x)-\int_B\frac{\mathcal{L}(x)}{\mathbb{E}[\mathcal{L}(G)]}d\mu_G(x)\Bigg|
    \end{multline*}
   \begin{multline*}
       \le \|\mathcal{L}\|_{\infty}\sup_{B\in\mathcal{B}(\mathbb{R}^{d\times n_{L+1}})}\mathbb{P}(Z\in B)\Bigg|\frac{1}{\mathbb{E}[\mathcal{L}(Z)]}-\frac{1}{\mathbb{E}[\mathcal{L}(G)]}\Bigg|\\
       +\frac{1}{\mathbb{E}[\mathcal{L}(G)]}\sup_{B\in\mathcal{B}(\mathbb{R}^{d\times n_{L+1}})}\Bigg|\int_B{\mathcal{L}(x)}d\mu_Z(x)-\int_B{\mathcal{L}(x)}d\mu_G(x)\Bigg|
   \end{multline*}
   \begin{multline*}
       \le \frac{\|\mathcal{L}\|^2_{\infty}}{\mathbb{E}[\mathcal{L}(Z)]\mathbb{E}[\mathcal{L}(G)]}\Bigg|{\mathbb{E}\Big[\frac{\mathcal{L}(Z)}{\|\mathcal{L}\|_{\infty}}\Big]}-{\mathbb{E}\Big[\frac{\mathcal{L}(G)}{\|\mathcal{L}\|_{\infty}}\Big]}\Bigg|\\
        +\frac{\|\mathcal{L}\|_{\infty}}{\mathbb{E}[\mathcal{L}(G)]}\sup_{B\in\mathcal{B}(\mathbb{R}^{d\times n_{L+1}})}\Bigg|\mathbb{E}\Big[1_B(Z)\frac{\mathcal{L}(Z)}{\|\mathcal{L}\|_{\infty}}\Big]-\mathbb{E}\Big[1_B(G)\frac{\mathcal{L}(G)}{\|\mathcal{L}\|_{\infty}}\Big]\Bigg|
   \end{multline*}
   \[
   \le \frac{\|\mathcal{L}\|_{\infty}}{\mathbb{E}[\mathcal{L}(G)]}\Bigg(\frac{\|\mathcal{L}\|_{\infty}}{\mathbb{E}[\mathcal{L}(Z)]}+1\Bigg)d_{TV}(Z,G),
   \]
   thanks to the second identity in the definition of the Total Variation distance provided in \eqref{dTV_def}. The final bound in the statement directly follows from Theorem \ref{fin_th_n}.

\subsection{Proofs of the results in Section \ref{gen_res}}\label{Sec_pr1}

\subsubsection{Proof of Lemma \ref{inEC}}\label{ss:plinec}
Using that $x\mapsto x\log x $ is a non-negative convex function, that the symbol $\frac{1_{E^C}}{\mathbb{P}(E^C)} {\mathbb{P}}$ defines a probability measure and
  exploiting Jensen's inequality, one infers that
\begin{multline*}
\int_{\mathbb{R}^d}{\mathbb{E}[\phi_A(x)1_{E^C}]}\log \Bigg(\frac{\mathbb{E}[\phi_A(x)1_{E^C}]}{\mathbb{P}(E^C)\phi_K(x)}\Bigg)dx\le \int_{\mathbb{R}^d}\mathbb{E}\Bigg[\phi_A(x)1_{E^C}\log \Bigg(\frac{\phi_A(x)}{\phi_K(x)}\Bigg)\Bigg]dx\\
\le \int_{\mathbb{R}^d}\mathbb{E}\Bigg[\phi_A(x)1_{E^C}\log \Bigg(\frac{\phi_A(x)}{\phi_K(x)}\Bigg)\Bigg]dx+ \int_{\mathbb{R}^d}\mathbb{E}\Bigg[\phi_K(x)1_{E^C}\log \Bigg(\frac{\phi_K(x)}{\phi_A(x)}\Bigg)\Bigg]dx\\
=\frac{1}{2}\mathbb{E}\Big[tr\Big(\sqrt{A}K^{-1}\sqrt{A}+\sqrt{K}A^{-1}\sqrt{K}-2I_d\Big)1_{E^C}\Big]\\
\le \frac{1}{2}\mathbb{E}\Big[\big(\|A\|_{HS}\|K^{-1}\|_{HS}+\|K\|_{HS}\|A^{-1}\|_{HS}\big)1_{E^C}\Big],
\end{multline*}

\noindent { where the first inequality trivially follows from the addition of a positive term, and the subsequent identity is a direct consequence of classical formulae for the relative entropy between absolutely continuous Gaussian elements}. To conclude, we observe that, by definition, on the event $E^C$ one has that $\frac{1}{2}< \frac{\|A-K\|_{op}}{\lambda(K)}$: as a consequence, using H$\ddot{o}$lder's inequality and the bound $\|A-K\|_{op}\le \|A-K\|_{HS}$, one deduces that
\begin{multline*}
\int_{\mathbb{R}^d}{\mathbb{E}[\phi_A(x)1_{E^C}]}\log \Bigg(\frac{\mathbb{E}[\phi_A(x)1_{E^C}]}{\mathbb{P}(E^C)\phi_K(x)}\Bigg)dx\\
\le \frac{8}{\lambda(K)^4}\Big(\|K\|_{HS}\mathbb{E}[\|A^{-1}\|_{HS}^2]^{1/2}+\|K^{-1}\|_{HS}\mathbb{E}[\|A\|_{HS}^2]^{1/2}\Big)\mathbb{E}[\|A-K\|_{HS}^8]^{1/2},
\end{multline*}
from which the desired bound follows at once.

\subsubsection{Proof of Lemma \ref{inE}: preliminary results}\label{ss:ppline}
Without loss of generality for the rest of this section, we will assume that $G$ is independent { of the pair $(A, F)$.} We introduce the matrices 
\begin{equation}\label{def_Gt}
\Gamma_t:=tA+(1-t)K, \quad \text{ $t\in [0,1]$} 
\end{equation}
and observe that on the event $E:=\{\|A-K\|_{op}\le \frac{\lambda(K)}{2}\}$ the matrix $\Gamma_t$ is strictly positive definite for every $t\in[0,1]$ if $K$ is strictly positive definite. In fact, for every $x\in\mathbb{R}^d$ with $\|x\|=1$ one has that 
\begin{equation}\label{min_eig_G}
x^T\Gamma_tx=tx^T(A-K)x+x^TKx\ge \lambda(K)-\|A-K\|_{op}\ge \frac{\lambda(K)}{2}>0,
\end{equation}
yielding that, for every $\omega\in E$, the function $\phi_{\Gamma_t(\omega)}$ ( see \eqref{phi_M}), is well-defined. To study the first term in \eqref{spiezzo in due}, we define the following class of interpolating functions:
\begin{equation}\label{tilde_g}
\tilde{g}(A,t,x):=\frac{\mathbb{E}[\phi_{\Gamma_t}(x)1_E]}{\mathbb{P}(E)\phi_K(x)},\quad t\in[0,1],\,x\in\mathbb{R}^d,
\end{equation}
and
\begin{equation}\label{fun_psi}
\psi(A,t,x):=\mathbb{P}(E)\tilde{g}(A,t,x)\log(\tilde{g}(A,t,x)),\quad t\in[0,1],x\in\mathbb{R}^d.
\end{equation}
Then, observing that $\psi(0,x)=0$ for every $x\in\mathbb{R}^d$, one deduces that
\[
\int_{\mathbb{R}^d}{\mathbb{E}[\phi_A(x)1_E]}\log\Bigg(\frac{\mathbb{E}[\phi_A(x)1_E]}{\mathbb{P}(E)\phi_K(x)}\Bigg)dx=\mathbb{E}[\psi(1,G)-\psi(0,G)];
\] 
the strategy of Proof of Lemma \ref{inE} is then {to use the Taylor expansion to the order three of $\psi$ around $t=0$, and to obtain an appropriate control of the remainder.}

\,

The derivability of $\psi$ in $t$ is a consequence of the next Lemma (proved in Section \ref{proof_lemma_pol_bound_hk}) and of the Remark immediately after.
Let us now define
\begin{equation}\label{fun_g}
g(A,t,x):=\frac{\phi_{\Gamma_t}(x)}{\phi_K(x)}
\end{equation}
and
\begin{equation}\label{def_h_k}
h_k(A,t,x):=\frac{1}{g(A,t,x)}\frac{\partial^k g}{\partial t^k}(A,t,x),
\end{equation}
noticing that $\tilde{g}(A,t,x)=\mathbb{E}[g(A,t,x)\frac{1_E}{\mathbb{P}(E)}]$.
\begin{lemma}\label{pol_bound_hk}
{For every integer $k\ge 1$, there exists a positive polynomial $p_k:\mathbb{R}\to\mathbb{R}$ such that, on the event $E:=\Big\{\|A-K\|_{op}\le \frac{\lambda(K)}{2}\Big\}$, one has the bound
\[
|h_k(A,t,x)|\le p_k(\|x\|)\|A-K\|_{HS}^k.
\]
}
\end{lemma}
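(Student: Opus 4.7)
The plan is to write $g(A,t,x)=\exp(f(A,t,x))$ with
\[
f(A,t,x)=\tfrac{1}{2}\log\det K-\tfrac{1}{2}\log\det\Gamma_t-\tfrac{1}{2}\,x^{T}(\Gamma_t^{-1}-K^{-1})x,
\]
and then exploit the classical Faà di Bruno identity, which asserts that if $g=e^{f}$ then
\[
\frac{1}{g}\frac{\partial^{k}g}{\partial t^{k}}=B_{k}\!\Bigl(\tfrac{\partial f}{\partial t},\tfrac{\partial^{2}f}{\partial t^{2}},\dots,\tfrac{\partial^{k}f}{\partial t^{k}}\Bigr),
\]
where $B_{k}$ is the $k$-th (complete) exponential Bell polynomial. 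Since $B_{k}$ is a fixed polynomial with non-negative integer coefficients and homogeneous in the sense that every monomial $\prod_{i}\partial^{j_{i}}f/\partial t^{j_{i}}$ satisfies $\sum_{i}j_{i}=k$, bounding $h_{k}$ reduces to uniformly bounding the derivatives $\partial^{j}f/\partial t^{j}$ for $j=1,\dots,k$.

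The core computation will be to differentiate $t\mapsto\Gamma_t^{-1}$ and $t\mapsto\log\det\Gamma_t$, using the standard matrix-calculus identities $\frac{d}{dt}\Gamma_t^{-1}=-\Gamma_t^{-1}(A-K)\Gamma_t^{-1}$ and $\frac{d}{dt}\log\det\Gamma_t=\operatorname{tr}(\Gamma_t^{-1}(A-K))$. Iterating gives the closed form
\[
\frac{\partial^{j}f}{\partial t^{j}}(A,t,x)=-\tfrac{1}{2}(-1)^{j-1}(j-1)!\,\operatorname{tr}\bigl((\Gamma_t^{-1}(A-K))^{j}\bigr)-\tfrac{1}{2}(-1)^{j}j!\,x^{T}\Gamma_t^{-1}\bigl((A-K)\Gamma_t^{-1}\bigr)^{j}x .
\]
On $E$, inequality \eqref{min_eig_G} forces $\|\Gamma_t^{-1}\|_{op}\le 2/\lambda(K)$ for every $t\in[0,1]$, which will be the only place the event $E$ enters. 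Using the submultiplicativity bounds $\|MN\|_{HS}\le\|M\|_{op}\|N\|_{HS}$, $|\operatorname{tr}(M)|\le\sqrt{d}\,\|M\|_{HS}$ and $\|A-K\|_{op}\le\|A-K\|_{HS}$, I will derive a uniform estimate
\[
\Bigl|\frac{\partial^{j}f}{\partial t^{j}}(A,t,x)\Bigr|\le c_{j}\bigl(1+\|x\|^{2}\bigr)\,\|A-K\|_{HS}^{j},\quad j=1,\dots,k,
\]
for constants $c_{j}$ depending only on $d,\lambda(K),j$.

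Plugging these bounds into the Bell-polynomial expansion of $h_{k}$, each monomial is bounded by
\[
\prod_{i}c_{j_{i}}\bigl(1+\|x\|^{2}\bigr)^{\#\{i\}}\|A-K\|_{HS}^{\sum_{i}j_{i}}\le C_{k}\bigl(1+\|x\|^{2}\bigr)^{k}\|A-K\|_{HS}^{k},
\]
because $\sum_{i}j_{i}=k$ and the number of factors in any monomial is at most $k$. Summing over the finitely many partitions contributing to $B_{k}$ yields the desired inequality with $p_{k}(r):=\tilde{C}_{k}(1+r^{2})^{k}$ for an explicit constant $\tilde{C}_{k}=\tilde{C}_{k}(d,\lambda(K))>0$.

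The main obstacle is purely bookkeeping: handling the quadratic-form derivative $\partial^{j}_t[x^{T}\Gamma_t^{-1}x]$ so that the factor $\|x\|^{2}$ appears only linearly in each $\partial^{j}f/\partial t^{j}$ (otherwise one would obtain an overly large power of $\|x\|$ and a polynomial that still works but is less natural). Once this is handled, the rest of the proof is a mechanical consequence of the Bell-polynomial representation and submultiplicativity of matrix norms.
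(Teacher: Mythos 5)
Your proof is correct. The closed form you derive for $\partial_t^j f$ with $f=\log g$ is right (it is exactly $\partial_t^{j-1}h_1$, and your coefficients $-\tfrac12(-1)^{j-1}(j-1)!$ and $\tfrac12(-1)^{j-1}j!$ check out against the identities $\tfrac{d}{dt}\Gamma_t^{-1}=-\Gamma_t^{-1}(A-K)\Gamma_t^{-1}$ and $\tfrac{d}{dt}\log\det\Gamma_t=\operatorname{tr}(\Gamma_t^{-1}(A-K))$), the bound $\|\Gamma_t^{-1}\|_{op}\le 2/\lambda(K)$ on $E$ is precisely \eqref{min_eig_G}, and the weighted homogeneity of the complete Bell polynomials ($\sum_i j_i=k$ in every monomial, with at most $k$ factors) correctly delivers $|h_k|\le \tilde C_k(1+\|x\|^2)^k\|A-K\|_{HS}^k$. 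Your route differs from the paper's in its combinatorial scaffolding: the paper proves, by induction on $k$, a strengthened statement that bounds both $h_k$ and all its $t$-derivatives $\partial_t^i h_k$ (by $r_k^{(i)}(\|x\|)\|A-K\|_{HS}^{k+i}$), and propagates it through the recursion $h_{k+1}=\partial_t h_k+h_k h_1$; its base case computes $\partial_t^i h_1$ in closed form, which is the same matrix-calculus computation as your $\partial_t^{j}f$. Your Fa\`a di Bruno/Bell-polynomial expansion replaces that two-parameter induction by a single explicit formula, so you never need the auxiliary derivative bounds and you get an explicit shape for the polynomial, $p_k(r)=\tilde C_k(1+r^2)^k$; the paper's argument is more elementary (no Bell polynomials) and its strengthened inductive hypothesis is purely an internal device, so nothing downstream is lost by your shortcut. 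One presentational point: the event $E$ enters not only through the bound $\|\Gamma_t^{-1}\|_{op}\le 2/\lambda(K)$ but also, via the same inequality \eqref{min_eig_G}, through the invertibility of $\Gamma_t$ for all $t\in[0,1]$, which is what makes your closed forms (and the smoothness of $t\mapsto g(A,t,x)$) well defined; it is worth saying this explicitly, though it is the same fact.
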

\begin{remark}\label{pass_sotto_deriv}
For $k\ge 1$ integer, $t\in [0,1]$ and $x\in\mathbb{R}^d$, recalling definitions \eqref{tilde_g} and \eqref{fun_g}, respectively, for $\tilde{g}$ and $g$, one has that
\[
\frac{\partial^k \tilde{g}}{\partial t^k}(A,t,x)=\frac{\partial^k}{\partial t^k}\mathbb{E}\Big[g(A,t,x)\frac{1_E}{\mathbb{P}(E)}\Big]=\mathbb{E}\Big[\frac{\partial^k g}{\partial t^k}(A,t,x)\frac{1_E}{\mathbb{P}(E)}\Big].
\]
To see this, one can use the fact that (by virtue of \eqref{min_eig_G} and denoting by $\lambda(\Gamma_t)$ the minimum eigenvalue of $\Gamma_t$) on the event $E$ one has the bound $\lambda(\Gamma_t)\ge \frac{\lambda(K)}{2}$ and therefore, using Lemma \ref{pol_bound_hk},
\begin{equation}\label{passo sotto A}
|h_k(A,t,x)|g(A,t,x)\le p_k(\|x\|) \Big(\frac{\lambda(K)\sqrt{d}}{2}\Big)^k\frac{1}{(\pi\lambda(K))^{d/2}}\frac{1}{\phi_K(x)}.
\end{equation}
 We observe that the quantity on the right-hand side of \eqref{passo sotto A} does not depend on $t$ and it is integrable with respect {to the law of $A$}, in such a way that it is possible to pass the derivative under the sign of integral.
\end{remark}

\begin{lemma}\label{lemm_der_psi}
If $K$ is invertible, then $\psi\in C^{\infty}(\mathbb{R}^d)$. In particular, if $k\ge 4$,
\[
\frac{\partial \psi}{\partial t}(A,t,x)=\mathbb{P}(E)\Bigg(\frac{\partial\tilde{g}}{\partial t}(A,t,x)\log(\tilde{g}(A,t,x))+\frac{\partial \tilde{g}}{\partial t}(A,t,x)\Bigg),
\]
\[
\frac{\partial^2 \psi}{\partial t^2}(A,t,x)=\mathbb{P}(E)\Bigg(\frac{\partial^2\tilde{g}}{\partial t^2}(A,t,x)\log(\tilde{g}(A,t,x))+\frac{1}{\tilde{g}(A,t,x)}\Big(\frac{\partial \tilde{g}}{\partial t}(A,t,x)\Big)^2+\frac{\partial^2 \tilde{g}}{\partial t^2}(A,t,x)\Bigg),
\]
\begin{multline*}
\frac{\partial^3 \psi}{\partial t^3}(A,t,x)=\mathbb{P}(E)\Bigg(\frac{\partial^3\tilde{g}}{\partial t^3}(A,t,x)\log(\tilde{g}(A,t,x))+\frac{3}{\tilde{g}(A,t,x)}\frac{\partial^2 \tilde{g}}{\partial t^2}(A,t,x)\frac{\partial\tilde{g}}{\partial t}(A,t,x)\\
-\frac{1}{(\tilde{g}(A,t,x))^2}\Big(\frac{\partial \tilde{g}}{\partial t}(A,t,x)\Big)^3
+\frac{\partial^3 \tilde{g}}{\partial t^3}(A,t,x)\Bigg),
\end{multline*}
\begin{multline*}
\frac{\partial^4 \psi}{\partial t^4}(A,t,x)=\mathbb{P}(E)\Bigg(\frac{\partial^4\tilde{g}}{\partial t^4}(A,t,x)\log(\tilde{g}(A,t,x))+\frac{4}{\tilde{g}(A,t,x)}\frac{\partial^3 \tilde{g}}{\partial t^3}(A,t,x)\frac{\partial\tilde{g}}{\partial t}(A,t,x)\\
+\frac{3}{\tilde{g}(A,t,x)}\Big(\frac{\partial^2 \tilde{g}}{\partial t^2}(A,t,x)\Big)^2
-\frac{6}{(\tilde{g}(A,t,x))^2}\frac{\partial^2 \tilde{g}}{\partial t^2}(A,t,x)\Big(\frac{\partial\tilde{g}}{\partial t}(A,t,x)\Big)^2+\frac{\partial^4 \tilde{g}}{\partial t^4}(A,t,x)\\
+\frac{2}{(\tilde{g}(A,t,x))^3}\Big(\frac{\partial\tilde{g}}{\partial t}(A,t,x)\Big)^4\Bigg).
\end{multline*}
\end{lemma}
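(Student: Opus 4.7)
The plan is to establish that $\psi$ is smooth in the $t$-variable by combining two ingredients, and then obtain the explicit expressions for the first four derivatives by a routine application of the product and chain rules.

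First I would argue the smoothness of $\tilde{g}$ in $t$. Since $K$ is invertible with smallest eigenvalue $\lambda(K)>0$, the bound \eqref{min_eig_G} shows that on the event $E$ the matrix $\Gamma_t = tA + (1-t)K$ is strictly positive definite uniformly in $t\in[0,1]$, with smallest eigenvalue bounded below by $\lambda(K)/2$. Consequently, $t\mapsto \phi_{\Gamma_t}(x)$ is $C^\infty$ on $E$, and so is $t\mapsto g(A,t,x) = \phi_{\Gamma_t}(x)/\phi_K(x)$. The dominated convergence argument summarized in Remark \ref{pass_sotto_deriv} (itself a consequence of Lemma \ref{pol_bound_hk}) allows one to interchange differentiation in $t$ with the expectation defining $\tilde{g}$, yielding $\tilde{g}\in C^\infty$ in $t$. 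Moreover, $\tilde{g}>0$ pointwise in $x$ (since both $\phi_{\Gamma_t}$ and $\phi_K$ are strictly positive Gaussian densities), so $\log\tilde{g}$ is a well-defined smooth function of $t$, whence $\psi\in C^\infty$ in $t$.

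Next I would derive the four explicit formulae by iterated differentiation of the scalar identity $(u\log u)' = u'(\log u + 1)$, valid for any positive smooth function $u$ of $t$ (here $u = \tilde{g}$, and the factor $\mathbb{P}(E)$ is constant and factors out). The first derivative is then immediate. Differentiating once more and using $(u'/u)' = u''/u - (u')^2/u^2$ one finds
\[
\frac{\partial^2}{\partial t^2}[u\log u] = u''\log u + u'' + \frac{(u')^2}{u},
\]
which matches the stated second-derivative formula. Differentiating this expression once more, the $u''\log u$ term contributes $u'''\log u + u''u'/u$, the constant $u''$ contributes $u'''$, and the quotient $(u')^2/u$ contributes $2u'u''/u - (u')^3/u^2$; collecting, the third-derivative formula in the statement is recovered. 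For the fourth derivative one repeats the procedure on each of the three remaining terms of the third derivative; the single delicate bookkeeping step is the differentiation of $(u')^3/u^2$, which produces $3(u')^2 u''/u^2 - 2(u')^4/u^3$. Combining all contributions and grouping the like powers of $u'$ yields exactly the asserted formula.

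No genuine obstacle arises beyond the careful iteration of the product and chain rules; the only points requiring attention are the justification of differentiation under the expectation (already supplied by Remark \ref{pass_sotto_deriv}) and the positivity of $\tilde{g}$ needed to make $\log\tilde{g}$ smooth. Once these are in place, the four identities follow by direct algebraic manipulation.
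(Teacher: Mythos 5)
Your proposal is correct and follows essentially the same route the paper intends: smoothness of $t\mapsto\tilde{g}$ via differentiation under the expectation (Remark \ref{pass_sotto_deriv}, resting on Lemma \ref{pol_bound_hk} and the uniform lower bound $\lambda(\Gamma_t)\ge\lambda(K)/2$ on $E$), strict positivity of $\tilde{g}$ so that $\log\tilde{g}$ is smooth, and then iterated product/chain rule on $u\log u$; your bookkeeping reproduces the four displayed identities exactly.
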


\medskip

As a consequence of the previous statement, using the Taylor expansion in $t=0$ of $\psi(A,t,x)$ one deduces the identity
\begin{multline*}
\int_{\mathbb{R}^d}{\mathbb{E}[\phi_A(x)1_E]}\log \Bigg(\frac{\mathbb{E}[\phi_A(x)1_E]}{\mathbb{P}(E)\phi_K(x)}\Bigg)dx\\
=\mathbb{E}\Bigg[\psi(A,0,G)+\frac{\partial \psi}{\partial t}(A,0,G)+\frac{1}{2}\frac{\partial^2 \psi}{\partial t^2}(A,0,G)
+\frac{1}{6}\frac{\partial^3 \psi}{\partial t^3}(A,0,G)
+\frac{1}{24}\frac{\partial \psi}{\partial t^4}(A,\eta,G)\Bigg],
\end{multline*}
with $\eta\in (0,1)$. In this expression, certain terms exhibit a general structure and will therefore be studied in the next section, using the tools introduced in the following remarks and proposition.

The first remark illustrates an application of a known relation between the derivatives of the Gaussian density with respect to the covariance matrix and those with respect to the argument $x$, as found in \cite{john}. For completeness, we provide a full proof here.

\begin{remark}\label{der_t_hess}
Recalling notations  \eqref{phi_M} and \eqref{def_Gt}, one has that
\begin{equation}\label{der_t_x_dens}
\frac{\partial \phi_{\Gamma_t}(x)}{\partial t}=\frac{1}{2}tr\Big((A-K)\nabla^2 \phi_{\Gamma_t}(x)\Big),
\end{equation}
where $\nabla^2 \phi_{\Gamma_t}(x)$ is the Hessian matrix of $\phi_{\Gamma_t}$ in $x$. To see this, one can use the relation
\[
\frac{\partial \phi_{\Gamma_t}(x)}{\partial t}=\frac{1}{2}\Big(\langle x,\Gamma_t^{-1}(A-K)\Gamma_t^{-1}x\rangle-tr(\Gamma_t^{-1}(A-K))\Big)\phi_{\Gamma_t}(x)
\]
and
\[
tr\Big((A-K)\nabla^2 \phi_{\Gamma_t}(x)\Big)=\sum_{i=1}^{d}\sum_{j=1}^{d}(A-K)_{i,j}\frac{\partial^2\phi_{\Gamma_t}}{\partial x_i\partial x_j}(x)
\]
\[
=\sum_{i=1}^{d}\sum_{j=1}^{d}(A-K)_{i,j}\Big(\sum_{r=1}^{d}\sum_{s=1}^{d}\phi_{\Gamma_t}(x)(\Gamma_t^{-1})_{j,r}(\Gamma_t^{-1})_{i,s}x_rx_s-\phi_{\Gamma_t}(x)(\Gamma_t^{-1})_{i,j}\Big)
\]
\[
=\sum_{r=1}^{d}\sum_{s=1}^{d}\phi_{\Gamma_t}(x)(\Gamma_t^{-1}(A-K)\Gamma_t^{-1})_{r,s}x_rx_s-\phi_{\Gamma_t}(x)tr(\Gamma_t^{-1}(A-K))=2\frac{\partial \phi_{\Gamma_t}(x)}{\partial t},
\]
which yields the desired identity.
\end{remark}

\begin{remark}\label{media_deriv_nulla}
Recalling that $A$ is assumed to be independent of $G$, for $k\ge 1$ integer and $t\in [0,1]$, on the event $E$ defined in \eqref{def_ev_E}, one has that
\[
\mathbb{E}\Big[\frac{\partial^k g}{\partial t^k}(A,t,G)\big|A\Big]=\mathbb{E}\Big[\frac{\partial^k g}{\partial t^k}(M,t,G)\Big]_{\big|_{M=A}}=0,
\]
where $g$ is defined in \eqref{fun_g}.
To see this, we start by proving that $|\frac{\partial^k g}{\partial t^k}(A,t,G)|$ is bounded by a quantity which is independent of $t$ and that it is integrable with respect to the law of $G$. Using Lemma \ref{pol_bound_hk} and proceeding as in \eqref{passo sotto A}, we infer the bound
\[
\Big|\frac{\partial^k g}{\partial t^k}(A,t,G)\Big|=|h_k(A,t,G)|\frac{\phi_{\Gamma_t}(G)}{\phi_K(G)}\le  p_k(\|G\|)\|A-K\|_{HS}^k\frac{e^{-\frac{1}{2}\langle G, \Gamma_t^{-1}G\rangle}}{(\pi\lambda(K))^{k/2}\phi_K(G)}.
\]
Moreover, $\langle G,\Gamma_t^{-1}G\rangle\ge \lambda(\Gamma_t^{-1})\|G\|^2\ge\frac{2}{2\|K\|_{op}+\lambda(K)}\|G\|^2$, where we have used the fact that, in this case, 
\[
\lambda(\Gamma_t)\le \|\Gamma_t\|_{op}\le\|A-K\|_{op}+\|K\|_{op}\le \frac{\lambda(K)}{2}+\|K\|_{op}.
\]
As a consequence,
\[
\Big|\frac{\partial^k g}{\partial t^k}(A,t,G)\Big|\le  p_k(\|G\|)\|A-K\|_{HS}^k\frac{e^{-\frac{1}{2\|K\|_{op}+\lambda(K)}\|G\|^2}}{(\pi\lambda(K))^{k/2}\phi_K(G)},
\]
which is integrable {with respect to the law of $G$} and does not depend on $t$, as desired. We now switch derivative and integral to obtain the chain of equalities
\[
\mathbb{E}\Big[\frac{\partial^k g}{\partial t^k}(A,t,G)\big|A\Big]=\frac{\partial^k }{\partial t^k}\mathbb{E}\Big[g(A,t,G)\big|A\Big]=\frac{\partial^k }{\partial t^k}\int_{\mathbb{R}^d}\phi_{tA+(1-t)K}(x)dx=0,
\]
thus concluding the argument.
\end{remark}

\begin{prop}\label{var_hk}
Fix $k\ge 1$ as well as a random vectors $N\sim\mathcal{N}_d(0,I_{d})$ independent of $A$. Then,
\[
\mathbb{E}\Big[\Big(h_k(A,t,F_t)\Big)^21_\mathbb{E}\Big]=\frac{k!}{2^k}\mathbb{E}\Big[\Big(\langle N,\sqrt{\Gamma_t}^{-1}(A-K)\Gamma_t^{-1}(A-K)\sqrt{\Gamma_t}^{-1}N\rangle\Big)^k 1_\mathbb{E}\Big]
\]
\end{prop}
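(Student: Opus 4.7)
The plan is to realize the interpolating vector as $F_t=\sqrt{\Gamma_t}\,N$ with $N\sim\mathcal{N}_d(0,I_d)$ independent of $A$; on the event $E$, the bound $\lambda(\Gamma_t)\geq \lambda(K)/2>0$ from \eqref{min_eig_G} guarantees that $\sqrt{\Gamma_t}^{-1}$ is well defined for every $t\in[0,1]$, and conditionally on $A$ the vector $F_t$ has density $\phi_{\Gamma_t}$. From the definition \eqref{def_h_k} of $h_k$ combined with $\phi_{\Gamma_t}=g\,\phi_K$ we immediately get the identity $\phi_{\Gamma_t}(x)\,h_k(A,t,x)=\partial_t^k\phi_{\Gamma_t}(x)$, which shows that $h_k$ is intrinsically the logarithmic derivative (in $t$) of the interpolating Gaussian density.

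Next, I would iterate Remark \ref{der_t_hess} to obtain
\[
\partial_t^{k}\phi_{\Gamma_t}(x)=\frac{1}{2^{k}}\bigl[\mathrm{tr}\bigl((A-K)\nabla_x^2\bigr)\bigr]^{k}\phi_{\Gamma_t}(x).
\]
This iteration is justified because the coefficients $A_{ij}-K_{ij}$ do not depend on $t$ or $x$ and mixed partial derivatives of $\phi_{\Gamma_t}$ in $t,x$ commute (by the smoothness of the Gaussian density in both variables on $E$). Performing the change of variables $y=\sqrt{\Gamma_t}^{-1}x$ and using $\nabla_x^2=\sqrt{\Gamma_t}^{-1}\nabla_y^{2}\sqrt{\Gamma_t}^{-1}$ as operators, we rewrite $\mathrm{tr}\bigl((A-K)\nabla_x^{2}\bigr)=\mathrm{tr}\bigl(\tilde{B}\,\nabla_y^{2}\bigr)=:\tilde{D}$ with $\tilde{B}:=\sqrt{\Gamma_t}^{-1}(A-K)\sqrt{\Gamma_t}^{-1}$, and obtain
\[
h_k(A,t,F_t)=\frac{1}{2^{k}}\,\frac{\tilde{D}^{k}\phi_{I_d}(N)}{\phi_{I_d}(N)}.
\]

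Now I would diagonalize $\tilde{B}=U\Lambda U^T$ with $\Lambda=\mathrm{diag}(\lambda_1,\dots,\lambda_d)$, and use the rotational invariance $U^T N\stackrel{d}{=}N$ (conditionally on $A$) to reduce to the case where $\tilde{D}=\sum_{i=1}^{d}\lambda_i\,\partial_{i}^{2}$. Since these $d$ operators commute, the multinomial theorem yields $\tilde{D}^{k}=\sum_{|\alpha|=k}\frac{k!}{\alpha!}\lambda^{\alpha}\partial^{2\alpha}$, and the tensor product structure of $\phi_{I_d}$ together with equation \eqref{her_mult_uni} identifies $\partial^{2\alpha}\phi_{I_d}(y)/\phi_{I_d}(y)=H_{2\alpha}(y)=\prod_{j}H_{2\alpha_j}(y_j)$. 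Squaring, taking conditional expectation, and invoking Proposition \ref{var_her} componentwise (all cross terms vanish by orthogonality) gives
\[
\mathbb{E}\bigl[h_k(A,t,F_t)^{2}\,\big|\,A\bigr]=\frac{(k!)^{2}}{4^{k}}\sum_{|\alpha|=k}\frac{\prod_{j}(2\alpha_j)!}{\prod_{j}(\alpha_j!)^{2}}\,\lambda^{2\alpha}.
\]

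Finally, I would compute the right-hand side of the claimed identity directly. Since $\tilde{B}^{2}=\sqrt{\Gamma_t}^{-1}(A-K)\Gamma_t^{-1}(A-K)\sqrt{\Gamma_t}^{-1}$ has eigenvalues $\lambda_i^{2}$ in the same orthonormal basis, rotational invariance and the standard Gaussian moments $\mathbb{E}[N_j^{2m}]=(2m)!/(2^{m}m!)$ yield
\[
\mathbb{E}\bigl[\langle N,\tilde{B}^{2}N\rangle^{k}\,\big|\,A\bigr]=\mathbb{E}\Bigl[\Bigl(\sum_i \lambda_i^{2}N_i^{2}\Bigr)^{k}\Bigr]=\frac{k!}{2^{k}}\sum_{|\alpha|=k}\frac{\prod_j(2\alpha_j)!}{\prod_j(\alpha_j!)^{2}}\,\lambda^{2\alpha}.
\]
Matching the two combinatorial sums gives $\mathbb{E}[h_k(A,t,F_t)^{2}\mid A]=\frac{k!}{2^{k}}\mathbb{E}[\langle N,\tilde{B}^{2}N\rangle^{k}\mid A]$; multiplying by the $A$-measurable indicator $1_E$ and taking expectation produces the statement. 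The main technical point is justifying the passage through the operator change of variables and the commutation of $\partial_t^k$ with the spatial operator; once this is done, the identity reduces to a bookkeeping exercise in Hermite combinatorics, and the appearance of the factor $k!/2^k$ is the signature of the second-order nature of the operator $D$.
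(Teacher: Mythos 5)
Your argument is correct, and its first half coincides with the paper's proof: both iterate the identity of Remark \ref{der_t_hess} to get $\partial_t^k g = 2^{-k}\big\langle (A-K)^{\otimes k}, \nabla^{2k}\phi_{\Gamma_t}/\phi_{\Gamma_t}\big\rangle\, g$ and then pass to standard Gaussian coordinates, so that $h_k(A,t,F_t)$ becomes $2^{-k}$ times a contraction of $\tilde{B}^{\otimes k}$, with $\tilde{B}=\sqrt{\Gamma_t}^{-1}(A-K)\sqrt{\Gamma_t}^{-1}$, against Hermite polynomials evaluated at a standard normal $N$ independent of $A$. The endgame differs. The paper stays coordinate-free: it expands the contraction over multi-indices, applies the orthogonality relation of Proposition \ref{var_her}, converts the resulting multi-index sum into a sum over permutations of $2k$ elements, and then reads this Wick-type sum backwards through Isserlis' theorem (Theorem \ref{iss_th}) to recognize it as the moment $\mathbb{E}\big[\langle N,\tilde{B}^2N\rangle^k\big]$. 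You instead diagonalize $\tilde{B}=U\Lambda U^T$ (legitimate conditionally on $A$, since $U^TN\sim\mathcal{N}_d(0,I_d)$ by independence and rotational invariance), reduce the operator to $\sum_i\lambda_i\partial_i^2$, and combine the multinomial theorem with univariate Hermite orthogonality to obtain the explicit expression $\frac{(k!)^2}{4^k}\sum_{|\alpha|=k}\frac{\prod_j(2\alpha_j)!}{\prod_j(\alpha_j!)^2}\lambda^{2\alpha}$, which you then match against a direct evaluation of $\mathbb{E}\big[\big(\sum_i\lambda_i^2N_i^2\big)^k\big]$ via Gaussian even moments; the two coefficient sums indeed agree, so the identity follows after multiplying by the $A$-measurable indicator $1_E$ and integrating. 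Your route is more elementary in that it bypasses Isserlis' theorem and the permutation bookkeeping (and, incidentally, the paper's intermediate expression involving $\sqrt{M}$ of the possibly indefinite matrix $M=\tilde{B}$), at the cost of introducing the spectral decomposition; the paper's route keeps everything basis-free and identifies the Gaussian moment structurally rather than by comparing explicit combinatorial coefficients.
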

\smallskip 

The proof uses the following classical result.
\begin{theorem}[\bf Isserlis' Theorem, see \cite{Isserlis}]\label{iss_th}
If $X:=(X_1,\dots,X_{2k})\sim\mathcal{N}_{2k}(0,M)$ in $\mathbb{R}^{2k}$ with $k\ge 1$  integer, then
\[
\mathbb{E}\Big[\Pi_{i=1}^{2k} X_i\Big]=\frac{1}{2^{k}k!}\sum_{\sigma\in\Sigma_{2k}}\Pi_{j=1}^{k}M_{\sigma(2j-1),\sigma(2j)},
\]
where $\Sigma_{2k}$ is the set of all the permutations of $2k$ elements.
\end{theorem}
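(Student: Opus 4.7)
The plan is to first recast $h_k$ as a pure differential-operator expression, then reduce its conditional second moment to a Gaussian integral computable via Hermite orthogonality. By Remark \ref{der_t_hess}, $\partial_t\phi_{\Gamma_t}(x)=L\phi_{\Gamma_t}(x)$ with $L:=\tfrac12\sum_{i,j}(A-K)_{i,j}\partial_{x_i}\partial_{x_j}$; since $L$ has no $t$-dependence, iteration yields $\partial_t^k\phi_{\Gamma_t}=L^k\phi_{\Gamma_t}$, so the definition \eqref{def_h_k} combined with $g(A,t,x)=\phi_{\Gamma_t}(x)/\phi_K(x)$ gives $h_k(A,t,x)=L^k\phi_{\Gamma_t}(x)/\phi_{\Gamma_t}(x)$ on $E$. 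Adopting the natural coupling $F_t:=\sqrt{\Gamma_t}N$ (so that $F_t\mid A\sim\mathcal{N}_d(0,\Gamma_t)$ on $E$, in line with Remark \ref{rad_AN}) and conditioning on $A$, while using $1_E\in\sigma(A)$, rewrites the left-hand side as
\[
\mathbb{E}\!\left[\,1_E\int_{\mathbb{R}^d}\frac{\bigl(L^k\phi_{\Gamma_t}(x)\bigr)^2}{\phi_{\Gamma_t}(x)}\,dx\right].
\]

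Next, I would perform two successive changes of variables: $y=\sqrt{\Gamma_t}^{-1}x$ converts $L$ into $\tilde L:=\tfrac12\sum_{i,j}S_{i,j}\partial_{y_i}\partial_{y_j}$, with $S:=\sqrt{\Gamma_t}^{-1}(A-K)\sqrt{\Gamma_t}^{-1}$ symmetric, and it replaces $\phi_{\Gamma_t}$ by $\phi_{I_d}$ in the integral; a further rotation $z=U^Ty$ with spectral decomposition $S=U\Lambda U^T$, $\Lambda=\mathrm{diag}(\lambda_1,\ldots,\lambda_d)$, produces the separable operator $\tfrac12\sum_j\lambda_j\partial_{z_j}^2$ acting on the product density $\prod_j\phi_1(z_j)$. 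Expanding $\tilde L^k$ via the multinomial theorem and using $\partial^{2\alpha_j}\phi_1=H_{2\alpha_j}\phi_1$ from Definition \ref{def_HP} (with $(-1)^{2\alpha_j}=1$), the ratio $L^k\phi_{\Gamma_t}/\phi_{\Gamma_t}$ becomes a linear combination of products of Hermite polynomials in the $z_j$'s. Squaring, integrating against $\prod_j\phi_1(z_j)\,dz_j$, and applying the one-dimensional orthogonality in Proposition \ref{var_her} to kill all off-diagonal cross-terms collapses the integral to
\[
\frac{1}{4^k}\sum_{|\alpha|=k}\binom{k}{\alpha}^2\prod_{j=1}^d\lambda_j^{2\alpha_j}(2\alpha_j)!.
\]

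For the right-hand side, I would observe that $\sqrt{\Gamma_t}^{-1}(A-K)\Gamma_t^{-1}(A-K)\sqrt{\Gamma_t}^{-1}=S^2=U\Lambda^2 U^T$, so since $N\perp A$ and $U$ is orthogonal and $\sigma(A)$-measurable, $\tilde z:=U^TN$ is standard Gaussian conditionally on $A$, and $\langle N,S^2N\rangle=\sum_j\lambda_j^2\tilde z_j^2$. Expanding $\bigl(\sum_j\lambda_j^2\tilde z_j^2\bigr)^k$ by the multinomial theorem and using $\mathbb{E}[\tilde z_j^{2\alpha_j}]=(2\alpha_j)!/(2^{\alpha_j}\alpha_j!)$, the one-variable case of Isserlis' Theorem \ref{iss_th}, then multiplying by $k!/2^k$, produces an expression that, after invoking $\binom{k}{\alpha}^2=(k!)^2/\prod_j(\alpha_j!)^2$ and $2^{|\alpha|}=2^k$, coincides with the previous display. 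Taking expectation over $A$ gives the claim. The main bookkeeping obstacle is the combinatorial identification of the two sums; a mild technical point is the measurable selection of $U$ and $\Lambda$ as functions of $A$, which is standard for symmetric matrix-valued random variables but should be noted.
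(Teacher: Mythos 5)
Your proposal does not prove the statement under review. The statement is Isserlis' Theorem (Theorem \ref{iss_th}): for $X\sim\mathcal{N}_{2k}(0,M)$ with an arbitrary covariance $M$, the mixed moment $\mathbb{E}\big[\prod_{i=1}^{2k}X_i\big]$ equals the pairing sum $\frac{1}{2^k k!}\sum_{\sigma\in\Sigma_{2k}}\prod_{j=1}^k M_{\sigma(2j-1),\sigma(2j)}$. This is a classical result that the paper simply cites; what you have written is instead an argument for Proposition \ref{var_hk}, i.e.\ the identity for $\mathbb{E}\big[(h_k(A,t,F_t))^2 1_E\big]$. Nowhere in your argument do you address a general covariance matrix $M$, produce the sum over permutations $\Sigma_{2k}$, or treat off-diagonal entries $M_{ij}$ with $i\neq j$ --- all of which are the actual content of Theorem \ref{iss_th}. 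Worse, your argument is circular as a purported proof of that theorem: in the right-hand-side computation you explicitly invoke $\mathbb{E}[\tilde z_j^{2\alpha_j}]=(2\alpha_j)!/(2^{\alpha_j}\alpha_j!)$ ``the one-variable case of Isserlis' Theorem \ref{iss_th}'', so the statement to be proved is used as an input. A genuine proof of Theorem \ref{iss_th} would proceed, for instance, by differentiating the moment generating function $y\mapsto e^{\frac12\langle y,My\rangle}$ $2k$ times at $y=0$, or by induction using Gaussian integration by parts; none of this appears in your text.

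As a side remark, viewed as a blind attempt at Proposition \ref{var_hk} your sketch is a reasonable alternative to the paper's argument: the paper expands $\frac{\partial^k g}{\partial t^k}$ in multivariate Hermite polynomials, uses their orthogonality (Proposition \ref{var_her} via \eqref{her_mult_uni}), and then applies the full multivariate Isserlis formula at step \eqref{eq_con_is}, whereas you diagonalize $S=\sqrt{\Gamma_t}^{-1}(A-K)\sqrt{\Gamma_t}^{-1}$ first and reduce everything to one-dimensional Hermite orthogonality and even Gaussian moments, with a final multinomial bookkeeping step (which you would still need to carry out, together with the measurability of the spectral decomposition, and the justification of $\partial_t^k\phi_{\Gamma_t}=L^k\phi_{\Gamma_t}$, cf.\ \eqref{der_g_k}). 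But this does not repair the central defect: the assigned statement, Theorem \ref{iss_th}, is never proved.
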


\smallskip

\begin{proof}[Proof of Proposition \ref{var_hk}]
Using the identity \eqref{der_t_x_dens}, exchanging the derivative with respect to $t$ with the {derivatives} with respect to $x$, whose { associated gradient} is indicated with $\nabla$, and using an { recursive} argument on $k$, it is easy to prove that on the event $E$
\begin{equation}\label{der_g_k}
\frac{\partial^k g}{\partial t^k}(A,t,x)=\frac{1}{2^k}\Big\langle (A-K)^{\otimes k},\frac{\nabla^{2k}\phi_{\Gamma_t}(x)}{\phi_{\Gamma_t}(x)}\Big\rangle g(A,t,x),
\end{equation}
where, in general, if $M\in\mathbb{R}^{d\times d}$ is a matrix, then $M^{\otimes k}\in\mathbb{R}^{(d\times d)^k}$ is defined as 
\[
(M^{\otimes k})_{i_1,i_2,\dots, i_{2k-1}i_{2k}}:=M_{i_1,i_2}\dots M_{i_{2k-1},i_{2k}}
\]
 for arbitrary indices $i_1,\dots,i_{2k}\in\{1,\dots,d\}$ and the scalar product $\langle \cdot,\cdot\rangle$ is the scalar product in $\mathbb{R}^{(d\times d)^k}$. To see this, consider first the case $k=1$: using \eqref{der_t_x_dens}, one infers that
\[
\frac{\partial g}{\partial t}(A,t,x)=\frac{1}{\phi_K(x)}\frac{\partial}{\partial t}\phi_{\Gamma_t}(x)=\frac{1}{2}tr\Bigg((A-K)\frac{\nabla^2\phi_{\Gamma_t}(x)}{\phi_K(x)}\Bigg).\\
\]
Assuming now that identity \eqref{der_g_k} is true for $k-1$, using once more \eqref{der_t_x_dens}, one deduces that
\begin{multline*}
\frac{\partial^k g}{\partial t^k}(A,t,x)=\frac{1}{2^{k-1}}\Big\langle (A-K)^{\otimes (k-1)},\frac{\partial}{\partial t}\frac{\nabla^{2(k-1)}\phi_{\Gamma_t}(x)}{\phi_{K}(x)}\Big\rangle\\
=\frac{1}{2^{k}}\Big\langle (A-K)^{\otimes (k-1)},{\nabla^{2(k-1)}tr\Big((A-K)\nabla^{2}\phi_{\Gamma_t}(x)}\Big)\Big\rangle\frac{1}{\phi_K(x)}
\end{multline*}
and identity \eqref{der_g_k} easily follows. Exploiting the fact that $\phi_{\Gamma_t}(x)=\phi_{I_d}(\sqrt{\Gamma_t}^{-1}x)\frac{1}{\sqrt{det(\Gamma_t)}}$, again by induction on $k$, it is easy to see that
\[
\Big\langle (A-K)^{\otimes k},\frac{\nabla^{2k}\phi_{\Gamma_t}(x)}{\phi_{\Gamma_t}(x)}\Big\rangle =\Big\langle \Big(\sqrt{\Gamma_t}^{-1}(A-K)\sqrt{\Gamma_t}^{-1}\Big)^{\otimes k},\frac{\nabla^{2k}\phi_{I_d}}{\phi_{I_d}}(\sqrt{\Gamma_t}^{-1}x)\Big\rangle .
\]
This yields that, for $N\sim\mathcal{N}_d(0,I_d)$ independent of $A$, using definition \eqref{def_h_k},
\begin{multline}\label{second_ind_hk}
\mathbb{E}\Big[\Big(h_k(A,t,F_t)\Big)^21_\mathbb{E}\Big]=\mathbb{E}\Big[\Big(\frac{1}{g(A,t,F_t)}\frac{\partial^k g}{\partial t^k}(A,t,F_t)\Big)^21_\mathbb{E}\Big]\\
=\mathbb{E}\Big[\Big(\frac{1}{2^k}\Big\langle \Big(\sqrt{\Gamma_t}^{-1}(A-K)\sqrt{\Gamma_t}^{-1}\Big)^{\otimes k},\frac{\nabla^{2k}\phi_{I_d}}{\phi_{I_d}}(N)\Big\rangle\Big)^21_\mathbb{E}\Big].
\end{multline}
Let us now define
\[
M:=\sqrt{\Gamma_t}^{-1}(A-K)\sqrt{\Gamma_t}^{-1}
\]
and for every multi-index, 
\[
J\in S^{(2k)}:= \Big\{J:=(j_1,\dots,j_d)\in \mathbb{N}_0^d: j_1+\dots +j_d=2k\Big\},
\]
let us define
\[
A_J:=\Big\{\alpha:=(\alpha_1,\dots,\alpha_{2k})\in \{1,\dots,d\}^{2k}: \sum_{r=1}^{2k}1_{\{\alpha_r=s\}}=j_s \quad \forall s=1,\dots,d\Big\}.
\]
Then, from \eqref{second_ind_hk} it follows that
\[
\mathbb{E}\Big[\Big(h_k(A,t,F_t)\Big)^21_\mathbb{E}\Big]\]
\[=\frac{1}{2^{2k}}\mathbb{E}\Bigg[\Bigg(\sum_{i_1=1}^d\dots\sum_{i_{2k}=1}^d M_{i_1,i_2}\dots M_{i_{2k-1},i_{2k}}\frac{1}{\phi_{I_d}(N)}\frac{\partial^{2k} \phi_{I_d}}{\partial x_{i_1}\dots \partial x_{i_{2k}}}(N)\Bigg)^21_\mathbb{E}\Bigg]
\]
\[
=\frac{1}{2^{2k}}\mathbb{E}\Bigg[\Bigg(\sum_{J\in S^{(2k)}}\sum_{\alpha\in A_J} M_{\alpha_1,\alpha_2}\dots M_{\alpha_{2k-1},\alpha_{2k}}\frac{1}{\phi_{I_d}(N)}\frac{\partial^{2k} \phi_{I_d}}{\partial x_{1}^{j_1}\dots \partial x_{d}^{j_d}}(N)\Bigg)^21_\mathbb{E}\Bigg]
\]
\[
=\frac{1}{2^{2k}}\mathbb{E}\Bigg[\Bigg(\sum_{J\in S^{(2k)}}\sum_{\alpha\in A_J} M_{\alpha_1,\alpha_2}\dots M_{\alpha_{2k-1},\alpha_{2k}}H_{j_1}(N_1)\dots H_{j_d}(N_d)\Bigg)^21_\mathbb{E}\Bigg],
\]
where, in the last equality, we used property \eqref{her_mult_uni} of the multivariate Hermite polynomials. Using now Proposition \ref{var_her} and the independence between $A$ and $N$, we infer that
\begin{multline*}
\mathbb{E}\Big[\Big(h_k(A,t,F_t)\Big)^21_\mathbb{E}\Big]\\
=\mathbb{E}\Bigg[\frac{1}{2^{2k}}\sum_{J\in S^{(2k)}}\sum_{\alpha\in A_J}\sum_{\beta\in A_J} M_{\alpha_1,\alpha_2}\dots M_{\alpha_{2k-1},\alpha_{2k}}M_{\beta_1,\beta_2}\dots M_{\beta_{2k-1},\beta_{2k}}{j_1!}\dots {j_d!}1_\mathbb{E}\Bigg].
\end{multline*}
Let us now observe that once $\alpha\in A_J$ is fixed, every element in $A_J$ {is uniquely characterized by a} permutation of $\alpha=(\alpha_1,\dots,\alpha_{2k})$ and hence the sum over $\beta\in A_J$ can be replaced with the sum over all permutations of $2k$ elements, $\Sigma_{2k}$, divided by $j_1!\dots j_d!$ which is the number of permutations of $\alpha$ that exchange identical elements. As a consequence,
\begin{multline*}
\mathbb{E}\Big[\Big(h_k(A,t,F_t)\Big)^21_\mathbb{E}\Big]\\
=\mathbb{E}\Bigg[\frac{1}{2^{2k}}\sum_{J\in S^{(2k)}}\sum_{\alpha\in A_J}\sum_{\sigma\in \Sigma_{2k}} M_{\alpha_1,\alpha_2}\dots M_{\alpha_{2k-1},\alpha_{2k}}M_{\alpha_{\sigma(1)},\alpha_{\sigma(2)}}\dots M_{\alpha_{\sigma(2k-1)},\alpha_{\sigma(2k)}}1_\mathbb{E}\Bigg]
\end{multline*}
\[
=\mathbb{E}\Bigg[\frac{1}{2^{2k}}\sum_{i_1=1}^d\dots\sum_{i_{2k}=1}^d\sum_{\sigma\in \Sigma_{2k}} M_{i_1,i_2}\dots M_{i_{2k-1},i_{2k}}M_{i_{\sigma(1)},i_{\sigma(2)}}\dots M_{i_{\sigma(2k-1)},i_{\sigma(2k)}}1_\mathbb{E}\Bigg]
\]
\begin{equation}\label{eq_con_is}
=\frac{k!}{2^k}\sum_{i_1=1}^d\dots\sum_{i_{2k}=1}^d \mathbb{E}\Big[M_{i_1,i_2}\dots M_{i_{2k-1},i_{2k}}(\sqrt{M}N)_{i_1}\dots(\sqrt{M}N)_{i_{2k}}\Big]
\end{equation}
\[
=\frac{k!}{2^k} \mathbb{E}\Big[\Big(\langle \sqrt{M}N,M\sqrt{M}N\rangle\Big)^k1_\mathbb{E}\Big]=\frac{k!}{2^k}\mathbb{E}\Big[\Big(\langle N,M^2 N\rangle\Big)^k1_\mathbb{E}\Big],
\]
where we used Theorem \ref{iss_th} to derive equation \eqref{eq_con_is}, with $N\sim\mathcal{N}_d(0,I_{d})$ independent of $A$.

\end{proof}

The following Lemma is a consequence of Proposition \ref{var_hk}, and is proved in Section \ref{proof_lemma_mom_sec_h}.

\begin{lemma}\label{mom_sec_h}
For every $t\in [0,1]$ it holds that
\[
\mathbb{E}[(h_1(A,t,F_t))^21_E]=\frac{1}{2}\mathbb{E}[tr((\Gamma_t^{-1}(A-K))^2)1_E],
\]
\[
\mathbb{E}[(h_1(A,t,F_t))^41_E]=
3\mathbb{E}[tr((\Gamma_t^{-1}(A-K))^4)1_E]+\frac{3}{4}\mathbb{E}[(tr((\Gamma_t^{-1}(A-K))^2))^21_E]\\,
\]
\begin{multline*}
\mathbb{E}[(h_1(A,t,F_t))^61_E]=60 \mathbb{E}\Big[tr((\Gamma_t^{-1}(A-K))^6)1_\mathbb{E}\Big]+\frac{15}{8}\mathbb{E}\Big[(tr((\Gamma_t^{-1}(A-K))^2))^31_\mathbb{E}\Big]\\
+10\mathbb{E}\Big[(tr((\Gamma_t^{-1}(A-K))^3))^21_\mathbb{E}\Big]
+\frac{45}{2}\mathbb{E}\Big[tr((\Gamma_t^{-1}(A-K))^2)tr((\Gamma_t^{-1}(A-K))^4)1_\mathbb{E}\Big],
\end{multline*}
\begin{equation}\label{id_h22}
\mathbb{E}[(h_2(A,t,F_t))^21_E]=\mathbb{E}[tr((\Gamma_t^{-1}(A-K))^4)1_E]+\frac{1}{2}\mathbb{E}[(tr((\Gamma_t^{-1}(A-K))^2))^21_E],
\end{equation}
\begin{multline*}
\mathbb{E}[(h_3(A,t,F_t))^21_E]=\frac{3}{4}\mathbb{E}\Big[\Big(tr\big((\Gamma^{-1}_t(A-K))^2\big)\Big)^31_\mathbb{E}\Big]+6\mathbb{E}\Big[tr\big((\Gamma^{-1}_t(A-K))^6\big)1_\mathbb{E}\Big]\\
+\frac{9}{2}\mathbb{E}\Big[tr\big((\Gamma^{-1}_t(A-K))^2\big)tr\big((\Gamma^{-1}_t(A-K))^4\big)1_\mathbb{E}\Big],\\
\end{multline*}
\begin{multline}\label{h4_lemma}
\mathbb{E}[(h_4(A,t,F_t))^21_E]=\frac{3}{2}\mathbb{E}\Big[\Big(tr\big((\Gamma^{-1}_t(A-K))^2\big)\Big)^41_\mathbb{E}\Big]\\
+18\mathbb{E}\Big[\Big(tr\big((\Gamma^{-1}_t(A-K))^2\big)\Big)^2tr\big((\Gamma^{-1}_t(A-K))^4\big)1_\mathbb{E}\Big]\\
+18\mathbb{E}\Big[\Big(tr\big((\Gamma^{-1}_t(A-K))^4\big)\Big)^21_\mathbb{E}\Big]
+72\mathbb{E}\Big[tr\big((\Gamma^{-1}_t(A-K))^8\big)1_\mathbb{E}\Big]\\
+48\mathbb{E}\Big[tr\big((\Gamma^{-1}_t(A-K))^2\big)tr\big((\Gamma^{-1}_t(A-K))^6\big)1_\mathbb{E}\Big].
\end{multline}
\end{lemma}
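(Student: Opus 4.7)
The plan is to reduce every identity to a moment computation for a Gaussian quadratic form. As in the proof of Proposition \ref{var_hk}, on the event $E$ the matrix $\Gamma_t$ is strictly positive definite by \eqref{min_eig_G}, and one may realize the conditional law of $F_t$ via $\sqrt{\Gamma_t}\,N$ with $N\sim\mathcal{N}_d(0,I_d)$ independent of $A$. Setting $M := \sqrt{\Gamma_t}^{-1}(A-K)\sqrt{\Gamma_t}^{-1}$, the cyclic property of the trace gives $\operatorname{tr}(M^j) = \operatorname{tr}\bigl((\Gamma_t^{-1}(A-K))^j\bigr)$ for every $j\ge 1$, so every trace appearing on the right-hand side of the lemma can be produced from powers of $M$. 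The central tool will be the standard cumulant generating function of a Gaussian quadratic form: for any symmetric matrix $B$ and $s$ in a neighborhood of $0$, $\mathbb{E}[e^{s N^T B N}\mid A] = \det(I-2sB)^{-1/2}$, and taking logarithms yields the conditional cumulants $\kappa_j(N^T B N\mid A) = 2^{j-1}(j-1)!\,\operatorname{tr}(B^j)$.

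The three identities on $\mathbb{E}[(h_k)^2 1_E]$ with $k=2,3,4$ follow by applying Proposition \ref{var_hk}, which reduces them to computing $\mathbb{E}[(\langle N, M^2 N\rangle)^k\mid A]$. Taking $B=M^2$ in the cumulant formula and expanding the non-centered moments via $\mathbb{E}[X^k]=\sum_{\pi}\prod_{B\in\pi}\kappa_{|B|}$ (sum over set partitions of $\{1,\dots,k\}$), I expect to obtain respectively $2\operatorname{tr}(M^4)+(\operatorname{tr} M^2)^2$; then $8\operatorname{tr}(M^6)+6\operatorname{tr}(M^2)\operatorname{tr}(M^4)+(\operatorname{tr} M^2)^3$; and finally $48\operatorname{tr}(M^8)+32\operatorname{tr}(M^2)\operatorname{tr}(M^6)+12(\operatorname{tr} M^4)^2+12(\operatorname{tr} M^2)^2\operatorname{tr}(M^4)+(\operatorname{tr} M^2)^4$. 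Multiplying by the prefactor $k!/2^k$ and putting $1_E$ back inside the expectation produces \eqref{id_h22}, the stated formula for $\mathbb{E}[(h_3)^2 1_E]$, and \eqref{h4_lemma}.

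For the identities on $\mathbb{E}[(h_1)^{2m}1_E]$ with $m=1,2,3$, Proposition \ref{var_hk} only controls the case $m=1$, so an explicit expression for $h_1$ is required. Computing $\nabla^2\phi_{\Gamma_t}(x)/\phi_{\Gamma_t}(x) = \Gamma_t^{-1}xx^T\Gamma_t^{-1}-\Gamma_t^{-1}$ directly (consistently with Remark \ref{der_t_hess}) and substituting into \eqref{def_h_k} for $k=1$ gives
\[
h_1(A,t,F_t) \;=\; \tfrac{1}{2}\bigl(N^T M N - \operatorname{tr}(M)\bigr) \;=:\; \tfrac{1}{2}Q,
\]
which is a centered Gaussian quadratic form conditionally on $A$, with cumulants $\kappa_j(Q\mid A) = 2^{j-1}(j-1)!\operatorname{tr}(M^j)$ for $j\ge 2$ and $\kappa_1(Q\mid A)=0$. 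Plugging into the classical moment-cumulant identities $\mathbb{E}[Q^2\mid A]=\kappa_2$, $\mathbb{E}[Q^4\mid A]=\kappa_4+3\kappa_2^2$, and $\mathbb{E}[Q^6\mid A]=\kappa_6+15\kappa_4\kappa_2+10\kappa_3^2+15\kappa_2^3$, then dividing by $2^{2m}$, restricting to $E$ and taking total expectation delivers the three remaining formulas.

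The main obstacle is purely combinatorial bookkeeping: checking that the set-partition expansion of the sixth-order centered moment of $Q$, and that of the fourth-order non-centered moment of $N^T M^2 N$ appearing in $\mathbb{E}[(h_4)^2 1_E]$, match exactly the numerical coefficients stated in the lemma. No analytic subtlety arises beyond the validity of the MGF expansion, which is automatic on $E$ because the bound $\|A-K\|_{op}\le \lambda(K)/2$ combined with \eqref{min_eig_G} controls $\|M\|_{op}$ uniformly in $t\in[0,1]$.
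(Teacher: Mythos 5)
Your proposal is correct and follows essentially the same route as the paper: both reduce the identities for $\mathbb{E}[(h_k)^2 1_E]$, $k\ge 2$, to moments of the Gaussian quadratic form $\langle N, M^2 N\rangle$ via Proposition \ref{var_hk} (with $M=\sqrt{\Gamma_t}^{-1}(A-K)\sqrt{\Gamma_t}^{-1}$ and $F_t\sim\sqrt{\Gamma_t}N$ conditionally on $A$), and both treat the higher moments of $h_1$ through its centred quadratic-form representation together with moment--cumulant identities (this is precisely what the paper's appeal to Proposition 2.7.13 and Corollary A.2.4 of \cite{NP12} amounts to). The only difference is bookkeeping: where you extract the non-centred moments of $\langle N,M^2N\rangle$ from the cumulant generating function $-\tfrac12\log\det(I_d-2sM^2)$ and set-partition expansions, the paper diagonalizes $M^2$ and expands directly in chi-square moments; all the coefficients you state check out and, after multiplication by $k!/2^k$ and the cyclic-trace identity $\operatorname{tr}(M^j)=\operatorname{tr}((\Gamma_t^{-1}(A-K))^j)$, reproduce the lemma exactly.
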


\subsubsection{Proof of Lemma \ref{inE}}\label{ss:pline}
Performing a Taylor expansion in $t=0$ of the function $\psi(A,t,x)$ defined in \eqref{fun_psi}, one obtains that
\begin{multline*}
\int_{\mathbb{R}^d}{\mathbb{E}[\phi_A(x)1_E]}\log \Bigg(\frac{\mathbb{E}[\phi_A(x)1_E]}{\mathbb{P}(E)\phi_K(x)}\Bigg)dx\\
=\mathbb{E}\Big[\psi(A,0,G)+\frac{\partial \psi}{\partial t}(A,0,G)+\frac{1}{2}\frac{\partial^2 \psi}{\partial t^2}(A,0,G)
+\frac{1}{6}\frac{\partial^3 \psi}{\partial t^3}(A,0,G)+\frac{1}{24}\frac{\partial \psi}{\partial t^4}(A,\eta,G)\Big]
\end{multline*}
where $\eta\in [0,1]$. Then, using Lemma \ref{lemm_der_psi}, Remark \eqref{pass_sotto_deriv} and Remark \eqref{media_deriv_nulla} one has that
\begin{multline}\label{taylor_dec_psi}
\int_{\mathbb{R}^d}{\mathbb{E}[\phi_A(x)1_E]}\log \Bigg(\frac{\mathbb{E}[\phi_A(x)1_E]}{\mathbb{P}(E)\phi_K(x)}\Bigg)dx=\mathbb{P}(E)\Bigg(\frac{1}{2}\mathbb{E}\Big[\Big(\frac{\partial \tilde{g}}{\partial t}(A,0,G)\Big)^2\Big]\\
+\frac{1}{2}\mathbb{E}\Big[\frac{\partial^2 \tilde{g}}{\partial t^2}(A,0,G)\frac{\partial\tilde{g}}{\partial t}(A,0,G)\Big]
-\frac{1}{6}\mathbb{E}\Big[\Big(\frac{\partial \tilde{g}}{\partial t}(A,0,G)\Big)^3\Big]+
\frac{1}{24}\mathbb{E}\Big[\frac{\partial^4\tilde{g}}{\partial t^4}(A,\eta,G)\log(\tilde{g}(A,\eta,G))\Big]\\
+\frac{1}{6}\mathbb{E}\Big[\frac{1}{\tilde{g}(A,\eta,G)}\frac{\partial^3 \tilde{g}}{\partial t^3}(A,\eta,G)\frac{\partial\tilde{g}}{\partial t}(A,\eta,G)\Big]+\frac{1}{8}\mathbb{E}\Big[\frac{1}{\tilde{g}(A,\eta,G)}\Big(\frac{\partial^2 \tilde{g}}{\partial t^2}(A,\eta,G)\Big)^2\Big]\\
-\frac{1}{4}\mathbb{E}\Big[\frac{1}{(\tilde{g}(A,\eta,G))^2}\frac{\partial^2 \tilde{g}}{\partial t^2}(A,\eta,G)\Big(\frac{\partial\tilde{g}}{\partial t}(A,\eta,G)\Big)^2\Big]+\frac{1}{12}\mathbb{E}\Big[\frac{1}{(\tilde{g}(A,\eta,G))^3}\Big(\frac{\partial\tilde{g}}{\partial t}(A,\eta,G)\Big)^4\Big]\Bigg).
\end{multline}
Note that, in the previous computation, we assumed that all the summands are integrable: in the subsequent lemmas, it is proved that this is the case, as soon as $\mathbb{E}[\|A\|_{HS}^8]<\infty$. The following technical statements focus on the terms appearing on the right-hand side of \eqref{taylor_dec_psi}; they will be proved in Section \ref{proof_technical}.

\begin{lemma}\label{term_log}
\begin{multline*}
\mathbb{P}(E)\mathbb{E}\Big[\frac{\partial^4\tilde{g}}{\partial t^4}(A,\eta,G)\log(\tilde{g}(A,\eta,G))\Big]\\
\le \frac{3\sqrt{70}}{2}\mathbb{E}\Big[\|\Gamma^{-1}_\eta(A-K)\|_{HS}^81_\mathbb{E}\Big]^{1/2}\Bigg(\frac{1}{2}\max{\Big\{\Big|\log\frac{2^d det K}{(2\|K\|_{op}+\lambda(K))^d}\Big|,\log\frac{2^d detK}{\lambda(K)^d}\Big\}}\\
+\frac{\sqrt{d}(\sqrt{2}+1)}{4}\|K^{-1}\|_{HS}\lambda(K)\Bigg).
\end{multline*}
\end{lemma}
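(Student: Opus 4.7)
My plan for proving Lemma \ref{term_log} is to reduce the quantity on the left-hand side to a Cauchy--Schwarz estimate, and then handle the two resulting factors separately using Lemma \ref{mom_sec_h} and a pointwise/$L^{2}$ analysis of $\log \tilde g$.

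First, I would combine Remark \ref{pass_sotto_deriv} with the identity $\frac{\partial^{4} g}{\partial t^{4}}(A,\eta,x)=h_{4}(A,\eta,x)\,g(A,\eta,x)$ coming from \eqref{def_h_k}. Since $G$ is independent of $A$ and has density $\phi_{K}$, Fubini's theorem together with $g(A,\eta,x)\phi_{K}(x)=\phi_{\Gamma_{\eta}}(x)$ gives
$$
\mathbb{P}(E)\,\mathbb{E}\!\left[\tfrac{\partial^{4}\tilde g}{\partial t^{4}}(\eta,G)\log\tilde g(\eta,G)\right]
=\mathbb{E}\!\left[h_{4}(A,\eta,F_{\eta})\,\log\tilde g(\eta,F_{\eta})\,\mathbf{1}_{E}\right],
$$
where $F_{\eta}:=\sqrt{\Gamma_{\eta}}\,N$ with $N\sim\mathcal{N}_{d}(0,I_{d})$ independent of $A$ (so that, conditionally on $A$, $F_{\eta}$ has density $\phi_{\Gamma_{\eta}}$, as in Remark \ref{rad_AN}). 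One then applies Cauchy--Schwarz to obtain
$$
\mathbb{E}[h_{4}(A,\eta,F_{\eta})\log\tilde g(\eta,F_{\eta})\,\mathbf{1}_{E}]
\le\mathbb{E}[h_{4}(A,\eta,F_{\eta})^{2}\mathbf{1}_{E}]^{1/2}\,\mathbb{E}[(\log\tilde g(\eta,F_{\eta}))^{2}\mathbf{1}_{E}]^{1/2}.
$$

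To bound the first factor, I would use the closed formula \eqref{h4_lemma} from Lemma \ref{mom_sec_h} for $\mathbb{E}[h_{4}^{2}\mathbf{1}_{E}]$, together with the elementary inequality $|\mathrm{tr}(M^{k})|\le\|M\|_{HS}^{k}$ valid for every square matrix $M$ and every integer $k\ge 2$ (which follows from Schur's inequality $\sum_{i}|\lambda_{i}(M)|^{2}\le \|M\|_{HS}^{2}$ and the monotonicity $\sum|\lambda_{i}|^{k}\le(\sum|\lambda_{i}|^{2})^{k/2}$). Applied with $M=\Gamma_{\eta}^{-1}(A-K)$, each of the five trace terms in \eqref{h4_lemma} is bounded by $\|\Gamma_{\eta}^{-1}(A-K)\|_{HS}^{8}$, and the sum of coefficients is $\tfrac{3}{2}+18+18+72+48=\tfrac{315}{2}=\bigl(\tfrac{3\sqrt{70}}{2}\bigr)^{2}$, producing the desired prefactor $\tfrac{3\sqrt{70}}{2}$.

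For the second factor I use the explicit form $\log g(A,\eta,x)=\tfrac12\log(\det K/\det\Gamma_{\eta})-\tfrac12\langle x,(\Gamma_{\eta}^{-1}-K^{-1})x\rangle$. On $E$ one has $\det\Gamma_{\eta}\in[(\lambda(K)/2)^{d},(\|K\|_{op}+\lambda(K)/2)^{d}]$, so the deterministic log-determinant piece is pointwise bounded by the ``max'' quantity appearing in the statement; since $\tilde g$ is a convex combination of $g$ on $E$, the same estimate transfers to $\tilde g$. For the quadratic piece, I write $Q:=\sqrt{\Gamma_{\eta}}(\Gamma_{\eta}^{-1}-K^{-1})\sqrt{\Gamma_{\eta}}$, so that $\langle F_{\eta},(\Gamma_{\eta}^{-1}-K^{-1})F_{\eta}\rangle=\langle N,QN\rangle$ and use the standard Gaussian formula $\mathbb{E}[\langle N,QN\rangle^{2}\mid A]=(\mathrm{tr}\,Q)^{2}+2\,\mathrm{tr}(Q^{2})$. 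Exploiting the key algebraic identity $\Gamma_{\eta}^{-1}-K^{-1}=-\eta\,\Gamma_{\eta}^{-1}(A-K)K^{-1}$, one checks that $\mathrm{tr}(Q)=-\eta\,\mathrm{tr}(K^{-1}(A-K))$ and $\mathrm{tr}(Q^{2})=\eta^{2}\mathrm{tr}((K^{-1}(A-K))^{2})$. Cauchy--Schwarz for traces ($|\mathrm{tr}(K^{-1}(A-K))|\le\|K^{-1}\|_{HS}\|A-K\|_{HS}$) and the submultiplicativity $\|K^{-1}(A-K)\|_{HS}\le\|K^{-1}\|_{HS}\|A-K\|_{op}$, combined with the estimate $\|A-K\|_{op}\le\lambda(K)/2$ and $\|A-K\|_{HS}\le\sqrt d\,\|A-K\|_{op}$ valid on $E$, yields $|\mathrm{tr}\,Q|\le\tfrac{\sqrt d}{2}\|K^{-1}\|_{HS}\lambda(K)$ and $\mathrm{tr}(Q^{2})\le\tfrac14\|K^{-1}\|_{HS}^{2}\lambda(K)^{2}$. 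Using $\sqrt{a+b}\le\sqrt a+\sqrt b$, these combine into the coefficient $\tfrac{\sqrt d+\sqrt 2}{2}\|K^{-1}\|_{HS}\lambda(K)\le\tfrac{\sqrt d(\sqrt 2+1)}{2}\|K^{-1}\|_{HS}\lambda(K)$ (the last inequality holding for $d\ge 1$), and the factor $\tfrac12$ from the decomposition of $\log g$ produces exactly $\tfrac{\sqrt d(\sqrt 2+1)}{4}\|K^{-1}\|_{HS}\lambda(K)$ as required.

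The main obstacle I anticipate is the passage from the pointwise bound on $|\log g|$ to a bound on $|\log\tilde g|$, since $\tilde g$ is an \emph{average} of $g$ over $E$. This is overcome by noting that $\tilde g(\eta,x)$ lies between $\inf_{\omega\in E}g(\omega,\eta,x)$ and $\sup_{\omega\in E}g(\omega,\eta,x)$, so that $|\log\tilde g|$ is dominated by the sum of the uniform log-determinant bound (which is $A$-free) and the $L^{2}$ control of the Gaussian quadratic form $\langle F_{\eta},(\Gamma_{\eta}^{-1}-K^{-1})F_{\eta}\rangle$; the second contribution only enters through its $L^{2}$-norm under Cauchy--Schwarz, which is precisely where the Gaussian moment formulas above become effective.
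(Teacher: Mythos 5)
Your reduction and your treatment of the first Cauchy--Schwarz factor are sound and match the paper's mechanics: the identity $\mathbb{P}(E)\mathbb{E}[\partial_t^4\tilde g(\eta,G)\log\tilde g(\eta,G)]=\mathbb{E}[h_4(A,\eta,F_\eta)\log\tilde g(\eta,F_\eta)1_E]$ is a legitimate re-weighting (Remark \ref{pass_sotto_deriv} plus $g\,\phi_K=\phi_{\Gamma_\eta}$), and bounding $\mathbb{E}[h_4(A,\eta,F_\eta)^21_E]$ through \eqref{h4_lemma} with $|\mathrm{tr}(M^k)|\le\|M\|_{HS}^k$ and $\tfrac32+18+18+72+48=\tfrac{315}{2}=(\tfrac{3\sqrt{70}}{2})^2$ reproduces exactly the paper's prefactor. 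The gap is in the second factor. After your Cauchy--Schwarz you must control $\mathbb{E}[(\log\tilde g(\eta,F_\eta))^2 1_E]^{1/2}$, and here the $A$ appearing inside $\tilde g$ (an average over an independent restriction of $A$ to $E$) is \emph{not} the $A$ that generates $F_\eta=\sqrt{\Gamma_\eta}N$. Your sandwich $\inf_{\omega'\in E}g\le\tilde g\le\sup_{\omega'\in E}g$ only yields $|\log\tilde g(\eta,x)|\le\tfrac12 M_{\det}+\tfrac12\,\mathrm{ess\,sup}_{\omega'\in E}|\langle x,(\Gamma_\eta(\omega')^{-1}-K^{-1})x\rangle|$, i.e. a supremum over a \emph{mismatched} realization, whereas your Gaussian moment formula $\mathbb{E}[\langle N,QN\rangle^2\mid A]=(\mathrm{tr}\,Q)^2+2\,\mathrm{tr}(Q^2)$ only controls the \emph{matched} quadratic form $\langle F_\eta,(\Gamma_\eta^{-1}-K^{-1})F_\eta\rangle$. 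The matched quantity does not dominate the mismatched supremum (e.g. on the part of $E$ where $A=K$ the matched form vanishes while the supremum does not), and if you instead bound the supremum crudely by $\|x\|^2\sup_{E}\|\Gamma_\eta^{-1}-K^{-1}\|_{op}\le\|x\|^2/\lambda(K)$ you end up with a constant of order $\sqrt{d(d+2)}\,\|K\|_{op}/\lambda(K)$, which is not of the stated form $\tfrac{\sqrt d(\sqrt2+1)}{4}\|K^{-1}\|_{HS}\lambda(K)$ (recall $\|K^{-1}\|_{HS}\lambda(K)\le\sqrt d$, with no dependence on $\|K\|_{op}/\lambda(K)$).

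The paper avoids this mismatch by never isolating $(\log\tilde g)^2$: it first applies Jensen for $x\mapsto x\log x$ conditionally on $G$, replacing $\tilde g\log\tilde g$ by $\mathbb{E}[g\frac{1_E}{\mathbb{P}(E)}\log g\mid G]$, so that the $\log$-expansion $\log g=\tfrac12\log\frac{\det K}{\det\Gamma_\eta}-\tfrac12\langle G,(\Gamma_\eta^{-1}-K^{-1})G\rangle$ carries the \emph{same} $A$ as the weight $g(A,\eta,G)$; a second Cauchy--Schwarz/Jensen step with respect to the tilted measure $g\frac{1_E}{\mathbb{P}(E)}\big/\mathbb{E}[g\frac{1_E}{\mathbb{P}(E)}\mid G]$ then turns the quadratic term into $\mathbb{E}[1_E\langle F_\eta,(\Gamma_\eta^{-1}-K^{-1})F_\eta\rangle^2]^{1/2}$ with matched $A$, which is where your (correct) trace estimates $|\mathrm{tr}\,Q|\le\tfrac{\sqrt d}{2}\|K^{-1}\|_{HS}\lambda(K)$ and $\mathrm{tr}(Q^2)\le\tfrac14\|K^{-1}\|_{HS}^2\lambda(K)^2$ legitimately apply. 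To repair your argument you would need to reorganize it along these lines (keep the $\log$ inside the conditional expectation weighted by $g1_E$ before separating factors), rather than bounding $|\log\tilde g|$ pointwise through an essential supremum.
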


\begin{lemma}\label{stime_per_th_fin}
For $i,j\in\{1,2,3\}$, $k\ge 1$ integer and $\eta\in [0,1]$, recalling definition \eqref{def_h_k},
\begin{multline}\label{gen_for_prod}
\mathbb{P}(E)\mathbb{E}\Big[\frac{1}{(\tilde{g}(A,\eta,G))^k}\frac{\partial^i \tilde{g}}{\partial t^i}(A,\eta,G)\Big(\frac{\partial^j\tilde{g}}{\partial t^j}(A,\eta,G)\Big)^k\Big]\\
\le \mathbb{E}\Bigg[(h_i(A,\eta,G))^2{g(A,\eta,G){1_E}}\Bigg]^{1/2}\mathbb{E}\Bigg[(h_j(A,\eta,G))^{2k}{g(A,\eta,G){1_E}}\Bigg]^{1/2}
\end{multline}
and in particular
\begin{equation}\label{h_quad}
\mathbb{P}(E)\mathbb{E}\Big[\frac{1}{\tilde{g}(A,\eta,G)}\Big(\frac{\partial^2 \tilde{g}}{\partial t^2}(A,\eta,G)\Big)^2\Big]\le  \frac{3}{2}\mathbb{E}\Big[\|\Gamma_\eta^{-1}(A-K)\|_{HS}^41_\mathbb{E}\Big],
\end{equation}
\begin{multline*}
\mathbb{P}(E)\mathbb{E}\Big[\frac{1}{\tilde{g}(A,\eta,G)}\frac{\partial^3 \tilde{g}}{\partial t^3}(A,\eta,G)\frac{\partial\tilde{g}}{\partial t}(A,\eta,G)\Big] \\
\le\frac{3 \sqrt{5}}{2\sqrt{2}}\mathbb{E}\Big[\|\Gamma^{-1}_\eta(A-K)\|_{HS}^61_\mathbb{E}\Big]^{1/2}\mathbb{E}\Big[\|\Gamma^{-1}_\eta(A-K)\|_{HS}^21_\mathbb{E}\Big]^{1/2},
\end{multline*}
\[
\mathbb{P}(E)\mathbb{E}\Big[\frac{1}{(\tilde{g}(A,\eta,G))^2}\frac{\partial^2 \tilde{g}}{\partial t^2}(A,\eta,G)\Big(\frac{\partial\tilde{g}}{\partial t}(A,\eta,G)\Big)^2\Big]\le  \frac{3\sqrt{5}}{2\sqrt{2}}\mathbb{E}\Big[\|\Gamma_\eta^{-1}(A-K)\|_{HS}^41_\mathbb{E}\Big],
\]
\begin{multline*}
\mathbb{P}(E)\mathbb{E}\Big[\frac{1}{(\tilde{g}(A,\eta,G))^3}\Big(\frac{\partial\tilde{g}}{\partial t}(A,\eta,G)\Big)^4\Big]\\
\le \frac{\sqrt{5}}{4}(10+\sqrt{3}+4\sqrt{6})\mathbb{E}\Big[\|\Gamma^{-1}_\eta(A-K)\|_{HS}^61_\mathbb{E}\Big]^{1/2}\mathbb{E}\Big[\|\Gamma^{-1}_\eta(A-K)\|_{HS}^21_\mathbb{E}\Big]^{1/2}.
\end{multline*}
\end{lemma}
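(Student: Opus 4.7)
The plan is first to establish the general inequality \eqref{gen_for_prod} via two applications of Cauchy--Schwarz/H\"older in the probability space of $A$, and then to recover each of the four explicit bounds by feeding \eqref{gen_for_prod} the closed-form expressions of Lemma \ref{mom_sec_h}, the subadditivity inequality $\sqrt{a_1+\cdots+a_n}\le \sqrt{a_1}+\cdots+\sqrt{a_n}$ for non-negative reals (which is what produces combinations like $\sqrt{60}+\sqrt{15/8}+\sqrt{10}+\sqrt{45/2}$ matching the announced constants), and the elementary trace estimate $|tr((\Gamma_t^{-1}(A-K))^m)|\le \|\Gamma_t^{-1}(A-K)\|_{HS}^m$ valid for $m\ge 2$. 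The latter follows from the similarity of $\Gamma_t^{-1}(A-K)$ to the symmetric matrix $\Gamma_t^{-1/2}(A-K)\Gamma_t^{-1/2}$, together with the power-mean inequality for its real eigenvalues.

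The starting point for \eqref{gen_for_prod} is the identity $\partial_t^\ell \tilde g(t,x)=\mathbb{E}[h_\ell(A,t,x)g(A,t,x)1_E]/\mathbb{P}(E)$, valid for every $\ell\ge 1$ by Remark \ref{pass_sotto_deriv} and definition \eqref{def_h_k}. Applying Cauchy--Schwarz in $A$ to $\partial_t^i \tilde g$ and H\"older in $A$ with conjugate exponents $2k$ and $2k/(2k-1)$ to $\partial_t^j \tilde g$ (so that the ratio $g=\phi_{\Gamma_t}/\phi_K$ is split as $g^{1/(2k)}\cdot g^{(2k-1)/(2k)}$) produces a pointwise bound in which the factors of $\tilde g$ on the two sides combine to $\tilde g^k$ and cancel the prefactor $\tilde g^{-k}$ exactly, leaving
\[
\tfrac{1}{\tilde g(t,x)^k}\partial_t^i\tilde g(t,x)(\partial_t^j\tilde g(t,x))^k\le \mathbb{E}[h_i^2\,g\,1_E/\mathbb{P}(E)]^{1/2}\mathbb{E}[h_j^{2k}g\,1_E/\mathbb{P}(E)]^{1/2}.
\]
Multiplying by $\mathbb{P}(E)$, integrating in $G\sim\mathcal{N}_d(0,K)$, applying Cauchy--Schwarz in $G$, and using the independence of $A$ and $G$ together with the identity $\mathbb{E}[h_m^{2\ell}(A,t,G)g(A,t,G)1_E]=\mathbb{E}[h_m^{2\ell}(A,t,F_t)1_E]$ (which holds because $g(A,t,x)\phi_K(x)=\phi_{\Gamma_t}(x)$ is the conditional density of $F_t=\sqrt{\Gamma_t}N$ given $A$, with $N$ an independent standard Gaussian) then delivers \eqref{gen_for_prod}.

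Each of the four specific estimates follows by the same recipe: specialise \eqref{gen_for_prod} to $(i,j,k)\in\{(2,2,1),(3,1,1),(2,1,2),(1,1,3)\}$; substitute the closed-form expressions of Lemma \ref{mom_sec_h} for the relevant $\mathbb{E}[(h_m(A,\eta,F_\eta))^{2\ell}1_E]$; apply the subadditivity of $\sqrt{\cdot}$ term by term so that each trace product $tr((\Gamma_\eta^{-1}(A-K))^{m_1})\cdots tr((\Gamma_\eta^{-1}(A-K))^{m_s})$ can be dominated individually by $\|\Gamma_\eta^{-1}(A-K)\|_{HS}^{m_1+\cdots+m_s}$; and finally collect the numerical constants and extract square roots.

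The step I anticipate will require the most care is not the algebra but the bookkeeping needed to justify Fubini and the various interchanges between expectation in $A$, expectation in $G$, and integration against Lebesgue measure on $\mathbb{R}^d$. Restricting everything to the event $E$ is crucial, since on $E$ one has the uniform lower bound $\lambda(\Gamma_t)\ge \lambda(K)/2$, which makes each density $\phi_{\Gamma_t}$ uniformly comparable to $\phi_K$; combined with the polynomial-in-$\|x\|$ control of $h_k$ provided by Lemma \ref{pol_bound_hk} and the standing moment hypothesis $\mathbb{E}[\|A\|_{HS}^8]<\infty$, this ensures that every exchange of integrals is legitimate, after which the remainder of the argument becomes purely algebraic.
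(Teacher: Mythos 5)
Your proposal is correct and is essentially the paper's argument: the paper proves \eqref{gen_for_prod} by conditioning on $G$ and combining Cauchy--Schwarz with Jensen's inequality under the conditional law tilted by $g(A,\eta,G)1_E$, which is equivalent to your pointwise Cauchy--Schwarz/H\"older splitting $g=g^{1/(2k)}\,g^{(2k-1)/(2k)}$ (the powers of $\tilde g$ do cancel exactly, and the $\mathbb{P}(E)$ factors as well) followed by Cauchy--Schwarz in $G$, and the four special cases are then obtained, exactly as you propose, from Lemma \ref{mom_sec_h}, the identity $\mathbb{E}\big[h_m^{2\ell}(A,\eta,G)g(A,\eta,G)1_E\big]=\mathbb{E}\big[h_m^{2\ell}(A,\eta,F_\eta)1_E\big]$ and the estimate $\big|tr\big((\Gamma_\eta^{-1}(A-K))^m\big)\big|\le\|\Gamma_\eta^{-1}(A-K)\|_{HS}^m$. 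One bookkeeping caveat: to hit the stated constants in the second and third estimates you must bound every trace product by the common quantity $\|\Gamma_\eta^{-1}(A-K)\|_{HS}^{6}$ (resp.\ $\|\Gamma_\eta^{-1}(A-K)\|_{HS}^{4}$) and sum the numerical coefficients \emph{inside} the square root (e.g.\ $\sqrt{\tfrac34+\tfrac92+6}=\tfrac{3\sqrt5}{2}$), since applying $\sqrt{\cdot}$-subadditivity term by term, as you describe, overshoots $\tfrac{3\sqrt5}{2\sqrt2}$; it is only in the last estimate that the subadditivity route reproduces the announced constant $\tfrac{\sqrt5}{4}(10+\sqrt3+4\sqrt6)$ (direct summation there would in fact give a smaller, hence still admissible, constant).
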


\begin{lemma}\label{term_con_zero}
\begin{multline*}
{\mathbb{P}(E)}\Bigg(\frac{1}{2}\mathbb{E}\Big[\Big(\frac{\partial \tilde{g}}{\partial t}(A,0,G)\Big)^2\Big]-\frac{1}{6}\mathbb{E}\Big[\Big(\frac{\partial \tilde{g}}{\partial t}(A,0,G)\Big)^3\Big]+\frac{1}{2}\mathbb{E}\Big[\frac{\partial^2 \tilde{g}}{\partial t^2}(A,0,G)\frac{\partial\tilde{g}}{\partial t}(A,0,G)\Big]\Bigg)\\
\le \frac{\sqrt{3}}{12\sqrt{2}}\big(2\sqrt{6}+3\sqrt{2}+2+\sqrt{d}\big)\|K^{-1}\|_{HS}^2\Bigg( \|\mathbb{E}[A]-K\|_{HS}^2+\frac{8}{\lambda(K)^2}\mathbb{E}\Big[\|A-K\|_{HS}^2\Big]^2\\
+\frac{32}{\lambda(K)^4}\|K\|_{HS}^2\mathbb{E}\Big[\|A-K\|_{HS}^4\Big]\Bigg)+\frac{\sqrt{3}}{8}\|K^{-1}\|_{HS}^4\mathbb{E}\Big[\|A-K\|_{HS}^4\Big].
\end{multline*}
\end{lemma}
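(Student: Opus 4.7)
The plan is to compute each of the three quantities at $t=0$ explicitly by exploiting the fact that $\Gamma_0 = K$ and $g(A,0,x)=1$. Setting $N := K^{-1/2} G \sim \mathcal{N}_d(0,I_d)$ and $M_0(A) := K^{-1/2}(A-K)K^{-1/2}$, the derivation in Proposition \ref{var_hk} shows that $h_1(A,0,G) = \tfrac{1}{2}(N^T M_0 N - \mathrm{tr}(M_0))$ is a centered quadratic form in $N$, so integrating out $A$ against $1_E/\mathbb{P}(E)$ yields $\frac{\partial \tilde g}{\partial t}(A,0,G) = \frac{1}{2\mathbb{P}(E)}(N^T \bar M N - \mathrm{tr}(\bar M))$, where $\bar M := \mathbb{E}[M_0(A)\,1_E]$ is deterministic. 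An analogous argument shows that $h_2(A,0,G)$ is a pure degree-$4$ multivariate Hermite polynomial in $N$ (equivalently, $\mathbb{E}[h_2(A,0,G)] = 0$).

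The key structural observation is that the mixed term $\tfrac{1}{2}\mathbb{P}(E)\mathbb{E}[\frac{\partial^2 \tilde g}{\partial t^2}(A,0,G)\,\frac{\partial \tilde g}{\partial t}(A,0,G)]$ \emph{vanishes identically}. This follows from the fact that $h_1$ and $h_2$ are pure degree-$2$ and degree-$4$ multivariate Hermite polynomials in $N$, respectively, so Hermite orthogonality under the $\mathcal{N}_d(0,I_d)$ measure forces $\mathbb{E}_G[h_1(A_1,0,G)\,h_2(A_2,0,G)] = 0$ for any (independent) copies $A_1, A_2$ of $A$; it may also be verified directly by the explicit cancellation $\mathbb{E}[h_1 h_2] = -\mathrm{tr}(M_0^3) + \mathrm{tr}(M_0^3) = 0$. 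For the surviving quadratic and cubic terms, the classical identities $\mathbb{E}[(N^T M N - \mathrm{tr}(M))^2] = 2\|M\|_{HS}^2$ and $\mathbb{E}[(N^T M N - \mathrm{tr}(M))^3] = 8\,\mathrm{tr}(M^3)$ (applied with $M = \bar M/\mathbb{P}(E)$) then yield the exact formulas $\mathbb{P}(E)\mathbb{E}[(\partial_t\tilde g|_0)^2] = \|\bar M\|_{HS}^2/(2\mathbb{P}(E))$ and $\mathbb{P}(E)\mathbb{E}[(\partial_t \tilde g|_0)^3] = \mathrm{tr}(\bar M^3)/\mathbb{P}(E)^2$.

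To extract the sharp $\|\mathbb{E}[A]-K\|_{HS}^2$ dependence in the bound (rather than the weaker $\mathbb{E}[\|A-K\|_{HS}^2]$), I would decompose $\bar M = K^{-1/2}(\mathbb{E}[A]-K)K^{-1/2} - \mathbb{E}[M_0(A)\,1_{E^C}]$ and apply the triangle inequality in Hilbert-Schmidt norm, the operator bound $\|K^{-1/2}X K^{-1/2}\|_{HS} \le \|K^{-1}\|_{HS}\|X\|_{HS}$, and Cauchy-Schwarz combined with Markov's inequality $\mathbb{P}(E^C) \le 4\mathbb{E}[\|A-K\|_{HS}^2]/\lambda(K)^2$. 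For the cubic piece, the crucial observation is that $\|M_0(A)\|_{op} \le 1/2$ on $E$, so $\|\bar M\|_{op} \le \mathbb{P}(E)/2$; combined with the bound $|\mathrm{tr}(\bar M^3)| \le \|\bar M\|_{HS}^2\|\bar M\|_{op}$, this cancels one power of $\mathbb{P}(E)$. The $\|K^{-1}\|_{HS}^4 \mathbb{E}[\|A-K\|_{HS}^4]$ contribution in the stated bound is recovered via Cauchy-Schwarz and the fourth-moment estimate $\mathbb{E}[h_1^4\,1_E] \le \tfrac{15}{4}\|K^{-1}\|_{HS}^4\,\mathbb{E}[\|A-K\|_{HS}^4]$ from Lemma \ref{mom_sec_h}.

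The main obstacle is handling the residual $1/\mathbb{P}(E)$ factor, which does not appear in the stated bound: when $\mathbb{P}(E) \ge 1/2$ it is absorbed into a harmless multiplicative constant, whereas when $\mathbb{P}(E) < 1/2$ Markov's inequality forces $\mathbb{E}[\|A-K\|_{HS}^2] \ge \lambda(K)^2/8$, so that the correction terms of order $\|K\|_{HS}^2\mathbb{E}[\|A-K\|_{HS}^4]/\lambda(K)^4$ dominate the crude Jensen estimate and absorb the $1/\mathbb{P}(E)$ loss. Tracking the exact constants (including the $\sqrt d$ factor arising from $\|\bar M\|_{HS} \le \sqrt d\,\|\bar M\|_{op}$ at a key step of the cubic-term estimate) is essentially bookkeeping once the structural cancellations above are in place.
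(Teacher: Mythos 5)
Your structural claims are correct, and your route is genuinely different from the paper's. The mixed term does vanish: since $A$ is independent of $G$, $\frac{\partial \tilde g}{\partial t}(A,0,G)$ and $\frac{\partial^2 \tilde g}{\partial t^2}(A,0,G)$ are deterministic linear combinations (the coefficients being $\mathbb{E}[\,\cdot\,1_E]/\mathbb{P}(E)$ averages of entries of $M_0:=K^{-1/2}(A-K)K^{-1/2}$ and of $M_0^{\otimes 2}$) of multivariate Hermite polynomials in $N=K^{-1/2}G$ of total degree exactly $2$ and exactly $4$, which are orthogonal under the standard Gaussian measure; and your exact identities $\mathbb{E}[(N^TMN-tr M)^2]=2\|M\|_{HS}^2$, $\mathbb{E}[(N^TMN-tr M)^3]=8\,tr(M^3)$, together with $\|\bar M\|_{op}\le \mathbb{P}(E)/2$ on $E$ (where $\bar M:=\mathbb{E}[M_0 1_E]$), are all correct. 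The paper does none of this: it bounds the mixed term by Cauchy--Schwarz against $\mathbb{E}[(\partial_t^2\tilde g)^2]$ (via \eqref{h_quad} with $\eta=0$), which is precisely where the term $\frac{\sqrt3}{8}\|K^{-1}\|_{HS}^4\mathbb{E}[\|A-K\|_{HS}^4]$ in the statement comes from, and it bounds the cubic term by Cauchy--Schwarz between the second and fourth moments of $\partial_t\tilde g$, which is where the $\sqrt d$ enters. Your cancellations give a strictly smaller structural constant ($\tfrac14+\tfrac1{12}=\tfrac13$ in front of $\|\bar M\|_{HS}^2/\mathbb{P}(E)$, versus roughly $0.57+0.05\sqrt d$ in the paper), and make the last term of the stated bound superfluous (harmless, since it is non-negative); the invocation of $\mathbb{E}[h_1^4 1_E]$ in your third paragraph is likewise unnecessary once the mixed term is gone.

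The weak point is your treatment of the residual $1/\mathbb{P}(E)$, which is self-inflicted by the decomposition $\bar M=K^{-1/2}(\mathbb{E}[A]-K)K^{-1/2}-\mathbb{E}[M_01_{E^C}]$. The paper avoids any case analysis by centering the indicator instead: it writes $\mathbb{E}[A\tfrac{1_E}{\mathbb{P}(E)}]-K=(\mathbb{E}[A]-K)+\mathbb{E}[A(\tfrac{1_E}{\mathbb{P}(E)}-1)]$ and applies Cauchy--Schwarz to the second piece, whose variance factor $\mathbb{E}[(\tfrac{1_E}{\mathbb{P}(E)}-1)^2]=\mathbb{P}(E^C)/\mathbb{P}(E)$ cancels the stray $\mathbb{P}(E)^{-1}$ exactly, after which Markov yields the two correction terms with the stated constants. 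Your case split can be made to work, but not as casually as written: since the lemma's constants are pinned down, ``absorbed into a harmless multiplicative constant'' is not automatic --- for $\mathbb{P}(E)\ge 1/2$ an unweighted triangle inequality gives the coefficient $\tfrac43\|K^{-1}\|_{op}^2$ in front of $\|\mathbb{E}[A]-K\|_{HS}^2$, which exceeds the stated coefficient ($\approx1.24\,\|K^{-1}\|_{HS}^2$ at $d=1$), so you need a weighted Young inequality and a feasibility check of the weight against both the main and correction terms; and for $\mathbb{P}(E)<1/2$ the domination argument should be made explicit via $1\le\frac{8}{\lambda(K)^2}\mathbb{E}[\|A-K\|_{HS}^2]$ applied to the crude bound $\|\bar M\|_{HS}^2/\mathbb{P}(E)\le\|K^{-1}\|_{op}^2\mathbb{E}[\|A-K\|_{HS}^2]$. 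Because your structural constants are smaller than the paper's there is enough slack for all of this, but the cleaner fix is simply to adopt the paper's centering trick, which removes the case analysis altogether.
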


Applying Lemmas \ref{term_log}, \ref{stime_per_th_fin} and \ref{term_con_zero} to inequality \eqref{taylor_dec_psi}, one deduces that
\begin{multline*}
\int_{\mathbb{R}^d}{\mathbb{E}[\phi_A(x)1_E]}\log \Bigg(\frac{\mathbb{E}[\phi_A(x)1_E]}{\mathbb{P}(E)\phi_K(x)}\Bigg)dx\le
\frac{\sqrt{3}}{12\sqrt{2}}\big(2\sqrt{6}+3\sqrt{2}+2+\sqrt{d}\big)\|K^{-1}\|_{HS}^2\cdot\\
\cdot \Bigg( \|\mathbb{E}[A]-K\|_{HS}^2
+\frac{8}{\lambda(K)^2}\mathbb{E}\Big[\|A-K\|_{HS}^2\Big]^2
+\frac{32}{\lambda(K)^4}\|K\|_{HS}^2\mathbb{E}\Big[\|A-K\|_{HS}^4\Big]\Bigg)\\
+\frac{\sqrt{3}}{8}\|K^{-1}\|_{HS}^4\mathbb{E}\Big[\|A-K\|_{HS}^4\Big]
+ \frac{\sqrt{70}}{16}\mathbb{E}\Big[\|\Gamma^{-1}_\eta(A-K)\|_{HS}^81_\mathbb{E}\Big]^{1/2}\cdot\\
\cdot\Bigg(\frac{1}{2}\max{\Big\{\Big|\log\frac{2^d det K}{(2\|K\|_{op}+\lambda(K))^d}\Big|,\log\frac{2^d detK}{\lambda(K)^d}\Big\}}
+\frac{(\sqrt{2}+1){d}}{4}
\Bigg)\\
+\frac{\sqrt{5}}{24}\Big({5}+\frac{\sqrt{3}}{2}+2\sqrt{6}+3\sqrt{2}\Big)\mathbb{E}\Big[\|\Gamma^{-1}_\eta(A-K)\|_{HS}^61_\mathbb{E}\Big]^{1/2}\mathbb{E}\Big[\|\Gamma^{-1}_\eta(A-K)\|_{HS}^21_\mathbb{E}\Big]^{1/2}\\
+\frac{3}{16}(1
+ \sqrt{10})\mathbb{E}\Big[\|\Gamma_\eta^{-1}(A-K)\|_{HS}^41_\mathbb{E}\Big]
\end{multline*}
\begin{multline*}
\le\frac{\sqrt{3}}{12\sqrt{2}}\big(2\sqrt{6}+3\sqrt{2}+2+\sqrt{d}\big)\|K^{-1}\|_{HS}^2\Bigg( \|\mathbb{E}[A]-K\|_{HS}^2
+\frac{8}{\lambda(K)^2}\mathbb{E}\Big[\|A-K\|_{HS}^2\Big]^2\\
+\frac{32}{\lambda(K)^4}\|K\|_{HS}^2\mathbb{E}\Big[\|A-K\|_{HS}^4\Big]\Bigg)
+\frac{\sqrt{3}d^2}{8\lambda(K)^4}\mathbb{E}\Big[\|A-K\|_{HS}^4\Big]
+ \frac{\sqrt{70}d^2}{\lambda(K)^4}\mathbb{E}\Big[\|A-K\|_{HS}^8\Big]^{1/2}\cdot\\
\cdot\Bigg(\frac{1}{2}\max{\Big\{\Big|\log\frac{2^d det K}{(2\|K\|_{op}+\lambda(K))^d}\Big|,\log\frac{2^d detK}{\lambda(K)^d}\Big\}}
+\frac{(\sqrt{2}+1){d}}{4}
\Bigg)\\
+\frac{2d^2\sqrt{5}}{3\lambda(K)^4}\Big({5}+\frac{\sqrt{3}}{2}+2\sqrt{6}+3\sqrt{2}\Big)\mathbb{E}\Big[\|A-K\|_{HS}^6\Big]^{1/2}\mathbb{E}\Big[\|A-K\|_{HS}^2\Big]^{1/2}\\
+\frac{3d^2}{\lambda(K)^4}(1
+ \sqrt{10})\mathbb{E}\Big[\|A-K\|_{HS}^4\Big],
\end{multline*}
where we have used that, on the event $E$, one has that $\lambda(\Gamma_\eta)\ge \frac{\lambda(K)}{2}$, as proved in \eqref{min_eig_G}. Therefore 
\begin{multline*}
\int_{\mathbb{R}^d}{\mathbb{E}[\phi_A(x)1_E]}\log \Bigg(\frac{\mathbb{E}[\phi_A(x)1_E]}{\mathbb{P}(E)\phi_K(x)}\Bigg)dx\\
\le\frac{\sqrt{3}}{12\sqrt{2}}\big(2\sqrt{6}+3\sqrt{2}+2+\sqrt{d}\big)\|K^{-1}\|_{HS}^2\|\mathbb{E}[A]-K\|_{HS}^2\\
+\Bigg\{\frac{\sqrt{2}}{\lambda(K)^4\sqrt{3}}\big(2\sqrt{6}+3\sqrt{2}+2+\sqrt{d}\big)\|K^{-1}\|_{HS}^2(\lambda(K)^2+4\|K\|_{HS}^2)\\
+ \frac{\sqrt{70}d^2}{\lambda(K)^4}\Bigg(\frac{1}{2}\max{\Big\{\Big|\log\frac{2^d det K}{(2\|K\|_{op}+\lambda(K))^d}\Big|,\log\frac{2^d detK}{\lambda(K)^d}\Big\}}
+\frac{(\sqrt{2}+1){d}}{4}
\Bigg)\\
+\Big(3+\frac{\sqrt{3}}{8}+ 5\sqrt{10}+\frac{4\sqrt{30}}{3}+\frac{\sqrt{15}}{3}+\frac{10\sqrt{5}}{3}\Big)\frac{d^2}{\lambda(K)^4}\Bigg\}\mathbb{E}\Big[\|A-K\|_{HS}^8\Big]^{1/2}.
\end{multline*}

The subsequent section focuses on the proofs of some crucial ancillary results.

\subsection{Proofs of technical results}\label{proof_technical}
\subsubsection{Proof of Lemma \ref{pol_bound_hk}}\label{proof_lemma_pol_bound_hk}
We will show by induction on $k\ge 1$ the stronger result that there exist positive polynomials $p_k$, $\{r^{(i)}_k\}_{i\ge 1}:\mathbb{R}\to\mathbb{R}$ such that 
\begin{equation}\label{ind_pol}
|h_k(A,t,x)|\le p_k(\|x\|)\|A-K\|_{HS}^k\quad\text{and}\quad \Big|\frac{\partial^i h_k}{\partial t^i}(A,t,x)\Big|\le r^{(i)}_k(\|x\|)\|A-K\|_{HS}^{k+i}.
\end{equation}
To see this, recall definition \eqref{def_h_k} and observe that, when $k=1$,
\begin{multline*}
|h_1(A,t,x)|=\Big|\frac{1}{g(A,t,x)}\frac{\partial g}{\partial t}(A,t,x)\Big|=\frac{1}{2}\Big|\langle x, \Gamma_t^{-1}(A-K)\Gamma_t^{-1}x\rangle-tr(\Gamma_t^{-1}(A-K))\Big|\\
\le \frac{1}{2}\Big(\|x\|^2\|\Gamma_t^{-1}\|_{op}^2\|A-K\|_{op}+\|\Gamma_t^{-1}\|_{HS}\|A-K\|_{HS}\Big)\le \frac{1}{2}\Big(\frac{1}{\lambda(\Gamma_t)^2}\|x\|^2+\frac{\sqrt{d}}{\lambda(\Gamma_t)}\Big)\|A-K\|_{HS}?
\end{multline*}
Similarly, an induction argument shows that, for every integer $i\ge 1$, there exist constants $C^{(i)}_1, C^{(i)}_2\in\mathbb{R}$ such that
\[
\frac{\partial^i h_1}{\partial t^i}(A,t,x)= C^{(i)}_1\langle x, \Big(\Gamma_t^{-1}(A-K)\Big)^{i+1}\Gamma_t^{-1}x\rangle+C^{(i)}_2tr\Big(\big(\Gamma_t^{-1}(A-K)\big)^{i+1}\Big),
\]
yielding in turn that
\begin{multline*}
\Big|\frac{\partial^i h_1}{\partial t^i}(A,t,x)\Big|\le \Big(|C_1^{(i)}|\|x\|^2\|\Gamma_t^{-1}\|_{op}^{i+2}+|C_2^{(i)}|\|\Gamma_t^{-1}\|_{HS}^{i+1}\Big)\|A-K\|_{HS}^{i+1}\\
\le \Big(|C_1^{(i)}| \|x\|^2\frac{1}{\lambda(\Gamma_t)^{i+2}}+|C_2^{(i)}|\frac{d^{\frac{i+1}{2}}}{\lambda(\Gamma_t)^{i+1}}\Big)\|A-K\|_{HS}^{i+1}.
\end{multline*}
Hence, using that on the event $E$ one has that $\lambda(\Gamma_t)\ge \frac{\lambda(K)}{2}$ (as proved in \eqref{min_eig_G}) the property \eqref{ind_pol} is verified with 
\[
p_1(y):=\frac{2}{\lambda(K)^2}y^2+\frac{\sqrt{d}}{\lambda(K)} \quad\text{and}\quad r^{(i)}_1(y):=|C_1^{(i)}|y^2\frac{2^{i+2}}{\lambda(K)^{i+2}}+|C_2^{(i)}|\frac{2^{i+1}d^{\frac{i+1}{2}}}{\lambda(K)^{i+1}}
\]
 for any $i\ge 1$ integer. Now we observe that, for $k\ge 1$,
\begin{equation}\label{iter_pol_mie}
h_{k+1}(A,t,x)=\frac{\partial h_k}{\partial t}(A,t,x)+h_k(A,t,x)h_1(A,t,x),
\end{equation}
which is a consequence of the fact that, by definition,
\[
h_{k+1}(A,t,x)=\frac{1}{g(A,t,x)}\frac{\partial^{k+1}g}{\partial t^{k+1}}(A,t,x)=\frac{1}{g(A,t,x)}\frac{\partial}{\partial t}(h_k(A,t,x)g(A,t,x)).
\]
Consequently, assuming the inductive hypothesis \eqref{ind_pol} holds for some $k \geq 1$, and using \eqref{iter_pol_mie}, the property \eqref{ind_pol} is also verified for $k+1$.

\subsubsection{Proof of Lemma \ref{mom_sec_h}}\label{proof_lemma_mom_sec_h}
Using Proposition \ref{var_hk} and selecting a random element $N\sim\mathcal{N}_d(0,I_d)$ independent of $A$, one deduces that
\begin{multline*}
\mathbb{E}\Big[(h_1(A,t,F_t))^21_\mathbb{E}\Big]=\frac{1}{2}\mathbb{E}\Big[\langle N,\Big(\sqrt{\Gamma_t}^{-1}(A-K)\sqrt{\Gamma_t}^{-1}\Big)^2N\rangle 1_\mathbb{E}\Big]\\
=\frac{1}{2}\mathbb{E}\Big[tr\Big(\Big(\sqrt{\Gamma_t}^{-1}(A-K)\sqrt{\Gamma_t}^{-1}\Big)^2\Big)1_\mathbb{E}\Big],
\end{multline*}
\[
\mathbb{E}\Big[(h_2(A,t,F_t))^21_\mathbb{E}\Big]=\frac{1}{2}\mathbb{E}\Big[\Big(\langle N,\Big(\sqrt{\Gamma_t}^{-1}(A-K)\sqrt{\Gamma_t}^{-1}\Big)^2N\rangle\Big)^2 1_\mathbb{E}\Big]=\frac{1}{2}\mathbb{E}\Bigg[\Bigg(\sum_{i=1}^d \lambda_i N_i^2\Bigg)^21_\mathbb{E}\Bigg],
\]
where $\{\lambda_i\}_{i=1,\dots,d}$ are the eigenvalues of $M^2:=(\sqrt{\Gamma_t}^{-1}(A-K)\sqrt{\Gamma_t}^{-1})^2$, {where we have used the fact that a real symmetrical matrix can be diagonalized by an orthonormal matrix, as well as that the law of a standard Gaussian vector is invariant by orthogonal transformations}. As a consequence,
\begin{multline*}
\mathbb{E}\Big[(h_2(A,t,F_t))^21_\mathbb{E}\Big]=\frac{1}{2}\sum_{i=1}^d \mathbb{E}\Big[\lambda_i^2 N_i^41_\mathbb{E}\Big]+\frac{1}{2}\sum_{i=1}^d\sum_{j=1,j\ne i}^d \mathbb{E}\Big[\lambda_i\lambda_j N_i^2N_j^21_\mathbb{E}\Big]\\
=\mathbb{E}\Big[tr(M^4)1_\mathbb{E}\Big]+\frac{1}{2}\mathbb{E}\Big[\big(tr(M^2)\big)^21_\mathbb{E}\Big].
\end{multline*}
On the other hand,
\begin{multline*}
\mathbb{E}\Big[(h_3(A,t,F_t))^21_\mathbb{E}\Big]=\frac{3}{4}\mathbb{E}\Bigg[\Bigg(\sum_{i=1}^d \lambda_i (N_i^2-1)+tr(M^2)\Bigg)^31_\mathbb{E}\Bigg]
=\frac{3}{4}\mathbb{E}\Big[\Big(\sum_{i=1}^d \lambda_i (N_i^2-1)\Big)^31_\mathbb{E}\Big]\\
+\frac{3}{4}\mathbb{E}\Big[\big(tr(M^2)\big)^31_\mathbb{E}\Big]+\frac{9}{4}\mathbb{E}\Big[\Big(\sum_{i=1}^d \lambda_i (N_i^2-1)\Big)^2tr(M^2)1_\mathbb{E}\Big]+\frac{9}{4}\mathbb{E}\Big[\big(tr(M^2)\big)^2\sum_{i=1}^d \lambda_i (N_i^2-1)1_\mathbb{E}\Big]
\end{multline*}
\[
=\frac{3}{4}\sum_{i=1}^d\mathbb{E}\Big[\lambda_i^3(N_i^2-1)^31_\mathbb{E}\Big]+\frac{3}{4}\mathbb{E}\Big[\big(tr(M^2)\big)^31_\mathbb{E}\Big]+\frac{9}{4}\sum_{i=1}^d \mathbb{E}\Big[\lambda_i ^2(N_i^2-1)^2tr(M^2)1_\mathbb{E}\Big]
\]
\[
=6\mathbb{E}\Big[tr(M^6)1_\mathbb{E}\Big]+\frac{3}{4}\mathbb{E}\Big[\big(tr(M^2)\big)^31_\mathbb{E}\Big]+\frac{9}{2}\mathbb{E}\Big[tr(M^4)tr(M^2)1_\mathbb{E}\Big]
\]
and 
\begin{multline*}
\mathbb{E}\Big[(h_4(A,t,F_t))^21_\mathbb{E}\Big]=\frac{3}{2}\mathbb{E}\Bigg[\Bigg(\sum_{i=1}^d \lambda_i (N_i^2-1)+tr(M^2)\Bigg)^41_\mathbb{E}\Bigg]=\frac{3}{2}\mathbb{E}\Big[\Big(\sum_{i=1}^d \lambda_i (N_i^2-1)\Big)^41_\mathbb{E}\Big]\\
+\frac{3}{2}\mathbb{E}\Big[\big(tr(M^2)\big)^41_\mathbb{E}\Big]+6\mathbb{E}\Big[\Big(\sum_{i=1}^d \lambda_i (N_i^2-1)\Big)^3tr(M^2)1_\mathbb{E}\Big]+6\mathbb{E}\Big[\big(tr(M^2)\big)^3\sum_{i=1}^d \lambda_i (N_i^2-1)1_\mathbb{E}\Big]\\
+9\mathbb{E}\Big[\Big(\sum_{i=1}^d \lambda_i (N_i^2-1)\Big)^2\big(tr(M^2)\big)^21_\mathbb{E}\Big]
\end{multline*}
\begin{multline*}
=\frac{3}{2}\sum_{i=1}^d\mathbb{E}\Big[ \lambda_i^4 (N_i^2-1)^41_\mathbb{E}\Big]+\frac{9}{2}\sum_{i=1}^d\sum_{j=1,j\ne i}^d \mathbb{E}\Big[\lambda_i^2\lambda_j^2 (N_i^2-1)^2(N_j^2-1)^21_\mathbb{E}\Big]\\
+\frac{3}{2}\mathbb{E}\Big[\big(tr(M^2)\big)^41_\mathbb{E}\Big]
+6\sum_{i=1}^d\mathbb{E}\Big[\lambda_i^3 (N_i^2-1)^3tr(M^2)1_\mathbb{E}\Big]
+9\sum_{i=1}^d\mathbb{E}\Big[ \lambda_i^2 (N_i^2-1)^2\big(tr(M^2)\big)^21_\mathbb{E}\Big]
\end{multline*}
\begin{multline*}
=72 \mathbb{E}\Big[tr(M^8)1_\mathbb{E}\Big]+18\mathbb{E}\Big[\big(tr(M^4)\big)^21_\mathbb{E}\Big]+\frac{3}{2}\mathbb{E}\Big[\big(tr(M^2)\big)^41_\mathbb{E}\Big]+48\mathbb{E}\Big[tr(M^6)tr(M^2)1_\mathbb{E}\Big]\\
+18\mathbb{E}\Big[tr(M^4)\big(tr(M^2)\big)^21_\mathbb{E}\Big].
\end{multline*}
Finally, applying Proposition 2.7.13 and Corollary A.2.4 from \cite{NP12} it holds that 
\[
\mathbb{E}[(h_1(A,t,F_t))^41_E]
=3\mathbb{E}\Big[tr(M^4)1_\mathbb{E}\Big]+\frac{3}{4}\mathbb{E}\Big[(tr(M^2))^21_\mathbb{E}\Big],
\]
and 
\begin{multline*}
\mathbb{E}[(h_1(A,t,F_t))^61_E]=60 \mathbb{E}\Big[tr(M^6)1_\mathbb{E}\Big]+\frac{15}{8}\mathbb{E}\Big[(tr(M^2))^31_\mathbb{E}\Big]+10\mathbb{E}\Big[(tr(M^3))^21_\mathbb{E}\Big]\\
+\frac{45}{2}\mathbb{E}\Big[tr(M^2)tr(M^4)1_\mathbb{E}\Big].
\end{multline*}

\subsubsection{Proof of Lemma \ref{term_log}}
Using Remark \ref{pass_sotto_deriv}, definition \eqref{def_h_k} and assuming without loss of generality that $A$ and $G$, one obtains that
\begin{multline*}
\mathbb{P}(E)\mathbb{E}\Big[\frac{\partial^4\tilde{g}}{\partial t^4}(A,\eta,G)\log(\tilde{g}(A,\eta,G))\Big]\\
=\mathbb{P}(E)\mathbb{E}\Bigg[\mathbb{E}\Big[h_4(A,\eta,G)g(A,\eta,G)\frac{1_E}{\mathbb{P}(E)}\big|G\Big]\log\Big(\mathbb{E}\Big[g(A,\eta,G)\frac{1_E}{\mathbb{P}(E)}\big|G\Big]\Big)\Bigg]\\
=\mathbb{E}\Bigg[\frac{\mathbb{E}\Big[h_4(A,\eta,G)g(A,\eta,G)1_\mathbb{E}\big|G\Big]}{\mathbb{E}\Big[g(A,\eta,G)\frac{1_E}{\mathbb{P}(E)}\big|G\Big]}\mathbb{E}\Big[g(A,\eta,G)\frac{1_E}{\mathbb{P}(E)}\big|G\Big]\log\Big(\mathbb{E}\Big[g(A,\eta,G)\frac{1_E}{\mathbb{P}(E)}\big|G\Big]\Big)\Bigg]
\end{multline*}
\[
\le \mathbb{E}\Bigg[\frac{\mathbb{E}\Big[h_4(A,\eta,G)g(A,\eta,G)1_\mathbb{E}\big|G\Big]}{\mathbb{E}\Big[g(A,\eta,G)\frac{1_E}{\mathbb{P}(E)}\big|G\Big]}\mathbb{E}\Big[g(A,\eta,G)\frac{1_E}{\mathbb{P}(E)}\log\Big(g(A,\eta,G)\Big)\big|G\Big]\Bigg],
\]
thanks to the convexity of the function $x\mapsto x\log x$ and Jensen inequality. Hence, considering the explicit expression of ${g}$ given by \eqref{fun_g},
\begin{multline*}
\mathbb{P}(E)\mathbb{E}\Big[\frac{\partial^4\tilde{g}}{\partial t^4}(A,\eta,G)\log(\tilde{g}(A,\eta,G))\Big]
\le \mathbb{E}\Bigg[\frac{\mathbb{E}\Big[h_4(A,\eta,G)g(A,\eta,G)1_\mathbb{E}\big|G\Big]}{\mathbb{E}\Big[g(A,\eta,G)\frac{1_E}{\mathbb{P}(E)}\big|G\Big]}\cdot\\
\cdot \mathbb{E}\Big[g(A,\eta,G)\frac{1_E}{\mathbb{P}(E)}\Big(\frac{1}{2}\log \frac{det(K)}{det(\Gamma_\eta)}-\frac{1}{2}\langle G, (\Gamma_\eta^{-1}-K^{-1})G\rangle\Big)\big|G\Big]\Bigg]
\end{multline*}
\begin{multline*}
\le \frac{1}{2}\mathbb{E}\Bigg[\frac{\Big|\mathbb{E}\Big[h_4(A,\eta,G)g(\eta,G)1_\mathbb{E}\big|G\Big]\Big|}{\mathbb{E}\Big[g(A,\eta,G)\frac{1_E}{\mathbb{P}(E)}\big|G\Big]}\mathbb{E}\Big[g(A,\eta,G)\frac{1_E}{\mathbb{P}(E)}\Big|\log \frac{det(K)}{det(\Gamma_\eta)}\Big|\big|G\Big]\Bigg]\\
-\frac{1}{2}\mathbb{E}\Bigg[\frac{\mathbb{E}\Big[h_4(A,\eta,G)g(A,\eta,G)1_\mathbb{E}\big|G\Big]}{\mathbb{E}\Big[g(A,\eta,G)\frac{1_E}{\mathbb{P}(E)}\big|G\Big]}\mathbb{E}\Big[g(A,\eta,G)\frac{1_E}{\mathbb{P}(E)}\langle G, (\Gamma_\eta^{-1}-K^{-1})G\rangle\big|G\Big]\Bigg]
\end{multline*}

\begin{multline*}
\le \frac{1}{2}{\mathbb{E}\Big[\Big|h_4(A,\eta,G)\Big|g(A,\eta,G)1_\mathbb{E}\Big]}\max{\Big\{\Big|\log\frac{2^d det K}{(2\|K\|_{op}+\lambda(K))^d}\Big|,\log\frac{2^d detK}{\lambda(K)^d}\Big\}}\\
-\frac{1}{2}\mathbb{E}\Bigg[\frac{\mathbb{E}\Big[h_4(A,\eta,G)g(A,\eta,G)1_\mathbb{E}\big|G\Big]}{\mathbb{E}\Big[g(A,\eta,G)\frac{1_E}{\mathbb{P}(E)}\big|G\Big]}\mathbb{E}\Big[g(A,\eta,G)\frac{1_E}{\mathbb{P}(E)}\big|G\Big]\cdot\\
\cdot \mathbb{E}\Bigg[\frac{g(A,\eta,G)\frac{1_E}{\mathbb{P}(E)}}{\mathbb{E}\Big[g(A,\eta,G)\frac{1_E}{\mathbb{P}(E)}\big|G\Big]}\langle G, (\Gamma_\eta^{-1}-K^{-1})G\rangle\Big|G\Bigg]\Bigg],
\end{multline*}
where we used the fact that, on the event $E$ defined in \eqref{def_ev_E}, $\Big(\frac{\lambda(K)}{2}\Big)^d\le det(\Gamma_\eta)\le \Big(\frac{\lambda(K)}{2}+\|K\|_{op}\Big)^d$ for every $\eta\in [0,1]$. Applying now the Cauchy-Schwarz inequality in the second summand and then applying Jensen inequality with respect to the probability measure {whose density with respect to $\mathbb{P}[\bullet\, | \, G]$ is given by} $\frac{g(A,\eta,G)\frac{1_E}{\mathbb{P}(E)}}{\mathbb{E}\Big[g(A,\eta,G)\frac{1_E}{\mathbb{P}(E)}\big|G\Big]}$, we deduce that
\begin{multline*}
\mathbb{P}(E)\mathbb{E}\Big[\frac{\partial^4\tilde{g}}{\partial t^4}(A,\eta,G)\log(\tilde{g}(A,\eta,G))\Big]\\
\le \frac{1}{2}\max{\Big\{\Big|\log\frac{2^d det K}{(2\|K\|_{op}+\lambda(K))^d}\Big|,\log\frac{2^d detK}{\lambda(K)^d}\Big\}}\mathbb{E}\Big[{|h_4(A,\eta,G)|^2g(A,\eta,G){1_E}}\Big]^{1/2}\\
+\frac{\mathbb{P}(E)}{2}\mathbb{E}\Bigg[\frac{\mathbb{E}\Big[|h_4(A,\eta,G)|^2g(A,\eta,G)\frac{1_E}{\mathbb{P}(E)}\big|G\Big]}{\mathbb{E}\Big[g(A,\eta,G)\frac{1_E}{\mathbb{P}(E)}\big|G\Big]}\mathbb{E}\Big[g(A,\eta,G)\frac{1_E}{\mathbb{P}(E)}\big|G\Big]\Bigg]^{1/2}\cdot\\
\cdot \mathbb{E}\Bigg[\mathbb{E}\Big[g(A,\eta,G)\frac{1_E}{\mathbb{P}(E)}\big|G\Big]\mathbb{E}\Bigg[\frac{g(A,\eta,G)\frac{1_E}{\mathbb{P}(E)}}{\mathbb{E}\Big[g(A,\eta,G)\frac{1_E}{\mathbb{P}(E)}\big|G\Big]}\Big(\langle G, (\Gamma_\eta^{-1}-K^{-1})G\rangle\Big)^2\Big|G\Bigg]\Bigg]^{1/2}
\end{multline*}

\begin{multline*}
\le \frac{1}{2}\max{\Big\{\Big|\log\frac{2^d det K}{(2\|K\|_{op}+\lambda(K))^d}\Big|,\log\frac{2^d detK}{\lambda(K)^d}\Big\}}\mathbb{E}\Big[|h_4(A,\eta,F_\eta)|^2{1_E}\Big]^{1/2}\\
+\frac{1}{2}\mathbb{E}\Big[|h_4(A,\eta,F_\eta)|^2{1_E}\Big]^{1/2} \mathbb{E}\Bigg[{1_E}\Big(\langle F_\eta, (\Gamma_\eta^{-1}-K^{-1})F_\eta\rangle\Big)^2\Bigg]^{1/2}
\end{multline*}

\begin{multline*}
\le \Bigg(\frac{3}{2}\mathbb{E}\Big[\Big(tr\big((\Gamma^{-1}_\eta(A-K))^2\big)\Big)^41_\mathbb{E}\Big]+18\mathbb{E}\Big[\Big(tr\big((\Gamma^{-1}_\eta(A-K))^2\big)\Big)^2tr\big((\Gamma^{-1}_\eta(A-K))^4\big)1_\mathbb{E}\Big]\\
+18\mathbb{E}\Big[\Big(tr\big((\Gamma^{-1}_\eta(A-K))^4\big)\Big)^21_\mathbb{E}\Big]
+72\mathbb{E}\Big[tr\big((\Gamma^{-1}_\eta(A-K))^8\big)1_\mathbb{E}\Big]\\
+48\mathbb{E}\Big[tr\big((\Gamma^{-1}_\eta(A-K))^2\big)tr\big((\Gamma^{-1}_\eta(A-K))^6\big)1_\mathbb{E}\Big]\Bigg)^{1/2}\cdot\\
	\cdot\Bigg(\frac{1}{2}\max{\Big\{\Big|\log\frac{2^d det K}{(2\|K\|_{op}+\lambda(K))^d}\Big|,\log\frac{2^d detK}{\lambda(K)^d}\Big\}}+\frac{\sqrt{2}}{2}\mathbb{E}\Big[\|I_d-\sqrt{\Gamma_\eta}K^{-1}\sqrt{\Gamma_\eta}\|_{HS}^21_\mathbb{E}\Big]^{1/2}\\
+\frac{1}{2}\mathbb{E}\Big[\Big(tr(I_d-\sqrt{\Gamma_\eta}K^{-1}\sqrt{\Gamma_\eta})\Big)^21_\mathbb{E}\Big]^{1/2}\Bigg),
\end{multline*}
where we have exploited identity \eqref{h4_lemma}. Observe that, thanks to the fact that on the event $E$ one has that $\|A-K\|_{op}\le\frac{\lambda(K)}{2}$, for every $\eta\in [0,1]$ 
\begin{multline*}
\mathbb{E}\Big[\|I_d-\sqrt{\Gamma_\eta}K^{-1}\sqrt{\Gamma_\eta}\|_{HS}^21_\mathbb{E}\Big]=\mathbb{E}\Big[\|I_d-{\Gamma_\eta}K^{-1}\|_{HS}^21_\mathbb{E}\Big]=\mathbb{E}\Big[\|(K-{\Gamma_\eta})K^{-1}\|_{HS}^21_\mathbb{E}\Big]\\
\le \|K^{-1}\|_{HS}^2\mathbb{E}\Big[\|A-K\|_{HS}^21_\mathbb{E}\Big]\le  \|K^{-1}\|_{HS}^2\frac{d\lambda(K)^2}{4}
\end{multline*}
and 
\begin{multline*}
\mathbb{E}\Big[\Big(tr(I_d-\sqrt{\Gamma_\eta}K^{-1}\sqrt{\Gamma_\eta})\Big)^21_\mathbb{E}\Big]=\mathbb{E}\Big[\Big(tr\big((K-\Gamma_\eta)K^{-1}\big)\Big)^21_\mathbb{E}\Big]\\
\le \|K^{-1}\|_{HS}^2\mathbb{E}\Big[\|A-K\|_{HS}^21_\mathbb{E}\Big]
\le \|K^{-1}\|_{HS}^2\frac{d\lambda(K)^2}{4}.
\end{multline*}
As a consequence,
\begin{multline*}
\mathbb{P}(E)\mathbb{E}\Big[\frac{\partial^4\tilde{g}}{\partial t^4}(A,\eta,G)\log(\tilde{g}(A,\eta,G))\Big]\\
\le \frac{3\sqrt{70}}{2}\mathbb{E}\Big[\|\Gamma^{-1}_\eta(A-K)\|_{HS}^81_\mathbb{E}\Big]^{1/2}\Bigg(\frac{1}{2}\max{\Big\{\Big|\log\frac{2^d det K}{(2\|K\|_{op}+\lambda(K))^d}\Big|,\log\frac{2^d detK}{\lambda(K)^d}\Big\}}\\
+\frac{\sqrt{d}(\sqrt{2}+1)}{4}\|K^{-1}\|_{HS}\lambda(K)\Bigg).
\end{multline*}
yielding the desired conclusion.

\subsubsection{Proof of Lemma \ref{stime_per_th_fin}}
By Remark \ref{pass_sotto_deriv}, definitions \eqref{def_h_k} and \eqref{tilde_g} and recalling that $A$ is assumed to be independent of $G$,
\begin{multline*}
\mathbb{P}(E)\mathbb{E}\Big[\frac{1}{(\tilde{g}(A,\eta,G))^k}\frac{\partial^i \tilde{g}}{\partial t^i}(A,\eta,G)\Big(\frac{\partial^j\tilde{g}}{\partial t^j}(A,\eta,G)\Big)^k\Big]\\
=\mathbb{P}(E)\mathbb{E}\Bigg[\frac{1}{\mathbb{E}\Big[{g}(A,\eta,G)\frac{1_E}{\mathbb{P}(E)}\big| G\Big]^k}\mathbb{E}\Big[\frac{\partial^i {g}}{\partial t^i}(A,\eta,G)\frac{1_E}{\mathbb{P}(E)}\big| G\Big]\mathbb{E}\Big[\frac{\partial^j {g}}{\partial t^j}(A,\eta,G)\frac{1_E}{\mathbb{P}(E)}\big| G\Big]^k\Bigg]
\end{multline*}
\begin{multline*}
=\mathbb{P}(E)\mathbb{E}\Bigg[{\mathbb{E}\Big[{g}(A,\eta,G)\frac{1_E}{\mathbb{P}(E)}\big| G\Big]}\mathbb{E}\Bigg[h_i(A,\eta,G)\frac{g(A,\eta,G)\frac{1_E}{\mathbb{P}(E)}}{\mathbb{E}\Big[{g}(A,\eta,G)\frac{1_E}{\mathbb{P}(E)}\big| G\Big]}\Big| G\Bigg]\cdot\\
\cdot \mathbb{E}\Bigg[h_j(A,\eta,G)\frac{g(A,\eta,G)\frac{1_E}{\mathbb{P}(E)}}{\mathbb{E}\Big[{g}(A,\eta,G)\frac{1_E}{\mathbb{P}(E)}\big| G\Big]}\Big| G\Bigg]^k\Bigg]
\end{multline*}
\begin{multline}\label{CSrad}
\le \mathbb{P}(E)\mathbb{E}\Bigg[{\mathbb{E}\Big[{g}(A,\eta,G)\frac{1_E}{\mathbb{P}(E)}\big| G\Big]}\mathbb{E}\Bigg[h_i(A,\eta,G)\frac{g(A,\eta,G)\frac{1_E}{\mathbb{P}(E)}}{\mathbb{E}\Big[{g}(A,\eta,G)\frac{1_E}{\mathbb{P}(E)}\big| G\Big]}\Big| G\Bigg]^2\Bigg]^{1/2}\cdot\\
\cdot \mathbb{E}\Bigg[\mathbb{E}\Big[{g}(A,\eta,G)\frac{1_E}{\mathbb{P}(E)}\big| G\Big]\mathbb{E}\Bigg[h_j(A,\eta,G)\frac{g(A,\eta,G)\frac{1_E}{\mathbb{P}(E)}}{\mathbb{E}\Big[{g}(A,\eta,G)\frac{1_E}{\mathbb{P}(E)}\big|G\Big]}\Big| G\Bigg]^{2k}\Bigg]^{1/2}
\end{multline}
\begin{equation}\label{Jen_hij}
\le \mathbb{E}\Bigg[(h_i(A,\eta,G))^2{g(A,\eta,G){1_E}}\Bigg]^{1/2} \mathbb{E}\Bigg[(h_j(A,\eta,G))^{2k}{g(A,\eta,G){1_E}}\Bigg]^{1/2},
\end{equation}
using Cauchy-Schwarz inequality in \eqref{CSrad} and Jensen inequality with respect to the probability { whose density with respect to $\mathbb{P} [\bullet \, |\, G]$ is given by} $\frac{g(A,\eta,G)\frac{1_E}{\mathbb{P}(E)}}{\mathbb{E}\Big[{g}(A,\eta,G)\frac{1_E}{\mathbb{P}(E)}\big|G\Big]}$ to obtain (\ref{Jen_hij}). From inequality \eqref{gen_for_prod} and Lemma \ref{mom_sec_h} it follows that 
\begin{multline*}
\mathbb{P}(E)\mathbb{E}\Big[\frac{1}{\tilde{g}(A,\eta,G)}\Big(\frac{\partial^2 \tilde{g}}{\partial t^2}(A,\eta,G)\Big)^2\Big]\le  \mathbb{E}\Big[(h_2(A,\eta,G))^2{g(A,\eta,G){1_E}}\Big]\\
=\mathbb{E}[tr((\Gamma_\eta^{-1}(A-K))^4)1_E]+\frac{1}{2}\mathbb{E}[(tr((\Gamma_\eta^{-1}(A-K))^2))^21_E]\le \frac{3}{2}\mathbb{E}\Big[\|\Gamma_\eta^{-1}(A-K)\|_{HS}^41_\mathbb{E}\Big],
\end{multline*}

\begin{multline*}
\mathbb{P}(E)\mathbb{E}\Big[\frac{1}{\tilde{g}(A,\eta,G)}\frac{\partial^3 \tilde{g}}{\partial t^3}(A,\eta,G)\frac{\partial\tilde{g}}{\partial t}(A,\eta,G)\Big]\\
\le \mathbb{E}\Big[(h_3(A,\eta,G))^2{g(A,\eta,G){1_E}}\Big]^{1/2}\mathbb{E}\Big[(h_1(A,\eta,G))^2{g(A,\eta,G){1_E}}\Big]^{1/2}
\end{multline*}

\begin{multline*}
=\Bigg(\frac{3}{4}\mathbb{E}\Big[\Big(tr\big((\Gamma^{-1}_\eta(A-K))^2\big)\Big)^31_\mathbb{E}\Big]+\frac{9}{2}\mathbb{E}\Big[tr\big((\Gamma^{-1}_\eta(A-K))^2\big)tr\big((\Gamma^{-1}_\eta(A-K))^4\big)1_\mathbb{E}\Big]\\
+6\mathbb{E}\Big[tr\big((\Gamma^{-1}_\eta(A-K))^6\big)1_\mathbb{E}\Big]\Bigg)^{1/2}\Bigg(\frac{1}{2}\mathbb{E}[tr((\Gamma_\eta^{-1}(A-K))^2)1_E]\Bigg)^{1/2}\\
\le\frac{3 \sqrt{5}}{2\sqrt{2}}\mathbb{E}\Big[\|\Gamma^{-1}_\eta(A-K)\|_{HS}^61_\mathbb{E}\Big]^{1/2}\mathbb{E}\Big[\|\Gamma^{-1}_\eta(A-K)\|_{HS}^21_\mathbb{E}\Big]^{1/2},
\end{multline*}

\begin{multline*}
\mathbb{P}(E)\mathbb{E}\Big[\frac{1}{(\tilde{g}(A,\eta,G))^2}\frac{\partial^2 \tilde{g}}{\partial t^2}(A,\eta,G)\Big(\frac{\partial\tilde{g}}{\partial t}(A,\eta,G)\Big)^2\Big]\\
 \le \mathbb{E}\Bigg[(h_2(A,\eta,G))^2{g(A,\eta,G){1_E}}\Bigg]^{1/2} \mathbb{E}\Bigg[(h_1(A,\eta,G))^4{g(A,\eta,G){1_E}}\Bigg]^{1/2}\\
=\Bigg(\mathbb{E}[tr((\Gamma_\eta^{-1}(A-K))^4)1_E]+\frac{1}{2}\mathbb{E}[(tr((\Gamma_\eta^{-1}(A-K))^2))^21_E]\Bigg)^{1/2}\cdot\\
\cdot \Bigg(3\mathbb{E}[tr((\Gamma_\eta^{-1}(A-K))^4)1_E]+\frac{3}{4}\mathbb{E}[(tr((\Gamma_\eta^{-1}(A-K))^2))^21_E]\Bigg)^{1/2}
\le \frac{3\sqrt{5}}{2\sqrt{2}}\mathbb{E}\Big[\|\Gamma_\eta^{-1}(A-K)\|_{HS}^41_\mathbb{E}\Big],
\end{multline*}

and
\begin{multline*}
\mathbb{P}(E)\mathbb{E}\Big[\frac{1}{(\tilde{g}(A,\eta,G))^3}\Big(\frac{\partial\tilde{g}}{\partial t}(A,\eta,G)\Big)^4\Big]\\
\le \mathbb{E}\Bigg[(h_1(A,\eta,G))^2{g(A,\eta,G){1_E}}\Bigg]^{1/2} \mathbb{E}\Bigg[(h_1(A,\eta,G))^6{g(A,\eta,G){1_E}}\Bigg]^{1/2}\\
\le \Bigg(\frac{1}{2}\mathbb{E}[tr((\Gamma_\eta^{-1}(A-K))^2)1_E]\Bigg)^{1/2}\Bigg(60 \mathbb{E}\Big[tr((\Gamma_\eta^{-1}(A-K))^6)1_\mathbb{E}\Big]+\frac{15}{8}\mathbb{E}\Big[(tr((\Gamma_\eta^{-1}(A-K))^2))^31_\mathbb{E}\Big]\\
+10\mathbb{E}\Big[(tr((\Gamma_\eta^{-1}(A-K))^3))^21_\mathbb{E}\Big]
+\frac{45}{2}\mathbb{E}\Big[tr((\Gamma_\eta^{-1}(A-K))^2)tr((\Gamma_\eta^{-1}(A-K))^4)1_\mathbb{E}\Big]\Bigg)^{1/2}\\
\le \frac{\sqrt{5}}{4}(10+\sqrt{3}+4\sqrt{6})\mathbb{E}\Big[\|\Gamma^{-1}_\eta(A-K)\|_{HS}^61_\mathbb{E}\Big]^{1/2}\mathbb{E}\Big[\|\Gamma^{-1}_\eta(A-K)\|_{HS}^21_\mathbb{E}\Big]^{1/2}.
\end{multline*}

\subsection{Proof of Lemma \ref{term_con_zero}}
Let us observe that
\begin{multline*}
\mathbb{E}\Big[h_1(A,0,x)\frac{1_E}{\mathbb{P}(E)}\Big]=\frac{1}{2}\langle x,K^{-1}\Big(\mathbb{E}\Big[A\frac{1_E}{\mathbb{P}(E)}\Big]-K\Big)K^{-1}x\rangle-\frac{1}{2}tr\Big(K^{-1}\Big(\mathbb{E}\Big[A\frac{1_E}{\mathbb{P}(E)}\Big]-K\Big)\Big)\\
=\frac{1}{\phi_K(x)}\frac{\partial}{\partial t}\Big( \phi_{\mathbb{E}[\Gamma_t\frac{1_E}{\mathbb{P}(E)}]}(x)\Big)_{\Big|_{t=0}}
\end{multline*}
and therefore it is possible to easily adapt the results from Lemma \ref{mom_sec_h}, replacing the matrix $A$ with $\mathbb{E}\Big[A\frac{1_E}{\mathbb{P}(E)}\Big]$, which is invertible when $\mathbb{P}(E)\ne 0$ because of inequality \eqref{aut_A} applied inside the expectation. More precisely,
\begin{equation}\label{h1_gen2}
\mathbb{E}\Big[\Big(\frac{\partial \tilde{g}}{\partial t}(A,0,G)\Big)^2\Big]=\frac{1}{2}\mathbb{E}\Bigg[tr\Bigg(\Big(K^{-1}\Big(\mathbb{E}\Big[A\frac{1_E}{\mathbb{P}(E)}\Big]-K\Big)\Big)^2\Bigg)\Bigg]=\frac{1}{2}\Big\|\Big(\mathbb{E}\Big[A\frac{1_E}{\mathbb{P}(E)}\Big]-K\Big)K^{-1}\Big\|_{HS}^2
\end{equation}
and 
\begin{multline}\label{h4_gen2}
\mathbb{E}\Big[\Big(\frac{\partial \tilde{g}}{\partial t}(A,0,G)\Big)^4\Big]=
3\mathbb{E}\Bigg[tr\Bigg(\Big(K^{-1}\Big(\mathbb{E}\Big[A\frac{1_E}{\mathbb{P}(E)}\Big]-K\Big)\Big)^4\Bigg)\Bigg]\\
+\frac{3}{4}\mathbb{E}\Bigg[\Bigg(tr\Bigg(\Big(K^{-1}\Big(\mathbb{E}\Big[A\frac{1_E}{\mathbb{P}(E)}\Big]-K\Big)\Big)^2\Bigg)\Bigg)^2\Bigg],\\
\end{multline}
Hence, using Cauchy-Schwarz,
\[
\mathbb{P}(E)\mathbb{E}\Big[\Big(\frac{\partial \tilde{g}}{\partial t}(A,0,G)\Big)^3\Big]\le\mathbb{P}(E)\mathbb{E}\Big[\Big(\frac{\partial \tilde{g}}{\partial t}(A,0,G)\Big)^2\Big]^{1/2}\mathbb{E}\Big[\Big(\frac{\partial \tilde{g}}{\partial t}(A,0,G)\Big)^4\Big]^{1/2} 
\]
and therefore, using \eqref{h1_gen2}, \eqref{h4_gen2} and the fact that for any symmetrical matrix $M$ one has that $tr(M^4)\le\|M\|_{op}^2\|M\|_{HS}^2$ and $\Big(tr(M^2)\Big)^2\le d\|M\|^2_{op}\|M\|_{HS}^2$,
\begin{multline*}
\mathbb{P}(E)\mathbb{E}\Big[\Big(\frac{\partial \tilde{g}}{\partial t}(A,0,G)\Big)^3\Big]\le\frac{\sqrt{3}\mathbb{P}(E)}{\sqrt{2}}\Big(1+\frac{\sqrt{d}}{{2}}\Big)\Big\|\Big(\mathbb{E}\Big[A\frac{1_E}{\mathbb{P}(E)}\Big]-K\Big)K^{-1}\Big\|_{HS}^2\cdot\\
\cdot\Big\|K^{-1}\Big(\mathbb{E}\Big[A\frac{1_E}{\mathbb{P}(E)}\Big]-K\Big)\Big]\Big\|_{op},
\end{multline*}
where 
\[
\Big\|K^{-1}\Big(\mathbb{E}\Big[A\frac{1_E}{\mathbb{P}(E)}\Big]-K\Big)\Big]\Big\|_{op}\le \|K^{-1}\|_{op}\mathbb{E}\Big[\|A-K\|_{op}\frac{1_E}{\mathbb{P}(E)}\Big]\le \frac{\lambda(K)}{2}\|K^{-1}\|_{op}= \frac{1}{2}
\]
and we have used that $\|A-K\|_{op}\le\frac{\lambda(K)}{2}$ on the event $E$. It follows that
\[
\mathbb{P}(E)\mathbb{E}\Big[\Big(\frac{\partial \tilde{g}}{\partial t}(A,0,G)\Big)^3\Big]\le \frac{\sqrt{3}\mathbb{P}(E)}{2\sqrt{2}}\Big(1+\frac{\sqrt{d}}{{2}}\Big)\Big\|\Big(\mathbb{E}\Big[A\frac{1_E}{\mathbb{P}(E)}\Big]-K\Big)K^{-1}\Big\|_{HS}^2.
\]
Finally, in order to deal with the last term, we apply again Cauchy-Schwarz to infer that
\[
\mathbb{P}(E)\mathbb{E}\Big[\frac{\partial^2 \tilde{g}}{\partial t^2}(A,0,G)\frac{\partial\tilde{g}}{\partial t}(A,0,G)\Big]\le \mathbb{P}(E)\mathbb{E}\Big[\Big(\frac{\partial^2 \tilde{g}}{\partial t^2}(A,0,G)\Big)^2\Big]^{1/2}\mathbb{E}\Big[\Big(\frac{\partial\tilde{g}}{\partial t}(A,0,G)\Big)^2\Big]^{1/2}
\]

\[
\le\frac{\sqrt{3}\mathbb{P}(E)}{{2}}\Big\|K^{-1}\Big(\mathbb{E}\Big[A\frac{1_E}{\mathbb{P}(E)}\Big]-K\Big)\Big\|_{HS}\mathbb{E}\Big[\|K^{-1}(A-K)\|_{HS}^4\Big]^{1/2}
\]
\[
\le \frac{\sqrt{3}\mathbb{P}(E)}{{4}}\Big\|K^{-1}\Big(\mathbb{E}\Big[A\frac{1_E}{\mathbb{P}(E)}\Big]-K\Big)\Big\|_{HS}^2+\frac{\sqrt{3}\mathbb{P}(E)}{{4}} \mathbb{E}\Big[\|K^{-1}(A-K)\|_{HS}^4\Big]
\]
thanks to the bound \eqref{h1_gen2}, to the identity \eqref{h_quad} with $\eta=0$ and to the fact that $2ab\le a^2+b^2$ for every $a,b\in\mathbb{R}$. To conclude, it is now sufficient to study the following quantity:
\begin{multline*}
\frac{\mathbb{P}(E)}{2}\Big\|\Big(\mathbb{E}\Big[A\frac{1_E}{\mathbb{P}(E)}\Big]-K\Big)K^{-1}\Big\|_{HS}^2\\
\le  {\mathbb{P}(E)}\|(\mathbb{E}[A]-K)K^{-1}\|_{HS}^2+{\mathbb{P}(E)}\Big\|\mathbb{E}\Big[A\Big(\frac{1_E}{\mathbb{P}(E)}-1\Big)\Big]K^{-1}\Big\|_{HS}^2
\end{multline*}

\[
\le  {\mathbb{P}(E)}\|(\mathbb{E}[A]-K)K^{-1}\|_{HS}^2+{\mathbb{P}(E)}\mathbb{E}\Big[\|A\|_{HS}\Big|\frac{1_E}{\mathbb{P}(E)}-1\Big|\Big]^2\|K^{-1}\|_{HS}^2
\]
\[
\le {\mathbb{P}(E)}\|(\mathbb{E}[A]-K)K^{-1}\|_{HS}^2+{\mathbb{P}(E)}\mathbb{E}\Big[\|A\|_{HS}^2\Big]\mathbb{E}\Big[\Big|\frac{1_E}{\mathbb{P}(E)}-1\Big|^2\Big]\|K^{-1}\|_{HS}^2
\]
\[
\le {\mathbb{P}(E)}\|(\mathbb{E}[A]-K)K^{-1}\|_{HS}^2+2\mathbb{E}\Big[\|A-K\|_{HS}^2\Big]\|K^{-1}\|_{HS}^2{\mathbb{P}(E^C)}+2\|K\|_{HS}^2\|K^{-1}\|_{HS}^2{\mathbb{P}(E^C)}
\]
\begin{multline}\label{uso_markov}
\le {\mathbb{P}(E)}\|(\mathbb{E}[A]-K)K^{-1}\|_{HS}^2+\frac{8}{\lambda(K)^2}\mathbb{E}\Big[\|A-K\|_{HS}^2\Big]\|K^{-1}\|_{HS}^2\mathbb{E}\Big[\|A-K\|_{op}^2\Big]\\
+\frac{32}{\lambda(K)^4}\|K\|_{HS}^2\|K^{-1}\|_{HS}^2\mathbb{E}\Big[\|A-K\|_{op}^4\Big]
\end{multline}
where we have used Markov's inequality to bound $\mathbb{P}(E^C)=\mathbb{P}\Big(\|A-K\|_{op}>\frac{\lambda(K)}{2}\Big)$ to obtain \eqref{uso_markov}, recalling that $\lambda(K)$ is defined as the minimum eigenvalue of $K$. As a consequence, 
\begin{multline*}
\frac{\mathbb{P}(E)}{2}\Big\|\Big(\mathbb{E}\Big[A\frac{1_E}{\mathbb{P}(E)}\Big]-K\Big)K^{-1}\Big\|_{HS}^2
\le \|\mathbb{E}[A]-K\|_{HS}^2\|K^{-1}\|_{HS}^2\\+\frac{8}{\lambda(K)^2}\mathbb{E}\Big[\|A-K\|_{HS}^2\Big]^2\|K^{-1}\|_{HS}^2
+\frac{32}{\lambda(K)^4}\|K\|_{HS}^2\|K^{-1}\|_{HS}^2\mathbb{E}\Big[\|A-K\|_{HS}^4\Big],
\end{multline*}
and the proof is concluded.

\begin{thebibliography}{100}

\bibitem{Agg}
 Aggarwal, C. C.,
\newblock {\em Neural Networks And Deep Learning: A Textbook}.
\newblock Springer International Publishing, 2023.


\bibitem{Torr23}
Apollonio, N., Canditiis, D. D., Franzina, G., Stolfi, P. and Torrisi, G. L.,
\newblock Normal Approximation Of Random Gaussian Neural Networks, 2023.

\bibitem{arora_2020}
Arora, S., Du, S. S., Hu, W., Li, Z., Salakhutdinov, R. and Wang, R.,
\newblock On Exact Computation With An Infinitely Wide Neural Net.
\newblock In {\em Proceedings of the 33rd International Conference on Neural Information Processing Systems}, pages 731:1--10. Curran Associates Inc., 2019.








\bibitem{BGL14}
Bakry, D., Gentil, I. and Ledoux, M.,
\newblock {\em Analysis And Geometry Of Markov Diffusion Operators}.
\newblock Grundlehren der Mathematischen Wissenschaften, vol. 348. Springer, Berlin, 2014.

\bibitem{BGRS23}
Balasubramanian, K., Goldstein, L., Ross, N. and Salim, A.,
\newblock Gaussian Random Field Approximation Via Stein's Method With Applications To Wide Random Neural Networks.
\newblock {\em Applied and Computational Harmonic Analysis}, 72:101668, 2024.

\bibitem{Bar21}
Barbour, A. D., Ross, N. and Zheng, G.,
\newblock Stein's Method, Smoothing And Functional Approximation.
\newblock {\em Electron. J. Probab.}, 29:1--29, 2024.


\bibitem{Btesi}
Basteri, A.,
\newblock Quantitative Convergence Of Randomly Initialized Wide Deep Neural Networks Towards Gaussian Processes.
\newblock Master's thesis, 2022. \url{https://etd.adm.unipi.it/t/etd-05022022-184611}.

\bibitem{BT24}
Basteri, A. and Trevisan, D.,
\newblock Quantitative Gaussian Approximation Of Randomly Initialized Deep Neural Networks.
\newblock {\em Mach. Learn.}, 113:6373--6393, 2024.

\bibitem{bob25}
Bobkov, S. G., Götze, F. Rényi divergences in central limit theorems: Old and new. Probability Surveys, 22, 1-75, 2025.
\bibitem{Bon20}
Bonis, T.,
\newblock Stein's Method For Normal Approximation In Wasserstein Distances With Application To The Multivariate Central Limit Theorem.
\newblock {\em Probab. Theory Relat. Fields}, 178(3):827--860, 2020.

\bibitem{BFF24}
Bordino, A., Favaro, S. and Fortini, S.,
\newblock Non-Asymptotic Approximations Of Gaussian Neural Networks Via Second-Order Poincaré Inequalities.
\newblock In {\em Proceedings of Machine Learning Research (AABI24)}, 2024.

\bibitem{Brown-languagemodels}
Brown, T., Mann, B., Ryder, N. et~al.,
\newblock Language Models Are Few-Shot Learners.
\newblock In {\em Advances in Neural Information Processing Systems}, volume~33, pages 1877--1901, 2020.

\bibitem{CMSV24}
Cammarota, V., Marinucci, D., Salvi, M. and Vigogna, S.,
\newblock A Quantitative Functional Central Limit Theorem For Shallow Neural Networks.
\newblock {\em Modern Stochastics: Theory and Applications}, 11(1):85--108, 2024.

\bibitem{CC24}
Carvalho, L., Costa, J. L., Mourão, J., and Oliveira, G.,
\newblock The Positivity Of The Neural Tangent Kernel.
\newblock arXiv:2404.12928, 2024.

\bibitem{CC23}
Carvalho, L., Costa, J. L., Mourão, J. and Oliveira, G.,
\newblock Wide Neural Networks: From Non-Gaussian Random Fields At Initialization To The NTK Geometry Of Training.
\newblock arXiv:2304.03385, 2023.

\bibitem{Cybenko}
Cybenko, G.,
\newblock Approximation By Superpositions Of A Sigmoidal Function.
\newblock {\em Math. Control Signals Syst.}, 2(4):303--314, 1989.
\bibitem{DG12}
Demengel, F. and Demengel, G.,
\newblock {\em Functional Spaces For The Theory Of Elliptic Partial Differential Equations}.
\newblock Springer, Berlin, 2012.



\bibitem{activations}
Dubey, S. R.,  Singh, S. K. and Chaudhuri, B. B.,
\newblock Activation Functions In Deep Learning: A Comprehensive Survey And Benchmark.
\newblock {\em Neurocomputing}, 503:92--108, 2022.

\bibitem{EMS21}
Eldan, R., Mikulincer, D. and Schramm, T.,
\newblock Non-Asymptotic Approximations Of Neural Networks By Gaussian Processes.
\newblock In {\em Conference on Learning Theory, Proceedings of Machine Learning Research}, pages 1754--1775, 2021.

\bibitem{FFP23}
Favaro, S., Fortini, S. and Peluchetti, S.,
\newblock Deep Stable Neural Networks: Large-Width Asymptotics And Convergence Rates.
\newblock {\em Bernoulli}, 29(3):2574--2597, 2023.

\bibitem{FHMNP}
Favaro, S., Hanin, B., Marinucci, D., Nourdin, I. and Peccati, G.,
\newblock Quantitative CLTs in Deep Neural Networks.
\newblock {\em Probab. Theory Rel. Fields}, to appear, 2025.

\bibitem{Fort22} Fortuin, V., "Priors In Bayesian Deep Learning: A Review", \textit{International Statistical Review}, 90(3), 563--591, Wiley Online Library, 2022. URL: \url{https://onlinelibrary.wiley.com/doi/full/10.1111/insr.12502}.

\bibitem{HannonG} Hanin, B., "Random Neural Networks In The Infinite Width Limit As Gaussian Processes", \textit{Ann. Appl. Probab.}, 33(6A), 4798--4819, 2023.

\bibitem{HanGa} Hanin, B., "Random Fully Connected Neural Networks As Perturbatively Solvable Hierarchies", \textit{Journal of Machine Learning Research}, 2024.



\bibitem{Herry} Herry, R., Malicet, D. and Poly, G., "Superconvergence Phenomenon In Wiener Chaoses", \textit{The Annals of Probability}, 2023.

\bibitem{HBNPS} Hron, J., Bahri, Y., Novak, R., Pennington, J. and Sohl-Dickstein, J., "Exact Posterior Distributions Of Wide Bayesian Neural Networks", \textit{CoRR}, abs/2006.10541, 2020. URL: \url{https://arxiv.org/abs/2006.10541}.

\bibitem{Isserlis} Isserlis, L., "On A Formula For The Product-Moment Coefficient Of Any Order Of A Normal Frequency Distribution In Any Number Of Variables", \textit{Biometrika}, 12(1/2), 134--139, 1918.

\bibitem{Jacot} Jacot, A., Gabriel, F. and Hongler, C., "Neural Tangent Kernel: Convergence And Generalization In Neural Networks", \textit{Advances in neural information processing systems}, 31, 2018.

\bibitem{john} Johnson, O., "Introduction To Information Theory", Lecture Notes in Electrical Engineering, 2004.

\bibitem{Jumper-alphafold}
 Jumper, J. M., Evans, R., Pritzel, A., et~al.,
\newblock Highly Accurate Protein Structure Prediction With AlphaFold.
\newblock {\em Nature}, 596, 2021.

\bibitem{KG22} Kasprzak, M. J. and Peccati, G., "Vector-Valued Statistics Of Binomial Processes: Berry--Esseen Bounds In The Convex Distance", \textit{Ann. Appl. Probab.}, 33(5), 3449--3492, 2023.

\bibitem{Klu22} Klukowski, A., "Rate Of Convergence Of Polynomial Networks To Gaussian Processes", \textit{Conference on Learning Theory, Proceedings of Machine Learning Research}, 701--722, 2022.

\bibitem{macchinechesiguidano}
Krizhevsky, A., Sutskever, I., and Hinton, G.E.,
\newblock ImageNet Classification With Deep Convolutional Neural Networks.
\newblock In {\em Advances in Neural Information Processing Systems}, volume~25. Curran Associates, Inc., 2012.

\bibitem{Led19} Ledoux, M., "On Optimal Matching Of Gaussian Samples", \textit{Journal of Mathematical Sciences}, 238(4), 2019.


\bibitem{LBNSPS} Lee, J., Bahri, Y., Novak, R., Schoenholz, S., Pennington, J. and Sohl-Dickstein, J., "Deep Neural Networks As Gaussian Processes", \textit{International Conference on Learning Representation}, 2018.

\bibitem{belkinhessian} Liu, Ch., Zhu, L. and Belkin, M., "On The Linearity Of Large Non-Linear Models: When And Why The Tangent Kernel Is Constant", \textit{NIPS'20: Proceedings of the 34th International Conference on Neural Information Processing Systems}, 15954--15964, 2020.

\bibitem{belkinloss} Liu, Ch., Zhu, L. and Belkin, M., "Loss Landscapes And Optimization In Over-Parameterized Non-Linear Systems And Neural Networks", \textit{Applied and Computational Harmonic Analysis}, 59, 85--116, 2022.

\bibitem{MHRTG} Matthews, A., Hron, J., Rowland, M., Turner, R. and Ghahramani, Z., "Gaussian Process Behavior In Wide Deep Neural Networks", \textit{International Conference on Learning Representation}, 2018.


\bibitem{weight} Narkhede, M.V., Bartakke, P.P. and Sutaone, M.S., "A Review On Weight Initialization Strategies For Neural Networks", \textit{Artif Intell Rev}, 55, 291--322, 2022.

\bibitem{Neal96} Neal, R., "Bayesian Learning For Neural Networks", Springer, 1996.

\bibitem{NP12} Nourdin, I. and Peccati, G., "Normal Approximations with Malliavin Calculus: From Stein's Method To Universality", Cambridge University Press, 2012.

\bibitem{NPS} Nourdin, I., Peccati, G. and Swan, Y., "Entropy And The Fourth Moment Phenomenon", \textit{J. Funct. Anal.}, 266(5), 3170--3207, 2014.

\bibitem{NualartBook}
 Nualart, D.,
\newblock { The Malliavin Calculus And Related Topics}.
\newblock Probability and its Applications (New York), Springer-Verlag, Berlin, 2nd edition, 2006.


\bibitem{RHerPol} Rahman, S., "Wiener-Hermite Polynomial Expansion for Multivariate Gaussian Probability Measures", arXiv preprint, 2017.

\bibitem{GPML} Rasmussen, C. E. and Williams, C. K. I., "Gaussian Processes For Machine Learning", MIT Press, 2006.

\bibitem{RYH22} Roberts, D. A., Yaida, S. and Hanin, B., "The Principles Of Deep Learning Theory: An Effective Theory Approach To Understanding Neural Networks", Cambridge University Press, 2022.


\bibitem{surveyNN} Sarker, I. H., "Deep Learning: A Comprehensive Overview On Techniques, Taxonomy, Applications And Research Directions", \textit{SN Comput Sci}, 2(6), 420, 2021. DOI: \url{https://doi.org/10.1007/s42979-021-00815-1}.

\bibitem{Tala} Talagrand, M., "Transportation Cost For Gaussian And Other Product Measures", \textit{Geometric and functional analysis}, 6(3), 587--600, 1996.

\bibitem{Trev} Trevisan, D., "Wide Deep Neural Networks With Gaussian Weights Are Very Close To Gaussian Processes", arXiv preprint arXiv:2312.06092, 2023.

\bibitem{trace} van Hemmen, J. L. and Ando, T., "An Inequality For Trace Ideals", \textit{Commun. Math. Phys.}, 76, 143--148, 1980.



\bibitem{HDP} Vershynin, R., "High-Dimensional Probability", 2018. URL: \url{https://api.semanticscholar.org/CorpusID:196128750}.

\bibitem{Vid} Vidotto, A., "An Improved Second-Order Poincaré Inequality For Functionals Of Gaussian Fields", \textit{Journal of Theoretical Probability}, 33(1), 396--427, 2020.

\bibitem{villani} Villani, C., "Optimal Transport, Old And New", Springer-Verlag Berlin Heidelberg, 2009.

\bibitem{PMW} Williams, D., "Probability With Martingales", Cambridge University Press, 1991.

\bibitem{XZ22} Xu, Y. and Zhang, H., "Convergence Of Deep Convolutional Neural Networks", \textit{Neural Networks}, 153, 553--563, 2022.

\bibitem{Ye} Ye, J. C., "Geometry Of Deep Learning: A Signal Processing Perspective", Springer, 2022, 195--226.


\end{thebibliography}
\end{document}